\documentclass[letterpaper,12pt,extrafontsizes,oneside]{memoir}
\DoubleSpacing
\usepackage{amsmath,amsthm,amsfonts,amssymb,mathrsfs}
\usepackage{graphicx}
\usepackage{epstopdf}
\usepackage{multirow}
\usepackage{subcaption}
\usepackage[binary-units]{siunitx}
\usepackage[numbers,sort&compress]{natbib}
\usepackage[hidelinks=true]{hyperref}
\usepackage[utf8x]{inputenc}
\usepackage{tikz-cd}
\usepackage{babel}
\usepackage{microtype}
\usetikzlibrary{babel}
\let\amsamp=&
\makeatletter

\setlrmarginsandblock{1in}{1in}{*}
\setulmarginsandblock{1in}{1in}{*}

\maxtocdepth{subsection}

\setsecnumdepth{subsection}

\chapterstyle{ger}

\checkandfixthelayout
\newtheorem{theorem}{Theorem}[section]
\newtheorem*{theorem*}{Theorem}
\newtheorem*{definition*}{Definition}
\newtheorem{proposition}[theorem]{Proposition}
\newtheorem{lemma}[theorem]{Lemma}
\newtheorem{corollary}[theorem]{Corollary}

\theoremstyle{definition}
\newtheorem{definition}[theorem]{Definition}
\newtheorem{question}[theorem]{Question}
\newtheorem{remark}[theorem]{Remark}
\newtheorem{warning}[theorem]{Warning}
\newtheorem{example}[theorem]{Example}

\newtheorem{construction}[theorem]{Construction}
\newtheorem{claim}[theorem]{Claim}

\DeclareMathOperator{\im}{Im}
\newcommand{\otimesl}{\otimes^{\boldsymbol{\mathrm{L}}}}
\newcommand{\bb}[1]{\mathbb{#1}}
\newcommand{\cl}[1]{\mathrm{cl}\left(#1\right)}
\newcommand{\simpmod}[1]{{#1\mathrm{-mod}}_{\Delta}}
\newcommand{\ideal}[1]{\mathfrak{#1}}
\renewcommand{\to}{\rightarrow}

\renewcommand{\over}[2]{\stackrel{#1}{#2}}
\renewcommand{\cal}[1]{\mathcal{#1}}
\newcommand{\scr}[1]{\mathscr{#1}}
\renewcommand{\mod}[1]{{#1}\mathrm{-mod}}
\newcommand{\alg}[1]{{#1}\mathrm{-alg}}

\DeclareMathOperator{\coeq}{coeq}
\DeclareMathOperator{\eq}{eq}
\DeclareMathOperator{\hocoeq}{hocoeq}
\DeclareMathOperator*{\hocolim}{hocolim}
\DeclareMathOperator*{\colim}{colim}
\let\lim\relax
\DeclareMathOperator*{\lim}{lim}
\DeclareMathOperator{\hocoker}{hocoker}
\DeclareMathOperator{\Spec}{Spec}

\DeclareMathOperator{\Ext}{Ext}
\DeclareMathOperator{\Hom}{Hom}
\DeclareMathOperator{\coker}{coker}
\DeclareMathOperator{\Tor}{Tor}

\DeclareMathOperator{\ob}{Ob}
\DeclareMathOperator{\Mor}{Mor}
\DeclareMathOperator{\rad}{rad}

\DeclareMathOperator{\spec}{Spec}
\DeclareMathOperator{\fun}{Fun}
\DeclareMathOperator{\Sym}{Sym}
\DeclareMathOperator{\spf}{Spf}
\DeclareMathOperator{\hofib}{hofib}
\DeclareMathOperator{\Hot}{Hot}
\DeclareMathOperator{\DK}{DK}
\DeclareMathOperator{\R}{R}
\title{The Miracle of Flatness in Algebraic Geometry}
\author{Ivan Zelich}
\renewcommand{\maketitlehookc}{\vspace*{\fill}\centering Submitted in partial fulfillment of the \\
requirements for the degree of\\
Doctor of Philosophy\\
in the Graduate School of Arts and Sciences\\
\vspace{1cm}
Columbia University}
\date{2026}
\aliaspagestyle{title}{empty}
\begin{document}
\maketitle
\vspace*{\fill}
\newcommand{\thecopyrightdate}{2026}
\begin{center}
  \copyright\ \thecopyrightdate \\
  \theauthor \\
  All rights reserved
\end{center}
\thispagestyle{empty}
\renewcommand{\abstractnamefont}{\normalfont}
\renewcommand{\abstractname}{\uppercase{Abstract}}
\begin{abstract}
  \begin{center}
    \thetitle\\
    \theauthor
  \end{center}
  \vspace{1cm}
\pagestyle{empty}

Algebraic Geometry was revolutionized in the 1960s by many incredible techniques introduced by Grothendieck and his collaborators. Arguably 
the most fundamental machinery was the introduction of sheaf cohomology, which in turn relied on the developments of homotopical algebra. 
Here, the notion of flatness becomes a crucial technical condition, especially when considering the interaction between base-change and cohomology, where the flatness condition 
is imposed on a morphism of schemes $f: X \to Y$ to rid $X$ of any pathological behavior for the resulting family of fibers 
$X_y$ for $y \in Y$. It would be unreasonable, however, to expect that the flatness condition on $f$ is generally sufficient to guarantee good behavior of the family of fibers $X_y$, 
because flatness is a module-theoretic condition that can generally exhibit quite strange behavior without additional finiteness hypotheses. Broadly speaking,
the pursuit of understanding this dichotomy is an underlying theme for this thesis.\\
\indent Semirings, studied in Chapter 1, provided new aspects of this dichotomy to wrestle with. One of our main results is that 
the most natural notion of homotopy theory for semirings is not enough to capture the properties of flatness required to guarantee good base-change properties.
\begin{theorem}[See Corollary~\ref{cor:main}]
For any commutative monoid $M \in \mod{\bb{N}}$, $M \otimesl_{\bb{N}} \bb{Z}$ is a discrete module. On the other hand, 
the semiring map $\bb{N} \to \bb{Z}$ is not flat, in the sense that the functor $-\otimes_{\bb{N}} \bb{Z}:\mod{\bb{N}} \to \mod{\bb{N}}$ does not preserve finite limits (see Corollary~\ref{cor:NRnotflat}).
\end{theorem}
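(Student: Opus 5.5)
We need to prove two claims: that $M \otimesl_{\bb{N}} \bb{Z}$ is discrete for every commutative monoid $M$, and that $-\otimes_{\bb{N}} \bb{Z}$ fails to preserve finite limits.

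\textbf{Discreteness.} We work in the model structure on simplicial commutative monoids ($\bb{N}$-modules). Here $M \otimesl_{\bb{N}} \bb{Z}$ is computed by taking a cofibrant (levelwise free) simplicial resolution $P_\bullet \to M$ and applying $-\otimes_{\bb{N}}\bb{Z}$ levelwise. For a free commutative monoid $\bb{N}^{(S)}$ we have $\bb{N}^{(S)}\otimes_{\bb{N}}\bb{Z} = \bb{Z}^{(S)}$, which is its group completion. Hence $P_\bullet\otimes_{\bb{N}}\bb{Z}$ is the levelwise group completion of $P_\bullet$.

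The key step is to identify this with the homotopy-theoretic group completion. I would use the bar-construction (or free resolution) description of $M\otimesl_{\bb{N}}\bb{Z}$, namely the derived functor of $K_0 = (-)^{gp}$, and compute its homotopy $\pi_i = L_i(-)^{gp}(M)$.

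A cleaner route writes $\bb{Z}$ as a coequalizer of free $\bb{N}$-modules. For any $M$,
\[ M^{gp} = \coeq\bigl(M\oplus M\oplus M \rightrightarrows M\oplus M\bigr), \]
where the pairs $(a,b)$ are identified with $(a+c,b+c)$. Dually, $\bb{Z}$ is the coequalizer of $\bb{N}^2 \rightrightarrows \bb{N}^2$ in $\mod{\bb{N}}$, via the maps $e_i\mapsto$ appropriate combinations. Continuing this presentation to a simplicial resolution $Q_\bullet \to \bb{Z}$ by free $\bb{N}$-modules gives $M\otimesl\bb{Z} \simeq M\otimes Q_\bullet$. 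This is levelwise a finite product of copies of $M$, and I would aim to show it is a simplicial commutative monoid whose underlying simplicial set is a groupoid nerve. Concretely, $Q_\bullet$ should be the nerve (bar construction) of the action of $\bb{N}$ on $\bb{N}^2$ by the diagonal, so $M\otimes Q_\bullet$ is the nerve of the action groupoid of $M$ acting diagonally on $M\times M$.

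This action is free in the sense that no element has a nontrivial stabilizer: $(a+c,b+c)=(a,b)$ forces $c$ to act trivially, and $c\neq 0$ would give non-identity morphisms that are only relevant if they compose to loops. The nerve of a translation category of a free monoid action is homotopy discrete exactly when the category is a disjoint union of categories with weakly contractible components. Each component is filtered, since any two objects $(a,b)$ and $(a',b')$ both map to $(a+a'+\dots)$. Filtered categories are contractible, so $\pi_0 = M^{gp}$ and the higher $\pi_i$ vanish. I expect the main obstacle to be verifying that this bar construction really is a cofibrant resolution of $\bb{Z}$, together with the filteredness of the components, including when cancellation fails in $M$. Filteredness also requires equalizing parallel morphisms $c,c'$ with $(a+c,b+c)=(a+c',b+c')$; these can be equalized after adding further elements only up to the relation defining the Grothendieck group. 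This is the step where care is needed, and I would argue via the colimit formula instead: $\pi_*$ of the nerve is $\colim$ over the filtered category, which is exact.

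\textbf{Non-flatness.} To show $-\otimes_{\bb{N}}\bb{Z}$ does not preserve finite limits, I would take a concrete equalizer. Consider the two maps $\bb{N}\rightrightarrows \mathbb{B}$, where $\mathbb{B}=\{0,1\}$ with $1+1=1$, given by $n\mapsto 0$ and $n\mapsto \min(n,1)$. Their equalizer is $\{0\}$. After tensoring, $\mathbb{B}\otimes\bb{Z}=0$ because $\mathbb{B}^{gp}=0$, so the equalizer of the tensored maps is all of $\bb{Z}$, which differs from $0$. Alternatively, the injection $\mathbb{N}\to\mathbb{B}$-type kernels would give the same conclusion, but this single example suffices.
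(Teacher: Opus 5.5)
Your route is sound and genuinely different from the paper's. The paper argues in three steps. First it shows $\bb{Z}\simeq\hocoeq(\mathrm{diag},0\colon\bb{N}\to\bb{N}\oplus\bb{N})$, hence $\bb{Z}\otimesl_{\bb{N}}\bb{Z}\simeq\bb{Z}$ (Theorem~\ref{thm:ZN}). Second it proves $\hocoeq(\iota_M,0\colon M\to M\otimes_{\bb{N}}\bb{Z})$ is contractible, by showing that the category with objects $M\otimes_{\bb{N}}\bb{Z}$ and arrows $n\to n+\iota_M(m)$ is filtered (Theorem~\ref{thm:contractab}). Third it combines these in a cofiber argument (Corollary~\ref{cor:main}).

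You instead take the paper's model $Q=Z(\mathrm{diag},0)$ of Example~\ref{exmp:coeqN} as a resolution of $\bb{Z}$. You observe that $M\otimes_{\bb{N}}Q=Z(\mathrm{diag}_M,0)$ is the nerve of the category on $M\times M$ with arrows $(a,b)\xrightarrow{c}(a+c,b+c)$. A single filteredness check then gives $M\otimesl_{\bb{N}}\bb{Z}\simeq M\otimes_{\bb{N}}\bb{Z}$.

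What your route buys:
\begin{itemize}
\item It avoids the cofiber step, which in the paper requires moving homotopy cokernels between $\bb{N}$- and $\bb{Z}$-modules.
\item The ``main obstacle'' you anticipate, that $Q\to\bb{Z}$ is a weak equivalence, is just the case $M=\bb{N}$ of your own computation.
\item Cofibrancy of $Q$ is clear, since its degeneracies send basis vectors to basis vectors (or use Proposition~\ref{prop:coeqN}).
\item It sidesteps the paper's assertion that $n$ or $-n$ lies in $\im(\iota_M)$, which fails for $M=\bb{N}^2$ and $n=(1,-1)$.
\end{itemize}
The price is that you compute the derived tensor product by resolving the second variable. This is legitimate because tensoring with a levelwise free simplicial $\bb{N}$-module preserves weak equivalences: direct sums of copies preserve weak equivalences of simplicial sets, and then the diagonal lemma applies. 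In exchange, the paper's route isolates Theorem~\ref{thm:contractab} and the general structure of $Z(f,g)$.

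However, you left the decisive step unfinished, and the fallback you propose does not work.
\begin{itemize}
\item Parallel arrows are equalized on the nose. If $c,c'\colon(a,b)\to(a',b')$, then $a+c=a'=a+c'$. Post-composing with the arrow labeled $a$ out of $(a',b')$ gives $c+a=c'+a$.
\item For the cospan condition, ``$(a+a'+\dots)$'' is not enough; use the component relation. Objects in one component satisfy $a+b'+k=a'+b+k$ for some $k\in M$, and then $(a,b)\xrightarrow{b'+k}(a+b'+k,\,b+b'+k)\xleftarrow{b+k}(a',b')$.
\end{itemize}
Hence each component is filtered with contractible nerve, and $\pi_0=M\otimes_{\bb{N}}\bb{Z}$.

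Discard the sentence about $\pi_*$ of the nerve being a colimit over the filtered category. It is not a valid formula, and it presupposes the filteredness it was meant to replace. Also discard the claim that the action has no nontrivial stabilizers. It is false for non-cancellative $M$: for $M=\bb{B}$, the arrow $1\colon(1,1)\to(1,1)$ is a non-identity endomorphism. The claim is also unnecessary.

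Your non-flatness example is correct. We have $\eq(0,\min(-,1)\colon\bb{N}\rightrightarrows\bb{B})=0$, whereas after tensoring both maps become $\bb{Z}\to\bb{B}\otimes_{\bb{N}}\bb{Z}=0$, whose equalizer is $\bb{Z}$. The paper instead uses $\eq(+,0\colon\bb{N}\oplus\bb{N}\to\bb{N})=0$, which becomes the antidiagonal copy of $\bb{Z}$ after tensoring; it packages this as Proposition~\ref{prop:flatpositivity}. Either suffices.
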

While this result is indeed in stark contrast to what happens in classical commutative algebra, where flatness of a ring map $R \to S$ can be detected via the derived functors of $-\otimesl_R S$, 
the semiring situation does bear some similarities to the classical situation.
\begin{theorem}[See Theorem~\ref{theorem:main}]
A flat, finitely presented epimorphism of semirings $f: A \to B$ induces an isomorphism of $\text{Spec}(B)$ onto a quasi-compact open of $\text{Spec}(A)$ in the Zariski-topology.
\end{theorem}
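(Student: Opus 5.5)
Following the classical argument for rings (Raynaud, Lazard), I would first reduce to showing that $\operatorname{Spec}(f):\operatorname{Spec}(B)\to\operatorname{Spec}(A)$ is injective, a homeomorphism onto its image, and that the image is open and quasi-compact.

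\textbf{Step 1: injectivity and local isomorphisms.} Since $f$ is an epimorphism, the codiagonal $B\otimes_A B\to B$ is an isomorphism. For any prime (prime $k$-ideal) $\mathfrak q\subset B$ with residue semifield $\kappa(\mathfrak q)$, this gives $\kappa(\mathfrak q)\otimes_A\kappa(\mathfrak q)\cong\kappa(\mathfrak q)\otimes_B\kappa(\mathfrak q)\cong\kappa(\mathfrak q)$. Two primes over the same $\mathfrak p$ would therefore give a nonzero tensor product that must equal both residue semifields. This forces $\mathfrak q=\mathfrak q'$, so $\operatorname{Spec}(f)$ is injective. Flatness is the key input at the next stage. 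Flatness of $A\to B$ (preservation of finite limits by $-\otimes_A B$) is stable under base change and localization. The plan is to show $A_{\mathfrak p}\to B_{\mathfrak p}$ is a flat epimorphism whose closed fiber is either $0$ or $\kappa(\mathfrak p)$. I would then use flatness (via the equalizer/kernel-congruence description, since semirings have congruences rather than kernels) to deduce that $A_{\mathfrak p}\to B_{\mathfrak q}$ is an isomorphism. A flat local epimorphism of local semirings should be an isomorphism. Here I would invoke whatever Nakayama-type lemma and faithful flatness criteria the paper develops for semirings: a flat local map is faithfully flat, and a faithfully flat epimorphism is an isomorphism, because $B\to B\otimes_A B$ is an equalizer by faithful flatness.

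\textbf{Step 2: openness.} Finite presentation means $B=A[x_1,\dots,x_n]/\sim$ with a congruence generated by finitely many relations. For $\mathfrak q$ over $\mathfrak p$ with $A_{\mathfrak p}\cong B_{\mathfrak q}$, the images of the $x_i$ are fractions $a_i/s$. The finitely many relations hold after inverting a single $s\notin\mathfrak p$. This yields an $A$-map $B\to A_s$ with the composite $A\to B\to A_s$ the localization. Since $f$ is epi, the induced map $B\to B_s$ factors compatibly, and a spreading-out argument (again only finitely many equations to check, using finite presentation of $B_s$ over $A_s$) shows $B_{s'}\cong A_{s'}$ for some $s'$. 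Thus $\operatorname{Spec}(B)\supset D(s')\cong D(s')\subset\operatorname{Spec}(A)$, and the image is a union of basic opens $D(s)$. Since $\operatorname{Spec}(B)$ is quasi-compact, finitely many such opens suffice, so the image is a quasi-compact open and $\operatorname{Spec}(f)$ is an open immersion onto it. This requires checking that Zariski basic opens and localizations behave as for rings, which I assume the paper establishes in Chapter 1.

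\textbf{Main obstacle.} I expect the hard step to be the local statement that a flat epimorphism of local semirings inducing a bijection on closed points is an isomorphism. Without subtraction, the ring arguments (kernels, Nakayama on cokernels, $\operatorname{Tor}$ characterizations) fail. The earlier corollary shows that derived tensors do not detect flatness, so I must work directly with finite-limit preservation. I would first try to prove faithful flatness via the condition $\mathfrak m B\neq B$ combined with the equalizer characterization. I would then prove descent, namely that $A\to B\rightrightarrows B\otimes_A B$ is an equalizer. With $B\otimes_A B=B$, this immediately gives $A\cong B$.
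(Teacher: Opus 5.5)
Your core argument matches the paper's. At a prime $\mathfrak{p}$ in the image, the localized map is a flat epimorphism that hits the closed point of $A_{\mathfrak p}$, so it is faithfully flat by Proposition~\ref{prop:flatsurjmax} and hence an isomorphism. Finite presentation then spreads this out to $A_s\cong B_s$ for some $s\notin\mathfrak p$, and quasi-compactness of $\operatorname{Spec}(B)$ finishes. You localize at $\mathfrak q$ on the $B$-side, while the paper uses $B\otimes_A A_{\mathfrak p}$; both work.

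The ``main obstacle'' you flag is exactly Proposition~\ref{prop:flatsurjmax}. Since faithful flatness in the paper means reflection of isomorphisms, a faithfully flat epimorphism is an isomorphism at once: $A\to B$ becomes $B\to B\otimes_A B$, which is an isomorphism. So no descent statement is needed. Similarly, in Step 2, once you have the $A$-algebra map $\phi\colon B\to A_s$, the epimorphism property alone shows that $A_s\to B_s$ and the map $B_s\to A_s$ induced by $\phi$ are mutually inverse. Your extra spreading-out is therefore superfluous. The paper instead proves a symmetric compactness statement (Claim~\ref{claim:main1}).

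One step does fail, though it is not load-bearing: the injectivity argument via residue semifields in Step 1. Over a semifield, two nonzero semifields can have zero tensor product. For example, $\mathbb{Q}\otimes_{\mathbb{Q}_+}\mathbb{B}=0$, because $1\otimes 1$ is additively idempotent and also has an additive inverse $(-1)\otimes 1$. So nothing forces $\kappa(\mathfrak q)\otimes_A\kappa(\mathfrak q')\neq 0$. Moreover, the paper's $\operatorname{Spec}$ consists of all prime ideals, not only $k$-ideals. For non-subtractive primes such as those in Example~\ref{exmp:badclosed}, no nonzero semiring quotient detects the prime at all, and restricting to $k$-ideals changes the space in the statement.

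You can simply drop this step. Every $\mathfrak q\in\operatorname{Spec}(B)$ lies over some $\mathfrak p$ for which Step 2 gives $s\notin\mathfrak p$ with $A_s\cong B_s$. Hence the opens $D(f(s))\cong\operatorname{Spec}(B_s)$, which are the full preimages of the $D(s)$, cover $\operatorname{Spec}(B)$ and map homeomorphically onto the $D(s)$. This yields injectivity and the homeomorphism onto the open image directly.
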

We were informed after having proved Theorem~\ref{theorem:main} that Florian Murty obtained the same result in greater generality via
a different method (see \cite{Martyflat}).\\
\indent Our main interest in studying the flatness of semirings is that here the notion seems to capture some strong positivity properties of semirings. The following is 
a question posed to the author by James Borger.
\begin{question}
Let $A$ be a finitely presented flat $\bb{R}_{+}$-algebra. 
If $A$ is non-trivial, does it necessarily have an $\bb{R}$-point (or even better, an $\bb{R}_{+}$-point)?
\end{question}
If we asked the above question for the groupification $\bb{R}$ of $\bb{R}_{+}$, then the answer would be no, as there are many non-trivial flat and finitely presented $\bb{R}$-algebras that do not have any $\bb{R}$-points.
In our opinion, the question seems to suggest that because $\bb{R}_{+}$ is a totally ordered semiring with no elements admitting additive inverses, the flatness condition on $A$ is strong enough to guarantee the existence of a $\bb{R}_{+}$-point.
We attempted to address this question by understanding when quotients of the form $\bb{R}_{+}[x_1,...,x_n]/(f \sim g)$ are flat over $\bb{R}_{+}$, but we were not able to obtain 
much progress. In private communication, Johan de Jong informed the author of the following result.
\begin{theorem}[de Jong]
The semiring $\bb{R}_{+}[x]/(ax \sim x^2 + 1)$ is flat over $\bb{R}_{+}$ if and only if $a \ge 2$. 
\end{theorem}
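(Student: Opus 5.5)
We have to show that $A_a=\bb{R}_{+}[x]/(ax \sim x^2+1)$ is flat over $\bb{R}_{+}$ exactly when $a\ge 2$. Flatness means that $-\otimes_{\bb{R}_{+}}A_a$ preserves finite limits, and we treat the two directions separately.

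\emph{Non-flatness for $a<2$.} Here $x^2-ax+1$ has no real root, so $A_a$ has no $\bb{R}_{+}$-point and no $\bb{R}$-point; this is the phenomenon Borger's question predicts flatness forbids. To turn it into a proof, I would compute $A_a\otimes_{\bb{R}_{+}}\bb{R}\cong \bb{R}[x]/(x^2-ax+1)\cong\bb{C}$. Then I would find an equalizer that is not preserved. The natural candidate is the diagram $\bb{R}_{+}\rightrightarrows\bb{R}_{+}$ given by $t\mapsto t$ and $t\mapsto 0$, or more generally one built from the relation $ax\sim x^2+1$.

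The cleanest route is positivity. The congruence defining $A_a$ is generated by the relation, so two polynomials with nonnegative coefficients are identified in $A_a$ iff they are linked by a chain of substitutions $ax\leftrightarrow x^2+1$ inside monomial multiples. For $a<2$, I expect $A_a$ to be additively cancellative, or close to it. I also expect that a nontrivial element becomes an additive inverse; for example, $x^2+1 = ax$ suggests that $(x-\tfrac a2)^2$ is ``negative'' after scaling. Concretely, I would show there exist nonzero $u,v\in A_a$ with $u+v=0$ or with a failure of zero-sum-freeness. Then I would test flatness against the kernel pair of the map $\bb{R}_{+}^2\to\bb{R}_{+}$, $(s,t)\mapsto s+t$, whose equalizer-type limit $\{(s,t):s+t=0\}=0$ must be preserved. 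Tensoring gives $\{(u,v)\in A_a^2:u+v=0\}$, and this must vanish, so $A_a$ must be zero-sum-free. Everything therefore reduces to the claim that $A_a$ is not zero-sum-free when $a<2$. I would attempt this by computing in the groupification $\bb{C}$: some positive combination $\sum c_i x^i$ maps to $0$ in $\bb{C}$, since powers of $e^{i\theta}$ with $\theta\ne 0$ have nonnegative combinations summing to $0$. I would then lift this identity to a genuine chain of relations in $A_a$, multiplying by a high power of a suitable positive polynomial to clear the difference. This is a Pólya-type positivity certificate: a polynomial in $\bb{R}_{+}[x]$ vanishing at the nonreal roots can be written as $(x^2-ax+1)q$ with $q$ having nonnegative coefficients after multiplying by $(1+x)^N$.

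\emph{Flatness for $a\ge 2$.} Here the roots $r,r^{-1}$ are positive reals. I would aim for a Lazard-type criterion: show that $A_a$ is a filtered colimit of free $\bb{R}_{+}$-modules, which is flat since finite limits commute with filtered colimits in $\mod{\bb{R}_{+}}$. First I would find a normal form. Using $x^2=ax-1$ is not allowed because of the subtraction, so instead I would view $A_a$ as a subsemiring of $\bb{R}[x]/(x^2-ax+1)\cong\bb{R}\times\bb{R}$ (or of $\bb{R}[\epsilon]/\epsilon^2$ when $a=2$) via evaluation at $r$ and $r^{-1}$. I would identify its image as the cone of pairs $(p(r),p(r^{-1}))$ with $p\in\bb{R}_{+}[x]$, which should be the positive orthant minus a boundary piece. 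I would then check cancellativity, so that $A_a$ embeds, using Pólya again: any $p,q\in\bb{R}_{+}[x]$ agreeing at both roots differ by $(x^2-ax+1)h$, and one needs to realize this via relations. Finally, I would exhibit the cone as a filtered union of simplicial (free) subcones that is closed under the relations.

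\emph{Main obstacle.} I expect the hardest step to be the positivity certificate showing that equality in the groupification implies equality in $A_a$, for $a\ge2$, with the case $a=2$ (a double root) especially delicate. For this I would first try induction on degree, using the chain $ax\cdot x^k\leftrightarrow x^{k+2}+x^k$.
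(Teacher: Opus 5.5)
The paper quotes this result from de Jong without proof, so your argument has to stand on its own. As written, the $a<2$ direction rests on a false claim, and the $a\ge 2$ direction leaves its crux unproved.

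\textbf{The case $a<2$.} Your reduction to ``$A_a$ is not zero-sum-free'' fails for $0<a<2$. Both $ax$ and $x^2+1$ are nonzero, and $\mathbb{R}_+[x]$ is zero-sum-free with no zero divisors. So the description of chains in Proposition~\ref{prop:quotientrels} shows that the class of $0$ is just $\{0\}$. Hence $A_a$ \emph{is} negative-free, and a positive combination vanishing in $\mathbb{C}$ can never be lifted to an identity $p=0$ in $A_a$. Your test only works for $a=0$, where $1+x^2=0$.

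Your expectation that $A_a$ is ``cancellative or close to it'' is also backwards: cancellation is exactly what fails. Apply Pólya's theorem, which is valid here because $x^2-axy+y^2$ is positive on the simplex. It gives $N$ with $0\neq p:=(1+x)^N(x^2-ax+1)\in\mathbb{R}_+[x]$. Then $p+ax(1+x)^N=(x^2+1)(1+x)^N\sim ax(1+x)^N$, while $p\not\sim 0$. So $A_a$ is not cancellative, and Proposition~\ref{prop:flatpositivity} rules out flatness.

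\textbf{The case $a\ge 2$.} Here cancellativity of $A_a$, i.e.\ injectivity of $A_a\to\mathbb{R}[x]/(x^2-ax+1)$, is equivalent to flatness, so it is the whole content. You leave it at ``one needs to realize this via relations.''

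Pólya's theorem is not available, since the quadratic now has positive roots. Naive induction on degree also fails. The polynomials $(a^2-1)x^2$ and $1+ax^3$ have the same image. But the nonnegative polynomials of degree $\le 3$ in that fibre form a triangle with $1+ax^3$ as a vertex, and every legal move into that vertex from the triangle is trivial. Any connecting chain must therefore pass through degree $4$.

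One way to close the gap uses the following fact. Write $h^{\pm}$ for the positive and negative parts of the coefficients of $h\in\mathbb{R}[x]$. By Proposition~\ref{prop:quotientrels}, the move $R\mapsto R+(x^2-ax+1)h$ is legal whenever $R\ge h^{+}ax+h^{-}(x^2+1)$ coefficientwise.

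\begin{enumerate}
\item[(a)] Take nonconstant $P,Q$ with $P-Q\in(x^2-ax+1)\mathbb{R}[x]$. Partial moves $a\kappa x^s\mapsto\kappa(x^{s-1}+x^{s+1})$ spread both to polynomials $P',Q'$ in the same fibre. These have all coefficients positive on a common interval $[m,M]$ and zero outside it.
\item[(b)] Along the segment from $P'$ to $Q'$ the coefficients stay bounded below on $[m,M]$. The quotient $(Q'-P')/(x^2-ax+1)$ is supported in $[m,M-2]$. So the segment is traversed by finitely many small legal moves.
\item[(c)] For $a\ge 2$ a nonconstant $p$ satisfies $p(r)>p(r^{-1})$ (respectively $p'(1)>0$ when $a=2$). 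So constants are alone in their fibres, and (a)--(b) give $A_a\cong C$.
\end{enumerate}

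Separately, $C$ is not ``the orthant minus a boundary piece.'' For $a>2$ it is $\{0\}\cup\{(u,v):u\ge v>0\}$, the increasing union of the free cones spanned by $1$ and $x^n$. For $a=2$ it is $\{0\}\cup\{u+v\epsilon: u>0,\ v\ge 0\}$, the union of the cones spanned by $1$ and $x^n=1+n\epsilon$. So your last step goes through once $A_a\cong C$ is established.
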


On the other hand, in Chapter 2, we will show that from the perspective of \textit{descendability}, flat ring maps between classical rings can behave 
arbitrarily badly. The notion of a descendable morphism is a technical condition introduced by Akhil Mathew \cite{DescendMathew} 
building upon the work of Paul Balmer \cite{DescendBalmer}. Informally, if a map $f: A \to B$ is descendable, then $A$-linear stable $\infty$-categories can
be understood via $B$-linear stable $\infty$-categories together with `descent' data (see \ref{thm:luriedescend}). Our main result is that 
faithfully flat ring maps are not necessarily descendable; see Corollary~\ref{corollary:mainindiv}, Theorem~\ref{theorem:maincounter}, and the work of Aoki \cite{aoki}. 
This answers a question by Bhatt and Scholze (\cite[11.24]{BhattScholzeProj}), Akhil Mathew (\cite[Prop. 3.32]{DescendMathew}), and Jacob Lurie (\cite[D.3.3.4]{LurieSAG}). 
Our strategy was to first understand the relationship between non-vanishing cup-products and cardinality in the module-theoretic setting.
\begin{theorem}[See Theorem~\ref{theorem:indivisiblecup}]
For a commutative ring $R$, if
$R$ contains an $n$-indivisible sequence (see Definition~\ref{definition:indivisible}), then 
there exists a flat module $M$ over $R$ and a class $\eta \in \Ext^1_R(M, R')$ such that 
\[\eta^{\otimesl_R n} \in \Ext^n_R(M^{\otimesl_R n}, R'^{\otimesl_R n}) \neq 0\]
where $R'$ is a free $R$-module.
\end{theorem}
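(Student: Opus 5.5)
The plan is to realize $M$ as a sequential colimit of free modules of rank one, so that $M$ is flat with an explicit two-term free resolution, and to take $\eta$ to be the class of that resolution. Let $x_1, x_2, \dots$ be the $n$-indivisible sequence of Definition~\ref{definition:indivisible}, and set
\[M = \colim\bigl(R \xrightarrow{x_1} R \xrightarrow{x_2} R \xrightarrow{x_3} \cdots\bigr).\]
This is a filtered colimit of free modules, hence flat. The telescope construction gives a short exact sequence
\[0 \to \bigoplus_{i\ge 1} R e_i \xrightarrow{\;d\;} \bigoplus_{i \ge 1} R e_i \to M \to 0, \qquad d(e_i) = e_i - x_i e_{i+1}.\]
Put $R' = \bigoplus_{i \ge 1} R e_i$, which is free, and let $\eta \in \Ext^1_R(M, R')$ be the class of this extension. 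Under the identification $\Ext^1_R(M,R') = \coker\bigl(\Hom(R',R') \xrightarrow{d^*} \Hom(R',R')\bigr)$, the class $\eta$ is the image of $\mathrm{id}_{R'}$.

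Next I would compute $\eta^{\otimesl_R n}$ explicitly. The complex $P_\bullet = [R' \xrightarrow{d} R']$ is a free resolution of $M$, so $P_\bullet^{\otimes n}$ is a free resolution of $M^{\otimes n} = M^{\otimesl_R n}$, using flatness of $M$. It is a Koszul-type complex of length $n$, with top term $P_n = R'^{\otimes n} = \bigoplus_{\underline{i} \in \bb{N}^n} R e_{\underline{i}}$. The external product of extension classes is computed by tensoring resolutions, so $\eta^{\otimesl_R n}$ is represented by $\mathrm{id}_{R'^{\otimes n}}$ in
\[\Ext^n_R(M^{\otimes n}, R'^{\otimes n}) = \coker\Bigl(\bigoplus_{j=1}^n \Hom(P_{n-1}^{(j)}, R'^{\otimes n}) \xrightarrow{\ \sum_j d_j^*\ } \Hom(P_n, R'^{\otimes n})\Bigr).\]
Here $d_j$ is $d$ acting in the $j$-th tensor factor. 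I would check the compatibility of the derived tensor power of Ext classes with this cocycle carefully, including signs, but I expect it to be formal.

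The statement thus reduces to showing that there are no maps $\varphi_1, \dots, \varphi_n \colon R'^{\otimes n} \to R'^{\otimes n}$ with
\[\mathrm{id} = \sum_{j=1}^n \varphi_j \circ (1 - x_{i_j} s_j).\]
Here $s_j$ shifts the $j$-th index up by one. Equivalently, this says the class in $\lim^n$ of the $\bb{N}^n$-indexed tower built from products of the $x_i$ does not vanish. This is the main obstacle.

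To attack it, I would evaluate such a putative identity on the basis vector $e_{(1,\dots,1)}$ and iterate the relation along the cube. Each $\varphi_j$ has only finitely many nonzero coordinates on any given basis vector. Pushing far out along each coordinate direction, the telescoping then forces an equation expressing some product $x_{a_1}\cdots x_{b_1}\cdots x_{a_n}\cdots x_{b_n}$ as an element of the ideal generated by the other such products, or as divisible by a further product. I expect the definition of an $n$-indivisible sequence to be designed precisely to forbid this relation. The first thing to try is therefore to unwind the case $n=1$, where the argument is the classical nonvanishing of $\lim^1$ for a non-stabilizing tower. Then I would induct on $n$ by restricting to one coordinate hyperplane at a time, matching the resulting divisibility relations to the clauses of Definition~\ref{definition:indivisible}.
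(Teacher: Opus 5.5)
There is a genuine gap here, and it is fatal. First, you have misread Definition~\ref{definition:indivisible}. An $n$-indivisible sequence is not a sequence of ring elements $x_1,x_2,\dots$. It is a family $\{S_i,\Psi_i\colon S_i\to R\}_{i\in[n]}$ of set maps such that:
each map $\Psi_{R,S_i}\colon\oplus_{S_i}R\to R\oplus\bigoplus_{S_i}R$, $1_s\mapsto 1\oplus\Psi_i(s)1_s$, is universally injective; and
no tuple $\Psi_1(s_1),\dots,\Psi_n(s_n)$ generates the unit ideal.
Your construction never uses this data.

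More seriously, the statement you isolate as the main obstacle is false for every $n\ge 2$, whatever elements $x_i$ you choose. Your $M$ is a countable colimit of free modules of rank one. Since the diagonal of $\bb{N}^n$ is cofinal, $M^{\otimes n}\cong\colim(R\xrightarrow{x_1^n}R\xrightarrow{x_2^n}\cdots)$ is again a countable telescope with a two-term free resolution. Hence $\mathrm{pd}_R M^{\otimes n}\le 1$ and $\Ext^n_R(M^{\otimes n},-)=0$ for $n\ge 2$. Because $P_\bullet^{\otimes n}$ is a projective resolution of $M^{\otimes n}$, maps $\varphi_j$ with $\mathrm{id}=\sum_j\varphi_j\circ d_j$ therefore always exist. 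In your own $\lim^n$ formulation: $\lim^k$ over the countable directed set $\bb{N}^n$ vanishes for $k\ge 2$, as recalled after Theorem~\ref{thm:rg1}. The $n=1$ case gives no guidance for higher $n$.

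The missing idea is cardinality. Theorem~\ref{theorem:indivisiblecup} assumes $|S_i|\ge\aleph_{i-1}$. The summary statement suppresses this, but it cannot be dropped. By Example~\ref{example:indiv}(i), the countable ring $\bb{Q}[x_1,x_2]$ carries a $2$-indivisible sequence. Yet by Theorem~\ref{thm:gj0} every flat module over it has projective dimension at most $1$, so all the relevant $\Ext^2$ classes vanish.

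Here is the paper's route.
\begin{enumerate}
\item[(a)] Take $M_i=\coker\Psi_{R,S_i}$, flat by universal injectivity, with the classes $\cl{\Psi_{R,S_i}}$. Use Lemma~\ref{lemma:extcrit} to reduce, as you do, to excluding an extension $r$. Writing $r(1_{i,s'})=f_i(s')$ and $r(1_{i,s})=g_i(s)$ yields the relation $(*)$.
\item[(b)] Apply the left inverse $d_n$, so each $d_n(f_i(s'))$ becomes a finitely supported function on $\prod_jS_j$.
\item[(c)] Proposition~\ref{proposition:setbounds} produces a point $s$ with $d_n(f_i(p^n_i(s)))(s)=0$ for all $i$. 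In the last coordinate one has a family of finite subsets of $S_n$ indexed by $\prod_{i<n}S_i$. Its union has cardinality at most $\aleph_{n-2}<\aleph_{n-1}$ (taking $|S_i|=\aleph_{i-1}$, which one may arrange by shrinking). So some $s_n$ avoids all of them, and one recurses on the remaining coordinates.
\item[(d)] Evaluating $(*)$ at $s$ gives $1\in(\Psi_1(s_1),\dots,\Psi_n(s_n))$, contradicting indivisibility.
\end{enumerate}
Your instinct that finite support of the splitting maps on basis vectors is the lever is correct. But it only bites when the index sets are too large for the supports to exhaust them, which never happens for a countable telescope.
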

We then form a quite general method to convert module theoretic non-vanishing cup-products to non-descendable ring maps, and consequently obtain many examples of non-descendable faithfully flat ring maps (See Corollary~\ref{corollary:mainindiv}).
Our method in particular allows us to construct examples between $p$-boolean rings, which was a question posed to the author by Juan Esteban Rodríguez Camargo. By slightly refining our argument, we were also able to construct a faithfully flat cover of $\spec{k[x_1,x_2,...]}$, where $k$ is an algebraically closed field, that is not descendable (see Theorem~\ref{theorem:maincounter}).
This example is quite striking, as $\spec{k[x_1,x_2,...]}$, while not Noetherian, is still a reasonably nice scheme from the perspective of algebraic geometry.\\

To wrap up this thesis, we homed in on a conjecture posed by Grothendieck and Dieudonné. In \cite[21.12.14]{EGAIV}, 
the authors conjectured that for a locally of finite type map $f: X \to Y$ of excellent, locally Noetherian schemes, with $X$ normal and $Y$ regular, the 
ramification locus of $f$ is pure of codimension $1$ (See Theorem~\ref{thm:nagata-purity}). Having already shown that for any open subscheme $V \subset X$ such that $V \to X$ is an affine morphism of schemes,
 $X \setminus V$ is pure of codimension $1$ (\cite[21.12.7]{EGAIV}), the authors then conjectured that
if $V$ is the maximal open subscheme $V \subset X$ where $f$ is unramified (and hence \'{e}tale), then $V \to X$ is an affine morphism of schemes (see (v) \cite[21.12.14]{EGAIV}).
The main goal of this chapter was to answer their conjecture in the affirmative, see Theorem~\ref{thm:affine-mixed}. We will in particular demonstrate the following:
\begin{theorem}[See Theorem~\ref{thm:cohpuregen}]
Let $(A,\ideal{m}_A,k)$ be a regular local ring and $V \to \spec{A}$ an \'{e}tale morphism that is cohomologically pure in codimension $1$. Then $V$ is an affine scheme.
\end{theorem}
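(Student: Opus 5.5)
We first pin down what ``cohomologically pure in codimension $1$'' should mean. We take it to say that for every point $x\in V$ with $\dim \mathcal{O}_{V,x}\ge 2$ and every $0<i<\dim \mathcal{O}_{V,x}-1$ one has the appropriate vanishing of local cohomology $H^i_{x}(\mathcal{O}_{V,x})$. More usefully, for every closed subset $Z\subset V$ of codimension $\ge 2$, restriction gives isomorphisms $H^i(V,\mathcal{O}_V)\cong H^i(V\setminus Z,\mathcal{O}_V)$ in the range needed, in the spirit of Grothendieck's SGA2 purity. The plan is to prove that $V$ is affine by Serre's criterion. Since $V$ is étale over the Noetherian scheme $\spec{A}$, it is Noetherian and quasi-separated; we assume, as is implicit for the statement, that it is quasi-compact. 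It then suffices to show $H^i(V,\mathcal{F})=0$ for all $i>0$ and all quasi-coherent $\mathcal{F}$. Equivalently, we may show that the natural map $V\to \spec{\Gamma(V,\mathcal{O}_V)}$ is an open immersion with closed complement empty. We will instead aim directly at the concrete form: $B:=\Gamma(V,\mathcal{O}_V)$, and $V\to\spec B$ is an isomorphism.

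\textbf{Step 1 (reduction to Zariski–Nagata situation).} Use Zariski's Main Theorem to factor $V\to \spec{A}$ as an open immersion $V\hookrightarrow \overline V$ followed by a finite morphism $\overline V=\spec{C}\to\spec{A}$. Here $C$ is the normalization of $A$ in the function field(s) of $V$, which is finite over $A$ by excellence; we may pass to the completion if needed, using faithfully flat descent of affineness. Then $V$ is an open subscheme of the affine normal scheme $\spec{C}$. By \cite[21.12.7]{EGAIV}, if $V\to\spec C$ were affine, then $\spec C\setminus V$ would be pure of codimension $1$. We run this in reverse: write $W=\spec C\setminus V$, split $W=W_1\cup W_2$ with $W_1$ the union of the codimension-one components and $W_2$ of codimension $\ge 2$. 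Since $C$ is normal, $W_1$ is locally cut out set-theoretically by one equation after inverting finitely many elements. Concretely, $\spec C\setminus W_1$ is affine. This uses the fact that on a normal scheme, complements of divisors that are locally $\mathbb{Q}$-Cartier are affine; $\mathbb{Q}$-Cartierness near the regular base needs an argument. Our first attempt is to use that $C$ is finite over regular $A$ and push the divisor down via the norm map, so that $W_1$ is the preimage-saturation of a Cartier divisor on $\spec A$ intersected appropriately.

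\textbf{Step 2 (kill the codimension $\ge2$ part).} Put $U=\spec C\setminus W_1$, which is affine, and $V=U\setminus W_2$ with $W_2\cap U$ of codimension $\ge 2$ in $U$. The cohomological purity hypothesis, transported to $U$ via the local structure ($V$ étale over the regular $A$, so $V$ is regular), gives $\Gamma(V,\mathcal{O})=\Gamma(U,\mathcal{O})$ by Hartogs/$S_2$. It also gives the vanishing $H^i(V,\mathcal{O}_V)=H^{i+1}_{W_2}(U,\mathcal{O}_U)=0$ for $i\ge1$. This vanishing is exactly the depth/local cohomology statement encoded in ``cohomologically pure in codimension $1$''. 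By Serre's criterion applied to $V$ (and twisting by an ample, here trivial, line bundle — one checks vanishing for all coherent sheaves via a resolution by free $\mathcal O_U$-modules restricted to $V$, using dimension shifting down from top degree where vanishing is automatic by Grothendieck's bound), $V$ is affine. Alternatively, the local cohomology vanishing forces $W_2\cap U=\emptyset$: if $\mathfrak{p}\in W_2$ has height $h\ge2$, then $H^h_{\mathfrak p}(C_{\mathfrak p})\ne0$ by Grothendieck nonvanishing. This contradicts the purity vanishing at the appropriate degree, giving $V=U$ affine.

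\textbf{Main obstacle.} The delicate point is Step 1's claim that removing the divisorial part $W_1$ yields an affine scheme. This needs a $\mathbb{Q}$-Cartier/finiteness property of the divisor $W_1$ on the normal (not necessarily regular) $\spec C$. The second delicate point is matching the hypothesis exactly to the needed degree of local cohomology vanishing on $C$ rather than on $V$, since $W_2$ lies outside $V$ and $C$ need not be Cohen–Macaulay there. For the first, we would try to use the étaleness of $V$ over $A$ and purity of branch locus over the regular base to show $W_1$ is the branch divisor pulled back, hence Cartier up to radical. For the second, we would verify the argument by induction on $\dim A$, localizing at a minimal point of $W_2$ so that $W_2$ becomes the closed point of $\spec C_{\mathfrak p}$ and the punctured spectrum lies in $V$.
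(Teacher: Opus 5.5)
Your proposal has a genuine gap, and it sits exactly at what you call the main obstacle: Step~1 is essentially as strong as the result itself. There is also a problem with the hypothesis. In the paper (Definition~\ref{def:cohpure}), ``cohomologically pure in codimension $1$'' is a global condition: $V$ is quasi-compact, separated and dominant over $\spec{A}$, each $H^i(V,\mathcal{O}_V)$ with $i\ge 1$ is supported on $\{\ideal{m}_A\}$, and $H^{d-1}(V,\mathcal{O}_V)=H^d(V,\mathcal{O}_V)=0$.

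Your local reading is automatic because $V$ is regular. Under it the statement is false: take $V$ to be the punctured spectrum of $A$ with $d\ge 2$.

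In Step~2 you then treat $H^i(V,\mathcal{O}_V)=0$ for $i\ge 1$ as ``encoded in the hypothesis''. This is circular, since for the quasi-affine $V\subset\spec{C}$ that vanishing is equivalent to affineness. The actual hypothesis leaves $H^1,\dots,H^{d-2}$ as possibly nonzero $\ideal{m}_A$-torsion modules, and killing them is the whole content of the theorem.

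With the correct hypothesis, your Grothendieck non-vanishing argument does go through, and it proves more than you use. At a generic point $\mathfrak{p}$ of $W=\spec{C}\setminus V$ of height $h\ge 2$ one gets $0\neq H^h_{\mathfrak{p}C_{\mathfrak{p}}}(C_{\mathfrak{p}})\cong H^h_W(C)_{\mathfrak{p}}\cong H^{h-1}(V,\mathcal{O}_V)_{\mathfrak{p}}$. This is impossible for $h<d$ by the support condition, and for $h=d$ because $H^{d-1}=0$. So $W$ is pure of codimension one (this is the argument of Lemma~\ref{lem:cohpuretop}(i)), $W_2=\emptyset$, and everything rests on Step~1.

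Step~1 is where the argument breaks. The complement of a pure codimension-one closed subset of a normal affine scheme need not be affine. In $\spec{k[x,y,z,w]/(xy-zw)}$, the complement of the plane $V(x,z)$ is a $\mathbb{G}_m$-bundle over $\mathbb{P}^1\times\mathbb{A}^1$ and has nonzero $H^1(\mathcal{O})$. So you need an effective $\mathbb{Q}$-Cartier divisor with support exactly $W$, and neither of your suggestions provides one:
\begin{itemize}
\item Pulling back the branch divisor of $\spec{A}$ (norm or discriminant) gives a Cartier divisor whose support is the whole preimage of the branch locus. This strictly contains $W$ as soon as $V$ has points over the branch locus, so it only shows that a smaller open is affine.
\item The ramification divisor is linearly equivalent to $K_C$, so it is $\mathbb{Q}$-Cartier only when $K_C$ is. That need not hold for a normal $C$ finite over a regular local ring.
\end{itemize}
Indeed, when $V$ is the full \'{e}tale locus of $\spec{C}\to\spec{A}$, Step~1 is exactly the local case of Theorem~\ref{thm:affine-mixed}, which the paper \emph{deduces} from the statement you are proving.

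The missing idea is to bypass divisors and kill $H^j(V,\mathcal{O}_V)$ for $1\le j\le d-2$ homologically.
\begin{itemize}
\item After completing (so $(A,\ideal{m}_A)$ is henselian) with $d\ge 3$, $V$ misses the closed point by purity of the ramification locus (Lemma~\ref{lem:cohpuretop}(ii)). Hence $R\Gamma(V,\mathcal{O}_V)\otimesl_A k\simeq 0$.
\item The key input is that $H^0(V,\mathcal{O}_V)\otimesl_A k$ is discrete (Theorem~\ref{Theorem:torindepmixed}(iii)). In mixed characteristic, Bhatt's theorem that $R\Gamma_{\ideal{m}_A}(A^+)$ is concentrated in degree $d$ gives $H^0(\mathcal{O}_V)\otimesl_A A^+=H^0(\mathcal{O}_{V'})$. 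The factors of the latter are normal domains with all $p$-th roots, and Proposition~\ref{prop:torindep} finishes. In characteristic $p$ one uses $A_{\mathrm{perf}}$ instead.
\item A nonzero $\ideal{m}_A$-torsion $H^j$ has $\Tor^A_d(H^j,k)\neq 0$. So for the least such $j$, the term $E_2^{-d,j}$ of the Tor spectral sequence survives, contradicting $R\Gamma(V,\mathcal{O}_V)\otimesl_A k\simeq 0$.
\end{itemize}
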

Our definition of `cohomologically pure in codimension $1$' (see Definition~\ref{def:cohpure}) could be seen as a cohomological analogue of the situation that $V$ arises 
as an open subscheme of $\spec{B}$, for a normal local ring $(B, \ideal{m}_B)$ that is finite over $A$, such that there is a factoring
$ V\to \spec{B} \setminus \{\ideal{m}_B\} \to \spec{B}$ where the first map is affine morphism of schemes. We only realized later that the authors in \cite[21.12]{EGAIV} were interested in characterizing rings $B$ for which such a $V$ is then 
forced to be an affine scheme, and consequently formulated a conjecture (iv) \cite[21.12.14]{EGAIV}.\\
\indent If their conjecture was true, 
then as noted in (v) \cite[21.12.14]{EGAIV}, the affineness of the maximal \'{e}tale locus would be equivalent to the purity of the ramification locus. 
By essentially paraphrasing the proof of purity of the ramification locus given in \cite[0ECD]{sp}, 
Theorem~\ref{thm:affine-mixed} would then follow by induction on the dimension of $Y$, where the case of dimension $2$ would crucially use 
the \textit{miracle flatness theorem} (see \cite[00R4]{sp}), from which this thesis derives its name.
\end{abstract}
\thispagestyle{empty}
\frontmatter
\tableofcontents*
\chapter{Acknowledgments}

First and foremost, I'd like to thank my advisor, Aise Johan de Jong, for his support and guidance throughout my PhD. 
Speaking with him always led to an incredibly stimulating intellectual discussion, and I am very grateful for the many hours he spent helping me develop my ideas. 
I came into this PhD with a preconceived notion of the mathematics that I liked and disliked, 
and have Johan to thank for helping me break out of that mindset and develop an appreciation for many areas of mathematics,
 because one never knows when a particular technique from a seemingly unrelated area of mathematics will be useful for solving a problem.
Johan also taught me the importance of being precise and careful with my mathematical writing, a skill which I have also 
found useful beyond mathematics. With no doubt, I will be drawing on these skills for the rest of my life, and I am very grateful to Johan for helping me develop them.\\
\indent I would also like to thank my former advisor, James Borger. Even after my Master's thesis, James continued to be a great source of support and encouragement, 
and my time under his supervision shaped my mathematical interests and approach to mathematics in many ways. Most of the material on semirings in this thesis
was inspired by collaboration with James, and I am still in awe of James's insights and creativity in developing beautiful mathematical theory.\\
\indent Next, I would like to thank the whole Columbia Math Department support staff for their incredible work in keeping the department running smoothly. 
In particular, I would like to thank Nathan Schweer for his help with many administrative issues I faced during my PhD, as well as his support and encouragement.\\
\indent I would also like to thank Juan Esteban Rodríguez Camargo and Hanlin Cai for many helpful discussions. 
Juan was very generous in helping me understand the literature on $\infty$-categories, descendability, and with proving Proposition~\ref{proposition:symformula}. 
Hanlin was very helpful in discussing ideas concerning Chapter~\ref{chapter:affine}. In particular, Hanlin frequently suggested
that considering the absolute integral closure of a Noetherian local ring may be useful, and he was the first to rephrase the equicharacteristic 
proof of Theorem~\ref{thm:affine-mixed} in terms of the Riemann-Hilbert functor of Bhatt and Lurie. One of the many threads of our discussion 
was the `potential strategy' to prove Theorem~\ref{thm:affine-mixed} by tilting to equicharacteristic outlined in the introduction of Chapter~\ref{chapter:affine}. 
It would've been incredibly satisfying if these arguments had worked, but unfortunately we have not been able to do so.\\
\indent Last, but certainly not least, I'd like to thank my fiancée Liberty for her love and support throughout my whole PhD. Life in New York City
is both wonderful and challenging, and I am very grateful to have had Liberty by my side to share in the wonderful moments and to support me through the challenging ones.\\
\indent My family, especially my parents, have also been a great source of support and encouragement throughout my life, and I couldn't have 
made it this far without them. Thank you to my sister Eve for coming up with the name `indivisible sequences'.\\
\indent Thank you to the Fulbright program for providing me with the funding to pursue my PhD at Columbia University.
\mainmatter
\chapter{Semirings}

This chapter is dedicated to the study of semirings, which are algebraic structures that generalize rings by 
allowing for the absence of additive inverses. While semirings have been studied as early as the 1950s, 
the formal foundations for the algebraic geometry of semirings was largely developed only recently in \cite{borgersemi}.
The algebraic geometry for semirings presents many new challenges. For example, the notion of closed subschemes
is more subtle (see Remark~\ref{rem:badclosed} and Example~\ref{exmp:badclosed}).

\subsubsection*{Notation and conventions} The relevant module-theoretic framework for semirings has been established in \cite{borgersemi}.  
Let $\bb{N}$ denote the set of natural numbers including $0$ endowed with the usual addition and multiplication operators thus making
it a semiring. Then $\mod{\bb{N}}$ is the category of commutative monoids, and $\alg{\bb{N}}$ is the category of semirings. The usual
abstract module theory endows for any semiring $A$ its category $\mod{A}$ of $A$-modules.
Note that $\mod{A}$ is a symmetric monoidal category with tensor product $\otimes_A$ and unit object $A$, 
and furthermore, for any two modules $M,N \in \mod{A}$, $\Hom_{\bb{N}}(M,N)$ is an internal hom for $(\mod{A},\otimes_{A})$.\\
\indent For a semiring $A$, a module $M \in \mod{A}$ is said to be \textit{free} if it is isomorphic to a direct sum of copies of $A$.
A module $M$ is said to be \textit{flat} if the functor $-\otimes_A M:\mod{A} \to \mod{A}$ preserves finite limits. Later, it will be 
also useful to refer to this notion of flatness as \textit{categorical flatness}.

\begin{example}
Here are some examples of semirings that will serve as coefficient rings for our semiring schemes:
\begin{itemize}
\item $\bb{R}_{+}$, the semiring of non-negative real numbers with the usual addition and multiplication.
\item The Boolean semiring $\bb{B}$ is defined as the set $\{0,1\}$ with addition and multiplication defined as follows:
\[0+0=0, \quad 0+1=1, \quad 1+1=1\]
and
\[0\cdot 0=0, \quad 0\cdot 1=0, \quad 1\cdot 1=1.\]
\end{itemize}
\end{example}
For a natural number $n \in \bb{N}$ greater than $1$, let $[n]=\{1,2,...,n\}$.

\section{Preliminary results and motivation}
The semirings $\bb{N}$ and $\bb{R}_{+}$ naturally encode a notion of \textit{positivity} that flat modules over them inherit. 
We make this precise in the following definitions and propositions.
\begin{definition}
Let $M \in \mod{\bb{N}}$ be a commutative monoid. $M$ is said to be:
\begin{enumerate}
\item[(i)] \textit{Cancellative} if for all $m,m',m'' \in M$, $m+m'=m+m''$ implies $m'=m''$.
\item[(ii)] \textit{Negative-free} if for all $m,m' \in M$, $m+m'=0$ implies $m=0$ and $m'=0$.
\item[(iii)] \textit{Integral} if it is both cancellative and negative-free. 
\end{enumerate} 
\end{definition}
\begin{proposition}\label{prop:flatpositivity}
Let $A$ be a semiring, and $M$ a flat $A$-module. If $A$ is cancellative (resp. negative-free, integral), then so is $M$.
\end{proposition}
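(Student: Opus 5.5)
Each of the three properties of $A$ can be phrased as a finite limit in $\mod{A}$ that is built from $A$ and is, or fails to be, a certain object. Since $-\otimes_A M$ preserves finite limits, the same finite limit built from $A\otimes_A M\cong M$ then has the corresponding form. Concretely, I will realize each property as the statement that a certain map out of a finite limit is an isomorphism, or that a certain finite limit vanishes.

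For cancellativity, I consider the addition map $\mu\colon A\oplus A\to A$ and the map $\phi\colon A\oplus A\oplus A\to A\oplus A$, $(a,b,c)\mapsto(a+b,a+c)$. Both are $A$-linear, and $\phi$ is natural in the module, so tensoring with $M$ yields the analogous map $\phi_M\colon M^{3}\to M^{2}$. Cancellativity of $M$ is equivalent to the following statement: the kernel pair of $\phi_M$, i.e.\ the fiber product $M^3\times_{M^2}M^3$, is contained in the subobject where the $b$-coordinates and $c$-coordinates agree. To phrase this purely as a limit statement, I use the equalizer $E\subset M^3\times_{M^2}M^3$ of the two projections onto $M^2$ given by $((a,b,c),(a',b',c'))\mapsto(a,a')$. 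This selects pairs with the same $a$ and equal sums $a+b=a+b'$, $a+c=a+c'$. Then cancellativity says exactly that the inclusion of the diagonal-type subobject $\{b=b',c=c'\}$ into $E$ is an isomorphism.

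It is simpler to take the equalizer $K_N$ of the two maps $N^3\to N^2$ given by $(a,b,b')\mapsto (a+b,\,a)$ and $(a,b,b')\mapsto(a+b',\,a)$. This is a finite limit, functorial in $N$, with $K_N=\{(a,b,b'):a+b=a+b'\}$. The map $N^2\to K_N$, $(a,b)\mapsto(a,b,b)$, is natural and is an isomorphism exactly when $N$ is cancellative. Since $-\otimes_A M$ commutes with finite direct sums and with equalizers, we get $K_A\otimes_A M\cong K_M$, compatibly with the natural map. If $A$ is cancellative, then $A^2\to K_A$ is an isomorphism, so $M^2\to K_M$ is as well, and $M$ is cancellative.

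For negative-freeness I use the kernel $Z_N$ of addition $N\oplus N\to N$, i.e.\ the equalizer of addition and the zero map. Then $Z_N=\{(m,m'):m+m'=0\}$, and $N$ is negative-free iff $Z_N=0$. Flatness gives $Z_M\cong Z_A\otimes_A M=0\otimes_A M=0$. Integrality follows by combining the two cases.

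The main point requiring care is checking that the comparison isomorphism $K_A\otimes_A M\to K_M$ is compatible with the natural maps $A^2\to K_A$ and $M^2\to K_M$. This compatibility holds because all maps involved are built from the structure maps (sums, projections, diagonals) of finite biproducts, which tensoring preserves. I also need that finite direct sums are biproducts in $\mod{A}$ and that the tensor product commutes with them, so that $N^k\otimes_A M\cong M^k$ naturally.
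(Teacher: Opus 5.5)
Your proof is correct and takes essentially the paper's route: express negative-freeness as the vanishing of $\eq(+,0\colon N\oplus N\to N)$ and cancellativity as the natural map $N\oplus N\to\eq\bigl((a,b,b')\mapsto a+b,\ (a,b,b')\mapsto a+b'\bigr)$ being an isomorphism, then use that $-\otimes_A M$ preserves finite limits and finite biproducts. Your equalizer for cancellativity is the correct one, whereas the paper's displayed maps $f,g$ (with second coordinates $n''$ and $n'$) appear to be a typo, since as written their equalizer is always the image of $\mathrm{id}\oplus\mathrm{diag}$.
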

\begin{proof}
A commutative monoid $N$ is negative free if and only if:
\[0=\text{eq}(+,0: N\oplus N \to N)\]
where $+$ is the sum-map and $0$ is the zero map. Similarly, $N$ is cancellative if and only if the map
\[\text{id} \oplus \mathrm{diag}: N \oplus N \to N \oplus N \oplus N\]
taking an element $n\oplus n' \mapsto n \oplus n' \oplus n'$ realizes an isomorphism
\[N\oplus N = \text{eq}(f,g: N \oplus N \oplus N \to N)\]
where $f,g$ are defined as follows. For $n,n',n'' \in N$:
\[f(n \oplus n' \oplus n'') = (n+n', n'')\]
\[g(n \oplus n' \oplus n'') = (n+n'',n')\]
All propositions follow by considering the relevant equalizer diagram for $A$, which, 
after applying $\otimes_A M$, remains an equalizer diagram.
\end{proof}
\begin{corollary}\label{cor:NRnotflat}
The semiring maps $\bb{N} \to \bb{Z}$ and $\bb{R}_{+} \to \bb{R}$ are not flat.
\end{corollary}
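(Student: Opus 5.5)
The plan is to apply Proposition~\ref{prop:flatpositivity} with $A=\bb{N}$ (resp. $A=\bb{R}_{+}$) and $M=\bb{Z}$ (resp. $M=\bb{R}$). A semiring map $A\to B$ is flat when $B$ is a flat $A$-module, i.e. $-\otimes_A B$ preserves finite limits. So if $\bb{N}\to\bb{Z}$ were flat, $\bb{Z}$ would be a flat $\bb{N}$-module. Since $\bb{N}$ is integral, and in particular negative-free, Proposition~\ref{prop:flatpositivity} would force $\bb{Z}$ to be negative-free.

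The key steps are therefore as follows. First, check that $\bb{N}$ and $\bb{R}_{+}$ are negative-free. This holds because $a+b=0$ with $a,b\ge 0$ forces $a=b=0$. Second, observe that $\bb{Z}$ and $\bb{R}$ are not negative-free, since $1+(-1)=0$ with $1\neq 0$. Third, conclude from Proposition~\ref{prop:flatpositivity} that $\bb{Z}$ is not flat over $\bb{N}$ and $\bb{R}$ is not flat over $\bb{R}_{+}$. Hence neither semiring map is flat.

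There is no real obstacle here; everything is immediate once Proposition~\ref{prop:flatpositivity} is available. The one point to be explicit about is the convention: flatness of the map means flatness of the target as a module over the source, via restriction of scalars along the map. Negative-freeness of $\bb{Z}$ is tested on its underlying commutative monoid, which is the same whichever module structure we use.

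If one wants a direct witness instead, take the negative-freeness equalizer $0=\text{eq}(+,0:\bb{N}\oplus\bb{N}\to\bb{N})$. After applying $-\otimes_{\bb{N}}\bb{Z}$ it becomes $\text{eq}(+,0:\bb{Z}\oplus\bb{Z}\to\bb{Z})\cong\bb{Z}$, which is not $0$. Here one uses $\bb{N}\otimes_{\bb{N}}\bb{Z}\cong\bb{Z}$ and that $\otimes$ commutes with finite direct sums. So this equalizer is not preserved.
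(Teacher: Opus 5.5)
Your proposal is correct and is the argument the paper intends: the corollary is stated right after Proposition~\ref{prop:flatpositivity}, with no separate proof, as an immediate consequence of it, since $\bb{N}$ and $\bb{R}_{+}$ are negative-free while $\bb{Z}$ and $\bb{R}$ are not. Your explicit witness, the equalizer $\text{eq}(+,0)$ failing to be preserved by $-\otimes_{\bb{N}}\bb{Z}$, is simply the proof of that proposition unwound in this specific case.
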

Integral commutative monoids admit a natural partial order defined as follows:
\begin{definition}
Let $M$ be a commutative monoid. For $m,m' \in M$, we say that $m \le m'$ if there exists a $m'' \in M$ such that
\[m + m'' = m'.\] 
For an integral commutative monoid we will denote this comparison relation by $\le_M$.
\end{definition}
\begin{proposition}\label{prop:poset}
Let $M$ be an integral commutative monoid. Then $\le_M$ has the following properties:
\begin{enumerate}
\item[(i)] (Reflexivity) For all $m \in M$, $m \le_M m$.
\item[(ii)] (Transitivity) For all $m,m',m'' \in M$, if $m \le_M m'$ and $m' \le_M m''$, then $m \le_M m''$.
\item[(iii)] (Antisymmetry) For all $m,m' \in M$, if $m \le_M m'$ and $m' \le_M m$, then $m=m'$.
\item[(iv)] (Compatibility with addition) For all $m_1,m_2,m_1',m_2' \in M$, if $m_1 \le_M m_1'$ and $m_2 \le_M m_2'$, 
then $m_1 + m_2 \le_M m_1' + m_2'$.
\item[(v)] (Compatibility with scalar multiplication) For all $a \in A$ and $m,m' \in M$, if $m \le_M m'$, then $a m \le_M a m'$.
\item[(vi)] For all $m,m',m'' \in M$, if $m + m'' \le_M m' + m''$, then $m \le_M m'$.
\end{enumerate}
Therefore, $(M, \le_M)$ defines a partially ordered commutative monoid.
\end{proposition}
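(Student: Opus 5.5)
The plan is to verify each property directly from the definition $m \le_M m'$ iff $m + m'' = m'$ for some $m'' \in M$. Only antisymmetry and property (vi) use the integrality hypothesis; the other items are formal consequences of associativity and commutativity. For (v) I read $M$ as an $A$-module over a semiring $A$, as in Proposition~\ref{prop:flatpositivity}; when $M$ is just a commutative monoid, take $A = \bb{N}$.

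For (i), take $m'' = 0$, so that $m + 0 = m$. For (ii), write $m + a = m'$ and $m' + b = m''$; then $m + (a+b) = m''$. For (iv), write $m_1 + a = m_1'$ and $m_2 + b = m_2'$; commutativity and associativity give $(m_1 + m_2) + (a+b) = m_1' + m_2'$. For (v), from $m + c = m'$ and distributivity of the module action we get $am + ac = am'$, so $am \le_M am'$.

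The substantive steps are (iii) and (vi).

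For (iii), suppose $m + a = m'$ and $m' + b = m$. Then $m + (a+b) = m = m + 0$. Cancellativity gives $a + b = 0$, and negative-freeness gives $a = b = 0$. Hence $m' = m + 0 = m$.

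For (vi), suppose $m + m'' + c = m' + m''$. Rearranging to $m'' + (m + c) = m'' + m'$ and cancelling $m''$ gives $m + c = m'$, so $m \le_M m'$. Note that (vi) uses only cancellativity.

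Putting these together, (i)–(iii) make $\le_M$ a partial order, and (iv) makes it compatible with the monoid addition, so $(M, \le_M)$ is a partially ordered commutative monoid. I expect no real obstacle; the only point needing care is antisymmetry, which genuinely requires both halves of integrality.
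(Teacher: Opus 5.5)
Your proposal is correct and follows the same route as the paper, which also treats (i), (ii), (iv), (v) as immediate, proves antisymmetry from $m = m + m_1 + m_2$ via cancellation and negative-freeness, and proves (vi) by cancelling $m''$. Two of your additions are sensible clarifications that the paper leaves implicit: you say explicitly that cancellativity is what yields $a+b=0$, and you read the $A$ in (v) as the acting semiring (or $\bb{N}$).
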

\begin{proof}
Out of the properties (i)-(v), only (iii) requires proof. Assume that $m \le_M m'$ and $m' \le_M m$. 
Then there exists $m_1,m_2 \in M$ such that $m= m' + m_1$ and $m' = m + m_2$. Thus $m=m+m_1+m_2$. 
Therefore $m_1+m_2=0 \implies m_1=m_2=0$, as desired.\\
\indent For (vi), assume that $m + m'' \le_M m' + m''$. Then there exists $m''' \in M$ such that $m + m'' + m''' = m' + m''$. 
By cancellation, we have $m + m''' = m'$, as desired, so $m \le_M m'$.
\end{proof}
On the other hand, categorical flatness is strong enough to tell us something about the existence of roots of polynomials.
\begin{proposition}
Let $\bb{R}_{+} \to A_{+}$ be a finite, flat map of semirings such that $A_+$ is reduced. Then $A_+$ has an $\bb{R}$-point, i.e. 
a map $A_{+} \to \bb{R}$ in $\alg{\bb{R}_{+}}$. 
\end{proposition}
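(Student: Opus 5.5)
Set $A = A_+ \otimes_{\bb{R}_+} \bb{R}$, which is a finite-dimensional commutative $\bb{R}$-algebra. The plan is to realize $A_+$ as a cone inside $A$ via Proposition~\ref{prop:flatpositivity} and the order of Proposition~\ref{prop:poset}, split $A$ into a product of fields, and exclude the case where every factor is $\bb{C}$ by a positivity argument on the norm. An $\bb{R}$-point of $A$ restricts along $A_+ \to A$ to the desired map $A_+ \to \bb{R}$ in $\alg{\bb{R}_+}$.

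First I need $A_+$ to be a free, finitely generated $\bb{R}_+$-module. I expect that flat plus finitely generated over $\bb{R}_+$ forces a basis, via a Lazard-type statement: $A_+$ is a filtered colimit of free modules, and finite presentation makes it a retract of a free module. Over $\bb{R}_+$, retracts of free modules are free, since an idempotent matrix with nonnegative entries is, up to permutation and positive scaling, a block of coordinate projections. If that is too much to prove here, I will use only what follows from Proposition~\ref{prop:flatpositivity}. Since $\bb{R}_+$ is integral, $A_+$ is integral, so $A_+ \to A$ is injective (cancellativity). Its image is a convex cone $C$ with $C \cap (-C) = 0$ (negative-freeness), $C$ closed under multiplication, and $C$ spans $A$. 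Finiteness gives that $C$ is finitely generated, hence closed and polyhedral.

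Second, I show $A$ is reduced. A nilpotent $n\in A$ can be written as $n = p - q$ with $p,q\in C$. I would like to deduce $n = 0$ from the reducedness of $A_+$. This step is the main obstacle. If freeness holds, $A = A_+\otimes\bb{R}$ with $A_+$ a free $\bb{R}_+$-module, and I would argue instead with the nilradical of $A$, which is an ideal stable under the cone structure, using the reducedness hypothesis on $A_+$. Failing that, I can pass to $A/\mathrm{nil}(A)$: the composite $A_+ \to A \to A/\mathrm{nil}(A)$ suffices for producing $\bb{R}$-points, so reducedness is not even essential. In either case $A_{\mathrm{red}} \cong \bb{R}^r \times \bb{C}^s$, and any $\bb{R}$-factor yields an $\bb{R}$-point. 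It is nonzero because $1 \neq 0$ in $A_+$, and $1 \in C$ is not nilpotent.

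Third, suppose $r=0$. Consider the multiplicative cone $C$ acting on $A_{\mathrm{red}} = \bb{C}^s$, or work with $A$ directly and the characteristic polynomial. Pick $c$ in the interior of $C$, for example $c = 1 + \sum_i e_i$ where the $e_i$ generate $C$. Multiplication by $c$ is a linear map preserving the pointed closed cone $C$, which has nonempty interior. By the Perron–Frobenius (Krein–Rutman) theorem, $m_c$ has an eigenvector $v\in C\setminus 0$ with real eigenvalue $\lambda = \rho(m_c) \ge 0$. Then $(c-\lambda)v = 0$ with $v \neq 0$, so $c-\lambda$ is a zero divisor. Hence $\lambda$ equals the value of $c$ at some point of $\spec A$. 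Doing this for generic $c$ and taking the corresponding maximal ideal, the residue field receives $c\mapsto\lambda$ real. To force that residue field to be $\bb{R}$, I would choose $c$ generating $A$ as an algebra and lying in the interior of $C$; this is possible since generators form an open dense set. Then a residue field generated by the real image of $c$ is $\bb{R}$. This gives the $\bb{R}$-point $A\to\bb{R}$, and restricting it to $A_+$ finishes the proof.
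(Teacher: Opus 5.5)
Your strategy is the paper's: groupify to $A=A_+\otimes_{\bb{R}_+}\bb{R}$, then use a Perron--Frobenius type positive eigenvector for multiplication by an element of $A_+$ to force that element to take a real value at some point of $\spec A$. The step that fails as written is the last one: ``choose $c$ generating $A$ as an algebra \dots\ this is possible since generators form an open dense set.'' You deliberately did not prove that $A$ is reduced. A non-reduced finite-dimensional $\bb{R}$-algebra need not be generated by a single element at all. For example, in $\bb{R}[x,y]/(x,y)^2$ every $c$ satisfies $\dim_{\bb{R}}\bb{R}[c]\le 2$. So the set you want to pick $c$ from may be empty.

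The repair is to ask for less. Let $\ideal{m}$ be a maximal ideal with $\kappa(\ideal{m})\cong\bb{C}$. The $c\in A$ whose image in $\kappa(\ideal{m})$ is real form the preimage of $\bb{R}$ under the surjective $\bb{R}$-linear map $A\to\kappa(\ideal{m})$, which is a hyperplane. So the $c$ whose image is non-real in every $\bb{C}$-type residue field form the complement of finitely many hyperplanes. This set meets the nonempty open set $\mathrm{int}(C)$. For such $c$, your eigenvector $v\in C\setminus 0$ with $cv=\lambda v$ and $\lambda\in\bb{R}$ makes $c-\lambda$ a zero divisor. Hence $c-\lambda$ is a non-unit and lies in some maximal ideal $\ideal{m}$. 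The image of $c$ in $\kappa(\ideal{m})$ is then the real number $\lambda$, so by the choice of $c$ we get $\kappa(\ideal{m})=\bb{R}$. Restricting $A\to\kappa(\ideal{m})$ to $A_+$ gives the point. Requiring the image of $c$ to generate $A_{\mathrm{red}}$ would also work; requiring $c$ to generate $A$ does not.

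With this fix your argument is correct, and it differs from the paper's only in packaging. The paper lifts multiplication by $a$ to a nonnegative $n\times n$ matrix on $\bb{R}_+^n$ and applies Brouwer on the simplex. That way it never needs $C$ to be closed or pointed, though one should check that the eigenvector has nonzero image in $A_+$, which uses negative-freeness. You apply Krein--Rutman to $C$ itself. This needs the closedness (from finite generation) and pointedness (from cancellativity plus negative-freeness) that you verified.

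Your route also handles two points the paper passes over quickly. First, you never need $A$ to be reduced, which the paper asserts without proof; in fact your argument never uses the reducedness of $A_+$. Second, you choose the test element inside $\mathrm{int}(C)$. The paper instead picks $a\in A$ with all eigenvalues non-real, but its eigenvector argument applies only to $a\in A_+$, and your observation that $C$ has nonempty interior is exactly what closes that point.

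The freeness discussion in your first step is unnecessary. It would also need finite presentation, which you do not have.
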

\begin{proof}
It suffices to show that the groupification $A=A_{+} \otimes_{\bb{R}_{+}} \bb{R}$ admits a map to $\bb{R}$. Since $A_+$ is flat over $\bb{R}_{+}$,
$A_+ \to A$ is injective, and since $A_{+}$ is also reduced and finite over $\bb{R}_{+}$, we see that $A$ is finite and reduced itself, and is therefore 
a finite product of finite field extensions of $\bb{R}$, hence isomorphic to a finite product of copies of $\bb{R}$ and $\bb{C}$. Thus, in order to construct a map $A_{+} \to \bb{R}$, 
it suffices to construct a map $A \to \bb{R}$, and for that, it suffices to show that $A$ has at least one factor isomorphic to $\bb{R}$.
Assume for the sake of contradiction that $A$ has no $\bb{R}$-factor. Then $A$ is a product of copies of $\bb{C}$.\\
\indent Let $A_+$ be generated as an $\bb{R}_{+}$-module by elements $x_1, \ldots, x_n \in A_{+}$. For any $a \in A_+$, multiplication by 
$a$ defines an $\bb{R}_{+}$-linear map
\[m_a: A_{+} \to A_{+}.\]
Hence, we may view $m_a$ as an $n \times n$ matrix with entries in $\bb{R}_{+}$. Let 
\[S = \{ s = (s_1,...,s_n) \in \bb{R}_+^{n} \mid |s|_1 =\sum_{i \in [n]} s_i = 1\}\]
be the $(n-1)$-simplex, and consider the map $f: S \to S$ defined by
\[f(s) = \frac{m_a(s)}{|m_a(s)|_1}, \; \forall s \in S.\]
By the Brouwer fixed point theorem,
there exists a fixed point $r \in S$ such that $f(r) = r$,
and so we conclude that $m_a$ admits an eigenvector $r$ with a positive real eigenvalue $\lambda = |m_a(r)|_1$ 
(one could also appeal to the Perron-Frobenius theorem).\\
\indent Since $A$ is a product of copies of $\bb{C}$, there exists an element $a \in A$ whose eigenvalues are all non-real complex numbers.
However, by the above argument, $a$ must have a positive real eigenvalue, yielding a contradiction. Hence, $A$ must have at least one factor isomorphic to $\bb{R}$.
\end{proof}
\begin{corollary}
Let $A$ be a finite reduced $\bb{R}$-algebra. If $A$ has a positive model, i.e. there exists a finite, reduced and flat $\bb{R}_{+}$-algebra $A_{+}$, 
such that $A = A_{+} \otimes_{\bb{R}_{+}} \bb{R}$, then $A$ has at least one $\bb{R}$-point.
\end{corollary}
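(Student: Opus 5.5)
This follows directly from the preceding proposition. Suppose $A$ is a finite reduced $\bb{R}$-algebra and $A_{+}$ is a finite, reduced, flat $\bb{R}_{+}$-algebra with $A \cong A_{+}\otimes_{\bb{R}_{+}}\bb{R}$. Then $\bb{R}_{+}\to A_{+}$ satisfies all the hypotheses of the proposition: it is finite, it is flat, and $A_{+}$ is reduced. So I would apply the proposition verbatim to get a map $\varphi: A_{+}\to\bb{R}$ in $\alg{\bb{R}_{+}}$.

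The remaining step is to pass from $A_{+}$ to $A$. I would use the universal property of the base change $A_{+}\otimes_{\bb{R}_{+}}\bb{R}$. Here $\varphi$ is a semiring map that is $\bb{R}_{+}$-linear, and the inclusion $\bb{R}\to\bb{R}$ is a semiring map. Their images commute, since $\bb{R}$ is commutative, and they agree on $\bb{R}_{+}$. Hence they induce a unique semiring map
\[A_{+}\otimes_{\bb{R}_{+}}\bb{R}\to\bb{R},\quad a\otimes t\mapsto \varphi(a)t.\]
This map is $\bb{R}$-linear by construction. Composing with the given isomorphism $A\cong A_{+}\otimes_{\bb{R}_{+}}\bb{R}$ gives an $\bb{R}$-algebra map $A\to\bb{R}$, which is an $\bb{R}$-point of $A$.

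There is no real obstacle in this argument. The only point that needs a little care is that the coproduct of commutative semirings is the tensor product over the base. This lets the universal property be applied in $\alg{\bb{R}_{+}}$ exactly as for rings.

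Alternatively, one can simply observe that the proof of the proposition already works with the groupification $A=A_{+}\otimes_{\bb{R}_{+}}\bb{R}$. That proof shows, by contradiction with the Brouwer fixed point argument, that $A$ has an $\bb{R}$-factor. Projecting onto that factor gives the required point directly.
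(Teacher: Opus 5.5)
Your proposal is correct and is essentially the paper's argument. The paper states the corollary as an immediate consequence of the preceding proposition, whose proof already passes to the groupification $A=A_{+}\otimes_{\bb{R}_{+}}\bb{R}$ and shows it has an $\bb{R}$-factor; that is your alternative route, and your main route (extending $A_{+}\to\bb{R}$ to $A\to\bb{R}$ by the universal property of base change) is equally valid. The alternative route is slightly cleaner here, because the corollary assumes $A$ reduced outright, and reducedness of $A$ is exactly what the proposition's proof otherwise has to deduce from $A_{+}$.
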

We therefore arrive at the following question 
that is closely related to the following version of Hilbert's Nullstellensatz in real algebraic geometry.
\begin{question}\label{mainquestion}
Let $A$ be a finitely presented flat $\bb{R}_{+}$-algebra. 
If $A$ is non-trivial, does it necessarily have an $\bb{R}$-point (or even better, an $\bb{R}_{+}$-point)?\\
\indent Equivalently, let 
\[\bb{R}^n_{+}:=\bb{R}_{+}[a_1,b_1,...,a_n,b_n]/f_n \sim g_n\]
where $f_n=1+\sum_{i=1}^n a_i^2 +b_i^2$ and $g_n=\sum_{i=1}^n 2a_i b_i$. By definition, $\bb{R}^n_{+}$ is the initial 
$\bb{R}_{+}[a_1,b_1,...,a_n,b_n]$-algebra $A$ such that the $f_n$ and $g_n$ are equal in $A$.\\
\indent Is it true that for every $n \ge 1$, there are no $\bb{R}_{+}$-algebra maps
\[\bb{R}^n_{+} \to A?\]
Let us explain the equivalence of the two questions. The existence of an $\bb{R}$-point of $A$ is equivalent to asking
for an $\bb{R}$-point of $A' = A \otimes_{\bb{R}_{+}} \bb{R}$. Picking a surjection $\bb{R}[x_1,...,x_n] \twoheadrightarrow A'$
with kernel $I$, $\spec{A'}$ does not admit $\bb{R}$-point if and only if $\text{rad}_{\bb{R}}(I) \neq \bb{R}[x_1,...,x_n]$, by the Real Nullstellensatz (see Theorem~\ref{realnull}). 
Therefore, $\spec{A'}$ admits $\bb{R}$-point if and only if $1 \notin \text{rad}_{\bb{R}}(I)$. Unpacking the definitions, 
and writing every element in $A'$ as the difference of two elements of $A$, we readily obtain the equivalence.
\end{question}
The equivalence mentioned above is due to the following version of Hilbert's Nullstellensatz in real algebraic geometry:
\begin{theorem}[Real Nullstellensatz]\label{realnull}
For an ideal $I \subseteq \bb{R}[x_1,...,x_n]$, let
\[V_{\bb{R}}(I) = \{x \in \bb{R}^n \mid f(x)=0 \text{ for all } f \in I\}\]
be its real variety, and 
\[\rad_{\bb{R}}(I)=\{f \in \bb{R}[x_1,...,x_n]:\; \exists m \in \bb{N}, f_1,...,f_d \in \bb{R}[x_1,...,x_n] :\;  -f^{2m}- (f_1^2 + ... + f_d^2) \in I\}.\]
Then $\rad_{\bb{R}}(I)$ is precisely the set of polynomials in $\bb{R}[x_1,...,x_n]$ that vanish on $V_{\bb{R}}(I)$. \cite[Chapter XI]{lang_algebra_2002}
\end{theorem}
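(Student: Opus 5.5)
The inclusion of $\rad_{\bb{R}}(I)$ into the set of polynomials vanishing on $V_{\bb{R}}(I)$ is immediate. Suppose $-f^{2m}-(f_1^2+\dots+f_d^2)\in I$ and $x\in V_{\bb{R}}(I)$. Evaluating at $x$ gives $f(x)^{2m}+\sum_i f_i(x)^2=0$ in $\bb{R}$. A sum of real squares vanishes only if every term does, so $f(x)=0$. For the converse we argue by contraposition: assume $f\notin\rad_{\bb{R}}(I)$, and produce a point $x\in V_{\bb{R}}(I)$ with $f(x)\neq 0$. The strategy has two steps. First, pass from the algebraic hypothesis to an ordered field containing a \emph{generic} such point. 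Second, descend that point to $\bb{R}$ by Tarski--Seidenberg transfer, which is the Artin--Lang homomorphism theorem.

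\textbf{Step 1: a ring in which $-1$ is not a sum of squares.} Set $B=\bb{R}[x_1,\dots,x_n]/I$ and $C=B[1/f]$. We claim $-1$ is not a sum of squares in $C$. Suppose $-1=\sum_i (g_i/f^{k})^2$ in $C$; by enlarging $k$ we may use a common denominator. Clearing denominators, the relation $f^{2k}+\sum_i g_i^2=0$ holds in $B$ after multiplying by a further power of $f$. That power can be taken even, say $f^{2l}$, with $g_i$ replaced by $f^l g_i$. The result is $-f^{2(k+l)}-\sum_i (f^lg_i)^2\in I$, that is, $f\in\rad_{\bb{R}}(I)$, a contradiction. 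In particular $C\neq 0$.

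\textbf{Step 2: an ordering.} Let $T\subset C$ be the set of sums of squares, a preordering with $-1\notin T$. By Zorn's lemma, choose a preordering $P\supseteq T$ of $C$ that is maximal subject to $-1\notin P$. The standard argument then shows three things. First, $P\cup -P=C$; this uses the observation that if $-a\notin P$ then $P+aP$ is still a proper preordering. Second, $\ideal{p}=P\cap -P$ is a prime ideal. Third, $P$ induces an ordering on the field $K=\mathrm{Frac}(C/\ideal{p})$. Let $R'$ be a real closure of $(K,P)$. The images $\xi_j$ of the $x_j$ give a point $\xi\in R'^n$ such that every generator $h_1,\dots,h_r$ of $I$ vanishes at $\xi$. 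Moreover $f(\xi)\neq 0$, because $f$ is a unit in $C$.

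\textbf{Step 3: transfer.} Consider the first-order sentence
\[\exists y_1,\dots,y_n,z\;:\; h_1(y)=\dots=h_r(y)=0\ \wedge\ f(y)z=1,\]
whose parameters lie in $\bb{R}$. It holds in $R'$, which is a real closed field extending $\bb{R}$ as an ordered field; the ordering on $\bb{R}$ is unique, so the extension is automatically order-compatible. By Tarski--Seidenberg, i.e.\ model completeness of real closed fields, the sentence holds in $\bb{R}$. The resulting witness is a point of $V_{\bb{R}}(I)$ at which $f$ does not vanish, as required. I expect the main obstacle to be Step 2, the construction of an ordering from the condition $-1\notin T$ (essentially the Artin--Schreier theory). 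The transfer principle in Step 3 is a black box of real algebraic geometry, and this is presumably why the paper cites \cite[Chapter XI]{lang_algebra_2002} rather than proving the theorem.
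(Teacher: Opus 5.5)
Your argument is correct, apart from one misstated lemma in Step 2 (repaired below). The paper gives no proof of its own; it only cites \cite[Chapter XI]{lang_algebra_2002}, so the meaningful comparison is with that source.

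In Lang, the decisive step is descending from an ordered finitely generated extension of $\bb{R}$ to an honest point of $\bb{R}^n$. This is done by the Artin--Lang homomorphism theorem, which Lang proves by algebraic arguments with real closures, with no appeal to model theory. You instead build the ordering directly on the ring $C=B[1/f]$ through a maximal proper preordering. In effect you produce a point of the real spectrum of $C$, that is, a real prime of $B$ avoiding $f$ together with an ordering of its residue field. You then quote Tarski--Seidenberg for the descent. Inverting $f$ at the outset is what makes $f(\xi)\neq 0$ automatic, so the plain existence form of the transfer suffices. Your route is shorter and carries over essentially verbatim to the Positivstellensatz (replace $T$ by the preordering generated by finitely many given polynomials). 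Lang's route avoids importing quantifier elimination.

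The step to repair is your justification of $P\cup -P=C$. The implication ``$-a\notin P\Rightarrow P+aP$ is proper'' is the field argument: from $-1=s+at$ one divides by $t$ to get $-a\in P$. In a ring you cannot divide, and for a general proper preordering of a ring the implication is false. For instance, take $P$ to be the sums of squares in $\bb{R}[x,y]$ and $-a$ the Motzkin polynomial plus $1$. This is strictly positive but not a sum of squares, yet $-1\in P+aP$ by Stengle's Positivstellensatz.

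What does hold, for every proper preordering $P$ and every $a\in C$, is that at least one of $P+aP$ and $P-aP$ is proper. Indeed, suppose $-1=s_1+at_1$ and $-1=s_2-at_2$. Multiplying $-at_1=1+s_1$ by $at_2=1+s_2$ gives $-1=s_1+s_2+s_1s_2+a^2t_1t_2\in P$, a contradiction. Maximality of $P$ then gives $a\in P$ or $-a\in P$, so your implication is true for the maximal $P$.

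Primality of $\ideal{p}=P\cap -P$ uses the same trick. Suppose $ab\in\ideal{p}$ with $a,b\notin\ideal{p}$, and adjust signs so that $-a,-b\notin P$. Maximality forces $P-aP$ and $P-bP$ to be improper, giving $at_1=1+s_1$ and $bt_2=1+s_2$ with $s_i,t_i\in P$. Since $-ab\in P$, we get $-(1+s_1)(1+s_2)=-abt_1t_2\in P$, which forces $-1\in P$.

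With this in place, Steps 1--3 go through as you wrote them.
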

Motivated by Question \ref{mainquestion}, we set-out the following goals:
\begin{enumerate}
\item[(a)] Develop a homotopical characterization of flatness for modules over semirings.
\item[(b)] Determine whether for a general flat finitely presented map of semirings $A \to B$, 
the map on spectra $\Spec{B} \to \Spec{A}$ is open.
\item[(c)] Determine when the equivalence relation generated by $f \sim g$ defines a flat map of semirings $\bb{R}_{+} \to \bb{R}_{+}[x_1,...,x_n]/(f \sim g)$.
\end{enumerate}
We will study these goals in the rest of the chapter.

\section{Homotopy theory of commutative monoids}
\indent Let $\Delta$ be the ordinary simplex category consisting of objects finite sets together with the usual set-maps.
For any category $\scr{C}$, we let $\scr{C}_{\Delta}$ denote the category consisting of objects being functors $\Delta^{\text{op}} \to \scr{C}$ and morphisms being 
natural transformations of functors; for any such functor $F: \Delta^{\text{op}} \to \scr{C}$, we call $F(n)$ the \textit{$n$-simplices} of $F$. For every object $C \in \scr{C}$, let $\underline{C} \in \scr{C}_{\Delta}$ be the constant pre-sheaf on $\Delta$ with value $C$; we will sometimes abuse notation and refer to such constant simplicial object $\underline{C}$ as simply $C$.\\
\indent We have a forgetful functor
\[U:\bb{N}\mathrm{-mod} \to \text{Set},\]
admitting a right adjoint
\[\bb{N}[-]:\mathrm{Set} \to \bb{N}\mathrm{-mod}\] 
assigning to every set $S \in \text{Set}$ the free commutative monoid $\bb{N}^{\oplus S}$. \\

\indent We also recall the Dold-Kan functor:
\[\DK:\text{Ch}(\mathrm{Ab}) \to \mathrm{Ab}_{\Delta}\]
As verified in \cite[III.2]{simpho}, the functor $\DK$ doesn't involve signs, so therefore descends to a well-defined functor
\[\DK:\text{Ch}(\bb{N}\mathrm{-mod}) \to \simpmod{\bb{N}}.\]
\begin{proposition}\label{prop:simpN}
Let $M,N \in \bb{N}\mathrm{-mod}_{\Delta}$ be simplicial commutative monoids, $S \in \mathrm{Set}_{\Delta}$ be a simplicial set, and $\Delta_{n}\in \mathrm{Set}_{\Delta}$ the simplicial set represented by $[n] \in \Delta$.
\begin{enumerate}
\item[(i)] The adjoint pair $(F \dashv U)$ extends to an adjoint pair:
\[\begin{tikzcd} \bb{N}\mathrm{-mod}_{\Delta} \arrow[r, bend right, "U"] & \text{Set}_{\Delta} \arrow[l, bend right, "F"]\end{tikzcd}\]
where $\bb{N}[S](n)=\bb{N}[S(n)]$.
\item[(ii)] The category $\bb{N}\mathrm{-mod}_{\Delta}$ may be endowed with a symmetric monoidal structure $\otimes_{\bb{N}}$, explicitly given by the following formula:
\[(M \otimes_{\bb{N}} N)(n) = M(n) \otimes_{\bb{N}} N(n).\]
There is an internal hom $\underline{\Hom}_{\bb{N}}$ given by the formula:
\[\underline{\Hom}_{\bb{N}}(M,N)(n) = \Hom_{\bb{N}\mathrm{-mod}_{\Delta}}(M \otimes_{\bb{N}}\bb{N}[\Delta_n],N).\]
 \end{enumerate}
 \end{proposition}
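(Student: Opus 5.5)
Both parts reduce to levelwise statements, together with the standard fact that a presheaf category valued in a closed symmetric monoidal category inherits such a structure. Note that the paper's wording of the adjunction is reversed: the free commutative monoid functor $\bb{N}[-]$ is left adjoint to the forgetful $U$. I will prove the statement as displayed, with $F=\bb{N}[-]$ left adjoint to $U$.

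\textbf{Part (i).} Define $F(S)(n)=\bb{N}[S(n)]$. For $\phi:[m]\to[n]$ in $\Delta$, the face/degeneracy map $F(S)(\phi)=\bb{N}[S(\phi)]$ is given by functoriality of $\bb{N}[-]$, so $F(S)$ is a simplicial commutative monoid. $U$ is applied levelwise in the same way. For the adjunction, a map of simplicial sets $S\to UM$ is a family of maps $S(n)\to UM(n)$ natural in $n$. By the levelwise adjunction, each $S(n)\to UM(n)$ corresponds to a monoid map $\bb{N}[S(n)]\to M(n)$. Naturality of the unit/counit shows that commutation with all $S(\phi)$, $M(\phi)$ is preserved under this bijection. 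The bijection $\Hom(F S,M)\cong\Hom(S,UM)$ is then natural in $S$ and $M$. This is the general fact that an adjunction $L\dashv R$ induces $L_*\dashv R_*$ on functor categories $\fun(\Delta^{op},-)$.

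\textbf{Part (ii), monoidal structure.} Define $M\otimes_{\bb{N}}N$ levelwise, with structure maps $M(\phi)\otimes N(\phi)$, using functoriality of $\otimes_{\bb{N}}$. The associator, unitor and symmetry are the levelwise ones from $(\mod{\bb{N}},\otimes_{\bb{N}})$; they are natural, hence simplicial. The unit is the constant object $\underline{\bb{N}}$, and the coherence axioms hold levelwise.

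\textbf{Part (ii), internal hom.} Set $\underline{\Hom}(M,N)(n)=\Hom(M\otimes\bb{N}[\Delta_n],N)$. It is a commutative monoid under pointwise addition, since $\mod{\bb{N}}$ is enriched in itself. It is simplicial, contravariantly in $[n]$, via precomposition with $\bb{N}[\Delta_m]\to\bb{N}[\Delta_n]$ for $[m]\to[n]$. We need a natural bijection
\[\Hom(P\otimes M,N)\cong\Hom(P,\underline{\Hom}(M,N)).\]

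\emph{Key observation.} A monoid map $\bb{N}[\Delta_n]\otimes X\to Y$ of simplicial objects corresponds to a family of maps $\bb{N}[\Delta_n(k)]\otimes X(k)\to Y(k)$. By Yoneda, this is determined by compatible data.

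\emph{Reduction to representables.} Every $P$ is a colimit (a coequalizer of coproducts) of the objects $\bb{N}[\Delta_n]\otimes \underline{\bb{N}}=\bb{N}[\Delta_n]$, via the canonical presentation using part (i) and Yoneda. The functor $-\otimes M$ preserves colimits, because it does so levelwise: $-\otimes_{\bb{N}}M(n)$ is a left adjoint, and colimits in presheaf categories are computed levelwise. The functor $\Hom(-,\underline{\Hom}(M,N))$ converts colimits to limits. It therefore suffices to check the bijection for $P=\bb{N}[\Delta_n]$. There it reduces to Yoneda, $\Hom(\bb{N}[\Delta_n],Q)\cong Q(n)$, which holds by part (i) combined with the Yoneda lemma for $\Delta_n$.

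\emph{Naturality.} Naturality in $P$ follows from the construction.

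The main obstacle is this reduction step: making sure $P$ is canonically a colimit of the free objects $\bb{N}[\Delta_n]$. This follows because the $\bb{N}[\Delta_n]$ are dense generators. Indeed, the free–forgetful adjunction of part (i) makes $U$ monadic levelwise, which yields the bar presentation
\[\bb{N}[U\bb{N}[UP]]\rightrightarrows\bb{N}[UP]\to P,\]
and each simplicial set is a colimit of the $\Delta_n$.
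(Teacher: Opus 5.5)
Your argument is correct, and for (ii) it takes a different route from the paper. Part (i) matches the paper, which also just invokes the levelwise extension of an adjunction to functor categories. You are also right that $\bb{N}[-]$ is the \emph{left} adjoint of $U$, as the notation $F\dashv U$ indicates, despite the sentence preceding the proposition.

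For (ii) the paper gives no argument of its own. It cites the Stacks Project construction of internal Hom for simplicial objects, after observing that the product $M\times\Delta_n$ of a simplicial object with a simplicial set (levelwise $\bigoplus_{\Delta_n(k)}M(k)$) is $M\otimes_{\bb{N}}\bb{N}[\Delta_n]$ here. You instead prove closedness directly:
the formula for $\underline{\Hom}_{\bb{N}}(M,N)(n)$ is forced by Yoneda; both $\Hom(-\otimes_{\bb{N}}M,N)$ and $\Hom(-,\underline{\Hom}_{\bb{N}}(M,N))$ send colimits to limits; and every $P$ is built from the $\bb{N}[\Delta_n]$ by colimits (the canonical colimit presentation of $UP$, then the bar coequalizer).

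The citation is shorter, and it ties $\underline{\Hom}_{\bb{N}}$ to the tensoring $M\otimes_{\bb{N}}\bb{N}[K]$ over simplicial sets that the simplicial model structure uses later. But it leaves the reader to transport the Stacks argument to $\otimes_{\bb{N}}$. Your argument is self-contained and purely formal, so it works verbatim for simplicial objects in any cocomplete closed symmetric monoidal category with a monadic free functor from sets.

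One step needs to be made explicit. The reduction to $P=\bb{N}[\Delta_n]$ requires a comparison map defined for \emph{every} $P$ and natural with respect to all maps of simplicial monoids. This matters because the maps between free objects in the bar presentation, such as the counit, are not induced by maps of simplicial sets. You only describe the bijection on generators and then assert that naturality ``follows from the construction''.

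To fix this, for $\phi:P\otimes_{\bb{N}}M\to N$ and $p\in P(n)$, set $\Phi_P(\phi)_n(p)=\phi\circ(\hat p\otimes\mathrm{id}_M)$, where $\hat p:\bb{N}[\Delta_n]\to P$ classifies $p$.
This is additive in $p$, since $\widehat{p+p'}=\hat p+\hat p'$ and $\otimes_{\bb{N}}$ is biadditive on maps. It is simplicial and natural in $P$, and at $P=\bb{N}[\Delta_n]$ it is your Yoneda bijection. Hence the class of $P$ on which $\Phi_P$ is bijective is closed under colimits and therefore contains every $P$.

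Relatedly, ``dense generators'' is not the right justification. The one-generator objects $\bb{N}[\Delta_n]$ are not dense in the ordinary sense. For constant $P=Q=\underline{\bb{N}^{2}}$, the non-additive map $(a,b)\mapsto(\max(a,b),0)$, applied in every degree, is compatible with all maps between these objects, yet it is not a map of simplicial monoids. So it is the closure argument with a globally defined $\Phi$, not density, that does the work.

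Alternatively, you can avoid colimits entirely by writing down the inverse directly. A map $\psi:P\to\underline{\Hom}_{\bb{N}}(M,N)$ corresponds to $P(n)\otimes_{\bb{N}}M(n)\to N(n)$, $p\otimes m\mapsto\psi_n(p)_n(m\otimes\iota_n)$, where $\iota_n\in\Delta_n(n)$ is the identity simplex.
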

 \begin{proof}
 The proof of (i) is a standard fact for functor categories. For (ii), this follows from \cite[Tag017H]{sp}, after noting that $M \times \Delta_n$ is the same as $M \otimes_{\bb{N}}\bb{N}[\Delta_n]$ in our set-up.
 \end{proof}

We now wish to endow $\bb{N}\mathrm{-mod}_{\Delta}$ with a model structure in order to derive the tensor product $\otimes_{\bb{N}}$. 
Motived by the model structure on chain complexes that works so well, 
there is a natural such model structure, in hindsight perhaps suitably called the \textit{naive model structure}.
First we recall the \textit{homotopy-theoretic model structure} on $\text{Set}_{\Delta}$:
\begin{definition}[The homotopy-theoretic model structure on $\text{Set}_{\Delta}$]
A map $f: X \to Y$ in $\text{Set}_{\Delta}$ is a fibration if and only if it has the right lifting property with respect to all horn inclusions 
$\Lambda^n_k \to \Delta_n$ for all $n \ge 1$ and $0 \le k \le n$.\\
\indent The map $f: X \to Y$ in $\text{Set}_{\Delta}$ is a weak equivalence if and only if the induced map on geometric realizations
$|f|: |X| \to |Y|$ is a weak homotopy equivalence of topological spaces.\\
\indent The map $f:X \to Y$ is a cofibration if and only if it has the left lifting property 
with respect to all acyclic fibrations (i.e. fibrations that are weak equivalences).\\
\indent The map $f: X \to Y$ is an acyclic fibration if and only if it has the right lifting property with respect to all boundary inclusions
$\partial \Delta_n \to \Delta_n$ for all $n \ge 0$. \cite[I.11]{simpho}
\end{definition}
With this model structure on $\text{Set}_{\Delta}$, we may now define the \textit{naive model structure} on $\bb{N}\mathrm{-mod}_{\Delta}$.
\begin{definition}[The naive model structure]A map $f: X \to Y$ in $\bb{N}\mathrm{-mod}_{\Delta}$ is a fibration (resp. weak equivalence) 
if and only if $U(f)$ is a 
fibration (resp. weak equivalence) in $\text{Set}_{\Delta}$. 
Cofibrations are characterized to have the left lifting property with respect to all acyclic fibrations 
(i.e. fibrations that are weak equivalences).\\
\indent These three classes of 
morphisms define a model structure on $\bb{N}\mathrm{-mod}_{\Delta}$.
\end{definition}
This model structured has been considered many times in the literature, two notable references being \cite{kansemi} and \cite{rezksemi}.
Some immediate consequences of this definition are as follows.
\begin{proposition}\label{prop:hosimpN}
Let $X \in \simpmod{\bb{N}}$ be a simplicial commutative monoid.
\begin{enumerate}
\item[(i)] The adjoint functors $(F \dashv U)$ of Proposition~\ref{prop:simpN} form a Quillen adjunction for the naive model 
structure on $\simpmod{\bb{N}}$ and the homotopy-theoretic model structure on $\text{Set}_{\Delta}$. 
\item[(ii)] The adjoint functors $((X \otimes_{\bb{N}} -) \dashv \underline{\Hom}_{\bb{N}}(X,-)) : \simpmod{\bb{N}} \to \simpmod{\bb{N}}$ 
form a Quillen adjunction.
\item[(iii)] The category $\simpmod{\bb{N}}$ equipped with the naive model structure is a simplicial model category. 
In particular, if $Y \to Z$ is a cofibration in $\simpmod{\bb{N}}$, then so is $X \otimes_{\bb{N}} Y \to X \otimes_{\bb{N}} Z$.
\end{enumerate}
\end{proposition}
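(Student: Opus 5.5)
For (i), I would use that the naive model structure is right-transferred along $U$: fibrations and weak equivalences in $\simpmod{\bb{N}}$ are defined as those maps sent by $U$ to fibrations and weak equivalences in $\text{Set}_{\Delta}$. Hence $U$ preserves fibrations and acyclic fibrations by definition. That is already one of the equivalent conditions for the adjunction of Proposition~\ref{prop:simpN}(i), with $F=\bb{N}[-]$ on the left, to be Quillen. Dually, $F$ preserves cofibrations because, by adjunction, a lifting problem against an acyclic fibration $p$ in $\simpmod{\bb{N}}$ is the same as a lifting problem against $U(p)$ in $\text{Set}_{\Delta}$. In particular, the generating (acyclic) cofibrations of $\simpmod{\bb{N}}$ are $\bb{N}[\partial\Delta_n]\to\bb{N}[\Delta_n]$ and $\bb{N}[\Lambda^n_k]\to\bb{N}[\Delta_n]$.

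For (iii), I would define the simplicial enrichment by $\mathrm{Map}(M,N)_n=\Hom(M\otimes_{\bb{N}}\bb{N}[\Delta_n],N)$, with tensor $M\otimes K := M\otimes_{\bb{N}}\bb{N}[K]$ and cotensor $N^K$ given levelwise. Since $U(N^K)=U(N)^K$, and $\text{Set}_{\Delta}$ is a simplicial model category, axiom SM7 in its cotensor form transfers directly. Let $i:K\to L$ be a cofibration of simplicial sets and $p:M\to N$ a fibration in $\simpmod{\bb{N}}$. Then $U$ of the map $M^L\to M^K\times_{N^K}N^L$ is the corresponding map for $U(p)$, because $U$ preserves limits. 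That map is a fibration, acyclic if $i$ or $p$ is. This gives the simplicial model structure; one could alternatively cite \cite[II.2]{simpho} for simplicial categories of algebras with a right-transferred structure.

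For (ii), the adjunction $(X\otimes_{\bb{N}}-)\dashv\underline{\Hom}_{\bb{N}}(X,-)$ comes from Proposition~\ref{prop:simpN}(ii). I would show that $X\otimes_{\bb{N}}-$ preserves cofibrations and acyclic cofibrations, and it suffices to check this on the generators. Applying $X\otimes_{\bb{N}}-$ to $\bb{N}[K]\to\bb{N}[L]$ gives $X\otimes K\to X\otimes L$ in the simplicial structure. The main obstacle is here: SM7 only says this map is an (acyclic) cofibration when $X$ is cofibrant, since it is the pushout-product of $0\to X$ with $K\to L$. For general $X$ I would instead use the right adjoint: $\underline{\Hom}_{\bb{N}}(X,-)$ preserves fibrations and acyclic fibrations if the levelwise formula commutes with $U$ via cotensors. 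Concretely, $U\underline{\Hom}(X,N)$ is the simplicial set $\mathrm{Map}(X,N)$, so I must show that $\mathrm{Map}(X,-)$ sends (acyclic) fibrations to Kan (acyclic) fibrations for arbitrary $X$. This amounts to lifting $X\otimes\Lambda^n_k\to X\otimes\Delta_n$ against fibrations, which needs a Moore-type argument that simplicial monoids' underlying simplicial sets are Kan and that the relevant pushout-products are acyclic. I expect the proof to rely on either $X$ being cofibrant or on this group-like / Kan-type input. If that fails for general $X$, I would restrict the claim to cofibrant $X$, which is exactly what the final sentence of (iii) needs.
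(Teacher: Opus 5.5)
Your argument for (i) is the paper's: $U$ preserves fibrations and acyclic fibrations by the definition of the naive structure.

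For (iii) you take a different but valid route. The paper appeals to Quillen's criterion \cite[Theorem II.5.1]{simpho} together with \cite[Lemma II.6.1]{simpho}. It uses $\text{Ex}^{\infty}$, which preserves finite products and hence simplicial commutative monoids, as a functorial fibrant replacement, so the transferred structure and its simplicial enrichment come out together. You instead take the model structure as given and check SM7 directly in cotensor form, using $U(N^K)=U(N)^K$ and the fact that $U$ preserves limits and detects fibrations and weak equivalences. Yours is more explicit about the simplicial axiom; the paper's also addresses existence of the model structure.

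On (ii) your hesitation is justified: the gap is in the statement and in the paper's proof, not in your proposal. The paper argues that $\underline{\Hom}_{\bb{N}}(X,-)$ preserves (acyclic) fibrations because it is a right adjoint and these classes are detected by lifting against $\bb{N}[\Lambda^n_k]\to\bb{N}[\Delta_n]$ and $\bb{N}[\partial\Delta_n]\to\bb{N}[\Delta_n]$. Transposing such a lifting problem, however, gives a lifting problem for $p$ against $X\otimes_{\bb{N}}\bb{N}[K]\to X\otimes_{\bb{N}}\bb{N}[L]$. That map is an (acyclic) cofibration only when $X$ is cofibrant, which is exactly the obstruction you found.

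Without cofibrancy, (ii) and the final clause of (iii) are false. Take $X=\underline{\bb{Z}/2}$. Let $p\colon M\to N=\underline{\bb{Z}/2}$ be the augmentation from the Dold--Kan simplicial abelian group $M$ of the complex $\bb{Z}\xrightarrow{2}\bb{Z}$ (in degrees $1,0$). This $p$ is a levelwise surjective weak equivalence of simplicial abelian groups, so $U(p)$ is an acyclic Kan fibration and $p$ is an acyclic fibration in the naive structure. A map out of a constant simplicial object is determined by its degree-$0$ component, so
\[\underline{\Hom}_{\bb{N}}(X,M)_0=\Hom_{\bb{N}}(\bb{Z}/2,\bb{Z})=0,\qquad \underline{\Hom}_{\bb{N}}(X,N)_0=\Hom_{\bb{N}}(\bb{Z}/2,\bb{Z}/2)=\bb{Z}/2.\]
Hence $\underline{\Hom}_{\bb{N}}(X,p)$ is not surjective on vertices, and so is not an acyclic fibration. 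The same computation shows that the identity of $X=N$ does not lift through $p$, so $X$ is not cofibrant. Consequently $X\otimes_{\bb{N}}-$ sends the generating cofibration $0=\bb{N}[\emptyset]\to\bb{N}[\Delta_0]$ to $0\to\underline{\bb{Z}/2}$, which is not a cofibration.

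This counterexample lives in simplicial abelian groups, whose underlying simplicial sets are Kan. So the ``group-like / Kan-type input'' you hold in reserve cannot rescue the general case, and you should drop it. Commit instead to the hypothesis that $X$ is cofibrant (e.g.\ $X=\bb{N}[K]$ for a simplicial set $K$, such as $\underline{\bb{N}^{\oplus S}}$). Under that hypothesis your argument is complete. The pushout-product form of SM7, applied to $0\to X$ and $K\to L$, shows that $X\otimes_{\bb{N}}-$ sends generating (acyclic) cofibrations to (acyclic) cofibrations. It is therefore left Quillen, which also gives the last sentence of (iii).
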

\begin{proof}
Both (i) and (ii) follow respectively from the fact that both $U$ and $\underline{\Hom}_{\bb{N}}(X,-)$ preserve both fibrations and acyclic fibrations. For $U$, 
this is immediate, and for $\underline{\Hom}_{\bb{N}}(X,-)$, this follows from the fact that it is a right adjoint, and the fact that fibrations and acyclic fibrations in $\simpmod{\bb{N}}$ are defined
via right lifting properties for the free $\bb{N}$-linear horn and boundary inclusions respectively (i.e. $\bb{N}[\Lambda^i_n] \hookrightarrow \bb{N}[\Delta_n]$ 
and $\bb{N}[\partial \Delta_n] \hookrightarrow \bb{N}[\Delta_n]$)\\
\indent For (iii), by \cite[Theorem II.5.1]{simpho} combined with \cite[Lemma II.6.1]{simpho}, it is enough to show that for any $X \in \simpmod{\bb{N}}$ 
we have a weak equivalence $X \to Q(X)$ with $Q(X) \in \simpmod{\bb{N}}$ and $U(Q(X))$ a Kan complex. This is provided to us by Kan's Extension Functor
that, to any simplicial set $S$, functorially associates a weak equivalence $S \mapsto \text{Ex}^{\infty}(S)$ where $\text{Ex}^{\infty}(S)$ a Kan complex.
\end{proof}
\begin{remark}
We may define the $\infty$-category $D_{\le 0}(\bb{N}\mathrm{-mod})$ to be the localization of simplicially-enriched category $\simpmod{\bb{N}}$ 
with respect to the weak equivalences defined by the naive model structure. Then $D_{\le 0}(\bb{N})$ is, up to 
equivalence of $\infty$-categories, the same as the \textit{animation} of $\bb{N}$-mod (see \cite[5.5.9.3]{htt}).
\end{remark}\label{rem:comptop}
With a model structure on $\simpmod{\bb{N}}$, we may proceed to calculate derived functors. In this paper, we will only be interested in doing so for $\otimes_{\bb{N}}$.
\begin{definition}
Let $N$ be an object of $\simpmod{\bb{N}}$. For any $M \in \simpmod{\bb{N}}$, we may take a cofibrant replacement $r: C(M) \to M$, and we define:
\[M \otimesl_{\bb{N}} N = C(M) \otimes_{\bb{N}} N,\]
as objects in $\Hot(\simpmod{\bb{N}})$.
\end{definition}
The naive model structure on $\simpmod{\bb{N}}$ is \textit{reasonable} in the following sense; by viewing $\bb{Z}\mathrm{-mod}_{\Delta} \hookrightarrow \bb{N}\mathrm{-mod}_{\Delta}$ 
as a full subcategory, then the induced model structure on $\simpmod{\bb{Z}}$ coincides with the usual model structure for chain complexes 
of abelian groups after applying Dold-Kan. However, there are quite a few conceptual differences between the two model-theoretic structures,
chief among them being the fact that objects in $\simpmod{\bb{N}}$ are rarely fibrant (i.e. Kan complexes) or even $\infty$-categories:
\begin{example}
\begin{enumerate}
\item[(a)] Objects of $\simpmod{\bb{N}}$ are rarely fibrant, or in other words, the underlying simplicial set is not a Kan complex.\\
\indent Indeed, while simple examples like $K(\bb{N},0) = \underline{\bb{N}}$, are Kan complexes, the next Eilenberg-Maclean space $K(\bb{N},1)$ is not \textemdash let us recall its definition. 
Consider the category $B\bb{N}$, which has one object and an endomorphism $1$ which acts freely; we define $K(\bb{N},1)=N(B\bb{N})$ where $N(B\bb{N})$ is its nerve. 
To show it is not a Kan complex, consider the map:
  \[\Lambda^2_{0} \to K(\bb{N},1)\]
  which takes the edge $[0,1] \to n$ and $[0,2] \to m$. If $K(\bb{N},1)$ were a Kan complex, then there would need to be a 
  number $h$ such that $n + h = m$, which is not solvable as soon as $n > m$.\\
  \indent Note that, while $K(\bb{N},1)$ is not a Kan complex, it is on the other hand an $\infty$-category, as is the nerve of any 
  ordinary category (see \cite[1.3]{keradon}).
\item[(b)] In fact, objects of $\simpmod{\bb{N}}$ are rarely $\infty$-categories. Indeed, we may construct a simplicial set 
modelling $K(\bb{N},2)$ which is not a quasi-category (see \cite{notquasi}). As $K(\bb{N},2)$ (which is the diagonal of the 
bisimplicial set $B^2 \bb{N}$) is modelled by an object in $\simpmod{\bb{N}}$ where each term is a finite free 
commutative monoid $\bb{N}^{\oplus r}$, 
from a homological standpoint, this example is essentially as nice as possible. 
We construct another example in Example~\ref{exmp:notquasi}.
\item[(c)] A surjection $X \twoheadrightarrow Y$ in $\simpmod{\bb{N}}$ need not be a fibration, even if it is termwise split on each degree.
\end{enumerate}
\end{example}
Motivated by trying to prove Question~\ref{mainquestion}, we wish to understand the structure of homotopy coequalisers in $\simpmod{\bb{N}}$. 
The structure of these equalizers is rather different from the case of abelian groups; in particular, we cannot show that they are always $1$-truncated. 
Calculating homotopy coequalizers amounts to understanding homotopy pushouts, which is greatly simplified in the case when the model category
is \textit{left proper} i.e. pushouts along cofibrations preserve weak equivalences. Fortunately, the naive model structure on $\simpmod{\bb{N}}$ is left proper. To show this, we could appeal to 
\cite[Theorem 8.1]{rezksemi}, which shows that it suffices to demonstrate that the functor 
\[ \simpmod{\bb{N}} \to \simpmod{\bb{N}}: \; \; M \mapsto M \oplus \underline{\bb{N}}\]
preserves weak equivalences, which can be done by hand. In particular, we note that for any $M \in \simpmod{\bb{N}}$ and $N \in \mod{\bb{N}}$,
\[M \oplus \underline{N} =\colim_{S' \subset U(N), |S'| < \infty} \prod_{S'} M = \hocolim_{S' \subset U(N), |S'| < \infty} \prod_{S'} M.\]
is a filtered colimit with cofibrant transition maps (i.e. monomorphisms in $\text{Set}_{\Delta}$) of products of copies of $M$, hence it preserves weak equivalences.
However, for completeness, we will give a more direct proof that follows the outline of Kan and Dwyer \cite{kansemi} 
(which unfortunately cannot be directly applied here as their categories do not result in commutative algebraic theories.)
\begin{proposition}
Let $M, M', N, N', K, K'$ be objects of $\simpmod{\bb{N}}$. 
\begin{enumerate}
\item[(i)]Let $f: M \to M'$ and $f': N \to N'$ be weak equivalences in $\simpmod{\bb{N}}$. 
Then the induced map
\[f \amalg f': M \oplus N \to M' \oplus N'\]
is also a weak equivalence.
\item[(ii)] Consider the pushout squares in $\simpmod{\bb{N}}$:
\[\begin{tikzcd}
M \arrow[r] \arrow[d] 
  & N \arrow[d] 
  && 
M' \arrow[r] \arrow[d] 
  & N' \arrow[d] \\
K \arrow[r] 
  & L
  \arrow[ul, phantom, "\ulcorner", very near start]
  &&
K' \arrow[r] 
  & L'
  \arrow[ul, phantom, "\ulcorner", very near start]
\end{tikzcd}\]
with weak equivalences $M \to M', N\to N', K\to K'$ making the diagrams commute, and moreover assume $M \to K$ and $M' \to K'$ are cofibrations.
Then the induced map $L \to L'$ is also a weak equivalence.
\item[(iii)]$\simpmod{\bb{N}}$ is left proper i.e. pushouts along cofibrations preserve weak equivalences.
\end{enumerate}
\end{proposition}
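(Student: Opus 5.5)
We prove the three parts in order, following the outline of Dwyer and Kan \cite{kansemi}. Each later part reduces to the earlier ones.

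For (i), the key observation is that in $\mod{\bb{N}}$ finite coproducts coincide with finite products. So $M \oplus N$ is computed degreewise, and its underlying simplicial set satisfies
\[U(M\oplus N) = U(M)\times U(N).\]
Weak equivalences in the naive model structure are detected by $U$. Geometric realization commutes with finite products of simplicial sets (in compactly generated spaces), and a product of weak homotopy equivalences is a weak homotopy equivalence, since $\pi_*$ of a product is the product of the $\pi_*$ at every basepoint. Hence $U(f)\times U(f')$ is a weak equivalence, and (i) follows immediately.

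For (ii), we first reduce to a cofibration of a standard form. Every cofibration in $\simpmod{\bb{N}}$ is a retract of a relative cell complex built from the generating maps $\bb{N}[\partial\Delta_n]\hookrightarrow \bb{N}[\Delta_n]$, and a retract of a weak equivalence is a weak equivalence. We then factor the comparison $L\to L'$ through the intermediate pushouts $K\amalg_M N'$ and $K'\amalg_{M'} N'$, so that we only vary one vertex at a time.

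\begin{itemize}
\item \textbf{Varying $N$.} A pushout along a cell attachment $K = M\amalg_{\bb{N}[\partial\Delta_n]}\bb{N}[\Delta_n]$ has the form $N\amalg_{\bb{N}[\partial\Delta_n]}\bb{N}[\Delta_n]$. Degreewise it is a quotient of $N\oplus \bb{N}[\Delta_n]$. We filter by the number of new generators used, using the word-length filtration on the free part, so that the associated layers look like $N\oplus$ (free pieces). Part (i) then handles each layer. Passage to transfinite composites is harmless, because filtered colimits of monomorphisms of simplicial sets preserve weak equivalences.
\item \textbf{Varying $K$ and $M$.} For these we use the standard cube or Reedy argument for homotopy pushouts. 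It suffices to know that pushouts along cofibrations between cofibrant objects are homotopy invariant, which holds in any model category. We then reduce the general case to the cofibrant case by cofibrant replacement combined with the $N$-varying step.
\end{itemize}

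Part (iii) is then the special case of (ii) with $M=M'$, $K=K'$, and only $N\to N'$ a weak equivalence.

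The main obstacle is the $N$-varying step, that is, showing that $N\mapsto N\amalg_{\bb{N}[\partial\Delta_n]}\bb{N}[\Delta_n]$ preserves weak equivalences. Unlike the abelian case, this pushout is not a split extension, because the quotient relation identifies boundary sums with elements of $N$ non-freely. My plan here is to instead invoke the criterion of \cite[Theorem 8.1]{rezksemi}: it reduces left properness to the statement that $M\mapsto M\oplus\underline{\bb{N}}$ preserves weak equivalences. That statement follows from the filtered-colimit formula displayed before the proposition together with (i). If a fully direct proof is demanded, I would write the pushout as a filtered colimit of iterated finite sums $N\oplus\bb{N}[\Delta_n]^{\oplus k}$ modulo the boundary identifications, and compare these filtration quotients inductively using (i).
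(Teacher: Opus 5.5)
Your proof is correct, but it is organized differently from the paper's: logically it runs (i) $\Rightarrow$ (iii) $\Rightarrow$ (ii), whereas the paper runs (i) $\Rightarrow$ (ii) $\Rightarrow$ (iii).

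\textbf{Part (i).} You use that $\oplus$ is a biproduct in $\mod{\bb{N}}$, so $U(M\oplus N)=U(M)\times U(N)$, and the claim reduces to products of weak equivalences of simplicial sets. The paper instead reduces to $M=M'$, treats constant $M$ via the filtered-colimit formula, and then applies the diagonal lemma to $(i,j)\mapsto M(i)\oplus N(j)$. That diagonal is exactly $U(M)\times U(N)$, so your argument is the same fact without the detour.

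\textbf{Parts (ii) and (iii).} The paper proves (ii) directly by transplanting Dwyer--Kan's Proposition 8.1 \cite{kansemi}: direct sums replace free products, (i) replaces their Lemma 2.7, and cofibrations are described as retracts of $\bb{N}[\partial\Delta_n]\hookrightarrow\bb{N}[\Delta_n]$-cell complexes in place of their 7.6. It then deduces (iii). You instead take the shortcut the paper mentions and declines, Rezk's criterion \cite[Theorem 8.1]{rezksemi}. Its hypothesis is simply (i) with $N=N'=\underline{\bb{N}}$ and $f'=\mathrm{id}$, so the filtered-colimit formula is not needed. You then recover (ii) from left properness via the standard fact that in a left proper model category, pushouts along cofibrations are homotopy pushouts and hence homotopy invariant. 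Your route is shorter and needs only general model-category theory beyond Rezk's theorem. The paper's route keeps the argument internal to $\simpmod{\bb{N}}$, at the cost of a ``verbatim'' adaptation of Dwyer--Kan.

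\textbf{A correction to your discussion of the main obstacle.} The pushout $N\amalg_{\bb{N}[\partial\Delta_n]}\bb{N}[\Delta_n]$ is degreewise split: it equals $N(k)\oplus\bb{N}[\Delta_n(k)\setminus\partial\Delta_n(k)]$, because $\bb{N}[\partial\Delta_n(k)]$ is a direct summand of $\bb{N}[\Delta_n(k)]$. Only the face maps are twisted: a face of a new simplex that lies in $\partial\Delta_n$ is sent into $N$ by the attaching map.

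This splitting is exactly what makes your fallback rigorous.
\begin{itemize}
\item The elements involving at most $\ell$ new simplices, counted with multiplicity, form simplicial subsets $F_\ell$.
\item Let $\mathrm{SP}^{\ell}(\Delta_n)$ be the simplicial set of size-$\ell$ multisets of simplices of $\Delta_n$, and $\partial_\ell\subset\mathrm{SP}^{\ell}(\Delta_n)$ the multisets meeting $\partial\Delta_n$.
\item Then $F_\ell$ is the pushout in $\text{Set}_{\Delta}$ of $F_{\ell-1}$ and $U(N)\times\mathrm{SP}^{\ell}(\Delta_n)$ along the monomorphism $U(N)\times\partial_\ell\hookrightarrow U(N)\times\mathrm{SP}^{\ell}(\Delta_n)$.
\end{itemize}
The gluing lemma in $\text{Set}_{\Delta}$, the product observation from (i), induction on $\ell$, and passage to the union then give a direct proof. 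This is in effect the Dwyer--Kan argument the paper relies on.
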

\begin{proof}
For (i), it suffices to prove the case that $M=M'$. We have already established the case when $M$ is discrete.
We may then proceed using the diagonal lemma for bisimplicial sets, as by definition, $M \oplus N = \mathrm{diag}(L_{\bullet, \bullet})$ where
\[L(i,j) = M(i) \oplus N(j).\]
The result then follows from the fact that for each fixed $i$, the map $M(i) \oplus N \to M(i) \oplus N'$ is a weak equivalence, so the diagonals
are weak equivalences.\\
\indent For (ii), we can follow \cite[Proposition 8.1]{kansemi} with the follow modifications. First, their free-product $*$ serves the same purpose as
our direct sum $\oplus$ in $\simpmod{\bb{N}}$. Their Lemma 2.7 is then replaced by our (i) and for their Proposition 8.2, we need
a characterization of cofibrations in $\simpmod{\bb{N}}$ similar to that of their 7.6. After noting that their free categories $F$ serve
the same functorial purpose of our $\bb{N}[-]$, then as $\bb{N}[\partial \Delta_n] \hookrightarrow \bb{N}[\Delta_n], \forall n \ge 0$ form a set of generating cofibrations,
their 7.6 also holds in our setting. The proof then goes through verbatim.\\
\indent Finally, (iii) follows from (ii) since any pushout diagram can be made into a cofibrant diagram which preserves weak equivalences.
\end{proof}
The following theorem describes our current understanding of the structure of homotopy coequalizers in $\simpmod{\bb{N}}$.
\begin{theorem}\label{thm:main0}
Let $f,g: M \to N$ be two maps in $\bb{N}\mathrm{-mod}$. There is a $Z(f,g) \in \simpmod{\bb{N}}$ such that $Z(f,g) \simeq \hocoeq(f,g)$ and:
\begin{enumerate}
\item[(i)] $Z(f,g)$ is a $(2,1)$-category that is weakly $1$-coskeletal and minimal in dimension $1$.
\footnote{In particular, $Z(f,g)$ as a simplicial set is isomorphic to the Duskin nerve of a $2$-category in which every 
$2$ morphism is an isomorphism, see \cite[4.8.1.5]{keradon}}
\item[(ii)] If $Z(f,g)$ is an $\infty$-category, then $Z(f,g)$ is further a $(1,1)$-category and $Z(f,g)=N(\Hot(Z(f,g)))$. For example, this is always true when $g=0$.
\item[(iii)] If $g=0$, $N$ is cancellative and $M$ is negative-free, then $Z(f,g)$ is discrete and $\hocoeq(f,g)=\coeq(f,g)$.
\end{enumerate}
\end{theorem}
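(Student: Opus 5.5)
We will compute the homotopy coequalizer as an explicit homotopy pushout and then simplify it. Write the coequalizer diagram as the pushout of $M\oplus M \xrightarrow{(f,g)} N$ along the fold map $\nabla: M\oplus M \to M$. By left properness (part (iii) of the preceding proposition) it suffices to replace one leg by a cofibration. We factor $\nabla$ as $M\oplus M \hookrightarrow M\otimes_{\bb{N}}\bb{N}[\Delta_1] \xrightarrow{\sim} M$, the simplicial cylinder. This is legitimate because $\simpmod{\bb{N}}$ is a simplicial model category by Proposition~\ref{prop:hosimpN}(iii). To make the inclusion a genuine cofibration we first replace $M$ by a cofibrant (free, termwise $\bb{N}[S]$) resolution $P_\bullet\to M$. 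We then define
\[Z(f,g) := N \amalg_{P\oplus P} \bigl(P\otimes_{\bb{N}} \bb{N}[\Delta_1]\bigr).\]
Concretely, the $n$-simplices of $Z(f,g)$ are $N \oplus \bigoplus_{\text{nondegenerate-edge labels}} P_n$ modulo the identifications at the two endpoints.

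The next step is to simplify this model. A more economical choice is available: because $N$ and $M$ are discrete, we may take $Z(f,g)$ to be the nerve-like object with $0$-simplices $N$, $1$-simplices $N\oplus M$, and $2$-simplices given by triples compatible with addition in $M$. This is the Duskin nerve of the monoidal groupoid-enriched category in which objects are elements of $N$ and a $1$-morphism $x\to y$ is an $m\in M$ with $x+f(m)$ and $y+g(m)$ related. We would verify that this object is weakly equivalent to the cylinder pushout by comparing both with the diagonal of the bisimplicial bar construction $B_\bullet(N, M, \ast)$ and applying the diagonal lemma, as in part (i) of the preceding proposition. Once $Z(f,g)$ is exhibited as the Duskin nerve of a $2$-category whose $2$-cells are invertible, property (i) follows formally. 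The weak $1$-coskeletality and minimality in dimension $1$ come from the fact that $2$-simplices are determined by their boundary up to a unique invertible $2$-cell, since $M$ is a set rather than a space.

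For (ii), recall that a Duskin nerve of a $(2,1)$-category is an $\infty$-category precisely when all $2$-cells are identities, i.e.\ when it is the nerve of an ordinary category; the identification $Z(f,g)=N(\Hot(Z(f,g)))$ then follows. When $g=0$, the relation $x+f(m)\sim y$ determines $y$ uniquely from $(x,m)$, so composition is strict and no nontrivial $2$-cells occur.

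For (iii), with $g=0$, $Z$ is the nerve of the category with objects $N$ and morphisms $x\to x+f(m)$. Cancellativity of $N$ makes each morphism determined by its source and target, and negative-freeness of $M$ rules out loops. We then compute $\pi_0$ and show that the higher homotopy groups vanish, concluding that $Z(f,g)$ is weakly equivalent to the discrete object $\coeq(f,g)$.

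The main obstacle is the comparison in the second step, between the small explicit $Z(f,g)$ and the honest cofibrant pushout. Here we do not have Kan fibrancy, so homotopy groups cannot be used naively. We would handle this through the bisimplicial diagonal argument, relying only on geometric realization.
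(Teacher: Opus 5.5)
\textbf{The Duskin-nerve step fails, and with it your arguments for (i) and (ii).} You plan to exhibit $Z(f,g)$ as the Duskin nerve of a $2$-category with invertible $2$-cells so that (i) follows formally. In (ii) you then use that such a nerve is an $\infty$-category exactly when its $2$-cells are identities. That criterion is false: by Duskin's theorem, the nerve of a $2$-category whose $2$-morphisms are invertible is \emph{always} an $\infty$-category. Hence $Z(f,g)$ is not such a nerve in general. In Example~\ref{exmp:notquasi} ($M=N=\bb{N}$, $f(n)=2n$, $g(n)=n$) an edge $a\to b$ exists iff $a\le b\le 2a$, so the inner horn $1\to 2\to 3$ has no filler. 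Thus surjectivity of $a^2_1$, which Definition~\ref{def:n1cat} demands of a $(2,1)$-category, genuinely fails. Your description of $1$-morphisms is also inaccurate: a $1$-simplex is a pair $(m,k)\in M\oplus N$ running from $k+g(m)$ to $k+f(m)$, and $k$ is not determined by the endpoints.

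What is missing is any work with the explicit model of Example~\ref{exmp:coeqN}. There $Z_n=M^{\oplus n}\oplus N$, the inner faces add adjacent $m_i$, and $\partial_0,\partial_n$ absorb $f(m_1)$, resp.\ $g(m_n)$, into $k$. Proposition~\ref{prop:weakskel} and Remark~\ref{rem:mindim1} work entirely by reading simplices off their faces; ``$M$ is a set rather than a space'' does not replace this.

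For (ii) the operative input is minimality in dimension $1$. Suppose $Z(f,g)$ is an $\infty$-category, and take two fillers of one $\Lambda^2_1$-horn. They share $m_1$ and $m_2$ (read off from $\partial_2$ and $\partial_0$), so they are $(m_1,m_2,k)$ and $(m_1,m_2,k')$, and their long edges $(m_1+m_2,k)$, $(m_1+m_2,k')$ are homotopic. But a $2$-simplex $(a,b,c)$ with degenerate $\partial_0=(b,f(a)+c)$ has $b=0$, hence $\partial_1=\partial_2$. So homotopic edges are equal, $k=k'$, and the fillers coincide. In dimensions $m\ge 3$ a simplex is determined by any inner horn: $\partial_0$ and $\partial_m$ give all the $m_l$, and a remaining inner face gives $k$. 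Hence $Z(f,g)$ is a $(1,1)$-category and equals $N(\Hot(Z(f,g)))$ by Proposition~\ref{prop:11cat}. For $g=0$, existence of composites is the explicit filler in Proposition~\ref{prop:weakskel}.

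\textbf{Part (iii) rests on a false implication.} For $g=0$ the morphisms $n\to n'$ are the $m\in M$ with $n'=n+f(m)$. For two such $m,m'$, cancellativity of $N$ only gives $f(m)=f(m')$, so it does not make morphisms unique. Likewise, negative-freeness of $M$ does not exclude loops, since a loop only yields $f(m+m')=0$. Both steps need $f$ injective, which the paper's proof assumes although the printed statement omits it. Without it the conclusion fails: $M=\bb{N}$, $N=0$, $f=g=0$ gives $Z=N(B\bb{N})$, whose realization is homotopy equivalent to $S^1$.

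Even for a poset, ``compute $\pi_0$ and show the higher homotopy groups vanish'' is not an argument, since $Z$ is not Kan and nerves of posets can have non-contractible components. What makes it work is that each component is directed. The morphisms $n\to n+f(m)$ and $n\to n+f(m')$ have the common target $n+f(m)+f(m')$, so zigzags reduce to cospans. Filtered categories have contractible nerves, as used in Theorem~\ref{thm:contractab}.

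Finally, your worry about the comparison step is reasonable: the paper computes the pushout directly with the constant $\underline{M}$, citing Proposition~\ref{prop:hosimpN}(iii) for cofibrancy of the cylinder inclusion. But $B_\bullet(N,M,\ast)$ is the wrong bisimplicial object: it involves only one action of $M$ on $N$ and is correct only when $g=0$. Use instead $(p,q)\mapsto P_p^{\oplus q}\oplus N$, with the $q$-direction faces of Example~\ref{exmp:coeqN} and the outer faces going through $P\to M$ followed by $f$, resp.\ $g$. Its diagonal is your pushout $N\amalg_{P\oplus P}(P\otimes_{\bb{N}}\bb{N}[\Delta^1])$. Each row maps to the constant $M^{\oplus q}\oplus N$ by a weak equivalence, because finite direct sums preserve weak equivalences. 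The diagonal lemma then identifies the two.
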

\begin{remark}
It is worth comparing Theorem~\ref{thm:main0} with the usual structure in abelian groups. 
Indeed, the homotopy coequaliser of two maps $f,g: M \to N$ is quasi-isomorphic to the $\hocoker(f-g)$, 
which is concentrated in degrees $0$ and $1$. 
Our theorem recovers this fact. 
In particular, (i) says that $\hocoeq(f,g)=Z(f,g)$ is a $(2,1)$ category, and we know that $Z(f,g)$ is a Kan complex, so (ii) says that $Z(f,g)$ 
is in fact a $(1,1)$ category. Therefore, $Z(f,g)$ is a $1$-groupoid, and in particular, is $1$-truncated (see \cite[3.5]{keradon}).
\end{remark}
To prove Theorem~\ref{thm:main0}, we need to recall the following definition:
\begin{definition}\label{def:n1cat}
Let $n$ be a positive integer. A simplicial set $S$ is an $(n,1)$-category if it satisfies the following condition for every pair of integers $0 < i < m$: 
the restriction map 
\[\Hom_{\text{Set}_{\Delta}}(\Delta^m, S) \to \Hom_{\text{Set}_{\Delta}}(\Lambda_i^m, S)\]
is surjective, and if $m > n$, is also injective. A simplicial set $S$ is weakly $n$-coskeletal if the restriction map
\[\Hom_{\text{Set}_{\Delta}}(\Delta^m, S) \to \Hom_{\text{Set}_{\Delta}}(\partial \Delta^m, S)\]
is bijective for $m \ge n+2$ and an injection for $m=n+1$.
\end{definition}
\begin{proposition}\label{prop:11cat}
A simplicial set $S$ is isomorphic to the nerve of a category $\cal{C}_0$ if and only if it is a $(1,1)$-category. 
In particular, $S=N(\Hot(S))$.
\end{proposition}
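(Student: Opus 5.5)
The plan is to prove both directions directly from Definition~\ref{def:n1cat} with $n=1$. In that case the condition says that for every $0<i<m$ the restriction $\Hom(\Delta^m,S)\to\Hom(\Lambda^m_i,S)$ is surjective, and for $m\ge 2$ it is also injective. In other words, every inner horn of dimension $\ge 2$ has a \emph{unique} filler.

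For the forward direction, start with the nerve $N(\cal{C}_0)$ of a category. Recall that $N(\cal{C}_0)$ is $2$-coskeletal and that an $m$-simplex is a string of $m$ composable arrows. An inner horn $\Lambda^m_i\to N(\cal{C}_0)$ with $0<i<m$ already contains every edge $j\to j+1$ when $m\ge 3$. When $m=2$ it contains the two edges $0\to1\to2$, and the unique filler is the composite. In all cases the filler is determined by the chain of consecutive edges, and the compatibility constraints hold automatically. This gives existence and uniqueness, so $N(\cal{C}_0)$ is a $(1,1)$-category. This is the standard argument of \cite[1.3]{keradon}, and I will cite it.

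For the converse, let $S$ be a $(1,1)$-category. Define $\cal{C}_0$ to have objects $S_0$ and morphisms $S_1$. The composite of $f\colon x\to y$ and $g\colon y\to z$ is the $d_1$-face of the unique filler of the horn $(g,-,f)\colon\Lambda^2_1\to S$. Identities are the degenerate edges $s_0x$, and the unit laws follow from uniqueness applied to the degenerate $2$-simplices $s_0f$ and $s_1f$. Associativity comes from uniqueness of fillers of $\Lambda^3_1$ or $\Lambda^3_2$: fill the three $2$-simplices defining $gf$, $hg$ and $h(gf)$, then fill the horn to a $3$-simplex, and read off $(hg)f=h(gf)$ from its remaining face.

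Next comes the comparison map $S\to N(\cal{C}_0)$, which sends an $m$-simplex to its spine of consecutive edges. This is well defined because the faces of a simplex compose correctly by construction. The main obstacle is showing this map is a bijection in every degree, that is, that each simplex of $S$ is determined by its spine and that every composable string is realized. I would argue by induction on $m$. For $m=2$, a $2$-simplex is the unique filler of its $\Lambda^2_1$-horn, so it is determined by its spine, and conversely every composable pair has such a filler. For $m\ge3$, take a composable string. By induction, the faces $d_j$ for $j\ne i$ (with a fixed $0<i<m$) are uniquely determined simplices, and they assemble into a horn $\Lambda^m_i\to S$. The key check is that these faces agree on overlaps, which again follows from the inductive hypothesis, since the overlaps are determined by their spines. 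That horn then has exactly one filler, which gives both surjectivity and injectivity in degree $m$. So $S\cong N(\cal{C}_0)$.

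Finally, for the identification $S=N(\Hot(S))$: the homotopy category of a quasicategory has morphisms equal to edges modulo the relation given by $2$-simplices $f\Rightarrow g$ with third edge degenerate. Uniqueness of fillers for $\Lambda^2_1$ forces such homotopic edges to coincide, because both fillers have the same horn. Hence $\Hot(S)=\cal{C}_0$, and therefore $S=N(\Hot(S))$.
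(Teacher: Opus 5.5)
Your argument is correct. The paper gives no argument of its own here: it simply cites Kerodon's characterization of nerves by unique inner horn fillers. What you have written is therefore a self-contained proof of the cited result rather than a competing strategy.

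In the forward direction, you lean on the standard fact that nerve simplices are determined by their spines. For the converse, you build $\cal{C}_0$ from $S_0$, $S_1$ and the unique $\Lambda^2_1$-fillers. You then prove by induction on $m$ that the spine map $S\to N(\cal{C}_0)$ is bijective, using that for $m\ge 3$ an $m$-simplex of the nerve is determined by its faces $d_j$ with $j\neq i$. This argument is sound, including the compatibility of the inductively produced faces on overlaps.

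Your final paragraph also makes explicit something the paper leaves implicit: $\Hot(S)$ equals $\cal{C}_0$ on the nose, because homotopic edges coincide by uniqueness of $\Lambda^2_1$-fillers.

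One small correction concerns the associativity step. There you use the \emph{existence} of a filler for $\Lambda^3_2$ (or $\Lambda^3_1$), together with \emph{uniqueness} of $\Lambda^2_1$-fillers to identify the remaining face as the $2$-simplex computing $(hg)f$. Uniqueness of $3$-dimensional fillers plays no role there. It is needed only for injectivity of the comparison map in degrees $m\ge 3$.
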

\begin{proof}
See \cite[Proposition 1.3.4.1]{keradon}.
\end{proof}
\begin{proposition}\label{prop:coeqN}
Let $f,g: X \to Y$ be two morphisms in $\simpmod{\bb{N}}$, and let $Z(f,g)$ fit into a pushout diagram in $\simpmod{\bb{N}}$:
\[\begin{tikzcd}
X \amalg X \arrow[d, "\text{id}_X \otimes {\bb{N}[\iota]}"] \arrow[r, "(f\text{,}g)"] & Y \arrow[d]\\
X \otimes_{\bb{N}} {\bb{N}[\Delta^1]}\arrow[r] & Z(f,g)
\end{tikzcd}\]
where $\iota: \partial \Delta^0 \hookrightarrow \Delta^1$ is the canonical inclusion as a sub-simplicial set. Then $Z(f,g) \simeq \hocoeq(f,g:X \to Y)$.
\end{proposition}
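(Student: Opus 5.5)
The strategy is to identify the displayed square as a homotopy pushout computing $\hocoeq(f,g)$, using left properness of the naive model structure. The homotopy coequalizer of $f,g:X\to Y$ is, by definition, the homotopy colimit of the diagram $X \rightrightarrows Y$. Equivalently, it is the homotopy pushout of
\[X\amalg X \xrightarrow{(f,g)} Y \quad\text{and}\quad X\amalg X \xrightarrow{\nabla} X,\]
where $\nabla$ is the fold map. This is the standard reduction of coequalizers to pushouts.

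\textbf{Step 1: factor the fold map.} We factor $\nabla$ as
\[X\amalg X = X\otimes_{\bb{N}}\bb{N}[\partial\Delta^1] \to X\otimes_{\bb{N}}\bb{N}[\Delta^1] \to X.\]
Here we use that $\bb{N}[-]$ sends coproducts of simplicial sets to direct sums, so $X\otimes_{\bb{N}}\bb{N}[\partial\Delta^1]\cong X\oplus X$, and that $\oplus$ is the coproduct in $\simpmod{\bb{N}}$. The second map is induced by $\Delta^1\to\Delta^0$.

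\textbf{Step 2: the second map is a weak equivalence.} I will show $X\otimes_{\bb{N}}\bb{N}[\Delta^1]\to X$ is a weak equivalence. It has a section induced by a vertex $\Delta^0\to\Delta^1$. The contraction of $\Delta^1$ onto that vertex, $\Delta^1\times\Delta^1\to\Delta^1$, induces a simplicial homotopy between the identity and the composite through $X$. Since $X\otimes_{\bb{N}}\bb{N}[\Delta^1]$ is levelwise $X(n)\otimes\bb{N}[\Delta^1(n)]$, this homotopy is levelwise and explicit. Such homotopies need not be invertible, since the underlying simplicial set is not Kan. However, a one-step simplicial homotopy still induces a homotopy of geometric realizations, so the map is a weak equivalence.

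\textbf{Step 3: the first map is a cofibration when $X$ is cofibrant.} The inclusion $\bb{N}[\iota]:\bb{N}[\partial\Delta^1]\to\bb{N}[\Delta^1]$ is a cofibration, being the image of a monomorphism under the left Quillen functor $F$ of Proposition~\ref{prop:hosimpN}(i). Tensoring with $X$ preserves cofibrations by Proposition~\ref{prop:hosimpN}(iii). This needs $X$ cofibrant, or at least the pushout-product axiom of the simplicial model structure with $\emptyset\to X$ a cofibration. So first I replace $X$ by a cofibrant $C(X)\to X$ and handle the comparison afterwards.

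\textbf{Step 4: conclude for cofibrant $X$.} The square is now a pushout along a cofibration. By left properness (part (iii) of the preceding proposition, or its part (ii)), it is a homotopy pushout. Hence $Z(f,g)\simeq\hocolim$ of the span $Y\leftarrow X\amalg X\to X$, which is $\hocoeq(f,g)$.

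\textbf{Step 5 (main obstacle): non-cofibrant $X$.} For general $X$, including the discrete monoids used in Theorem~\ref{thm:main0}, I must show that $Z(f,g)$ is unchanged up to weak equivalence on replacing $X$ by $C(X)$. First, $X\otimes\bb{N}[\Delta^1]$ and $X\amalg X$ are weakly equivalent to their $C(X)$ versions, by part (i) of the preceding proposition and Step 2. Second, the map $X\amalg X\to X\otimes\bb{N}[\Delta^1]$ is levelwise a direct summand inclusion. Levelwise, $\bb{N}[\Delta^1(n)]$ is free on $\Delta^1(n)$, which contains the two vertex-constant simplices. So $Z(f,g)$ is levelwise $Y(n)\oplus X(n)^{\oplus(\Delta^1(n)\setminus\partial\Delta^1(n))}$.

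I would then argue directly, via the diagonal lemma as in part (i) of the preceding proposition, that this levelwise-summand pushout is invariant under weak equivalences in $X$. The cleanest route is to write $Z(f,g)$ as the diagonal of a bisimplicial object and compare it with the cofibrant case degreewise. This comparison is the step I expect to require the most care.
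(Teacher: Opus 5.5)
Your proposal is correct, and its skeleton is the paper's: identify $\hocoeq(f,g)$ with the homotopy pushout of $Y\xleftarrow{(f,g)}X\amalg X\xrightarrow{\nabla}X$, factor $\nabla$ as $X\amalg X\to X\otimes_{\bb{N}}\bb{N}[\Delta^1]\to X$, and invoke left properness.

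The difference is that you do not assume the first map is a cofibration for arbitrary $X$. The paper does assume this, citing Proposition~\ref{prop:hosimpN}(iii). That citation is only valid for cofibrant $X$, and the claim genuinely fails otherwise. For $X=\underline{M}$, pushing out along $X\amalg X\to 0$ would give a cofibrant object whose $1$-simplices are $M$. Cofibrant objects are retracts of levelwise free ones, so $M$ would be a retract of a free commutative monoid, which is false for $M=\bb{Z}$. The paper applies the proposition to $X=\underline{M}$ with $M$ arbitrary in Theorem~\ref{thm:contractab} and Corollary~\ref{cor:main}, so your Step 5 is really needed.

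Step 5 closes in a few lines if you fix the $\Delta^1$-index rather than the $X$-index:
\begin{enumerate}
\item[(a)] $Z(f,g)$ is the diagonal of the bisimplicial object $(p,m)\mapsto Y(p)\oplus X(p)^{\oplus m}$, which is your levelwise pushout.
\item[(b)] On the $m$-th row, the map induced by $q\colon C(X)\to X$ is $\mathrm{id}_Y\oplus q^{\oplus m}$. This is a weak equivalence by part (i) of the left-properness proposition preceding Theorem~\ref{thm:main0}.
\item[(c)] The diagonal lemma then gives $Z(fq,gq)\simeq Z(f,g)$.
\item[(d)] Your Step 4 gives $Z(fq,gq)\simeq\hocoeq(fq,gq)$, and $\hocoeq(fq,gq)\simeq\hocoeq(f,g)$ because $q$ is a weak equivalence.
\end{enumerate}
Fixing the $X$-index instead only reduces you to the discrete, possibly non-cofibrant case, which is circular.

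Your Step 2 is also a sounder justification than the paper's reduction to $\underline{\Hom}_{\mathrm{Set}_\Delta}(\Delta^1,X)$, which is not the underlying simplicial set of $X\otimes_{\bb{N}}\bb{N}[\Delta^1]$. Your explicit simplicial homotopy, transported along the natural map $U(W)\times\Delta^1\to U(W\otimes_{\bb{N}}\bb{N}[\Delta^1])$, realizes to an honest homotopy.
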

\begin{proof}
Let $Z'=X \coprod^{\text{h}}_{\nabla,X \amalg X, (f,g)} Y$, note that $Z' \simeq \hocoeq(f,g:X \to Y)$. To show the claim, 
it suffices to show that $\text{id}_X \otimes \bb{N}[\iota]: X\amalg X \to X \otimes_{\bb{N}} \bb{N}[\Delta^1] $ is a cofibrant replacement for $\nabla: X \amalg X \to X$. This will follow from the following two claims:
\begin{enumerate}
\item[(i)] The canonical map $X \otimes_{\bb{N}} \bb{N}[\Delta^1] \to X$ induced by $\Delta^1 \to *$ is a homotopy equivalence.
\item[(ii)] The map $\text{id}_X \otimes \bb{N}[\iota]: X\amalg X \to X \otimes_{\bb{N}} \bb{N}[\Delta^1]$ is a cofibration.
\end{enumerate}
Note that (ii) follows from (i) and (iii) of Proposition~\ref{prop:hosimpN}, so it suffices to show (i). 
But this can be shown after applying $U$, so it suffices to prove a weak equivalence between the simplicial sets 
$\underline{\Hom}_{\text{Set}_{\Delta}}(\Delta^1, X)$ and $\underline{\Hom}_{\text{Set}_{\Delta}}(\Delta^0, X)$, which is clear as $\Delta^1 \to \Delta^0$ is a homotopy equivalence.
\end{proof}
\begin{example}\label{exmp:coeqN}
Consider the situation of Proposition~\ref{prop:coeqN} specialized to the case where $X=\underline{M}$ and $Y=\underline{N}$, where $M$ and $N$ are $\bb{N}$-modules. 
Then $Z(f,g)$, as a functor $\Delta^{op} \to \bb{N}$ (ignoring degeneracy maps), may be described as follows: $\forall (m_1,...,m_n) \in M^{\oplus n}, k \in N$
\begin{align*}
Z(f,g)([n]) &= M^{\oplus n} \oplus N, \; \; \forall [n] \in \ob(\Delta^{op}),\\
Z(f,g)(\partial_i)(m_1,...,m_n,k) &= (m_1,...,m_{i-1}, m_i + m_{i+1},...,m_n, k), \; \;  \forall 0 < i < n,\\
Z(f,g)(\partial_0) (m_1,...,m_n,k) &= (m_2,...,m_n, f(m_1)+k),\\
Z(f,g)(\partial_n) (m_1,...,m_n,k) &= (m_1,...,m_{n-1}, g(m_n)+k).
\end{align*}
$Z(f,g)$ has some nice features, as the next proposition shows.
\end{example}
\begin{proposition}\label{prop:weakskel}
Let $Z(f,g)$ be the simplicial set described in Example~\ref{exmp:coeqN}, i.e. $\forall (m_1,...,m_n) \in M^{\oplus n}, k \in N$
\begin{align*}
Z(f,g)([n]) &= M^{\oplus n} \oplus N, \; \; \forall [n] \in \ob(\Delta^{op}),\\
Z(f,g)(\partial_i)(m_1,...,m_n,k) &= (m_1,...,m_{i-1}, m_i + m_{i+1},...,m_n, k), \; \;  \forall 0 < i < n,\\
Z(f,g)(\partial_0) (m_1,...,m_n,k) &= (m_2,...,m_n, f(m_1)+k),\\
Z(f,g)(\partial_n) (m_1,...,m_n,k) &= (m_1,...,m_{n-1}, g(m_n)+k),
\end{align*}
where $M, N \in \bb{N}\mathrm{-mod}$ and $f,g: M \to N$ are morphisms in $\bb{N}\mathrm{-mod}$.\\
\indent Then for any $m \ge 3$ and $0 < i < m$, the restriction maps
\[a^m_i: \Hom_{\simpmod{\bb{N}}}(\Delta^m, Z(f,g)) \to \Hom_{\simpmod{\bb{N}}}(\Lambda_i^m, Z(f,g))\]
\[b^m: \Hom_{\simpmod{\bb{N}}}(\Delta^m, Z(f,g)) \to \Hom_{\simpmod{\bb{N}}}(\partial \Delta^m, Z(f,g))\]
are bijective, and injective for $m=2$.\\
\indent If $g=0$, then we also have bijectivity for $a^2_1$, so $Z(f,0) \simeq N(\Hot(Z(f,0)))$, 
and in particular, $Z(f,0)$ is an $\infty$-category. $\mathrm{Hot}(Z(f,0))$ can be described as follows:
\begin{itemize}
  \item $\ob(\Hot(Z(f,0))) = \{n| n'\in N\}$
  \item $\Mor(\Hot(Z(f,0)))(n,n') = \{m \in M | n'=n+f(m)\}$.
  \item The composition rule 
  \[\Mor(\Hot(Z(f,0)))(n',n'') \times \Mor(\Hot(Z(f,0)))(n,n') \to \Mor(\Hot(Z(f,0)))(n,n'')\]
  is described via addition on the commutative monoid $M$.
\end{itemize}
\end{proposition}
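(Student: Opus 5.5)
The plan is to prove everything by direct bookkeeping with the explicit face maps. The key observation is that an $n$-simplex $(m_1,\dots,m_n,k)$ can be read off from a small number of its faces.

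\textbf{Reconstruction from faces.} For $n\ge 2$ and any inner index $0<j<n$, the face $\partial_j$ leaves $k$ unchanged. The outer faces give the $M$-coordinates:
\[\partial_0 x=(m_2,\dots,m_n,f(m_1)+k),\qquad \partial_n x=(m_1,\dots,m_{n-1},g(m_n)+k).\]
So $\partial_n$ recovers $m_1,\dots,m_{n-1}$ and $\partial_0$ recovers $m_2,\dots,m_n$. Hence an $n$-simplex is determined by the triple $(\partial_0x,\partial_n x,\partial_jx)$ for any single inner $j$.

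\textbf{Injectivity.} For $m\ge 2$ the boundary $\partial\Delta^m$ contains $\partial_0$, $\partial_m$ and $\partial_1$, so $b^m$ is injective for all $m\ge 2$. For $m\ge 3$ and $0<i<m$, the horn $\Lambda^m_i$ still contains $\partial_0$, $\partial_m$ and some inner face $\partial_j$ with $j\ne i$, since there are at least two inner indices. This gives injectivity of $a^m_i$ for $m\ge 3$.

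\textbf{Surjectivity for $m\ge 3$.} Given a compatible family of faces $(y_l)_{l\ne i}$ on $\Lambda^m_i$, define a candidate
\[x=(m_1,\dots,m_m,k),\]
taking $m_1,\dots,m_{m-1}$ from $y_m$, taking $m_m$ as the last $M$-coordinate of $y_0$, and taking $k$ as the $N$-coordinate of an inner face $y_j$ with $j\ne i$.

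Two things then need checking.
\begin{enumerate}
\item The choices are consistent. Overlapping coordinates of $y_0$ and $y_m$ must agree, and different inner faces must have the same $N$-coordinate. Both follow from the compatibility relations $\partial_a y_b=\partial_{b-1}y_a$ for $a<b$, applied to codimension-two faces, together with the inner-face observation above: an inner face of a face preserves $k$ and merely sums adjacent $m$'s.
\item The identity $\partial_l x=y_l$ holds for every $l\ne i$. Each $y_l$ is again determined by its own $\partial_0$, last face and an inner face when $m-1\ge 2$. Those faces are codimension-two faces of $x$, which agree with the corresponding faces of the $y$'s by compatibility, so the reconstruction principle gives the equality.
\end{enumerate}
The same argument, with all faces given, yields surjectivity of $b^m$ for $m\ge 3$. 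Since a map out of $\partial\Delta^m$ determines the horn, bijectivity of $b^m$ then also follows from that of $a^m_i$.

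I expect the main obstacle to be this consistency bookkeeping, especially at $m=3$. There the faces are $1$-simplices $(m,k)$, whose $N$-coordinate is not preserved by any inner face. One must instead track vertices: the vertex $\partial_1$ of $(m,k)$ is $g(m)+k$, and the vertex $\partial_0$ is $f(m)+k$. I would handle $m=3$ by an explicit check and then run the general argument for $m\ge 4$.

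\textbf{The case $g=0$.} A horn $\Lambda^2_1$ consists of edges $y_2=(m_1,a)$ and $y_0=(m_2,b)$ with $\partial_0y_2=\partial_1y_0$, that is, $f(m_1)+a=b$. The simplex $x=(m_1,m_2,a)$ has
\[\partial_2x=(m_1,g(m_2)+a)=(m_1,a),\qquad \partial_0x=(m_2,f(m_1)+a)=(m_2,b),\]
so it fills the horn. Hence $a^2_1$ is surjective, and it is injective by the above. Together with the bijectivity for $m\ge 3$, this shows $Z(f,0)$ is a $(1,1)$-category. Proposition~\ref{prop:11cat} then gives $Z(f,0)\cong N(\Hot(Z(f,0)))$, and in particular $Z(f,0)$ is an $\infty$-category.

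\textbf{Description of $\Hot(Z(f,0))$.} The objects are the $0$-simplices, namely $N$. An edge $(m,n)$ has source $\partial_1=g(m)+n=n$ and target $\partial_0=f(m)+n$. Thus morphisms $n\to n'$ are the elements $m\in M$ with $n'=n+f(m)$. Composition is read off from the $2$-simplex $(m_1,m_2,n)$, whose inner face is $(m_1+m_2,n)$, so composition is addition in $M$.
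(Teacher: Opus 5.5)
\textbf{The $m=3$ horn case fails.} The step you defer, ``handle $m=3$ by an explicit check'', cannot be completed. For inner horns at $m=3$ the assertion is false in the stated generality; it holds only under an extra hypothesis such as $N$ cancellative or $g=0$.

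Take $\Lambda^3_1$ with faces $y_0=(a_1,a_2,\alpha)$, $y_2=(b_1,b_2,\beta)$, $y_3=(c_1,c_2,\gamma)$. The faces $y_2$ and $y_0$ force the only candidate $x=(b_1,a_1,a_2,\beta)$, and then $\partial_3x=y_3$ requires $\gamma=g(a_2)+\beta$. The only horn relations involving $\gamma$ are those for the pairs $(0,3)$ and $(2,3)$. They give merely $f(b_1)+\gamma=f(b_1)+g(a_2)+\beta$ and $g(a_1)+\gamma=g(a_1)+g(a_2)+\beta$. The relation that would pin $\gamma$ down, $\partial_1y_3=\partial_2y_1$, involves precisely the missing face. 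Symmetrically, for $\Lambda^3_2$ the $N$-coordinate of $y_0$ is determined only after cancelling $f(a_1)$ or $g(a_2)$.

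Here is a concrete counterexample. Take $M=\mathbb{N}$, $N=\mathbb{B}$ (so $1+1=1$) and $f=g\colon n\mapsto\min(n,1)$. The faces $y_0=(1,0,1)$, $y_2=(1,1,0)$, $y_3=(1,1,1)$ satisfy all three horn relations: the six faces involved all equal $(1,1)$. But $\partial_0x=y_0$ and $\partial_2x=y_2$ force $m_3=0$ and $k=0$, so the last coordinate of $\partial_3x$ is $g(0)+0=0\neq 1$.

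In the same example the $2$-simplices $(1,1,0)$ and $(1,1,1)$ have equal $\partial_0$ and $\partial_2$, so $a^2_1$ is not injective. You rightly proved only injectivity of $b^2$, but the statement claims it for $a^2_1$ as well.

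\textbf{What survives.} The rest of your plan is sound and needs no cancellation:
\begin{itemize}
\item injectivity of $b^m$ for $m\ge 2$ and of $a^m_i$ for $m\ge 3$;
\item surjectivity of $b^m$ for $m\ge 3$, by your direct argument with all faces given (not by deducing it from $a^3_i$). Here the relations for $(0,2)$ and $(1,m)$ carry $k$ to the $N$-coordinates of $y_0$ and $y_m$;
\item surjectivity of $a^m_i$ for $m\ge 4$. Some index in $\{2,\dots,m-1\}\setminus\{i\}$ and some index in $\{1,\dots,m-2\}\setminus\{i\}$ are always available for the same transport.
\end{itemize}

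The paper's proof is the same face-reconstruction argument and has the same hole. It writes $\partial_2x=(m_1,n)$, which silently sets $g=0$, and asserts that the lifts exist ``by the simplicial identities''. So the statement needs $N$ cancellative, or should be restricted to $g=0$, which is all that is used later.

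\textbf{The $g=0$ paragraph.} Your ``injective by the above'' does not follow from your reconstruction principle, since $\Lambda^2_1$ has no inner face. Use instead that $\partial_2x=(m_1,k)$ when $g=0$. More simply, when $g=0$ an $n$-simplex $(m_1,\dots,m_n,k)$ is exactly a composable chain $k\to k+f(m_1)\to\cdots\to k+f(m_1)+\dots+f(m_n)$ in the category described in the statement. So $Z(f,0)$ is literally the nerve of that category, with no horn computation needed.
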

\begin{proof}
Let us first assume $m\ge 3$. We note that any $x=(m_1,...,m_m, n) \in Z(f,g)([m])$ is determined by 
\begin{align*}
\partial_0 x &= (m_2,...,m_m, f(m_1)+n)\\
\partial_n x &= (m_1,...,m_{m-1}, g(m_m)+n)\\
\partial_j x &= (m_1,...,m_{j-1}, m_j + m_{j+1},...,m_m, n)
\end{align*}
for any $0 < j < m$ and $j \neq i$. Thus, injectivity is clear, and for surjectivity, forming a candidate for $x$ given 
$\partial_j x$ for $j \neq i$ is straightforward by these equations, and the fact that it is indeed a lift to $Z(f,g)([m])$ of the boundaries
follows from the simplicial identities in both situations. \\
\indent Now, let us assume $m=2$. For injectivity, let $x=(m_1,m_2,n) \in M_{\bb{Z}}^{\oplus 2} \oplus N_{\bb{Z}}$, then:
\[\partial_0 x = (m_2,f(m_1)+n), \partial_2 x  = (m_1, n),\]
so indeed, $x$ is determined by $\partial_0 x, \partial_2 x$ and thus injectivity for $m=2$.\\
\indent If $g=0$, then the simplicial identities imply surjectivity for $a^2_1$. To be explicit, given $(m_1,n_1), (m_2,n_2)$ 
with $f(m_2)+n_2=n_1$ forming an element $\Lambda^2_1 \in Z(f,g)([1])$, then $x=(m_2,m_1,n_2)$ is an element in $Z(f,g)([2])$ with
$\delta_0x=(m_1,n_1)$ and $\delta_2x=(m_2,n_2)$.\\
\indent Therefore, $Z(f,0)$ is a $(1,1)$-category (see Definition~\ref{def:n1cat}), and thus $Z(f,0) \simeq N(\Hot(Z(f,0)))$ by Proposition~\ref{prop:11cat}. 
The description of $\Hot(Z(f,0))$ is then straightforward to verify.
\end{proof}
\begin{remark}\label{rem:mindim1}
We note that the simplicial set $Z(f,g)$ of Proposition~\ref{prop:weakskel} is also minimal in dimension $1$ (see \cite[Definition 4.7.6.4]{keradon}). 
This amounts to the following verification: given $x,x' \in Z(f,g)([1])$, if there exists simplices $y,y' \in Z(f,g)([2])$ such that
\[\partial_0 y =x, \partial_2 y = s(\partial_1 x), \; \partial_2 y' =x', \partial_0 y' = s(\partial_0 x'), \; \partial_1 y = \partial_1 y',\]
then $x=x'$. In our case, we can just directly verify:
\[x=\partial_1 y = \partial_1 y' = x'.\]
\end{remark}
\begin{example}\label{exmp:notquasi}
One may ask whether all simplicial sets $Z(f,g)$ constructed in Example~\ref{exmp:coeqN} are $\infty$-categories. 
We will show that this is not the case by consider $M=N=\bb{N}$ and $f,g: \bb{N} \to \bb{N}$ given by $f(n)=2n$ and $g(n)=n$ (note that $\bb{B}= \coeq(f,g: \bb{N}\to \bb{N})$).\\
\indent In particular, we remark that there exists a morphism $n \to n' \in Z(f,g)([0])=\bb{N}$ if and only if $n \le n' \le 2n$. 
So for example, there is an inner horn $\Lambda^1_2$ with edges $ 1 \to 2 \to 3$ in $Z(f,g)([1])$, but since there is no morphism from $1 \to 3$, 
there cannot be a $2$-simplex in $Z(f,g)([2])$ filling this horn.
\end{example}
\begin{proof}[Proof of Theorem~\ref{thm:main0}]
Using Proposition~\ref{prop:coeqN}, Example~\ref{exmp:coeqN} and Proposition~\ref{prop:weakskel}, 
we find a simplicial set $Z(f,g)$ which is a $(2,1)$-category that is weakly $1$-coskeletal 
such that $Z(f,g) \simeq \hocoeq(f,g:M \to N)$.
The second part of Proposition~\ref{prop:weakskel} implies that $Z(f,g)$ is an $\infty$-category when $g=0$, 
and indeed, whenever $Z(f,g)$ is an $\infty$-category, $Z(f,g)$ is a $(1,1)$-category 
by \cite[Proposition 4.8.1.7]{keradon} and Remark~\ref{rem:mindim1}, so by \cite[Example 4.8.1.3]{keradon}, $Z(f,g) \simeq N(\Hot(Z(f,g)))$.\\
\indent Finally, when $g=0$, $N$ is cancellative and $f$ is injective, then every two objects of $\Hot(Z(f,0))$ are 
connected by a unique morphism, and if $M$ is negative-free,
then there are no non-trivial automorphisms, so $\Hot(Z(f,0))$ is a poset. There is then a 
contracting homotopy taking $N(\Hot(Z(f,0)))$ to its set of connected components, and thus 
$\hocoeq(f,0)=\coeq(f,0)$.
\end{proof}
As an immediate application of Theorem~\ref{thm:main0}, we have the following:
\begin{theorem}\label{thm:ZN}
We have 
\[\bb{Z} \simeq \hocoeq(\mathrm{diag},0: \bb{N} \to \bb{N} \oplus \bb{N}).\]
Therefore:
\[\bb{Z} \otimesl_{\bb{N}} \bb{Z} \simeq \bb{Z}[0].\]
\end{theorem}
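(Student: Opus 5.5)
The plan is to apply Theorem~\ref{thm:main0}(iii) with $M=\bb{N}$, $N=\bb{N}\oplus\bb{N}$, $f=\mathrm{diag}$ and $g=0$. Since $\bb{N}\oplus\bb{N}$ is cancellative and $\bb{N}$ is negative-free, part (iii) gives $\hocoeq(\mathrm{diag},0)=\coeq(\mathrm{diag},0)$. The strict coequalizer is the quotient of $\bb{N}\oplus\bb{N}$ by the congruence generated by $(a,a)\sim(0,0)$. The map $(a,b)\mapsto a-b$ identifies this quotient with $\bb{Z}$: it is surjective, and if $a-b=a'-b'$, say with $a'=a+c$ and $b'=b+c$, then $(a',b')=(a,b)+(c,c)\sim(a,b)$. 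This proves the first assertion.

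For the second assertion I would first identify $\hocoeq(\mathrm{diag},0)$ with the pushout of Proposition~\ref{prop:coeqN}. This uses a cofibrant model: $\underline{\bb{N}}=\bb{N}[\Delta^0]$ is cofibrant, so $\underline{\bb{N}}\amalg\underline{\bb{N}}\to\underline{\bb{N}}\otimes_{\bb{N}}\bb{N}[\Delta^1]$ is a cofibration between cofibrant objects. Also $\underline{\bb{N}\oplus\bb{N}}$ is free, hence cofibrant. So $Z=Z(\mathrm{diag},0)$ is a pushout of cofibrant objects along a cofibration, and is therefore cofibrant. It is weakly equivalent to $\underline{\bb{Z}}$, so we may compute
\[\bb{Z}\otimesl_{\bb{N}}\bb{Z}\simeq Z\otimes_{\bb{N}}\underline{\bb{Z}}.\]
The functor $-\otimes_{\bb{N}}\underline{\bb{Z}}$ is a left adjoint, so it commutes with pushouts and with $\otimes\bb{N}[\Delta^1]$. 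Hence $Z\otimes_{\bb{N}}\underline{\bb{Z}}$ is the analogous pushout $Z(\mathrm{diag}_{\bb{Z}},0)$ built from $\mathrm{diag},0:\bb{Z}\to\bb{Z}\oplus\bb{Z}$. Concretely, it is the simplicial object of Example~\ref{exmp:coeqN} with $M=\bb{Z}$ and $N=\bb{Z}^2$.

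This object is a simplicial abelian group. Under Dold--Kan it corresponds to the two-term complex $\bb{Z}\xrightarrow{\mathrm{diag}}\bb{Z}^2$, i.e.\ the usual bar model for the homotopy cokernel. Because $\mathrm{diag}$ is injective with cokernel $\bb{Z}$, the homotopy is $\bb{Z}$ in degree $0$ and vanishes elsewhere. As a cross-check, Theorem~\ref{thm:main0}(iii) applies again here: $\bb{Z}^2$ is cancellative, and although $\bb{Z}$ is not negative-free, $\Hot(Z(f,0))$ is a groupoid with trivial automorphism groups because $f$ is injective. Hence $Z\otimes\underline{\bb{Z}}\simeq\bb{Z}[0]$.

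The main obstacle is justifying the homotopy invariance. Specifically, we need that tensoring the cofibrant model $Z$ with $\underline{\bb{Z}}$ computes the derived tensor product, which relies on Proposition~\ref{prop:hosimpN}(ii),(iii) together with the cofibrancy of $Z$. We also need the identification of $Z(\mathrm{diag}_{\bb{Z}},0)$ with the Dold--Kan image of the two-term complex, or alternatively a direct contraction of the nerve of its homotopy category, which is a contractible-component groupoid. I would verify this contraction directly via Proposition~\ref{prop:weakskel}: its morphisms are $\Mor(n,n')=\{m\in\bb{Z}: n'=n+(m,m)\}$, which has at most one element, so each component is contractible and $\pi_0=\bb{Z}$.
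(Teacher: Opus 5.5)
Your proposal is correct and follows the paper's own argument. Theorem~\ref{thm:main0}(iii) identifies $\hocoeq(\mathrm{diag},0)$ with the strict coequalizer $\bb{Z}$, and $-\otimesl_{\bb{N}}\bb{Z}$ then carries this to $\hocoeq(\mathrm{diag},0:\bb{Z}\to\bb{Z}\oplus\bb{Z})\simeq\bb{Z}[0]$. The only difference is that you make explicit a step the paper asserts in one line, namely that the derived tensor product commutes with homotopy coequalizers: you do this via the cofibrancy of the pushout model $Z(\mathrm{diag},0)$ and its identification with $\DK(\bb{Z}\xrightarrow{\mathrm{diag}}\bb{Z}^2)$, as in Corollary~\ref{cor:DKcoeq}.
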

\begin{proof}
The conditions of Theorem~\ref{thm:main0} (iii) are satisfied, so we have:
\[\hocoeq(\mathrm{diag},0: \bb{N} \to \bb{N} \oplus \bb{N}) \simeq \coeq(\mathrm{diag},0: \bb{N} \to \bb{N} \oplus \bb{N}) \simeq \bb{Z},\]
Since the functor $-\otimesl_{\mathbb{N}} \mathbb{Z}$ commutes with homotopy coequalizers:
\[\bb{Z} \otimesl_{\bb{N}} \bb{Z} \simeq \hocoeq(\mathrm{diag},0: \bb{N} \to \bb{N} \oplus \bb{N}) \otimesl_{\bb{N}} \bb{Z}
\simeq \hocoeq(\mathrm{diag},0: \bb{Z} \to \bb{Z} \oplus \bb{Z}) \simeq \bb{Z}[0],\]
as desired.
\end{proof}
It is also interesting to relate the explicit construction of $Z(f,g)$ in Example~\ref{exmp:coeqN} to other potential constructions.
\begin{corollary}\label{cor:DKcoeq}
Let $f,g: X \to Y$ be two maps in $\simpmod{\bb{N}}$. Let $\Delta^{nd}_{\le 1}$ denote the subcategory of $\Delta_{\le 1}$ 
where we do not include the degeneracy map $[1] \to [0]$. 
Consider the canonical inclusion of categories $i:\Delta^{nd,op}_{\le 1} \to \Delta^{op}$, 
and let $F:\Delta^{nd,op}_{\le 1} \to \simpmod{\bb{N}}$ be the functor representing $f,g:X \to Y$.\\
\indent If $F'$ is the homotopy left Kan extension of $F$ along $i$, then $F' \simeq \hocoeq(f,g: X \to Y)$. 
Moreover, if $g=0$ and $X=\underline{X([0])}, Y=\underline{Y([0])}$ then $F'\simeq \DK(f: X[0] \to Y[0])$.
\end{corollary}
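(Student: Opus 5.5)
The plan is to compute the homotopy left Kan extension $F'$ explicitly via the pointwise (Bousfield--Kan) formula and compare it with the cofibrant pushout $Z(f,g)$ of Proposition~\ref{prop:coeqN}. The category $\Delta^{nd,op}_{\le 1}$ is the parallel-pair category with objects $[0],[1]$ and two maps $\partial_0,\partial_1:[1]\to[0]$ in $\Delta^{op}$, with $F([1])=X$, $F([0])=Y$, and $F(\partial_0),F(\partial_1)$ equal to $f,g$. Since $\simpmod{\bb{N}}$ is a simplicial model category (Proposition~\ref{prop:hosimpN}(iii)), homotopy left Kan extensions exist. Because the colimit of a diagram $\Delta^{op}\to\simpmod{\bb{N}}$ is just its value at $[0]$ (the object $[0]$ is terminal in $\Delta$, hence initial in $\Delta^{op}$), I expect that ``$F'$'' is to be read through its homotopy colimit, i.e.\ its geometric realization $|F'|$. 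The first claim then reduces to
\[\hocolim_{\Delta^{op}} i_!^{h}F \simeq \hocolim_{\Delta^{nd,op}_{\le 1}} F = \hocoeq(f,g),\]
which is transitivity of homotopy left Kan extensions, $\hocolim\circ i^h_! \simeq \hocolim$. I would justify this by deriving the composite of left Quillen functors in the projective model structures, which exist because $\simpmod{\bb{N}}$ is cofibrantly generated.

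For the second statement, I would instead write down a concrete model of $F'$. First replace $F$ by a projectively cofibrant diagram: when $X,Y$ are cofibrant constants, it suffices to make $(f,g):X\amalg X\to Y$ a cofibration up to the cylinder used in Proposition~\ref{prop:coeqN}. Then compute the strict left Kan extension by the pointwise formula
\[(i_!F)([n])=\colim_{([n]\to i(c))\in ([n]\downarrow i)} F(c),\]
using the comma categories of $i$ in $\Delta^{op}$. Maps $[n]\to[0]$ in $\Delta^{op}$ form a single element, while maps $[n]\to[1]$ in $\Delta^{op}$ are the maps $[1]\to[n]$ in $\Delta$. So the $n$-th term is a quotient of $Y\oplus\bigoplus_{[1]\to[n]}X$, where the degenerate maps $[1]\to[n]$ are identified with $Y$ via $f,g$. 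When $g=0$ and $X,Y$ are constant, I expect these identifications to leave exactly the non-degenerate edges $j\to j+1$ in the normalized form, giving $X^{\oplus n}\oplus Y$ in degree $n$. This matches Example~\ref{exmp:coeqN}, whose face maps (summing adjacent entries, with $\partial_0$ applying $f$ and $\partial_n$ applying $g=0$) are exactly those of $\DK(f:X[0]\to Y[0])$, the Dold--Kan construction of a two-term complex.

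The main obstacle is the second step. Outside the $g=0$, constant case, the strict Kan extension of a non-cofibrant $F$ need not be homotopically correct, and the comma category $([n]\downarrow i)$ is not filtered, so the pointwise colimit does not obviously give $X^{\oplus n}\oplus Y$ rather than some larger quotient. I would first try to show directly that $Z(f,g)$ from Example~\ref{exmp:coeqN}, restricted along $i$, recovers $F$, and that the induced map $i_!F\to Z(f,g)$ is levelwise an isomorphism. If that works, combining it with Proposition~\ref{prop:coeqN} gives both assertions at once, using the identification of $Z(f,0)$ with $\DK$ noted above.
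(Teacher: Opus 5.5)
Your argument for the first claim is the paper's: you read $F'$ through $\hocolim_{\Delta^{op}}$ and use $\hocolim_{\Delta^{op}}\circ\bb{L}i_!\simeq\hocolim_{\Delta^{nd,op}_{\le 1}}$. The aside you give to motivate that reading is false, though harmless. Over a category with an initial object it is the \emph{limit} that equals the value there; the colimit of a simplicial object is the coequalizer of $d_0,d_1$. The real reason is simply that $F'$ is a simplicial object of $\simpmod{\bb{N}}$, while $\hocoeq(f,g)$ is an object of it.

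The gap is in the second claim.
\begin{itemize}
\item Your pointwise formula has the wrong variance. A left Kan extension is a colimit over $(i\downarrow[n])$, whose objects are arrows $i(c)\to[n]$ in $\Delta^{op}$, i.e.\ maps $[n]\to c$ in $\Delta$. The category $([n]\downarrow i)$ you use indexes the right Kan extension, and a colimit over it is contravariant in $[n]\in\Delta^{op}$.
\item Your counts are inconsistent: arrows $[n]\to[0]$ in $\Delta^{op}$ are the $n+1$ vertices, not a single element.
\item The hoped-for collapse of the $\binom{n+2}{2}$ edges $[1]\to[n]$ to the $n$ spine edges is unjustified. There are no arrows between edge objects in that comma category, so nothing relates the edge $0\to 2$ to the edges $0\to1$ and $1\to2$.
\end{itemize}
So the step you yourself call the main obstacle is left unproved. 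Your fallback is only conditional, and it is misstated: $i^*Z(f,g)$ is $X\oplus Y\rightrightarrows Y$, $(m,k)\mapsto f(m)+k,\ g(m)+k$, not $F$. There is only a map $F\to i^*Z(f,g)$, adjoint to $i_!F\to Z(f,g)$.

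There are two quick repairs.
\begin{itemize}
\item The paper's route needs no levelwise computation. With your own reading of $F'$, the first claim and Proposition~\ref{prop:coeqN} give $|F'|\simeq\hocoeq(f,0)\simeq Z(f,0)$. Moreover $Z(f,0)$ is literally $\DK(f\colon X[0]\to Y[0])$ (levels $M^{\oplus n}\oplus N$, same face maps), an identification you already noted.
\item If you want the stronger levelwise statement, use the correct comma category. $(i\downarrow[n])$ consists of the unique map $[n]\to[0]$ and the $n+2$ monotone maps $\beta\colon[n]\to[1]$ (maps in $\Delta$). Only the two constant $\beta$ admit an arrow to the object over $[0]$, one each, via $\delta^0$ and $\delta^1$ respectively. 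The $n$ surjective $\beta$ are isolated.
\end{itemize}
With the correct comma category, $(i_!F)([n])=X^{\oplus n}\oplus\colim(X\to Y\leftarrow X)=X^{\oplus n}\oplus Y$ on the nose. The faces, read off from $\beta\circ\delta^i$, are exactly those of Example~\ref{exmp:coeqN}. The diagonal of $i_!F$ is precisely the pushout $Z(f,g)$ of Proposition~\ref{prop:coeqN}, for arbitrary $f,g$.

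This formula also removes your worry about non-cofibrant $F$ (discrete $\underline{M}$ are usually not cofibrant anyway). Since $\oplus$ is also the product, and products of weak equivalences of simplicial sets are weak equivalences, the formula sends levelwise weak equivalences of diagrams to levelwise weak equivalences. Hence $i_!F\simeq\bb{L}i_!F$ without any cofibrant replacement.
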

\begin{proof}
A defining property of homotopy left Kan extensions is that 
\[\hocolim_{\Delta^{nd}_{\le 1}} F = \hocolim_{\Delta^{op}} F',\]
so the first claim follows directly. For the second claim, by Proposition~\ref{prop:coeqN} and Example~\ref{exmp:coeqN}, 
we have an explicit description of $\hocoeq(f,g: X \to Y)$ using the simplicial set $Z(f,0)$ defined there, and in this case, 
one directly verifies that as simplicial sets, $Z(f,0)$ and $\DK(f: X[0]\to Y[0])$ are the same, resolving the second claim.
\end{proof}
The following theorem is rather surprising.
\begin{theorem}\label{thm:contractab}
Let $\iota_M: M \to M \otimes_{\bb{N}} \bb{Z}$ be the canonical map of a commutative monoid $M \in \mod{\bb{N}}$ 
to its groupification. Then $\hocoeq(\iota_M,0: M \to M \otimes_{\bb{N}} \bb{Z})$ is contractible.
\end{theorem}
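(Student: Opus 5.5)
The plan is to compute the homotopy coequalizer with the explicit model of Proposition~\ref{prop:coeqN} and Example~\ref{exmp:coeqN}, and then show that the resulting simplicial set is the nerve of a filtered category. Take $f=\iota_M$, $g=0$, $X=\underline{M}$ and $Y=\underline{N}$ with $N=M\otimes_{\bb{N}}\bb{Z}=M^{\mathrm{gp}}$. Proposition~\ref{prop:coeqN} then gives $\hocoeq(\iota_M,0)\simeq Z(\iota_M,0)$. Since $g=0$, Proposition~\ref{prop:weakskel} identifies $Z(\iota_M,0)$ with $N(\cal{C})$, where $\cal{C}=\Hot(Z(\iota_M,0))$ is described as follows:
\begin{itemize}
\item its objects are the elements $n\in M^{\mathrm{gp}}$;
\item its morphisms $n\to n'$ are the elements $m\in M$ with $n'=n+\iota_M(m)$;
\item composition is addition in $M$.
\end{itemize}
Weak equivalences in $\simpmod{\bb{N}}$ are detected on underlying simplicial sets. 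It therefore suffices to show that $N(\cal{C})$ is weakly contractible, i.e.\ that $|N(\cal{C})|$ is contractible.

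The key step is to verify that $\cal{C}$ is a nonempty filtered category. It is nonempty because $0\in M^{\mathrm{gp}}$.

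First, any two objects admit a common target. Given $n,n'\in M^{\mathrm{gp}}$, every element of the groupification is a difference of elements of $M$, so $n'-n=\iota_M(a)-\iota_M(b)$ for some $a,b\in M$. Then $n+\iota_M(a)=n'+\iota_M(b)$, so $a\colon n\to n+\iota_M(a)$ and $b\colon n'\to n+\iota_M(a)$ are morphisms to a common object.

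Second, any two parallel morphisms are equalized by a further morphism. Suppose $m,m'\colon n\to n'$. Then $\iota_M(m)=\iota_M(m')$, and by the standard description of the groupification this means there is some $k\in M$ with $m+k=m'+k$. The morphism $k\colon n'\to n'+\iota_M(k)$ then satisfies $k\circ m=k\circ m'$, because composition is addition.

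It is then a standard fact that a nonempty filtered category has contractible nerve. For instance, $N(\cal{C})$ is the filtered colimit of the nerves of its finite subdiagrams. Each such finite subdiagram maps into a cone in $\cal{C}$, and a category with a cone point has contractible nerve. Hence every map from a sphere into $|N(\cal{C})|$ is nullhomotopic, and so $|N(\cal{C})|$ is weakly contractible. Combining this with $\hocoeq(\iota_M,0)\simeq N(\cal{C})$ gives the theorem.

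The main obstacle is the second filtered condition. This is exactly where groupification, rather than an arbitrary target, matters: it is the statement that $\iota_M$ identifies precisely the pairs of elements that become equal after adding a common element of $M$. This should be checked directly from the construction of $M\otimes_{\bb{N}}\bb{Z}$ as pairs $(a,b)$ modulo $(a,b)\sim(a',b')$ if and only if $a+b'+k=a'+b+k$ for some $k\in M$. A smaller point to watch is the identification of $Z(\iota_M,0)$ with the nerve $N(\Hot(Z(\iota_M,0)))$; this is supplied by Proposition~\ref{prop:weakskel} because $g=0$, so no further work is needed there.
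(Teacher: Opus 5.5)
Your proposal is correct and follows the same route as the paper. You model $\hocoeq(\iota_M,0)$ by $Z(\iota_M,0)=N(\scr{C})$ via Propositions~\ref{prop:coeqN} and~\ref{prop:weakskel}, show $\scr{C}$ is filtered (your equalizing step is exactly the paper's), and conclude that the nerve is contractible.

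Your common-target argument, writing $n'-n=\iota_M(a)-\iota_M(b)$, is actually better than the paper's. The paper relies on the claim that every element of $M\otimes_{\bb{N}}\bb{Z}$ or its negative lies in $\im(\iota_M)$, which fails already for $M=\bb{N}^{\oplus 2}$ and the element $(1,-1)$, so your version repairs that step.
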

\begin{proof}
By Theorem~\ref{thm:main0}, we know that $\hocoeq(\iota_M,0: M \to M \otimes_{\bb{N}} \bb{Z}) \simeq N(\scr{C})$ 
where $\scr{C}$ with objects $M \otimes_{\bb{N}} \bb{Z}$ and morphisms $n \mapsto n + i(m)$ for $m \in M$ (see Proposition~\ref{prop:weakskel}). 
We claim that $\scr{C}$ is a filtered category,
which will be enough to prove the desired contractibility of $N(\scr{C})$ (see \cite[Proposition 5.3.1.18]{htt}). We note the following trivial observation: 
for any $m \in M \otimes_{\bb{N}} \bb{Z}$, either $m$ or $-m \in \im(\iota_M)$.\\
\indent First, we show that for any two arrows $f,g: X \to Y \in \scr{C}$, there is a $Z \in \scr{C}$ and a map $h:Y \to Z$ such that $hf=hg$.\\
\indent The two maps $f,g$ may be represented as maps $n \to n + \iota_M(m)$ and $n \to n+\iota_M(m')$, 
where $n=X \in M \otimes_{\bb{N}} \bb{Z}$ and $m,m' \in M$, 
and we have the relation $n+\iota_M(m) = n + \iota_M(m')$ so $ \iota_M(m)  = \iota_M(m')$. Therefore, there exists a $k \in M$ 
such that $k + m' = k+m$. 
Take $Z = n + \iota_M(k + m) = n+\iota_M(k + m')$, and the map $Y \to Z$ given by $Y \to Y + \iota_M(k)$.\\
\indent Next we show that for any two elements $X, Y \in \scr{C}$, there is a third $Z \in \scr{C}$ and maps $X \to Z \leftarrow Y$.\\
\indent We represent $X, Y$ by two elements $n, n' \in M \otimes_{\bb{N}} \bb{Z}$. 
If $n$ and $n'$ are both in the image of $\iota_M$, so that $n = \iota_M(m)$ and $n' = \iota_M(m')$, 
then we take $Z = \iota_M(m + m')$. On the other hand, if $X$ is not in the image of $\iota_M$, then $-n=\iota_M(m)$, 
so there is an arrow $X \to 0$ given by $n \mapsto n + \iota_M(m)$. If $Y$ is in the image of $\iota_M$, then there is an arrow $X \to Y$ so we set $Z=Y$,
and otherwise, we set $Z=0$.\\
\indent These two facts combined are enough to show the claim.
\end{proof}
\begin{corollary}\label{cor:main}
For any $M \in \bb{N}\mathrm{-mod}$, we have $M \otimesl_{\bb{N}} \bb{Z} \simeq M \otimes_{\bb{N}} \bb{Z}[0]$.
\end{corollary}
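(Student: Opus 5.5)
Since $\otimesl_{\bb{N}}$ is symmetric, we may compute $M \otimesl_{\bb{N}} \bb{Z}$ as $\bb{Z} \otimesl_{\bb{N}} M$, resolving $\bb{Z}$ instead of $M$. By Theorem~\ref{thm:ZN} (via Theorem~\ref{thm:main0}(iii)), $\bb{Z} \simeq \hocoeq(\mathrm{diag},0: \bb{N} \to \bb{N}\oplus\bb{N})$. By Proposition~\ref{prop:hosimpN}(ii), $-\otimesl_{\bb{N}} M$ is a left Quillen functor, so it commutes with homotopy colimits and in particular with homotopy coequalizers. Since $\bb{N}$ and $\bb{N}\oplus\bb{N}$ are free, hence cofibrant, tensoring with them is underived. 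This gives
\[\bb{Z} \otimesl_{\bb{N}} M \simeq \hocoeq(\mathrm{diag}_M, 0 : M \to M \oplus M).\]

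The plan is to identify this homotopy coequalizer with the discrete module $M \otimes_{\bb{N}} \bb{Z}[0]$. I would first change coordinates: the automorphism $(a,b)\mapsto (a, a+b)$ is not available without inverses, so instead I would work directly with the explicit model $Z(\mathrm{diag}_M,0)$ from Proposition~\ref{prop:weakskel}. That model is the nerve of the category $\scr{C}$ with objects $M\oplus M$ and morphisms $(a,b)\to(a+m,b+m)$ for $m\in M$. Its connected components are the classes of $(a,b)$ under the relation generated by adding the diagonal, namely the groupification $M\otimes_{\bb{N}}\bb{Z}$, via $(a,b)\mapsto a-b$. It therefore suffices to show that each connected component of $N(\scr{C})$ is contractible.

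For a fixed component over $x\in M\otimes_{\bb{N}}\bb{Z}$, I would show the corresponding full subcategory $\scr{C}_x$ is filtered, mimicking the proof of Theorem~\ref{thm:contractab}, and then conclude contractibility by \cite[Proposition 5.3.1.18]{htt}.
\begin{itemize}
\item \emph{Two objects.} Given $(a,b)$ and $(a',b')$ in $\scr{C}_x$, both map to $(a+a'+k,\,b+a'+k)=(a'+a+k,\,b'+a+k)$ for suitable $k$. The needed equality $b+a'+k=b'+a+k$ holds for some $k$ precisely because $a-b=a'-b'$ in the groupification.
\item \emph{Parallel arrows.} Given $m,m'$ with $(a+m,b+m)=(a+m',b+m')$, we have $\iota(m)=\iota(m')$ after cancellation up to some $k$. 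Post-composing with the arrow given by adding $k$ equalizes them, exactly as in Theorem~\ref{thm:contractab}.
\end{itemize}

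Alternatively, one can reduce to Theorem~\ref{thm:contractab} by comparing $\scr{C}$ with the category built from $M\to M\otimes\bb{Z}$, though the direct filteredness argument seems cleaner.

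The main obstacle is the middle step: checking that the identification $\bb{Z}\otimesl_{\bb{N}}M\simeq\hocoeq(\mathrm{diag}_M,0)$ is legitimate. This needs that the derived tensor is computed by the Proposition~\ref{prop:coeqN} model with $X=\underline{M}$, which may not be cofibrant. I would handle this using left properness: the pushout in Proposition~\ref{prop:coeqN} is along the cofibration $\text{id}\otimes\bb{N}[\iota]$, which is preserved under replacing $M$ by a cofibrant resolution. Alternatively, one can note directly that $C(\bb{Z})\otimes_{\bb{N}} M$ is the pushout $Z(\mathrm{diag}\otimes M,0)$, since $Z(\mathrm{diag},0)$ is built from free modules and the tensor product commutes with colimits levelwise.
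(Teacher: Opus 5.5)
Your argument is correct, but it is organized differently from the paper's. The paper never computes $M\otimesl_{\bb{N}}\bb{Z}$ directly. It proves Theorem~\ref{thm:contractab}, which is filteredness of the category with objects $M\otimes_{\bb{N}}\bb{Z}$ and arrows $n\to n+\iota_M(m)$. It then tensors that contractible homotopy coequalizer with $\bb{Z}$, and uses $\bb{Z}\otimesl_{\bb{N}}\bb{Z}\simeq\bb{Z}$ from Theorem~\ref{thm:ZN} to see that the homotopy cokernel of $\iota_M\colon M\otimesl_{\bb{N}}\bb{Z}\to M\otimes_{\bb{N}}\bb{Z}$ vanishes. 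That route is modular. However, its last step ("vanishing cokernel implies equivalence") silently uses that both sides are grouplike; Theorem~\ref{thm:contractab} with $M=\bb{N}$ shows the step fails for $\bb{N}\to\bb{Z}$.

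You instead tensor the explicit resolution $Z(\mathrm{diag},0)$ of $\bb{Z}$ with $M$ and run the filteredness argument on each component of $Z(\mathrm{diag}_M,0)$. This avoids that issue and exhibits the equivalence concretely as the augmentation to $\pi_0=M\otimes_{\bb{N}}\bb{Z}$. For $M=\bb{N}$ it even reproves the first half of Theorem~\ref{thm:ZN}.

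I would make your ``alternatively'' remark the main line. $Z(\mathrm{diag},0)$ is cofibrant, being a pushout of $\bb{N}[\partial\Delta^1]\to\bb{N}[\Delta^1]$ along a map to the free module $\bb{N}\oplus\bb{N}$. Since $-\otimes_{\bb{N}}M$ preserves pushouts, $Z(\mathrm{diag},0)\otimes_{\bb{N}}M=Z(\mathrm{diag}_M,0)$ on the nose. So you need neither Proposition~\ref{prop:coeqN} for the non-cofibrant $\underline{M}$ nor left properness.

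The one point you still need to justify is the balancing $M\otimesl_{\bb{N}}\bb{Z}\simeq\bb{Z}\otimesl_{\bb{N}}M$, because the paper's $\otimesl$ resolves only the first variable. It holds because cofibrant objects are retracts of levelwise free ones. Tensoring with a levelwise free object preserves all weak equivalences: direct sums preserve weak equivalences, and then the diagonal lemma applies.
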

\begin{proof}
By Theorem~\ref{thm:contractab} we have:
\[\hocoeq(\iota_M,0: M \to M \otimes_{\bb{N}} \bb{Z}) \simeq 0.\]
By Theorem~\ref{thm:ZN}, we have:
\begin{align*}
\hocoker(\iota_M: M \otimesl_{\bb{N}} \bb{Z} \to M \otimes_{\bb{N}} \bb{Z}) & \simeq \hocoeq(\iota_M,0: M \otimesl_{\bb{N}} \bb{Z} \to M \otimes_{\bb{N}} \bb{Z} \otimesl_{\bb{N}} \bb{Z})\\
&\simeq \hocoeq(\iota_M,0: M \to M \otimes_{\bb{N}} \bb{Z}) \otimesl_{\bb{N}} \bb{Z}\\
&\simeq 0.
\end{align*}
Thus, the map $\iota_M: M \otimesl_{\bb{N}} \bb{Z} \to M \otimes_{\bb{N}} \bb{Z}$ is a weak equivalence, as desired.
\end{proof}
\section{Flat topology for semiring schemes}
The previous section showed that homotopy theory is not enough to understand the algebraic structure of flatness. We thus now
turn to studying flatness in a more algebro-geometric manner. 
\begin{definition}[Zariski topology on semirings]
For a semiring $R$, an \textit{ideal} of $R$ will just be an $R$-submodule. We say an $R$-submodule $\ideal{p} \subset R$ is \textit{prime} if $R \setminus \ideal{p}$ is a multiplicatively closed subset. 
Furthermore, for any multiplicatively closed subset $S \subset R$, we may define the 
\textit{localization} semiring $S^{-1}R$ in the usual way: elements of $S^{-1}R$ are represented as `fractions' $r/a$ 
under the equivalence relation 
\[r/s \sim r'/t \Longleftrightarrow \exists q \in S \text{ such that } qtr = qsr'.\]
We may then define the semiring $R[1/a]$ for any $a \in R$ as the localization along the multiplicatively closed subset $\{1,a,a^2,...\} \subset R$.\\
\indent Let $\text{Spec}(R)$ denote the topological space whose points are prime ideals $\ideal{p} \subset \text{Spec}(R)$ equipped with the \textit{Zariski} topology, 
the topology generated by the subsets $D(a) = \{\ideal{p} \cap R \; | \; \ideal{p} \subset \text{Spec}(R[1/a])\}$.\\
\indent A semiring map $f: A \to B$ induces a map $\spec{f}: \spec{B} \to \spec{A}$ via the association $\ideal{q} \mapsto f^{-1}(\ideal{q})$.\\
\indent A semiring $A$ is \textit{local} if the following equivalent conditions hold:
\begin{enumerate}
\item[(a)] $\spec{A}$ has a unique closed point.
\item[(b)] $A$ has a unique maximal ideal.
\item[(c)] There is a proper ideal $\ideal{m} \subset A$ such that every element in $A \setminus \ideal{m}$ is invertible.
\end{enumerate}
One may check the equivalence of these conditions as in classical commutative algebra.
\end{definition}
\begin{proposition}\label{prop:spec}
Let $A\to B$ be a map of semirings. The following hold:
\begin{enumerate}
\item[(i)] Closed subsets of $\spec{A}$ correspond to ideals $I \subset A$ via the association $I \mapsto V(I) = \{\ideal{p} \; | \; I \subset \ideal{p}\}$.
\item[(ii)] $\spec{A}$ is a spectral space, and therefore has an associated constructible topology.
\item[(iii)] The map $\spec{B} \to \spec{A}$ induced by $A \to B$ is continuous with respect to the Zariski topology, and is spectral.
\end{enumerate}
\end{proposition}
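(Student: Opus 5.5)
The plan is to mimic the classical arguments for commutative rings, checking at each step that only the multiplicative structure and the additive monoid structure are used, never subtraction.

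For (i), I will show that the $D(a)$ form a basis closed under finite intersection, with $D(a)\cap D(b)=D(ab)$, because $\ideal{p}$ is prime exactly when $R\setminus\ideal{p}$ is multiplicatively closed. The first thing to verify is that $D(a)$, defined as the image of $\spec{R[1/a]}$, coincides with $\{\ideal{p} : a\notin\ideal{p}\}$. To do this I will extend a prime $\ideal{p}$ with $a\notin\ideal{p}$ to the prime $\{r/a^n : r\in\ideal{p}\}$ of $R[1/a]$, and check that this set is well defined and has multiplicatively closed complement using the equivalence relation $qtr=qsr'$. Consequently the closed sets are the intersections of complements, namely $\{\ideal{p} : a_i\in\ideal{p}\ \forall i\}=V(I)$, where $I$ is the ideal generated by the $a_i$. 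Conversely, $V(I)=\bigcap_{a\in I}V(a)$, and since ideals are just submodules, membership of generators is enough.

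For (ii), I will apply Hochster's characterization of spectral spaces, and follow the standard proof for rings.
\begin{enumerate}
\item[(a)] Quasi-compactness of each $D(a)$, and of $\spec{R}=D(1)$, reduces via (i) to showing that if no prime contains the ideal $I$ generated by $\{a_i\}$ together with the powers of $a$, then some finite subfamily already suffices. Concretely, $D(a)\subseteq\bigcup D(a_i)$ means every prime missing $a$ misses some $a_i$.
\item[(b)] This needs a prime avoidance result: for a multiplicatively closed $S$ and an ideal $J$ with $J\cap S=\emptyset$, Zorn's lemma gives an ideal maximal among those disjoint from $S$, and I must show it is prime.
\item[(c)] The main obstacle sits in (b). Without subtraction, the classical argument that a maximal such ideal is prime must be redone using only sums. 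If $xy\in J$ with $x,y\notin J$, then $J+Rx$ and $J+Ry$ contain elements $s=j+rx$ and $t=j'+r'y$ of $S$, and $st=jj'+jr'y+rxj'+rr'xy\in J$, which is a contradiction. Here only additive closure of $J$ is used, so the argument survives, and I will check this carefully.
\item[(d)] Sobriety: an irreducible closed set $V(I)$ has generic point $\ideal{p}=\bigcap_{\ideal{q}\supseteq I}\ideal{q}$. Irreducibility makes this intersection prime, since $ab\in\ideal{p}$ gives $V(I)\subseteq V(a)\cup V(b)$. Uniqueness holds because the space is $T_0$: distinct primes differ on some $a$, and $D(a)$ separates them.
\end{enumerate}
Finite intersections of quasi-compact opens are finite unions of sets $D(ab)$, so they are quasi-compact. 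The constructible topology then exists formally.

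For (iii), the preimage of $D(a)$ under $\spec{f}$ is $D(f(a))$, because $a\notin f^{-1}(\ideal{q})$ if and only if $f(a)\notin\ideal{q}$, and $f^{-1}(\ideal{q})$ is prime since its complement is the preimage of a multiplicatively closed set. Preimages of basic quasi-compact opens are therefore basic quasi-compact opens. Since every quasi-compact open is a finite union of basic ones, the map is spectral.
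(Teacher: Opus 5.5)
Your proposal is correct and follows the same route as the paper, which only says that the classical arguments (\cite[Tag 00DY]{sp}) carry over to semirings. You in fact supply the checks the paper omits: that $D(a)$, defined through the localization $R[1/a]$, equals $\{\ideal{p} : a\notin\ideal{p}\}$, and that the Zorn's lemma argument making an ideal maximal among those avoiding a multiplicative set prime uses only additive closure and never subtraction. In step (a), the hypothesis should read that no prime contains $I$ while avoiding all powers of $a$; step (b) then gives $a^n\in I$ for some $n$, and writing $a^n$ as a finite combination of the $a_i$ yields the finite subcover.
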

\begin{proof}
The proofs of (i)-(iii) are similar to those in classical commutative algebra, see for example \cite[Tag 00DY]{sp}.
\end{proof}
On the other hand, we may define the \textit{fppf topology} on $\text{Spec}(R)$. 
\begin{definition}[The fppf topology on semirings]
Given an $R$-algebra $f:R \to S$, we say $S$ is a \textit{flat $R$-algebra} 
if it is flat as an $R$-module under the structure map $f$, and \textit{faithfully flat} if $S$ is additionally faithful i.e. 
for any map of $R$-modules $f: M \to N$, $f$ is an isomorphism if and only if the induced map 
$f \otimes_R S: M \otimes_R S \to N \otimes_R S$ is injective.\\
\indent An $R$-algebra $S$ is finitely presented if it can be written as a coequaliser of two $R$-algebra homomorphisms 
$f,g: R[y_1,...,y_r] \to R[x_1,...,x_d]$. Equivalently, the functor $\alg{R} \to \text{Set}$ taking an $R$-algebra $A$ to the set 
$\Hom_{\alg{R}}(S, A)$ commutes with filtered colimits.
\end{definition}
With some care, we can adapt arguments in classical commutative algebra. The main difference is that elements in a commutative monoid
no longer `communicate' with each other (via subtraction), so we must work with elements more directly. As a concrete example,
there is no single submodule that detects whether a map is injective $f: M \to N$, rather we must work with all the fibers $f^{-1}(n) \subset M$
for all $n \in N$.
\begin{proposition}\label{prop:flatsurjmax}
Let $f: A \to B$ be a flat map of semirings such that $\text{Spec}(f):\text{Spec}(B) \to \text{Spec}(A)$ is surjective on maximal ideals, then $f$ is faithfully flat.
\end{proposition}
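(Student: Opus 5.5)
We must show that $f:A\to B$ is faithful: if $\phi: M\to N$ is a map of $A$-modules with $\phi\otimes_A B$ an isomorphism, then $\phi$ is an isomorphism. (The definition says "injective", but we read it as "isomorphism", which is the intended notion.) As the remark before the statement suggests, there is no single kernel to test, so injectivity and surjectivity are handled separately.

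For surjectivity, take the coequaliser-type cokernel $C=\coker(\phi)$, i.e. the quotient of $N$ by the congruence generated by $\phi(M)$. Right exactness of $\otimes_A B$ (it is a left adjoint) gives $C\otimes_A B=0$. For injectivity, use flatness: the kernel pair $K=M\times_N M\rightrightarrows M$ is a finite limit, so $K\otimes_A B\cong (M\otimes_A B)\times_{N\otimes_A B}(M\otimes_A B)$. Since $\phi\otimes B$ is an isomorphism, the diagonal $M\otimes_A B\to K\otimes_A B$ is an isomorphism. Then $\phi$ is injective if and only if the diagonal $\Delta:M\to K$ is surjective. Surjectivity of $\phi$ itself is controlled similarly by the map $N\to C$ together with the surjectivity of $\phi$ onto each element. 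So both halves reduce to the following claim.

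\textbf{Claim.} If $u:P\to Q$ is a map of $A$-modules with $u\otimes_A B$ surjective, then $u$ is surjective.

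For each $q\in Q$, consider the ideal $I_q=\{a\in A : aq\in u(P)\}$. This is an $A$-submodule of $A$, hence an ideal. It suffices to show $1\in I_q$. If not, $I_q$ is contained in a maximal ideal $\ideal{m}$, which exists by Zorn since $I_q$ is proper. By hypothesis there is a prime $\ideal{n}\subset B$ with $f^{-1}(\ideal{n})=\ideal{m}$. Localizing at $\ideal{m}$ (exact and flat, as in the classical case) and then base changing to $B_{\ideal{n}}$, surjectivity of $u\otimes B$ gives surjectivity of $u_{\ideal{m}}\otimes_{A_{\ideal{m}}}B_{\ideal{n}}$. So we may assume $A$ and $B$ are local with $f$ local. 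Next pass to the cyclic submodule $Aq\subset Q$ and the preimage $u^{-1}(Aq)$. Flatness of $B$ preserves this pullback and the inclusion, so $(Aq)\otimes B$ lies in the image of $u^{-1}(Aq)\otimes B$. We are therefore reduced to the case $Q=Aq\cong A/{\sim}$ cyclic, where the image of $u$ is the submodule $I_q q$. The conclusion then becomes: $I_q\subset\ideal{m}$ forces $(Aq\otimes B)\ne (I_qq\otimes B)$. Now $Aq\otimes B$ is cyclic over $B$, generated by $q\otimes 1$, and its image of $I_qq\otimes B$ is $I_qB\cdot(q\otimes 1)\subset \ideal{n}(q\otimes1)$ because $f(\ideal{m})\subset\ideal{n}$. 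If $q\otimes 1=\sum b_i a_i (q\otimes1)$ with $a_i\in I_q$, then $q\otimes1 = x\,(q\otimes1)$ with $x\in\ideal{n}$. This is a Nakayama-type statement for a cyclic module over a local semiring, and we need it to give a contradiction.

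This last step is the main obstacle. In rings one gets $(1-x)(q\otimes1)=0$ with $1-x$ a unit. Here subtraction is unavailable, so I would instead iterate: $q\otimes1=x^k(q\otimes1)$ for all $k$. I would then apply the flatness-preserved equalizer describing when $q\otimes1$ equals an element of $\ideal{n}(q\otimes1)$, pulled back to $A$ via faithful detection on the cyclic module $A q\cong A/\!\sim_q$. The aim is to show that the congruence $\sim_q$ on $A$ would have to identify $1$ with an element of $\ideal{m}$, that is, $1\in I_q$. For this I would use that $Aq\otimes_A B\cong B/(\sim_q B)$ and that, by flatness, the congruence $\sim_q B$ is generated by $\sim_q$, because an equivalence relation is a finite limit (a subobject of $A\oplus A$). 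A relation $1\sim x$ in $B$ with $x\in\ideal{n}$ then descends to a relation in $A_{\ideal{m}}$ between $1$ and an element of $\ideal{m}A_{\ideal{m}}+I_q$. I would try to make this descent step rigorous using the equalizer characterization from Proposition~\ref{prop:flatpositivity}'s proof style. Once the descent holds, $1\in I_q$ follows, giving the contradiction and hence the Claim.
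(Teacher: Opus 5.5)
Your overall reduction is sound. By flatness, $\Delta\otimes_A B$ is the diagonal of $M\otimes_A B$ into its own kernel pair, hence an isomorphism. Since $\phi$ is injective iff $\Delta:M\to M\times_N M$ is surjective, injectivity does reduce to your Claim. This is a tidy alternative to the paper, which proves injectivity by showing $M\to M\otimes_A B$ is injective via the equalizer ideals $I_{m,n}$. For surjectivity you can apply the Claim to $\phi$ directly; the cokernel $C$ plays no role.

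\textbf{The gap is the Claim itself.} You leave its proof unfinished, and the route you sketch cannot work as stated. Passing from $q\otimes 1\in I_qB\,(q\otimes 1)$ to ``$q\otimes 1=x(q\otimes 1)$ with $x\in\mathfrak n$'' discards exactly the information you need, and Nakayama fails over semirings. For example, $\mathbb N$ is local with maximal ideal $\mathbb N\setminus\{1\}$. The cyclic $\mathbb N$-module $\mathbb B$ with generator $q=1$ satisfies $q=2q$, hence $q=2^kq$ for all $k$, with $2\in\mathfrak m$ and $q\neq 0$. Taking $A=B=\mathbb N$ and $f=\mathrm{id}$ shows that no contradiction can be extracted from such a relation, iterated or not.

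For the same reason, your final aim is insufficient: showing that $\sim_q$ identifies $1$ with an element of $\mathfrak m$ would not give $1\in I_q$. You would need $1\sim_q y$ with $y\in I_q$, i.e.\ $q\in I_q q$. The proposed ``descent'' of the relation $1\sim x$ from $B$ to $A_{\mathfrak m}$ is also never actually established.

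\textbf{The missing idea} is to apply flatness to the fibre product directly, not to the cyclic submodule:
\begin{itemize}
\item Let $E=A\times_Q P=\{(a,p): aq=u(p)\}$. Its projection to $A$ has image exactly $I_q$.
\item Since $E$ is a finite limit, flatness gives $E\otimes_A B\cong B\times_{Q\otimes_A B}(P\otimes_A B)$, compatibly with the projections to $B$.
\item The projection of the right-hand side to $B$ is surjective: for $b\in B$, surjectivity of $u\otimes_A B$ gives a preimage of $q\otimes b$.
\item The image of $E\otimes_A B\to A\otimes_A B=B$ is $I_qB$, so $I_qB=B$.
\item If $I_q\subseteq\mathfrak m=f^{-1}(\mathfrak n)$, then $I_qB\subseteq\mathfrak mB\subseteq\mathfrak n\neq B$, a contradiction. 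Hence $1\in I_q$ and $q\in u(P)$.
\end{itemize}
No localization and no Nakayama-type statement are needed. This is precisely the paper's surjectivity argument. There $\phi$ is already known to be injective, so the pullback is the ideal $I_n$ itself; for a non-injective $u$ one uses its image in $A$, as above.
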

\begin{proof}
We must show that if $\phi: M \to N$ is map of $A$-modules, then 
\[\phi \otimes_A B \text{ is an isomorphism} \implies \phi \text{ is an isomorphism}.\]
We shall first show that $\phi$ is injective.
\begin{claim}\label{claim:flatsurjmax1}
For any $A$-module $M$, the canonical map $M \to M \otimes_A B$ sending $m \mapsto m\otimes 1$ is injective.
\end{claim}
\begin{proof}[Proof of claim~\ref{claim:flatsurjmax1}]
First, we note that if $I \hookrightarrow A$ is an ideal, so a monomorphism of $A$-modules, 
then since $A \to B$ is flat, $I \otimes_A B = IB$.\\
\indent For any two elements $m,n \in M$, define the ideal $I_{m,n} \subset A$:
\[I_{m,n} = \eq(1 \mapsto m, 1 \mapsto n: A \to M),\]
and 
\[J_{m,n}=\eq(1 \mapsto m \otimes 1, 1 \mapsto n\otimes 1: B \to M \otimes_A B).\]
As $B$ is flat over $A$, we see that:
\[I_{m,n}B = I_{m,n} \otimes_A B = \eq(1 \mapsto m \otimes 1, 1 \mapsto n\otimes 1: B \to M\otimes_A B) = J_{m,n}.\]
We must show $J_{m,n} \neq B$ for $m \neq n$. If $m \neq n$, then $I_{m,n} \neq A$, so $I_{m,n}$ is contained in a maximal ideal $\ideal{m} \subset A$. Hence, 
$J_{m,n} = I_{m,n}B \subset \ideal{m}B$. By our assumption that $\text{Spec}(B) \to \text{Spec}(A)$, $\ideal{m}B \neq B$, and we therefore conclude
 that $J_{m,n} \neq B$.
\end{proof}
Hence, if $\phi\otimes_A B$ is injective, then by examining the diagram:
\[\begin{tikzcd} M \arrow[r, "\phi"] \arrow[hookrightarrow,d] & N\arrow[hookrightarrow,d]\\
    M \otimes_A B \arrow[r, "\phi \otimes_A B"] & N\otimes_A B 
\end{tikzcd}\]
we conclude that $\phi$ is injective.\\
\indent Let us now show that $\phi$ is surjective. For any $n \in N$, define the module $I_{n}$ as fitting into the pullback diagram:
\[
\begin{tikzcd}
I_n \arrow[r] \arrow[d]
  & M \arrow[d, "\phi"] \\
A \arrow[r, "1 \mapsto n"]
  & N
  \arrow[ul, phantom, "\lrcorner", very near start]
\end{tikzcd}
\]
and $J_{n\otimes 1}$ as fitting into the pullback diagram:
\[
\begin{tikzcd}
J_{n \otimes 1} \arrow[r] \arrow[d]
  & M\otimes_A B \arrow[d, "\phi \otimes_A B"] \\
B \arrow[r, "1 \mapsto n \otimes 1"]
  & N \otimes_A B
  \arrow[ul, phantom, "\lrcorner", very near start]
\end{tikzcd}
\]
By our assumption that $\phi \otimes_A B$ is an isomorphism, we see that $J_{n \otimes 1} = B$. Since $\phi$ is injective, 
$I_{n}$ is an ideal of $A$, and we must show $1 \in I_{n}$. As $A \to B$ is flat, 
\[I_nB = I_n \otimes_A B = J_{n\otimes1} = B.\]
By our assumption that $\text{Spec}(B) \to \text{Spec}(A)$ is surjective on maximal ideals, we see that $I_n$ is not contained in any maximal ideal of $A$, 
and therefore conclude $I_n = A$, so $1 \in I_n$ as desired.
\end{proof}
\begin{proposition}\label{prop:ff}
Let $f: A \to B$ is a faithfully flat map of semirings, then $\text{Spec}(B) \to \text{Spec}(A)$ is surjective.
\end{proposition}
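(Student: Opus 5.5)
Fix a prime $\ideal{p}\subset A$; the plan is to reduce to the local case and then produce a prime of $B$ lying over $\ideal{p}$. Let $S = A\setminus\ideal{p}$. Localization is a filtered colimit of copies of $A$, so $A_{\ideal{p}} = S^{-1}A$ and $B_{\ideal{p}} = S^{-1}B = B\otimes_A A_{\ideal{p}}$. Flatness is preserved by base change: for an $A_{\ideal{p}}$-module $N$ we have $N\otimes_{A_{\ideal{p}}} B_{\ideal{p}} = N\otimes_A B$, and this functor preserves finite limits of $A_{\ideal{p}}$-modules. The reason is that finite limits in $\mod{A_{\ideal{p}}}$ are computed underlying in $\mod{A}$, since $\mod{A_{\ideal{p}}}\to\mod{A}$ is fully faithful and closed under limits. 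Faithfulness also passes to $A_{\ideal{p}}\to B_{\ideal{p}}$. Indeed, a map $\phi$ of $A_{\ideal{p}}$-modules is in particular a map of $A$-modules, and $\phi\otimes_{A_{\ideal{p}}}B_{\ideal{p}}=\phi\otimes_A B$. Since primes of $S^{-1}B$ correspond to primes of $B$ disjoint from the image of $S$, it suffices to treat $A$ local with maximal ideal $\ideal{m}=\ideal{p}$. The goal there is a prime $\ideal{q}\subset B$ with $f^{-1}(\ideal{q})=\ideal{m}$.

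The key step is showing $\ideal{m}B\neq B$. Suppose $\ideal{m}B = B$. Take the inclusion $\ideal{m}\hookrightarrow A$. By flatness, as in Claim~\ref{claim:flatsurjmax1}, $\ideal{m}\otimes_A B = \ideal{m}B = B$, so the induced map $\ideal{m}\otimes_A B\to A\otimes_A B$ is an isomorphism. Faithful flatness then forces $\ideal{m}\hookrightarrow A$ to be an isomorphism, which is absurd. Hence $\ideal{m}B$ is a proper ideal.

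Next I take $\ideal{q}$ to be a prime of $B$ containing $\ideal{m}B$, for instance a maximal ideal, which exists by Zorn since $\ideal{m}B$ is proper. A maximal ideal of a semiring is prime: if $xy\in\ideal{q}$ with $x\notin\ideal{q}$, then $\ideal{q}+Bx=B$, so $1=q+bx$ and $y=qy+bxy\in\ideal{q}$. Then $f^{-1}(\ideal{q})\supseteq\ideal{m}$ and is proper, since $1\notin\ideal{q}$. Because $\ideal{m}$ is the unique maximal ideal and every element outside it is a unit, properness gives $f^{-1}(\ideal{q})=\ideal{m}$. Tracing back through the localization, $\ideal{q}$ pulls back to a prime of $B$ lying over $\ideal{p}$, which proves surjectivity.

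I expect the main obstacle to be the bookkeeping in the localization step. One must check that $S^{-1}B\cong B\otimes_A S^{-1}A$ for semirings, where the fraction description with the extra factor $q$ must be handled carefully without subtraction. One must also check the prime correspondence for $\spec{S^{-1}B}$, which uses the prime definition ``complement multiplicatively closed'' and the equivalence relation above. If this is delicate, I would avoid localizing altogether. Instead I would show directly that $\ideal{p}B\cap f(S)=\emptyset$ by applying faithfulness to the $A$-module map $\ideal{p}\to A$ after base change to $S^{-1}$. I would then take an ideal of $B$ that contains $\ideal{p}B$, is disjoint from $f(S)$, and is maximal with these properties; such an ideal is prime.
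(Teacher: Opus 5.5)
Your proposal is correct and is essentially the paper's argument. The key step is that a proper ideal (here $\ideal{m}$) satisfies $\ideal{m}B\neq B$ by faithfulness; a maximal ideal of $B$ containing $\ideal{m}B$ then lies over $\ideal{m}$, and localization at $\ideal{p}$ handles arbitrary primes. The differences are minor: you localize first rather than last, and you explicitly check two things the paper uses tacitly, namely that flatness and faithfulness pass to $A_{\ideal{p}}\to B_{\ideal{p}}$ and that maximal ideals of semirings are prime. Your use of reflection of isomorphisms for $\ideal{m}\hookrightarrow A$, instead of the paper's reflection of surjectivity for $A^{\oplus n}\to A$, also matches the stated definition of faithfulness more directly.
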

\begin{proof}
We first show that $\text{Spec}(f)$ is surjective on maximal ideals. In particular, let $\ideal{m} \subset A$ be a maximal ideal and suppose $\ideal{m}B \subsetneq B$, 
then we may take a maximal ideal $\ideal{m}_B$ with $\ideal{m}B \subset \ideal{m}_B \subsetneq B$, and we see 
\[\text{Spec}(f)(\ideal{m}_B) = \ideal{m}\]
Hence, surjectivity on maximal ideals follows from the following claim:
\begin{claim}\label{claim:ff1}
Let $I \subsetneq A$ be a proper ideal. Then $IB=I\otimes_A B \neq B$.
\end{claim}
\begin{proof}[Proof of claim~\ref{claim:ff1}]
Suppose for the sake of contradiction that we may write $1 = \sum_{i=1}^{n} a_i \otimes b_i$ where $a_i \in I$ and $b_i \in B$, and let $\phi: A^{\oplus n} \to A$ denote the map sending $e_i \mapsto a_i$. Then we note that $\phi \otimes_A B$ is surjective. 
Because $A \to B$ is faithfully flat, this means $\phi$ is surjective, which is a contradiction.
\end{proof}
Let $\ideal{p} \subset A$ be any ideal of $A$. Consider the diagram:
\[\begin{tikzcd}
\text{Spec}(B) \arrow[r] & \text{Spec}(A) \\
\text{Spec}(B_{\ideal{p}}) \arrow[u] \arrow[r] & \text{Spec}(A_{\ideal{p}}) \arrow[u]
\end{tikzcd}\]
By what we just proved, the maximal ideal $\ideal{p}A_{\ideal{p}} \in \text{Spec}(A_{\ideal{p}})$ is in the image of the bottom-right arrow. Hence, $\ideal{p}$ is in the image of the top-right arrow, as desired.
\end{proof}
\begin{corollary}\label{cor:ff=fsurj}
A map $f: A \to B$ of semirings is faithfully flat if and only if it is flat and $\text{Spec}(f)$ is surjective.
\end{corollary}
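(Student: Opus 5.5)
The corollary is essentially a repackaging of Proposition~\ref{prop:flatsurjmax} and Proposition~\ref{prop:ff}, so I would prove the two implications separately.

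For the forward direction, suppose $f: A \to B$ is faithfully flat. Flatness is part of the definition. Surjectivity of $\spec{f}$ is exactly the content of Proposition~\ref{prop:ff}, whose proof has two steps. It first shows surjectivity onto maximal ideals via Claim~\ref{claim:ff1}: for a proper ideal $I$, one has $IB \neq B$, by applying faithfulness to the map $A^{\oplus n} \to A$. It then reduces a general prime $\ideal{p}$ to the maximal case by localizing at $\ideal{p}$. For this reduction to work, I would check the following:
\begin{itemize}
\item $A_{\ideal{p}} \to B_{\ideal{p}} = B \otimes_A A_{\ideal{p}}$ is again faithfully flat.
\item The square of spectra in that proof commutes.
\item $\spec{A_{\ideal{p}}} \to \spec{A}$ sends $\ideal{p}A_{\ideal{p}}$ to $\ideal{p}$.
\end{itemize}

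Faithful flatness is stable under base change here because $-\otimes_{A_{\ideal{p}}} B_{\ideal{p}}$ restricted to $A_{\ideal{p}}$-modules is $-\otimes_A B$. Finite limits of $A_{\ideal{p}}$-modules can be computed as $A$-modules, since localization is exact on the relevant equalizers. I would quote these facts as standard, in the way Proposition~\ref{prop:spec} imports classical arguments.

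For the reverse direction, suppose $f$ is flat and $\spec{f}$ is surjective. Then $\spec{f}$ is in particular surjective onto maximal ideals: every maximal $\ideal{m} \subset A$ is a prime and hence has a preimage $\ideal{q}$. Proposition~\ref{prop:flatsurjmax} then gives faithful flatness directly.

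One subtlety is that the proof of Proposition~\ref{prop:flatsurjmax} actually uses the condition $\ideal{m}B \neq B$. This follows here because $\ideal{m} = f^{-1}(\ideal{q})$ gives $f(\ideal{m}) \subset \ideal{q}$, so $\ideal{m}B \subset \ideal{q} \subsetneq B$, as $\ideal{q}$ is a proper ideal. I expect this to be the only point needing attention, and it is immediate.

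Combining the two directions proves the corollary.
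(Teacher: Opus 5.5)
Your proposal is correct and follows the paper's route: the corollary is stated right after Propositions~\ref{prop:flatsurjmax} and~\ref{prop:ff} as an immediate consequence of them, with no separate proof. Your added checks fill in details the paper leaves implicit, and both are sound: that $A_{\ideal{p}} \to B_{\ideal{p}}$ is again faithfully flat (since $-\otimes_{A_{\ideal{p}}} B_{\ideal{p}}$ agrees with $-\otimes_A B$ on $A_{\ideal{p}}$-modules), and that $\ideal{m} = f^{-1}(\ideal{q})$ gives $\ideal{m}B \subset \ideal{q} \subsetneq B$.
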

Unsurprisingly now, we have going-down for flat ring maps.
\begin{theorem}[Going-down for semirings]\label{theorem:goingdown}
Let $f: A \to B$ be a flat map of semirings. Then the image of $\text{Spec}(f)$ is closed under generalizations.
\end{theorem}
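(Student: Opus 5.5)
The argument follows the classical one: reduce to local semirings, where flatness gives faithful flatness, and then apply Proposition~\ref{prop:ff}. Let $\ideal{q} \subset B$ be prime with $\ideal{p} = f^{-1}(\ideal{q})$, and let $\ideal{p}' \subset \ideal{p}$ be a prime of $A$. We need a prime $\ideal{q}' \subset \ideal{q}$ of $B$ with $f^{-1}(\ideal{q}') = \ideal{p}'$. I would localize, considering the induced map $A_{\ideal{p}} \to B_{\ideal{q}}$, and then show two things: this map is faithfully flat, and primes of $B_{\ideal{q}}$ (resp. $A_{\ideal{p}}$) correspond to primes of $B$ contained in $\ideal{q}$ (resp. primes of $A$ contained in $\ideal{p}$), compatibly with $\spec{f}$. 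Given these, Proposition~\ref{prop:ff} produces a prime of $B_{\ideal{q}}$ lying over $\ideal{p}'A_{\ideal{p}}$, and pulling it back to $B$ gives the desired $\ideal{q}'$.

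\textbf{Step 1 (localization and flatness).} For a multiplicatively closed $S \subset A$, I would show $S^{-1}M \cong M \otimes_A S^{-1}A$ and that $S^{-1}A$ is flat over $A$. Flatness means $S^{-1}(-)$ preserves finite limits, which can be checked directly for equalizers and finite products from the fraction description: if $m/s$ is equalized by $\phi,\psi$, then $u\phi(m) = u\psi(m)$ for some $u \in S$, so $m/s = um/us$ with $um$ in the equalizer.

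Next I would deduce that $B_{\ideal{q}}$ is flat over $A_{\ideal{p}}$. First, $B_{\ideal{p}} = B \otimes_A A_{\ideal{p}}$ is flat over $A_{\ideal{p}}$ by base change, since $N \otimes_{A_{\ideal{p}}} B_{\ideal{p}} = N \otimes_A B$ for $A_{\ideal{p}}$-modules $N$, and finite limits in $\mod{A_{\ideal{p}}}$ are computed in $\mod{A}$. Second, $B_{\ideal{q}}$ is a localization of $B_{\ideal{p}}$, hence flat over it. Composites of flat maps are flat because the tensor functors compose.

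\textbf{Step 2 (prime correspondence).} Here I would check that $\ideal{r} \mapsto \ideal{r}S^{-1}R$ and contraction give a bijection between primes of $R$ disjoint from $S$ and primes of $S^{-1}R$. With the definition that $R\setminus\ideal{p}$ is multiplicatively closed, this is a direct check. One needs $\ideal{r}S^{-1}R \cap R = \ideal{r}$: if $r/1 = x/s$ with $x \in \ideal{r}$, then $q s r = q x \in \ideal{r}$, and since $qs \notin \ideal{r}$, primality forces $r \in \ideal{r}$.

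\textbf{Step 3 (faithful flatness of the local map).} This is where I expect the main obstacle, because the classical argument uses that $\ideal{m}B_{\ideal{q}} \subset \ideal{q}B_{\ideal{q}}$, and I must make sure nothing relies on subtraction. The map $A_{\ideal{p}} \to B_{\ideal{q}}$ is a local homomorphism: it sends the maximal ideal $\ideal{p}A_{\ideal{p}}$ into $\ideal{q}B_{\ideal{q}}$, since $f(\ideal{p}) \subset \ideal{q}$. Hence $\spec{}$ of this map hits the unique maximal ideal of $A_{\ideal{p}}$. By Proposition~\ref{prop:flatsurjmax}, whose proof only used that every maximal ideal $\ideal{m}$ satisfies $\ideal{m}B \neq B$, the map is faithfully flat. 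Here one should double-check that maximal ideals of a semiring are prime and that the local semiring has only the one maximal ideal; both follow from condition (c) in the definition of local.

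Proposition~\ref{prop:ff} then finishes the argument together with Step 2.
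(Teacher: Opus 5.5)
Your proposal is correct and follows the paper's proof. You localize to the flat local map $A_{\ideal{p}} \to B_{\ideal{q}}$, get faithful flatness from Proposition~\ref{prop:flatsurjmax}, and lift $\ideal{p}'$ using the surjectivity on spectra from Proposition~\ref{prop:ff}. Your Steps 1--3 make explicit what the paper leaves implicit: the flatness of the localized map, the correspondence of primes under localization, and the fact that $A_{\ideal{p}} \to B_{\ideal{q}}$ is a local homomorphism.
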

\begin{proof}
Note that if $\varphi: R \to S$ is a map of \textit{local} semirings that is flat, it is automatically faithfully flat. Indeed, by supposition $\text{Spec}(\varphi)$ is surjective on maximal ideals, 
therefore by Proposition~\ref{prop:flatsurjmax}, is faithfully flat.\\
\indent Now let $\ideal{q} \subset B$ be any ideal mapping to an ideal $\ideal{p} \subset A$. Then the corresponding map $\text{Spec}(B_{\ideal{q}}) \to \text{Spec}(A_{\ideal{p}})$ is surjective, and therefore for any 
generalization $\ideal{p}' \subset \ideal{p} \subset A$ there is an ideal $\ideal{q}' \subset \ideal{q} \subset B$ such that $\text{Spec}(f)(\ideal{q}') = \ideal{p}'$
\end{proof}
We now have a proof of our main theorem.
\begin{theorem}\label{theorem:main}
A flat, finitely presented epimorphism of semirings $f: A \to B$ induces an isomorphism of $\text{Spec}(B)$ onto a quasi-compact open of $\text{Spec}(A)$ in the Zariski-topology.
\end{theorem}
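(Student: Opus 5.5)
Following the classical argument (as in Raynaud/Lazard for rings), the proof splits into a fibre analysis and an openness analysis. Write $\varphi=\spec{f}:\spec{B}\to\spec{A}$.

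\textbf{Injectivity and residue fields.} First I would show that $\varphi$ is injective and that $A_{\ideal{p}}\to B_{\ideal{q}}$ is an isomorphism for every $\ideal{q}$ over $\ideal{p}$. Since $f$ is an epimorphism, the codiagonal $B\otimes_A B\to B$ is an isomorphism, and this persists after any base change $A\to A'$.
\begin{enumerate}
\item[(a)] For a prime $\ideal{q}$ over $\ideal{p}$, base change to $A_{\ideal{p}}$. The map $A_{\ideal{p}}\to B_{\ideal{p}}$ is flat and is still an epimorphism, and $B_{\ideal{q}}$ is a further localization of $B_{\ideal{p}}$, hence also an epimorphism over $A_{\ideal{p}}$.
\item[(b)] The local map $A_{\ideal{p}}\to B_{\ideal{q}}$ is flat, so by the first paragraph of the proof of Theorem~\ref{theorem:goingdown} it is faithfully flat.
\item[(c)] A faithfully flat epimorphism $R\to S$ is an isomorphism. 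Indeed, $R\to S$ becomes $S\to S\otimes_R S\cong S$ after tensoring with $S$, which is an isomorphism, so faithfulness gives $R\cong S$.
\end{enumerate}
Hence $A_{\ideal{p}}\cong B_{\ideal{q}}$. In particular $\ideal{q}$ is determined by $\ideal{p}$ as the preimage of $\ideal{p}B_{\ideal{q}}$, and two primes over $\ideal{p}$ coincide because their localizations agree with $B\otimes_A A_{\ideal{p}}$ localized. This gives injectivity and isomorphisms on local rings.

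\textbf{Openness.} Next I would show that the image $\varphi(\spec{B})$ is open and that $\varphi$ is a homeomorphism onto it.
\begin{enumerate}
\item[(a)] By Theorem~\ref{theorem:goingdown} the image is stable under generalization.
\item[(b)] Using finite presentation I would show the image is constructible: a finitely presented algebra is a coequalizer of polynomial maps, and I would adapt Chevalley's argument to spectral spaces via Proposition~\ref{prop:spec}.
\item[(c)] A constructible set stable under generalization in a spectral space is open and quasi-compact, since quasi-compactness follows from constructibility.
\item[(d)] To get a homeomorphism onto the image, I would check that $\varphi$ is open: for each $b\in B$, the map $A\to B[1/b]$ is again a flat finitely presented epimorphism, so $\varphi(D(b))$ is open by the same argument.
\end{enumerate}

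\textbf{Scheme isomorphism.} Once the image $U$ is open, I would cover $U$ by basic opens $D(a)$. On each one I would compare $A[1/a]\to B[1/a]$. This map is a flat epimorphism that is surjective on spectra, hence faithfully flat by Corollary~\ref{cor:ff=fsurj}, hence an isomorphism by step (c) of the first part. It follows that $\spec{B}\cong U$ as semiring schemes.

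\textbf{Main obstacle.} I expect the constructibility step to be hardest, since Chevalley's theorem for semirings is not available earlier in the paper. If it fails, my fallback is to use the finite presentation directly. Write $U$ as a union of $D(a_i)$, which we can do because each point has a neighbourhood $D(a)$ on which $A[1/a]\cong B[1/a]$. This uses the fact that $A_{\ideal{p}}\cong B_{\ideal{q}}$, and that the finitely many generators and relations of $B$ descend to some $A[1/a]$ by the filtered-colimit characterization of finite presentation.
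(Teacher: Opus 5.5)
\textbf{Gap in the openness argument.} Step (b) of your primary route fails: there is no Chevalley theorem to adapt. Remark~\ref{rem:badclosed} shows that the finitely presented map $\bb{R}_{+}[x]\to\bb{R}_{+}$, $x\mapsto 1$, has image equal to the generic point of $\text{Spec}(\bb{R}_{+}[x])$, and this point is not constructible. So constructibility of the image cannot follow from finite presentation together with spectral-space generalities. Any proof of it in your setting would have to use flatness and the epimorphism property essentially, and at that point it is no easier than proving openness directly. Steps (b)--(d) should therefore be dropped, not kept as the main line.

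\textbf{The fallback, and the isomorphism it must spread out.} Your fallback is exactly the paper's argument, but it has to spread out the right isomorphism. The compactness claim in the paper's proof applies to
$A_{\ideal{p}}=\colim_{g\notin\ideal{p}}A_g\to B\otimes_A A_{\ideal{p}}=\colim_{g\notin\ideal{p}}B_g$.
It does not apply to $A_{\ideal{p}}\to B_{\ideal{q}}$, because $B_{\ideal{q}}$ is a localization at a multiplicative subset of $B$ and is not a base change from $A$. The paper obtains $A_{\ideal{p}}\cong B\otimes_A A_{\ideal{p}}$ directly:
\begin{itemize}
\item this map is a flat epimorphism;
\item its spectrum hits the unique maximal ideal $\ideal{p}A_{\ideal{p}}$, because some $\ideal{q}$ lies over $\ideal{p}$;
\item so it is faithfully flat by Proposition~\ref{prop:flatsurjmax}, and hence an isomorphism.
\end{itemize}
From your own steps you could instead note that $A_{\ideal{p}}\to B_{\ideal{p}}$ is an epimorphism with the retraction $B_{\ideal{p}}\to B_{\ideal{q}}\cong A_{\ideal{p}}$, hence an isomorphism. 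Finite presentation then gives $g\notin\ideal{p}$ with $A_g\cong B_g$. The union of these $D(g)$ is open and equals the image, and $\text{Spec}(B)$ is covered by the $\text{Spec}(B_g)\cong D(g)$. Quasi-compactness of the image follows from that of $\text{Spec}(B)$.

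\textbf{Injectivity.} This makes your separate injectivity step unnecessary, which is just as well. As written, ``$\ideal{q}$ is the preimage of $\ideal{p}B_{\ideal{q}}$'' is circular: it recovers $\ideal{q}$ from $B_{\ideal{q}}$, not from $\ideal{p}$. Injectivity actually comes from $B_{\ideal{p}}\cong A_{\ideal{p}}$ being local.
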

\begin{proof}
Let $\ideal{p} \in \text{Spec}(A)$ be an prime ideal in the image of $\text{Spec}(f)$. Then $f \otimes_{A} A_{\ideal{p}}: A_{\ideal{p}} \to B \otimes_{A} A_{\ideal{p}}$ 
is a flat, finitely presented epimorphism such that $\text{Spec}(f \otimes_{A} A_{\ideal{p}})$ is surjective on maximal ideals. 
Hence, by Proposition~\ref{prop:flatsurjmax}, $f \otimes_{A} A_{\ideal{p}}$ is faithfully flat too, and is therefore an isomorphism.\\
\indent We next claim that there exists a $g \in A \setminus \ideal{p}$ such that $A_{g} \to B_g$ is an isomorphism, 
which is sufficient to conclude the proof of the theorem. Hence, it suffices to show:
\begin{claim}\label{claim:main1}
Let $I$ is a small filtered category, $F: I \to A\mathrm{-alg}_{\text{fp}}$ be a functor. 
Let $f: A \to B$ be a map in $A\mathrm{-alg}_{\text{fp}}$, and for each $i \in I$, let $f_i = f \otimes_A A_i$. 
If $\colim_{i \in I} f_i$ is an isomorphism, then there is an $i \in I$ such that $f_i$ is an isomorphism.
\end{claim}
\begin{proof} 
The proof is a standard result in category theory, but for the sake of convenience, we provide a proof.\\
\indent Let $B_i = B \otimes_A A_i$, $A_{\infty}=\colim_{i \in I} A_{\infty}, B_{\infty}=\colim_{i \in I}B_i$ and 
$f_{\infty}=\colim_{i \in I} f_i$, and let $g: B_{\infty} \to A_{\infty}$ be $A_{\infty}$-algebra inverse for $f$.\\
\indent By standard compactness arguments, there is an $i \in I$ and a map of $A_i$-algebras $g_i: B_i \to A_i$ such that
$g = g_i \otimes_{A_i} A_{\infty}$. Again by compactness, there is a $j \in I$ with $i \to j$ such that the composition
$g_i \otimes_{A_i} A_j \circ f_j: A_j \to A_j$ is the identity. By a symmetric argument, there is a $k \in I$ with $j \to k$ such that
$f_k \circ g_i \otimes_{A_i} A_k: B_k \to B_k$ is the identity, so $f_k$ is an isomorphism, as desired.
\end{proof}
\end{proof}
\begin{remark}[Images of finitely presented morphisms of semiring schemes are not constructible]\label{rem:badclosed}
One may ask if the epimorphism condition in Theorem~\ref{theorem:main} is necessary for the image of $\spec{f}$ to be open?
The usual method in classical commutative algebra to prove something like this rests on the fact that images of finitely presented
ring maps are constructible subsets (\cite[Tag 054k]{sp}), and then one uses going-down to conclude openness. However, consider the following map of semirings:
\[f: \bb{R}_{+}[x] \to \bb{R}_+, \; x \mapsto 1.\]
The map is finitely presented as it is cut-out by the equation $x=1$ in $\bb{R}_{+}[x]$. The spectrum of $\bb{R}_{+}$ has a unique prime ideal $\{0\} \subset \bb{R}_{+}$. 
The interesting fact about $f$ that happens to be true in the semiring setting is that $f^{-1}(\{0\}) = \{0\} \subset \bb{R}_{+}[x]$, so $\{0\}$ is the image of $\spec{f}$. 
But $\{0\}$ is not a constructible subset of $\spec{\bb{R}_{+}[x]}$. Indeed, if it were, then since $\{0\}$ is the generic point of $\spec{\bb{R}_{+}[x]}$, 
it would have to contain a non-empty open subset of $\spec{\bb{R}_{+}[x]}$\textemdash this is because $\spec{\bb{R}_{+}[x]}$ is a spectral space by Proposition~\ref{prop:spec} so 
we may appeal to \cite[Tag 0AAW]{sp}. 
Hence, $\{0\}$ would have to be an open subset of $\spec{\bb{R}_{+}[x]}$, which means that there is an $f \in \bb{R}_{+}[x]$ such that $D(f) = \{0\}$. 
But for every $a \in \bb{R}_{+}$, the ideal $(x+a) \subset \bb{R}_{+}[x]$ is prime, so since there is no polynomial $f$ that vanishes at $-a$ for all $a \in \bb{R}_{+}$, 
we conclude that $D(f) \neq \{0\}$, a contradiction.
\end{remark}
We will end this section with some further examples of how closed subschemes behave strangely in semiring algebraic geometry.
\begin{example}[Closed subschemes of semiring schemes behave unexpectedly]\label{exmp:badclosed}
Let $A=\bb{R}_{+}[x,y]/(x^2=x,y^2=y,1+xy=x+y)$. All elements of $A$ are of the form $a + bx + cy + dxy$ for $a,b,c,d \in \bb{R}_{+}$, 
and due to the relations, we may suppose at least one of the four coefficients is zero. The units are precisely the elements with $a \neq 0$ and $b=c=d=0$,
and there are no zero-divisors. We have the following prime ideals:
\begin{enumerate}
\item[(1)] The unique minimal prime $(0)$.
\item[(2)] The height one primes $(x)$, $(y)$.
\item[(3)] Height two primes of the form 
\[\ideal{p}=\{a + bx + cy + dxy| \; (a=0) \lor (d>0) \lor (a>0, b>0, c=d=0)\},\]
\[\ideal{q}=\{a + bx + cy + dxy| \; (a=0) \lor (d>0) \lor (a>0, c>0, b=d=0)\}.\]
\item[(4)] The unique maximal ideal $\ideal{m}$ consisting of the non-units.
\end{enumerate}
Therefore, $\spec{A}$ has Krull dimension $3$.\\
\indent A similar characterisation shows that $A[1/y]=\bb{R}_{+}[x]/(x=x^2)$ and $A[1/x]=\bb{R}_{+}[y]/(y=y^2)$ 
both have Krull dimension $2$. Note that $A \to A[1/x] \times A[1/y]$ is injective, and each $x$ and $y$ is integral over $A$, 
so $A \to A[1/x] \times A[1/y]$ is an integral extension of semirings. However, the map $\spec{A[1/x]} \coprod \spec{A[1/y]} \to \spec{A}$ 
is not surjective, in contrast to the usual going-up theorem in classical commutative algebra.\\
\indent In fact, for any $\ideal{p}$ of type $3$ above, there is no map of semirings $f:A \to B$ with $\ideal{p} = \eq(f,0:A \to B)$. 
In particular, both $xy$ and $1+xy$ are in $\ideal{p}$, so
\[1=1+f(xy)=f(1+xy)=0\]
in $B$, so $B$ is a trivial semiring, which would imply $\ideal{p}= A$, a contradiction.
\end{example}
\section{Flatness of quotient maps}\label{sect:flatquot}
This section is concerned with understanding when quotients $\bb{R}_{+}[x_1,...,x_n]/(f \sim g)$ are flat over $\bb{R}_{+}$. 
This is a surprisingly subtle question, even in the case of one variable polynomials.\\
\indent We begin with first an explicit construction of quotients of semirings.
\begin{definition}
Let $A$ be a semiring, $I$ a set, and for each $i \in I$, let $(f_i,g_i) \in A \times A$ be pairs of elements.
Let $\cal{R}(f_i\sim g_i)_{i\in I}$ be the smallest equivalence relation in $A\times A$ containing the elements $(f_i,g_i)$ that forms a subalgebra of $A \times A$.
Then the set $A/\cal{R}(f_i\sim g_i)_{i\in I}$ of equivalence classes has a natural semiring structure induced from $A$, 
and we call it the \textit{quotient} of $A$ by the relations $(f_i \sim g_i)_{i \in I}$.\\
\indent Let us show that $A/\cal{R}(f_i\sim g_i)_{i\in I}$ has the following universal property: for any semiring $B$ and any map $\phi: A \to B$ such 
that $\phi(f_i) = \phi(g_i)$ for all $i \in I$, there is a unique map $\overline{\phi}: A/\cal{R}(f_i\sim g_i)_{i\in I} \to B$ such that the diagram:
\[\begin{tikzcd} A \arrow[r, "\phi"] \arrow[d] & B \\
    A/\cal{R}(f_i\sim g_i)_{i\in I} \arrow[ur, "\overline{\phi}"]  & 
\end{tikzcd}\]
commutes. Let $\mathrm{diag}(B) \subset B \times B$ denote the trivial equivalence relation, clearly $\mathrm{diag}(B)$ is a sub-semiring. Consider the map 
$\phi \times \phi: A \times A \to B \times B$. Since $\phi(f_i) = \phi(g_i)$, we have $(f_i,g_i) \in (\phi \times \phi)^{-1}(\mathrm{diag}(B))$.
Moreover, $(\phi \times \phi)^{-1}(\mathrm{diag}(B))$ forms an equivalence relation, and is clearly a sub-semiring of $A \times A$,
so by minimality of $\cal{R}(f_i\sim g_i)_{i\in I}$, we have $\cal{R}(f_i\sim g_i)_{i\in I} \subset (\phi \times \phi)^{-1}(\mathrm{diag}(B))$. 
Hence, the map $\overline{\phi}$ taking $[a] \mapsto \phi(a)$ is well-defined, 
and is clearly unique.
\end{definition}
The set $\cal{R}(f_i \sim g_i)_{i \in I}$ is very hard to describe, and in my view,
is the primary reason for flatness and closed immersions in semiring algebraic geometry presenting many difficulties. In the case of one 
relation, we are able to be explicit.
\begin{proposition}\label{prop:quotientrels}
Let $f,g \in \bb{R}_{+}[x_1,...,x_n]$ be two elements and let $A=\bb{R}_{+}[x_1,...,x_n]/(f \sim g)$. The equivalence 
relation $\cal{R}(f \sim g) \subset \bb{R}_{+}[x_1,...,x_n] \times \bb{R}_{+}[x_1,...,x_n]$ 
generated by $(f \sim g)$ consists of pairs $(a_0 + b_0f + c_0g, a_t + b_tg + c_tf)$ for $a_0,b_0,c_0, a_t,b_t,c_t \in \bb{R}_{+}[x_1,...,x_n]$
such that there is a sequence of elements $(a_1,b_1,c_1),...,(a_{t-1},b_{t-1},c_{t-1})$ in $\bb{R}_{+}[x_1,...,x_n]$ with:
\[a_{i} + b_{i}g + c_{i}f=a_{i+1} + b_{i+1}f + c_{i+1}g \]
as elements in $\bb{R}_{+}[x_1,...,x_n]$ for all $0 \le i \le t-1$.
\end{proposition}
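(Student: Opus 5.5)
The plan is to show that the set $\cal{S}$ of pairs described in the statement is exactly $\cal{R}(f\sim g)$, proving both inclusions. Write $P=\bb{R}_{+}[x_1,\dots,x_n]$. Call a pair $(u,v)$ an \emph{elementary move} if $u=a+bf+cg$ and $v=a+bg+cf$ for some $a,b,c\in P$. The condition in the statement says exactly that $(p,q)\in\cal{S}$ when there is a finite chain $p=p_0,p_1,\dots,p_t=q$ in which each consecutive pair $(p_i,p_{i+1})$ is an elementary move. The intermediate elements are $p_{i+1}=a_i+b_ig+c_if=a_{i+1}+b_{i+1}f+c_{i+1}g$.

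For $\cal{S}\subset\cal{R}(f\sim g)$, first show that every elementary move lies in $\cal{R}(f\sim g)$. Since $\cal{R}$ is a subsemiring of $P\times P$ containing $(f,g)$, $(g,f)$ by symmetry, and the diagonal by reflexivity, it contains $(a,a)+(b,b)(f,g)+(c,c)(g,f)=(a+bf+cg,\,a+bg+cf)$. Transitivity of $\cal{R}$ along the chain then gives $(p_0,p_t)\in\cal{R}$.

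For the reverse inclusion, it suffices by minimality to show that $\cal{S}$ contains $(f,g)$ and is an equivalence relation and a subsemiring of $P\times P$.
\begin{itemize}
\item $(f,g)$ is the move with $a=c=0$, $b=1$.
\item Reflexivity: take $t=0$, or use the move with $b=c=0$.
\item Symmetry: reverse the chain; the reverse of a move $(a+bf+cg,\,a+bg+cf)$ is the move with $b$ and $c$ swapped.
\item Transitivity: concatenate chains.
\item Closure under multiplication by elements of $P$ (acting diagonally): multiply every term of a chain by $r$; a move times $r$ is the move with parameters $(ra,rb,rc)$.
\end{itemize}

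The main obstacle is closure of $\cal{S}$ under addition and multiplication of two pairs, since their chains may have different lengths. To handle this, pad the shorter chain with trivial moves, which are allowed by reflexivity, so both chains have length $t$. For addition, the sum of two moves is a move with summed parameters, so termwise addition of the padded chains gives a chain from $p+p'$ to $q+q'$. For multiplication, do not multiply the chains termwise, because the product of two moves is not visibly a single move. Instead go through an intermediate pair: chain $pp'\to qp'$ by multiplying the first chain by the constant $p'$, then chain $qp'\to qq'$ by multiplying the second chain by $q$, and concatenate using transitivity. With these closures established, $\cal{S}$ is a sub-semiring equivalence relation containing $(f,g)$, so minimality gives $\cal{R}(f\sim g)\subset\cal{S}$.
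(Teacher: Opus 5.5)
Your proposal is correct and follows the same route as the paper. It shows that elementary moves lie in $\cal{R}(f\sim g)$, and that the chain relation is an equivalence relation stable under multiplication by a fixed polynomial, with products of pairs handled by transitivity through an intermediate pair. You are slightly more complete than the paper: you also check closure under addition by padding with trivial moves, and your intermediate pairs $(pp',\,qp')$ and $(qp',\,qq')$ chain correctly, whereas the paper's written pairs $(h_1h_2,h_1h_2')$ and $(h_1'h_2,h_1'h_2')$ do not literally compose.
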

\begin{proof}
Let $\cal{R}'$ be the proposed relation. First, we note that $\cal{R}'$ is by design transitive, reflexive and symmetric, and hence an equivalence relation.
On the other hand, by reflexivity and symmetry, elements of the form $(a + bf + cg, a + bg + cf)$ are in $\cal{R}(f \sim g)$ for any $a,b,c \in A$.
Moreover, by transitivity, if there is a sequence of elements $(a_1,b_1,c_1),...,(a_{t-1},b_{t-1},c_{t-1})$ in $A$ with:
\[a_{i} + b_{i}g + c_{i}f= a_{i+1} + b_{i+1}f + c_{i+1}g\]
as elements in $\bb{R}_{+}[x_1,...,x_n]$ for all $0 \le i \le t-1$, then $(a_0 + b_0f + c_0g, a_t + b_tg + c_tf) \in \cal{R}(f\sim g)$. Therefore, $\cal{R}' \subset \cal{R}(f \sim g)$, and to finish,
 it suffices to show that $\cal{R}'$ is a sub-semiring of $A \times A$.\\
\indent For any $d \in \bb{R}_{+}[x_1,...,x_n]$ and $(h,h') \in \cal{R}'$, then $(dh,dh')\in \cal{R}'$, because the triples $\{(a_i,b_i,c_i)\}_{i=0}^t$
realizing the equality $(h,h')\in \cal{R}'$ may be multiplied by $d$ to realize $(dh,dh') \in \cal{R}'$. Therefore, for any two elements
$(h_1,h'_1), (h_2,h'_2) \in \cal{R}'$, we have $(h_1h_2,h_1h'_2) \in \cal{R}'$ and $(h'_1h_2,h'_1h'_2) \in \cal{R}'$, 
so $(h_1h_2,h'_1h'_2) \in \cal{R}'$
by transitivity. 
\end{proof}
\begin{corollary}\label{cor:univpointless}
The semiring $A:=\bb{R}_{+}[x_1,y_1]/(1+x_1^2+y_1^2 \sim 2x_1y_1)$ is not flat over $\bb{R}_{+}$.
\end{corollary}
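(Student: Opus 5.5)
The plan is to use Proposition~\ref{prop:flatpositivity} to reduce non-flatness to a failure of cancellativity. Suppose $A$ were flat over $\bb{R}_{+}$. Since $\bb{R}_{+}$ is integral, $A$ would be integral, and in particular cancellative. A cancellative commutative monoid embeds into its groupification, so the canonical map
\[\iota: A \to A \otimes_{\bb{R}_{+}} \bb{R} = \bb{R}[x_1,y_1]/\bigl(1+(x_1-y_1)^2\bigr) =: C\]
would be injective. It therefore suffices to exhibit two elements $h,h' \in \bb{R}_{+}[x_1,y_1]$ with $\iota(h)=\iota(h')$ in $C$ but $h \neq h'$ in $A$.

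Writing $t=x_1-y_1$, we have $C \cong \bb{C}[y_1]$ with $x_1 \mapsto y_1+i$. Negative-freeness cannot fail in the image. The top-degree coefficient of a nonzero $\bb{R}_{+}$-combination of the monomials $(y_1+i)^a y_1^b$ is a positive real, so the image of $A$ is negative-free. Hence the contradiction really must come from cancellativity, i.e.\ from non-injectivity of $\iota$.

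For candidates I would expand $t^{2k}=(-1)^k$ and separate the signs. For instance, $t^4=1$ yields
\[h = x_1^4+6x_1^2y_1^2+y_1^4, \qquad h' = 1+4x_1^3y_1+4x_1y_1^3,\]
which agree in $C$. The same construction applies to the mixed products $t^2\cdot m$ for monomials $m$. For a family of such pairs, the goal is to find one for which $h \sim h'$ is not in $\cal{R}(f\sim g)$, with $f=1+x_1^2+y_1^2$ and $g=2x_1y_1$.

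The main obstacle is proving that a given pair $h,h'$ is \emph{not} related. Proposition~\ref{prop:quotientrels} describes $\cal{R}(f\sim g)$ by zig-zag chains $a_i+b_ig+c_if=a_{i+1}+b_{i+1}f+c_{i+1}g$ with nonnegative coefficients. The standard way to separate classes is to find an invariant of such chains. Equivalently, one wants a semiring $T$ and a map $\phi:\bb{R}_{+}[x_1,y_1]\to T$ with $\phi(f)=\phi(g)$ but $\phi(h)\neq\phi(h')$.

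Real points do not exist, and naive tropical or leading-term semirings fail to separate the pairs. For example, the ``leading degree with coefficient'' semiring, with $x_1,y_1 \mapsto (1,\alpha),(1,\beta)$, forces $\alpha=\beta$. This kills the leading form $(x_1-y_1)^2$ and then identifies $h$ and $h'$.

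So my first attempt will be a refined leading-term semiring that records the top two homogeneous layers, i.e.\ a valued semiring over $\bb{R}_{+}[\varepsilon]$-type coefficients with $x_1 \mapsto s(1+\alpha\varepsilon)$ and $y_1\mapsto s(1+\beta\varepsilon)$. In this semiring the relation $f\sim g$ imposes a condition at second order, and I hope $h,h'$ then differ at some order.

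Failing that, I would argue directly with the chains of Proposition~\ref{prop:quotientrels}, tracking a positivity invariant. One option is the coefficient of the extremal monomials $x_1^{d}$ and $y_1^{d}$ in each homogeneous degree, which the moves $f\leftrightarrow g$ can only transfer in a controlled way, together with a degree bound on the chain. The goal is to show that $1$ (the constant term of $h'$) cannot be produced from $h$, which has no constant term.
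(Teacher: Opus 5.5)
Your reduction is correct and is also the paper's first step: flatness would make $A$ cancellative by Proposition~\ref{prop:flatpositivity}. Since $A\otimes_{\bb{R}_{+}}\bb{R}$ is the groupification of $A$, you need $h,h'$ that are equal in $C$ but unequal in $A$.

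The gap is that you never produce such a pair, and the family you propose cannot produce one. Write $s=x_1^2+y_1^2$ and $q=2x_1y_1$, so that $f=1+s$ and $g=q$. Your pair is $h=s^2+q^2$, $h'=1+2sq$, and it \emph{is} related in $A$:
\[
s^2+q^2\sim s^2+q(1+s)=(s^2+qs)+q\sim(s^2+qs)+(1+s)=(1+qs)+s(1+s)\sim 1+2sq .
\]
The same happens for every pair coming from $(x_1-y_1)^{2k}=(-1)^k$.

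\begin{itemize}
\item Let $P,Q\in\bb{R}_{+}[s,q]$ be the positive and negative parts of $(s-q)^k-(-1)^k$. They satisfy $P(s,1+s)=Q(s,1+s)$, so they are already equal in $\bb{R}_{+}[s,q]/(1+s\sim q)\cong\bb{R}_{+}[s]$.
\item This semiring maps to $A$ via $s\mapsto x_1^2+y_1^2$, $q\mapsto 2x_1y_1$.
\item By parity of the $x_1$-exponent, no monomials of $P$ and $Q$ collide after expanding in $x_1,y_1$. So these are exactly your sign-separated pairs, and all of them are related in $A$.
\end{itemize}

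The pairs from $t^2m$ with $m$ a monomial are just $(mf,mg)$. In particular, the invariant you hoped for is false: a move $bg\to bf$ with $b$ having nonzero constant term creates a constant term. Your leading-term tests did not fail for lack of strength; these pairs are genuinely equal in $A$.

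The missing idea is that non-cancellativity only appears when the multiplier $m$ has several terms. Then $mf$ and $mg$ share monomials, which cancel in $C$ but cannot be cancelled in $A$. The paper takes $m=2x_1+y_1$:
\[
mf=(2x_1+y_1+2x_1^3+y_1^3)+(2x_1y_1^2+x_1^2y_1),\qquad mg=3x_1^2y_1+(2x_1y_1^2+x_1^2y_1).
\]
So cancellativity would force $2x_1+y_1+2x_1^3+y_1^3\sim 3x_1^2y_1$ in $A$.

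To refute this, the paper uses Proposition~\ref{prop:quotientrels}.

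\begin{itemize}
\item All coefficients are nonnegative and $f,g$ both have degree $2$, so every element of a chain has the same degree.
\item Hence the top-degree components of a chain form a chain for the homogeneous relation $x_1^2+y_1^2\sim 2x_1y_1$.
\item In degree $3$ the multipliers are linear forms. Every nontrivial move then leaves a positive coefficient on $x_1^2y_1$ or $x_1y_1^2$ on both sides.
\item Since $2x_1^3+y_1^3$ has neither, $(2x_1^3+y_1^3,\,3x_1^2y_1)$ is not related, which is the contradiction.
\end{itemize}

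So a top-degree argument does finish the proof. But it works as a combinatorial invariant of chains, applied to a pair that genuinely uses cancellation, not as a semiring homomorphism. Your $(\deg,\text{coefficient})$ valuation with $\alpha=\beta$ would not separate this pair either: both sides go to $(3,3\alpha^3)$.
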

\begin{proof}
Let $\cal{R}$ be the equivalence relation generated by $1+x_1^2 + y_1^2 \sim 2x_1y_1$. Using Proposition~\ref{prop:quotientrels}, we may observe that if $(a,b) \in \cal{R}$, then
$\mathrm{deg}(a)=\mathrm{deg}(b)$. Note that this would more generally hold for any relation of the form $p \sim q$ where $p$ and $q$ are polynomials of the same degree. 
Therefore, by matching top-degrees, we see that if $(a,b) \in \cal{R}$ and $a'$ and $b'$ are the top-degree terms of $a$ and $b$ respectively,
then $(a',b') \in \cal{R}'$, where $\cal{R}'$ is the equivalence relation generated by $x_1^2+y_1^2 \sim 2x_1y_1$.\\
\indent Since $\bb{R}_{+}$ is cancellative, we know by Proposition~\ref{prop:flatpositivity} that if $A$ is flat, then it is also cancellative.
We have two equations:
\[2x_1 + 2x_1^3 + 2x_1y_1^2 \sim 4x_1^2y_1\]
\[y_1 + x_1^2y_1 +  y_1^3 \sim 2x_1y_1^2,\]
so by substitution, cancellativity and matching top-degrees, we find that $(2x_1^3 + y_1^3,3x_1^2y_1)$ is in the degree $3$ part of $\cal{R}'$, which is generated by the relations:
\[x_1^3 + x_1y_1^2 \sim 2x_1^2y_1\]
\[x_1^2y_1 + y_1^3 \sim 2x_1y_1^2.\]
For arbitrary elements $h=a_1x_1^3 + a_2x_1^2y_1 + a_3x_1y_1^2 + a_4y_1^3$ and $k=b_1x_1^3 + b_2x_1^2y_1 + b_3x_1y_1^2 + b_4y_1^3$, if 
$(h,k)$ is in the degree $3$ part of $\cal{R}'$, then by Proposition~\ref{prop:quotientrels}, if $h$ and $k$ are not equal as polynomials, 
then because both degree $3$ generators for $\cal{R}'$ contain non-zero coefficients for $x_1y_1^2$ and $x_1^2y_1$ (on both sides), we see that
$a_2 + a_3$ and $b_2 + b_3$ are both strictly positive. Therefore, the pair $(2x_1^3 + y_1^3,3x_1^2y_1)$ cannot be in the degree $3$ part of $\cal{R}'$, so we have a contradiction, 
and $A$ is not flat over $\bb{R}_{+}$.
\end{proof}
\begin{remark}
Corollary~\ref{cor:univpointless} shows that the obvious choice for a counterexample to Question~\ref{mainquestion} does not work. 
A natural next step would be to prove that the higher dimensional analogue of the relation $1+x_1^2+y_1^2 \sim 2x_1y_1$, which is 
$1+f \sim g$ where $f=\sum_{i \in [n]} x_i^2 + y_i^2$ and $g=\sum_{i \in [n]} 2x_iy_i$, does not generate a $\bb{R}_{+}$-flat quotient
$A=\bb{R}_{+}[x_1,y_1,...,x_n,y_n]/(1+f \sim g)$ either. We did not pursue this question seriously, but we outline a possible approach mimicking the proof for $n=1$.\\
\indent Indeed, the key idea was to use cancellativity to find a relation that cannot be generated by $f \sim g$ alone. We could consider the following two equations:
\[g+(f-y_n^2)g + y_n^2g \sim g^2\]
\[y_n^2+ y_n^2f \sim y_n^2g,\]
so by substitution, cancellativity and matching top-degrees, we find that $(h,k)$ is in the degree $4$ part of $f \sim g$, where 
\[h=(f-y_n^2)(g+y_n^2)- x_n^2y_n^2 + y_n^4 \]
\[k=g^2 - x_n^2y_n^2.\]
Let $A_n \subset A$ denote the set of homogenous degree $n$ monomials. The degree $4$ part of $\cal{R}'$ is the transitive and symmetric closure of the relations
generated by the equations
\[af \sim ag, a \in A_2.\]
These equations however do not exhibit the same kind of invariant as the degree $3$ relations in the $n=1$ case, so it is not clear how to show that $(h,k)$ is not in the degree $4$ part of $\cal{R}'$.
\end{remark}
The fundamental tool to analyze flatness is the equational criterion for flatness, which holds in the semiring setting as well (see \cite[3.5]{borgersemi}).
\begin{theorem}[Equational criterion for flatness]\label{theorem:equationalflatness}
Let $A$ be a semiring, and let $M$ be an $A$-module. Then $M$ is flat if and only if for any finite collection of elements $m_1,...,m_k \in M$ satisfying a relation:
\[\sum_{i\in [k]} a_{i}m_i = \sum_{i\in [k]} b_{i}m_i,\]
with $a_{i}, b_{i} \in A$, there is a finite set of elements $\{w_l\}_{l\in [t]}, \subset M$, and for each $i\in[k], l \in [t]$,
there are elements $c_{il} \in A$ such that:
\[m_i = \sum_{l\in [t]} c_{il}w_l = \sum_{l\in [t]} d_{il}w_l\]
\[\sum_{i\in [k]} a_{i}c_{il} = \sum_{i\in [k]} b_{i}c_{il}, \; \forall l \in [t].\]
Consequently, $M$ is flat if and only if it can be written as a filtered colimit of free $A$-modules.
\end{theorem}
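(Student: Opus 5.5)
We argue as in the classical proof of Lazard's theorem, with every difference replaced by a pair of sides, since $\mod{A}$ is not additive. Flatness is taken to mean that $-\otimes_A M$ preserves finite limits, and the argument has three steps: one direction of the equational criterion, its converse, and then the filtered-colimit characterization. (We read the displayed condition as $m_i=\sum_l c_{il}w_l$ with $\sum_i a_ic_{il}=\sum_i b_ic_{il}$; the stray $d_{il}$ appears to be a typo.)

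\textbf{Flat implies the equational condition.} Given the relation $\sum a_i m_i=\sum b_i m_i$, we consider the equalizer
\[
K=\eq(a,b:A^{\oplus k}\to A),\qquad a(e_i)=a_i,\; b(e_i)=b_i .
\]
This is a submonoid of $A^{\oplus k}$. Since $M$ is flat, $K\otimes_A M\to M^{\oplus k}$ is the equalizer of $a\otimes M$ and $b\otimes M$. The tuple $(m_i)$ lies in that equalizer, so it lifts to an element $\sum_l \kappa_l\otimes w_l$ of $K\otimes_A M$ with $\kappa_l\in K$ and $w_l\in M$. Writing $\kappa_l=(c_{1l},\dots,c_{kl})$ gives exactly $m_i=\sum_l c_{il}w_l$ and $\sum_i a_ic_{il}=\sum_i b_ic_{il}$.

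\textbf{The equational condition implies flatness.} First we reduce to showing that $-\otimes_A M$ preserves equalizers of maps between finite free modules with finitely generated sources. The justification is that $-\otimes_A M$ preserves colimits, and every finite limit diagram can be written as a filtered colimit of such finitely presented diagrams. This reduction is the delicate point, because filtered colimits do not obviously commute with equalizers in this setting.

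A cleaner route is therefore to prove the last sentence of the theorem directly and deduce flatness from it. Filtered colimits in $\mod{A}$ are computed on underlying sets, so they commute with finite limits. Free modules are flat, because $A^{\oplus S}\otimes_A N\cong N^{\oplus S}$ and direct sums commute with finite limits (each element has finite support). Hence any filtered colimit of free modules is flat.

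\textbf{The equational condition gives a filtered colimit of free modules.} This is Lazard's construction. Let $I$ be the category whose objects are pairs $(A^{\oplus n},\phi:A^{\oplus n}\to M)$ with $n$ finite, and whose morphisms are $A$-linear maps over $M$. There is a natural map $\colim_I A^{\oplus n}\to M$, and it is surjective. Using the equational condition we show three things.
\begin{enumerate}
\item[(i)] $I$ is filtered. Two objects map to their direct sum. For two parallel maps $u,v:A^{\oplus n}\to A^{\oplus p}$ over $M$, the images $u(e_j)$ and $v(e_j)$ give relations of the displayed form among the images of the basis of $A^{\oplus p}$. Applying the condition to all $j$ simultaneously, after concatenating the relations into one tuple by the usual trick, yields a map $A^{\oplus p}\to A^{\oplus t}$ over $M$ that coequalizes $u$ and $v$.
\item[(ii)] The colimit map is injective. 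Two elements with the same image in $M$ are both represented in a single $A^{\oplus n}$, with image vectors $(a_i)$ and $(b_i)$ satisfying $\sum a_im_i=\sum b_im_i$. The condition then provides a factorization through $A^{\oplus t}$ that identifies the two elements.
\item[(iii)] Cofinality is handled as in Lazard's argument. Alternatively, we replace $I$ by its essentially small skeleton.
\end{enumerate}

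\textbf{Main obstacle.} The hardest step is (i): encoding several relations, and the coequalization of parallel arrows, as a single instance of the equational condition. Unlike the ring case, a relation cannot be moved to one side as a single element, so each equality must be kept as a pair $(a,b)$. The fix we intend is to treat a family of $r$ relations as one relation between the two maps $a,b:A^{\oplus k}\to A^{\oplus r}$. We would then verify that the equational condition, once established for single relations, extends to such systems by induction on $r$: apply the condition to the first relation, pull the remaining relations back to the $w_l$, and iterate.
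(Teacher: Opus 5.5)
Your proof is correct. The paper gives no argument of its own for this statement; it only cites \cite[3.5]{borgersemi}. Your write-up therefore supplies a self-contained proof: Lazard's argument, with every relation kept as a pair of sides. Its three steps are:
\begin{enumerate}
\item[(a)] flat $\Rightarrow$ criterion, by lifting $(m_i)$ along $K\otimes_A M\hookrightarrow M^{\oplus k}$ with $K=\eq(a,b)$;
\item[(b)] criterion $\Rightarrow$ $M\cong\colim_I A^{\oplus n}$;
\item[(c)] filtered colimits of free modules are flat, since free modules are flat and filtered colimits commute with finite limits, both being computed on underlying sets.
\end{enumerate}
A benefit of writing it out is that it shows only distributivity is used, never subtraction or cancellation.

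Some tightening is needed.
\begin{enumerate}
\item[(1)] In step (i) there is no ``usual trick'' that packages several relations in $M$ into a single one. The induction from your last paragraph does the job instead. If $m_i=\sum_l c_{il}w_l$ settles the first relation and $w_l=\sum_q c'_{lq}w'_q$ settles the pulled-back second one, then $c''_{iq}=\sum_l c_{il}c'_{lq}$ settles both. This holds because $\sum_i a^{(1)}_i c''_{iq}=\sum_l\bigl(\sum_i a^{(1)}_i c_{il}\bigr)c'_{lq}$, and likewise for $b^{(1)}$.
\item[(2)] Step (iii) is superfluous. The category $I$ is already small, and a bijective homomorphism of $A$-modules is an isomorphism.
\item[(3)] The worry in your abandoned first reduction is unfounded. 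Two sentences later you yourself use that filtered colimits commute with finite limits.
\item[(4)] Your reading of the stray $d_{il}$ costs nothing. The paper actually invokes a two-family form in Lemma~\ref{lemma:flatmonomial}: from $\sum_i a_im_i=\sum_j b_jm'_j$ one obtains $m_i=\sum_l c_{il}w_l$ and $m'_j=\sum_l d_{jl}w_l$ with $\sum_i a_ic_{il}=\sum_j b_jd_{jl}$. This follows by applying your version to the concatenated family $(m_1,\dots,m_k,m'_1,\dots,m'_{k'})$ with coefficient vectors $(a,0)$ and $(0,b)$.
\end{enumerate}
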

Here are a few consequences.
\begin{corollary}\label{cor:flatquotient}
Let $A$ be a semiring, $B=A[x_1,...,x_n]/(p_i \sim q_i)_{i \in [k]}$ be a quotient of a polynomial semiring over $A$. 
If $B$ is flat, then the following are true:
\begin{enumerate}
\item[(i)] Without changing the choice of surjection $A[x_1,...,x_n] \to B$, we may assume $p_i$ is a monomial for all $i \in [k]$.
\item[(ii)] Now let $A = \bb{N}$, and suppose that no monomial goes to $0$ in $B$. 
Using (i), for each $i$ write $p_i=x^{E_i}$ and $q_i = \sum_{j\in [m_i]} a_i x^{F_{ij}}$ for $E_i, F_{ij} \in \bb{N}^n$. 
Then for each $i \in [k]$, there do not exist $\{a_j\}_{j \in [m_i]} \subset \bb{N}$ such that
\[\sum_{j \in [m_i]} a_j (E_i-F_{ij}) = 0,\]
unless $a_j = 0$ for all $j \in [m_i]$. In particular, $E_i$ is not in the convex hull of $\{F_{ij}\}_{j \in [m_i]}$.
\end{enumerate}
\end{corollary}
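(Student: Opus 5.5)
For (i), the plan is to use the equational criterion (Theorem~\ref{theorem:equationalflatness}) to replace each relation $p_i \sim q_i$ by finitely many relations whose left-hand sides are monomials. These new relations should hold in $B$ and together imply $p_i \sim q_i$, so the quotient of $A[x_1,\dots,x_n]$ by them is still $B$, with the same surjection.

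Fix $i$. Write $p_i=\sum_E a_E x^E$ and $q_i=\sum_E b_E x^E$, where $E$ runs over the finitely many exponents occurring in $p_i$ or $q_i$. In $B$, regarded as an $A$-module, the images $[x^E]$ satisfy $\sum_E a_E[x^E]=\sum_E b_E[x^E]$. The criterion gives elements $w_1,\dots,w_t\in B$ and coefficients $c_{El}\in A$ with
\[[x^E]=\sum_l c_{El}w_l, \qquad \sum_E a_Ec_{El}=\sum_E b_Ec_{El}\ \text{ for all } l.\]
Since $B$ is spanned over $A$ by images of monomials, write $w_l=\sum_M d_{lM}[x^M]$ and set $c'_{EM}=\sum_l c_{El}d_{lM}$. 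Multiplying the identities for each $l$ by $d_{lM}$ and summing over $l$ shows that
\[\sum_E a_Ec'_{EM}=\sum_E b_Ec'_{EM}\ \text{ for all } M.\]
Now the relations $x^E\sim\sum_M c'_{EM}x^M$ hold in $B$ and have monomial left-hand sides. They also imply the old relation:
\[p_i\sim\sum_M\Big(\sum_E a_Ec'_{EM}\Big)x^M=\sum_M\Big(\sum_E b_Ec'_{EM}\Big)x^M\sim q_i.\]
Replacing each $p_i\sim q_i$ by this finite family therefore yields the same quotient.

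For (ii), take $A=\bb{N}$, $p_i=x^{E}$ and $q_i=\sum_j \alpha_j x^{F_j}$, where I write the coefficients as $\alpha_j\ge 1$ to avoid a clash with the $a_j$ in the statement. I assume the relation is non-trivial, meaning $q_i\neq x^E$; otherwise it can be discarded. Suppose $\sum_j a_j(E-F_j)=0$ with $a_j\in\bb{N}$ not all zero, and set $s=\sum_j a_j\ge1$, so that $sE=\sum_j a_jF_j$. In $B$ we have $[x^{sE}]=[x^E]^s=[q_i]^s$. Expanding $q_i^s$ with the multinomial theorem, the term obtained by choosing $x^{F_j}$ exactly $a_j$ times is $x^{\sum_j a_jF_j}=x^{sE}$, with a positive integer coefficient. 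Hence in $B$
\[[x^{sE}]=[x^{sE}]+r,\]
where $r$ is an $\bb{N}$-combination of images of monomials. We need $r$ to be non-zero as a formal combination. That fails only if $q_i^s$ is exactly the single monomial $x^{sE}$, which forces $q_i=x^E$, the excluded trivial case.

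Since $\bb{N}$ is integral, Proposition~\ref{prop:flatpositivity} shows that the flat module $B$ is cancellative and negative-free. Cancellation gives $r=0$ in $B$. Negative-freeness then shows that every monomial appearing in $r$ maps to $0$ in $B$, contradicting the hypothesis that no monomial goes to $0$. For the convex hull statement: if $E$ were a convex combination of the $F_j$, the weights could be taken rational, since the $F_j$ are integral vectors. Clearing denominators gives such $a_j\in\bb{N}$, not all zero.

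The main obstacle I expect is in (i): checking that passing from the abstract $w_l$ to monomials preserves the linear identities, and that the enlarged relation set generates exactly the same congruence as before. In (ii), the only delicate point is making sure $r$ is genuinely non-zero, which is handled by the triviality caveat above.
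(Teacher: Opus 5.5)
Your proof is correct.

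\textbf{Part (i).} This is the paper's argument. Lemma~\ref{lemma:flatmonomial} applies the equational criterion to the relation among the images of the monomials of $p_i$ and $q_i$. It then replaces $p_i\sim q_i$ by the relations $x^{E}\sim \sum_l c_{El}w_l$, with the $w_l$ lifted to $A[x_1,\dots,x_n]$. Your further expansion of the $w_l$ into monomials is an inessential variant.

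\textbf{Part (ii).} Here your route is more hands-on.
\begin{itemize}
\item The paper transfers the order $\le_B$ of Proposition~\ref{prop:poset} to exponent vectors, setting $E\le F$ iff $x^E\le x^F$ in $B$. It asserts $E_i>F_{ij}$ for every $j$ and sums these inequalities with weights $a_j$, reaching $(\sum_j a_j)E_i>(\sum_j a_j)E_i$.
\item You instead expand $q_i^{s}$ by the multinomial theorem, split off one copy of $x^{sE}$, and apply cancellation and then negative-freeness to the remainder $r$.
\end{itemize}
The underlying mechanism is the same. Your version, however, makes visible two points the order-theoretic argument leaves implicit.
\begin{itemize}
\item The strict inequality $F_{ij}<E_i$, and its preservation when exponents are added, is exactly where cancellation, negative-freeness and the hypothesis that no monomial maps to $0$ are needed.
\item The conclusion genuinely fails for a trivial relation $x^{E}\sim x^{E}$. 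For example, $x\sim x$ in $\bb{N}[x]$ with $a_1=1$ is a counterexample. So your non-triviality caveat is required, not cosmetic.
\end{itemize}
You also supply the rationality argument behind the convex-hull consequence, which the paper leaves unstated.
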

We first prove a lemma.
\begin{lemma}[Boosting polynomial relations using flatness]\label{lemma:flatmonomial}
Let $A$ be a semiring, and let $\phi: A[x_1,...,x_n] \to B$ a surjection of semirings. Suppose $B$ is flat over $A$. 
Then for any two elements $f,g \in A[x_1,...,x_n]$ such that $\phi(f)=\phi(g)$, there is a finite set of elements $t_i,s_i \in A[x_1,...,x_n]$ with $\phi(t_i)=\phi(s_i)$ for all $i$,
such that each $t_i$ is a monomial, and the relation $f \sim g$ is implied by the relations $t_i \sim s_i$ for all $i$.
\end{lemma}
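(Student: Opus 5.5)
Write $P = A[x_1,\dots,x_n]$ and $\mu_1,\dots,\mu_k$ for the monomials occurring in $f$ or $g$. The plan is to apply the equational criterion (Theorem~\ref{theorem:equationalflatness}) to the elements $m_j = \phi(\mu_j)\in B$, factor the relation $\phi(f)=\phi(g)$ through a finite free module, and read off monomial relations from that factorization.

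\textbf{Step 1: set up the relation in $B$.} Expand
\[f=\sum_{j} a_j\mu_j, \qquad g=\sum_j b_j\mu_j,\]
with $a_j,b_j\in A$. The hypothesis $\phi(f)=\phi(g)$ becomes the $A$-linear relation
\[\sum_j a_j m_j=\sum_j b_j m_j \quad\text{in } B.\]
Since $B$ is flat over $A$, Theorem~\ref{theorem:equationalflatness} gives elements $w_1,\dots,w_t\in B$ and coefficients $c_{jl}\in A$ such that
\[m_j=\sum_l c_{jl}w_l \quad\text{and}\quad \sum_j a_jc_{jl}=\sum_j b_jc_{jl} \text{ for every } l.\]
(The statement of the theorem has a stray $d_{il}$; I will use it in the form with a single coefficient family $c_{il}$, which is the standard criterion from \cite{borgersemi}.)

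\textbf{Step 2: lift the $w_l$ and write down the new relations.} Because $\phi$ is surjective, choose $v_l\in P$ with $\phi(v_l)=w_l$. Take the relations
\[t_j := \mu_j \;\sim\; s_j := \sum_l c_{jl}v_l .\]
Each $t_j$ is a monomial, and $\phi(t_j)=m_j=\phi(s_j)$ by construction.

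\textbf{Step 3: show these relations imply $f\sim g$.} Work in any quotient $P\to P'$ where $t_j\sim s_j$ holds for all $j$, or equivalently in $\mathcal{R}(t_j\sim s_j)$, which is closed under the semiring operations. There we compute
\[f=\sum_j a_j\mu_j\sim \sum_j a_j\sum_l c_{jl}v_l=\sum_l\Bigl(\sum_j a_jc_{jl}\Bigr)v_l .\]
The last expression is an identity in the polynomial semiring $P$. The coefficient identities from Step 1 hold in $A$, so they can be substituted inside $P$ to give
\[\sum_l\Bigl(\sum_j a_jc_{jl}\Bigr)v_l=\sum_l\Bigl(\sum_j b_jc_{jl}\Bigr)v_l .\]
Running the same computation backwards for $g$ yields $g$. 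Hence $(f,g)\in\mathcal{R}(t_j\sim s_j)_j$, which is exactly the statement that $f\sim g$ is implied by the relations $t_j\sim s_j$.

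\textbf{Main obstacle.} The only delicate point is Step 3. The passage from a relation in $B$ to one in $P$ must avoid using any relation of $B$ beyond the $t_j\sim s_j$. This works because the equalities of coefficients hold already in $A$, and because $\mathcal{R}$ is a congruence, so it is compatible with scalar multiplication and addition. Everything else is bookkeeping.
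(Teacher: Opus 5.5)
Your proof is correct and is essentially the paper's argument: apply the equational criterion to the images of the monomials, lift the $w_l$ along the surjection $\phi$, and take each $t_j$ to be a monomial and $s_j$ the corresponding $A$-linear combination of the lifts. The differences are cosmetic. The paper keeps separate monomial lists for $f$ and $g$, with two coefficient families $c_{ik}, d_{jk}$ satisfying $\sum_i a_i c_{ik}=\sum_j b_j d_{jk}$; this is the same criterion applied to the concatenated list. The paper also leaves implicit the congruence computation that you carry out in Step 3.
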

\begin{proof}
Write $f= \sum_{i\in [m]} a_i x^{E_i}$ and $g = \sum_{j\in [l]} b_j x^{F_j}$ for $a_i,b_j \in A$ and $E_i, F_j \in \bb{N}^n$. Since $f$ and $g$ have the same image in $B$, 
by Theorem~\ref{theorem:equationalflatness}, 
there are elements $w_1,...,w_t \in A[x_1,...,x_n]$, elements $\{c_{ik}\}_{i \in [m], k \in [t]}$ and $\{d_{jk}\}_{j \in [l], k \in [t]}$ in $A$ such that:
\[\phi(x^{E_i}) = \phi\left(\sum_{k \in [t]} c_{ik} w_k\right), \; \phi(x^{F_j}) = \phi\left(\sum_{k \in [t]} d_{jk} w_k\right),\]
and for each $k \in [t]$,
\[\sum_{i\in [m]} a_i c_{ik} = \sum_{j\in [l]} b_j d_{jk}.\]
By setting $t_i=x^{E_i}$ for $i \in [m]$ and $t_{m+j} = x^{F_j}$ for $j \in [l]$, and $s_i = \sum_{k \in [t]} c_{ik} w_k$ for $i \in [m]$ and
$s_{m+j} = \sum_{k \in [t]} d_{jk} w_k$ for $j \in [l]$, we conclude the proof.
\end{proof}
\begin{proof}[Proof of Corollary~\ref{cor:flatquotient}]
For (i), we may apply Lemma~\ref{lemma:flatmonomial} to each relation $p_i \sim q_i$ to obtain the desired result.\\
\indent For (ii), recall that $B$ is an integral commutative monoid (see e.g. Proposition~\ref{prop:flatpositivity}), and 
can therefore be endowed with a partial ordering by Proposition~\ref{prop:poset}. We will endow the space $\bb{N}^n$ with the following ordering. 
For two $E,F \in \bb{N}^n$, we say $E \le F$ if and only if $x^E \le x^F$ in $B$. It's clear that $(\bb{N}^n, \le)$ defines
a partial ordering, and since $B$ is integral as a module over itself, $\le$ is compatible with addition.\\
\indent Now suppose for the sake of contradiction that for some $i \in [k]$, there are numbers $\{a_j\}_{j \in [m_i]} \subset \bb{N}$ such that:
\[\sum_{j\in [m_i]} a_j (E_i-F_{ij}) = 0.\]
This implies that:
\[\left(\sum_{j\in [m_i]} a_j\right) E_i = \sum_{j\in [m_i]} a_j F_{ij}.\]
On the other hand, the equation $p_i=q_i$ implies that $E_i > F_{ij}$ for all $1 \le j \le m_i$. Therefore, we have:
\[\left(\sum_{j\in [m_i]} a_j\right) E_i > \sum_{j\in [m_i]} a_j F_{ij} = \left(\sum_{j\in [m_i]} a_j\right) E_i,\]
a contradiction unless $\sum_{j\in [m_i]} a_j=0 \implies a_j=0$ for all $j \in [m_i]$.
\end{proof}
\begin{corollary}
Let $A=\bb{N}[x_1,...,x_n]/(x^{E}\sim \sum_{j \in [m]} a_j x^{F_j})$ for $E,F_j \in \bb{N}^n$ and $a_j \in \bb{N}$. 
If $A$ is flat over $\bb{N}$, then
$A$ has an $\bb{R}_{+}$-point.
\end{corollary}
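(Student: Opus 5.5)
The plan is to turn the combinatorial obstruction of Corollary~\ref{cor:flatquotient}(ii) into a real point by a convexity argument followed by the intermediate value theorem. An $\bb{R}_{+}$-point of $A$ is a tuple $r\in\bb{R}_{+}^n$ with $r^{E}=\sum_{j\in[m]} a_j r^{F_j}$. I will look for such an $r$ with all coordinates strictly positive.

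First I dispose of the degenerate cases.
\begin{itemize}
\item If $A$ is the zero semiring it has no points. So I assume $A$ is non-trivial, as in Question~\ref{mainquestion}.
\item If all $a_j=0$, the relation reads $x^E\sim 0$. Then $E\neq 0$, since otherwise $1=0$ in $A$. Sending some $x_i$ with $E_i>0$ to $0$ and the other variables to $1$ gives a point.
\item Otherwise I drop the indices with $a_j=0$.
\end{itemize}
To apply Corollary~\ref{cor:flatquotient}(ii) I must also check that no monomial maps to $0$ in $A$. I expect this to be the main technical obstacle. My first attempt is to use Proposition~\ref{prop:flatpositivity}: flatness makes $A$ integral. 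Combining this with Proposition~\ref{prop:quotientrels}, I would argue that a relation $x^G\sim 0$ would force $x^G$ to be related to $0$ through a chain of the form $a_i+b_if+c_ig=a_{i+1}+b_{i+1}g+c_{i+1}f$. Now both sides of $x^E\sim\sum a_jx^{F_j}$ are nonzero with coefficients in $\bb{N}$, so every term in such a chain has nonzero support unless the chain is trivial. Hence the class of $0$ is just $\{0\}$, and no monomial dies.

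Granting this, Corollary~\ref{cor:flatquotient}(ii) says the vectors $v_j=E-F_j\in\bb{Z}^n$ admit no nontrivial $\bb{N}$-linear relation $\sum_j a_j v_j=0$. Clearing denominators, there is then no nontrivial relation with coefficients in $\bb{Q}_{\ge 0}$. Because the cone spanned by the $v_j$ is rational polyhedral, there is also no nontrivial relation with coefficients in $\bb{R}_{\ge 0}$. Said differently, $0$ is not in the convex hull of the $v_j$. By Gordan's theorem (equivalently, separating the compact convex hull from $0$), there is $y\in\bb{R}^n$ with $\langle y,v_j\rangle>0$ for all $j$.

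Now set $r(t)=(e^{ty_1},\dots,e^{ty_n})\in\bb{R}_{>0}^n$ for $t\in\bb{R}$, and consider the continuous function
\[h(t)=\frac{r(t)^{E}}{\sum_j a_j r(t)^{F_j}} .\]
Its reciprocal is $\sum_j a_j e^{-t\langle y,v_j\rangle}$, a positive combination of exponentials with strictly negative rates. So $1/h(t)\to 0$ as $t\to+\infty$ and $1/h(t)\to+\infty$ as $t\to-\infty$. By the intermediate value theorem there is $t_0$ with $h(t_0)=1$. The point $r(t_0)$ satisfies $r^{E}=\sum_j a_j r^{F_j}$ in $\bb{R}_{+}$, so evaluation at $r(t_0)$ factors through $A$ and gives the desired map $A\to\bb{R}_{+}$.
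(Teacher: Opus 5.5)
Your route is essentially the paper's. Corollary~\ref{cor:flatquotient}(ii) plus a separation theorem gives a direction in which $x^E$ dominates every $x^{F_j}$, and an intermediate value argument along that direction produces the point. The paper takes an integral functional $\lambda$ (after perturbing and scaling) and maps to $\bb{N}[t^{\pm 1}]$. Your real $y$ and the curve $t\mapsto(e^{ty_1},\dots,e^{ty_n})$ do the same thing without the perturbation.

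You also supply several steps the paper's proof passes over:
\begin{itemize}
\item that no monomial dies in $A$, which is the hypothesis of Corollary~\ref{cor:flatquotient}(ii);
\item the passage from no $\bb{N}$-relations to no $\bb{R}_{\ge 0}$-relations (strictly, the rational polyhedral cone to invoke is $\{c\in\bb{R}^m_{\ge 0}:\sum_j c_jv_j=0\}$, not the cone spanned by the $v_j$);
\item the case $A=0$, which is flat and is a genuine exception to the statement as written;
\item the case where all $a_j=0$, where the corollary does not apply and the paper's final equation $t^{\lambda(E)}=0$ has no positive root.
\end{itemize}

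One degenerate case still slips through. After discarding zero coefficients, the relation may be the vacuous one: $m=1$, $a_1=1$, $F_1=E$. Then $A=\bb{N}[x_1,\dots,x_n]$ is flat and no monomial dies, yet $v_1=0$. So Corollary~\ref{cor:flatquotient}(ii) as stated is false here, and no separating $y$ exists. Handle this case separately; any point of $\bb{R}_+^n$ works.

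Outside this case the corollary is sound. Write $g=\sum_j a_jx^{F_j}=x^{F_j}+w_j$ with $w_j\in\bb{N}[x_1,\dots,x_n]$. A nontrivial relation $\sum_j c_jv_j=0$ with $C=\sum_jc_j$ gives, in $A$,
$x^{CE}=g^C=\prod_j(x^{F_j}+w_j)^{c_j}=x^{\sum_j c_jF_j}+R=x^{CE}+R$.
Here $R$ is a nonzero polynomial unless $m=1$ and $a_1=1$; in that subcase with $F_1\neq E$, the equation $c_1v_1=0$ already forces $c_1=0$. Cancelling $x^{CE}$ contradicts your observation that no nonzero polynomial vanishes in $A$. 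With the vacuous relation treated separately, your proof is complete.
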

\begin{proof}
By Corollary~\ref{cor:flatquotient} (ii) and the Hahn-Banach theorem, there is a linear functional $\lambda: \bb{R}^n \to \bb{R}$ such that
\[\lambda(E) > \lambda(F_j), \; \forall j \in [m].\]
By perturbation we may assume $\lambda: \bb{Q}^n \to \bb{Q}$, and by scaling we may assume $\lambda: \bb{Z}^n \to \bb{Z}$. Define the semiring map
\[\phi: \bb{N}[x_1,...,x_n] \to \bb{N}[t^{\pm 1}]\]
induced by sending $x_i \mapsto t^{\lambda(e_i)}$ where $\{e_i\}_{i=1}^n$ is the standard basis of $\bb{Z}^n$. Let
\[A'=\bb{N}[t^{\pm1}]/(t^{\lambda(E)} \sim \sum_{j \in [m]} a_j t^{\lambda(F_j)}).\]
Then $\phi$ induces a map $A \to A'$, and since $\lambda(E) > \lambda(F_j)$ for all $j \in [m]$, the equation
\[t^{\lambda(E)} = \sum_{j \in [m]} a_j t^{\lambda(F_j)}\]
has at least one solution in $\bb{R}_{+}$. Therefore, $A'$ has an $\bb{R}_{+}$-point, and hence so does $A$.
\end{proof}

\chapter{Flatness and Descendability}

Let $R$ be a commutative ring and $M$ an $R$-module. Studying the extent to which flat modules are not projective has presented a fruitful 
line of inquiry in homological algebra. 
A truly remarkable characterization of flat modules was given by Raynaud and Gruson in their seminal 1971 paper \cite{Raynaud1971}.
\begin{theorem}\label{thm:rg1}
A module $M$ over a ring $R$ is projective if and only if it is flat, Mittag-Leffler and a direct sum of countably generated modules. 
\cite[Theorem 2.2.1]{Raynaud1971} (see also \cite[Tag 059Z]{sp})
\end{theorem}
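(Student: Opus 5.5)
The forward implication is the easy one. Every projective module is a direct summand of a free module $R^{\oplus I}$, so it is flat. A free module is Mittag-Leffler because for $R^{\oplus I}$ and any family $(Q_\alpha)$ the canonical map $R^{\oplus I}\otimes_R \prod_\alpha Q_\alpha \to \prod_\alpha (R^{\oplus I}\otimes_R Q_\alpha)$ is injective. Both flatness and this injectivity property pass to direct summands. Finally, Kaplansky's theorem says that every projective module is a direct sum of countably generated projective modules. I would prove it by a transfinite dévissage: given $P\oplus Q=F$ free, one builds an increasing continuous chain of direct summands of $F$, each generated by countably many basis elements, that split compatibly with $P$ and $Q$ by a back-and-forth closure argument. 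So the content lies in the converse.

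For the converse, write $M=\bigoplus_j M_j$ with each $M_j$ countably generated. Flatness and the Mittag-Leffler property are inherited by direct summands, and a direct sum of projectives is projective. So it suffices to show that a countably generated, flat, Mittag-Leffler module $M$ is projective.

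By Lazard's theorem (the classical analogue of Theorem~\ref{theorem:equationalflatness}), $M$ is a filtered colimit of finite free modules. Since $M$ is countably generated, the following step refines this to a countable sequential colimit
\[F_1 \xrightarrow{\phi_1} F_2 \xrightarrow{\phi_2} F_3 \to \cdots,\qquad M=\colim_n F_n,\]
with each $F_n$ finite free. To build it, enumerate generators $x_1,x_2,\dots$ and relations. Inductively choose $F_n$ so that its image contains $x_1,\dots,x_n$, and use the equational criterion to factor the relevant finitely many relations through the next stage.

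Next, translate the Mittag-Leffler hypothesis into the statement that the inverse system $\Hom_R(F_n,N)$ is Mittag-Leffler for every module $N$. Equivalently, for each $n$ there is $m\ge n$ such that for all $k\ge m$ the images of $F_n\to F_m$ and $F_n\to F_k$ factor compatibly. I would use the concrete form from \cite[Tag 059E]{sp}: for each $n$ there exist $m\ge n$ and, for every $k\ge m$, a map $\psi:F_m\to F_k$ with $\psi\circ\phi_{n,m}=\phi_{n,k}$ up to the image in $M$.

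With this in hand, pass to a subsequence and take $N=\bigoplus_n F_n$. The Mittag-Leffler condition then makes $\lim^1_n \Hom_R(F_n,N)=0$. Using the standard exact sequence
\[0\to \bigoplus_n F_n \xrightarrow{1-\mathrm{shift}} \bigoplus_n F_n \to M\to 0,\]
this vanishing shows that the sequence splits, so $M$ is a direct summand of $\bigoplus_n F_n$ and hence projective.

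I expect the main obstacle to be the middle step. One has to extract, from the abstract tensor-with-products definition of Mittag-Leffler, the factorization property of the specific chosen system $(F_n)$. One also has to check that this property is independent of the presentation. The Mittag-Leffler condition alone does not give $\lim^1=0$ for arbitrary $N$; it does so here only because $\Hom(F_n,N)$ are stable images. I would handle this by first proving that the Mittag-Leffler condition on $M$ is equivalent to the factorization condition for any presentation as a colimit of finitely presented modules, as in \cite[Tag 0599]{sp}, and then applying it.
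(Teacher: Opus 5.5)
Your outline is the standard proof of \cite[Tag 059Z]{sp}; the paper itself gives nothing beyond that citation. The forward direction and the reduction to the countably generated case are fine. So is the telescope finish: from $0\to\bigoplus_n F_n\to\bigoplus_n F_n\to M\to 0$ you get $\Ext^1_R(M,N)\cong\lim^1_n\Hom_R(F_n,N)$, because $\bigoplus_n F_n$ is free.

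\textbf{The gap.} The step that produces the sequence $F_1\to F_2\to\cdots$ uses only countable generation and flatness, and in that generality it is false. A countably generated flat module can have a relation module that is not countably generated. Then there is nothing to ``enumerate,'' and killing finitely many relations per stage never exhausts the kernel. For a concrete example, let $R=\prod_{s\in S}k$ with $S$ uncountable, $I=\bigoplus_{s\in S}k$, and $M=R/I$.
\begin{enumerate}
\item[(a)] $M$ is cyclic, and it is flat because $R$ is absolutely flat.
\item[(b)] A sequential colimit of finite free modules is countably presented, by the telescope sequence.
\item[(c)] If $M$ were countably presented, Schanuel's lemma would force $I$ to be countably generated.
\item[(d)] $I$ is not countably generated: countably many elements have supports inside a countable $T\subset S$, so they cannot generate $e_s$ for $s\notin T$.
\end{enumerate}
As the theorem predicts, this $M$ is not Mittag-Leffler.

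\textbf{The missing idea.} You must use the Mittag-Leffler hypothesis \emph{before} the countable refinement, and in the right direction. Start from a Lazard presentation $M=\colim_{i\in I}F_i$. Apply the Mittag-Leffler property of $(\Hom_R(F_i,N))_i$ with $N=\prod_{j\ge i}F_j$; its images stabilize factorwise. Evaluating at $\phi_{i\,j}\in\Hom_R(F_i,F_j)$ gives, for each $i$, an index $j(i)\ge i$ with the following property: for every $k\ge j(i)$ there is $g\colon F_k\to F_{j(i)}$ with $g\circ\phi_{ik}=\phi_{i\,j(i)}$ (cf.\ \cite[Tag 059E]{sp}). Hence $\ker(F_i\to M)=\ker(\phi_{i\,j(i)})$, so the kernel dies at a single stage, whether or not it is finitely or countably generated. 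Now any chain $i_1\le i_2\le\cdots$ with $i_{n+1}\ge j(i_n)$ and $x_{n+1}\in\im(F_{i_{n+1}}\to M)$ has colimit exactly $M$.

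\textbf{Smaller errors.}
\begin{enumerate}
\item[(1)] Your ``concrete form'' points the wrong way. A map $\psi\colon F_m\to F_k$ with $\psi\circ\phi_{n,m}=\phi_{n,k}$ always exists (take $\psi=\phi_{m,k}$), so it says nothing. The content is a map $F_k\to F_m$ in the other direction.
\item[(2)] Your closing worry is misplaced. Once the sequence exists, $(\Hom_R(F_n,N))_n$ is a countable Mittag-Leffler system for \emph{every} $N$. So $\lim^1$ vanishes and $\Ext^1_R(M,N)=0$ for all $N$. Specializing to $N=\bigoplus_n F_n$ and passing to a subsequence are harmless but unnecessary.
\end{enumerate}
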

The conditions of being Mittag-Leffler and countably generated are related to the well-known fact that the functor:
\[\lim_{I}: \fun(I, \mod{R}) \to \mod{R}\]
has derived functors $R^n\lim_{I}$ whose vanishing is related to the size of the indexing category $I$.
In particular, if $I$ is countable, then $R^k\lim_I = 0$ for all $k \geq 2$, and more generally if $|I|< \aleph_n$
then $R^k\lim_I = 0$ for all $k \geq n+1$. After writing $M$ as a filtered colimit of finite free $R$-modules,
the countably generated assumption allows us to regard $M$ as a filtered colimit indexed by a countable category, 
and the Mittag-Leffler condition ensures a vanishing of $R^1\lim_I$. Theorem~\ref{thm:rg1}
led to a proof that projective modules $M$ satisfy descent with respect to faithfully flat extensions $R \to S$, i.e. if
$M \otimes_R S$ is a projective $S$-module, then $M$ is a projective $R$-module (see \cite[Tag 05A4]{sp}).\\
\indent Theorem~\ref{thm:rg1} has been generalized in various directions, one of which is the relationship between the base ring $R$ and the projective
dimension of a flat module $M$. The following two results are quite remarkable:
\begin{theorem}\label{thm:gj0}
Let $R$ be a commutative ring and $M$ a flat $R$-module.
\begin{enumerate}
\item If $|R| \leq \aleph_n$, then $M$ has projective dimension at most $n+1$. \cite[Theorem 7.10]{gj}
\item If $R$ is Noetherian with $\text{dim}(R) = d$, then $M$ has projective dimension at most $d$. \cite[Corollary 7.2]{gj}
\end{enumerate}
\end{theorem}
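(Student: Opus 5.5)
The plan is to treat the two parts separately, both resting on the Lazard presentation of $M$ as a filtered colimit of finite free modules together with the Raynaud--Gruson style dévissage recalled after Theorem~\ref{thm:rg1}. The guiding principle is that projective dimension of a flat module is controlled by the vanishing of higher derived limits $R^k\lim_I$, and this in turn is controlled by the size of the indexing category $I$.

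For part (1), assume $|R|\le\aleph_n$. First reduce to modules of small size. By a Kaplansky/Hill-type filtration argument, any flat $R$-module $M$ admits a continuous filtration $0=M_0\subset M_1\subset\cdots\subset M_\alpha=M$ by pure submodules whose successive quotients $M_{\beta+1}/M_\beta$ are flat and of cardinality $\le\aleph_n$. Purity uses the equational criterion to close off sets of witnesses, and $|R|\le\aleph_n$ keeps the closure within size $\aleph_n$. By Auslander's lemma on filtrations, $\mathrm{pd}(M)\le\sup_\beta \mathrm{pd}(M_{\beta+1}/M_\beta)$. It therefore suffices to treat flat $M$ with $|M|\le\aleph_n$. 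Such an $M$ is a filtered colimit of finite free modules indexed by a directed poset $I$ with $|I|\le\aleph_n$. The standard Mitchell/Osofsky bound then says that $\colim_I$ of projectives has projective dimension $\le k+1$ when $|I|\le\aleph_k$. Concretely, one resolves $M$ by the bar-type (Roos) complex $\bigoplus_{i_0<\dots<i_p}F_{i_0}\to M$, and for $|I|\le\aleph_n$ this can be truncated to length $n+1$ via a cofinal-subset induction on $n$. The base case $n=0$ is a countable chain, where $0\to\bigoplus F_i\to\bigoplus F_i\to M\to0$ gives $\mathrm{pd}\le1$. This yields $\mathrm{pd}(M)\le n+1$.

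For part (2), assume $R$ is Noetherian of dimension $d$. The plan is the Raynaud--Gruson/Jensen theorem that flat modules over a Noetherian ring of dimension $d$ have projective dimension $\le d$. I would proceed by induction on $d$ using local cohomology. Since projective dimension of a flat module can be tested by $\Ext^{d+1}_R(M,-)$, and $\Ext$ out of a flat module into injectives behaves well, one reduces to bounding $\Ext^{>d}(M,N)$. Using the Matlis decomposition of injective modules over a Noetherian ring, with injective dimension indexed by height, together with a filtration of $R$-modules by supports and the fact that flat modules are "locally projective after completion", one obtains the bound. Alternatively, I would cite the dévissage of Raynaud--Gruson (Part II, 3.3), which shows directly that such a flat $M$ has a projective resolution of length $d$ built from the Mittag-Leffler property on closed strata of $\Spec R$.

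The main obstacle is the truncation step in part (1): showing that the Roos resolution of a colimit over $|I|\le\aleph_n$ can be shortened to length $n+1$. Equivalently, this is the vanishing $R^{k}\lim_I=0$ for $k\ge n+2$ applied to $\Hom(F_\bullet,-)$. I would attack it by writing $I$ as a continuous union of subposets of size $\aleph_{n-1}$ and inducting, exactly as in Mitchell's proof. For part (2), the analogous difficulty lies in the dévissage along $\dim$, and there I would rely on the cited result.
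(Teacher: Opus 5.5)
Your part (1) is correct, but it follows a different route from the one the paper relies on. The paper gives no proof of its own: it cites Gruson--Jensen and sketches their functor-category argument. There one bounds the injective dimension of $\overline{\overline{M}}$ in $\mathrm{Lex}$, using that coherent functors over a ring of cardinality $\le\aleph_n$ are $\aleph_n$-Noetherian (Theorems~\ref{thm:gj1} and~\ref{thm:gj2}), and $M$ is never decomposed.

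You take the classical module-theoretic route instead:
\begin{enumerate}
\item a continuous chain of pure submodules of $M$ with flat subquotients of size $\le\kappa=\max(|R|,\aleph_0)\le\aleph_n$;
\item Auslander's lemma, reducing to flat modules of size $\le\aleph_n$;
\item a Lazard presentation of each small flat module $N$ over a filtered category of cardinality $\le\aleph_n$ (objects are finite tuples in $N$, morphisms are matrices over $R$), which you replace by a cofinal directed poset of the same size;
\item the identification $\Ext^p_R(\colim_I F_i,N)\cong R^p\lim_I\Hom_R(F_i,N)$, combined with Mitchell's vanishing $R^p\lim_I=0$ for $p\ge n+2$ when $|I|\le\aleph_n$.
\end{enumerate}
Each step is sound. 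Your version makes the role of cardinality transparent; it is exactly the heuristic the paper states after Theorem~\ref{thm:rg1}. The cost is that it needs the deconstruction step and puts all the difficulty into Mitchell's theorem, which you cite rather than prove. The Gruson--Jensen argument encodes the same cardinality phenomenon as an $\aleph_n$-Artinian condition on a functor category and handles all flat modules at once.

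For part (2) you do not actually give an argument, and the sketch you place before the citation would not work. It never says where $\dim R=d$ enters.
\begin{enumerate}
\item Saying that $\Ext$ from a flat module into injectives ``behaves well'' is vacuous, since $\Ext^{>0}_R(X,E)=0$ for every $X$ when $E$ is injective.
\item Exploiting injective resolutions of an arbitrary $N$ via Matlis decomposition would require $N$ to have finite injective dimension. That holds for all $N$ only when $R$ is regular, and then the statement is trivial because $\mathrm{gl.dim}\,R=d$.
\item The claim that flat modules become projective locally after completion is false: $\mathbb{Q}_p$ is flat over the complete discrete valuation ring $\mathbb{Z}_p$ and has projective dimension exactly $1$.
\end{enumerate}
Nor can the method of part (1) be reused. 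A Noetherian ring of dimension $d$, e.g.\ $k[x_1,\dots,x_d]$ with $|k|$ large, can have arbitrarily large cardinality, while the bound must depend only on $d$. So part (2) rests solely on your citation of Raynaud--Gruson. That is the same footing as the paper, which also proves (2) only by citing \cite[Corollary 7.2]{gj}. You should present it plainly as a citation and drop the local-cohomology sketch.
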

These results suggest that there should exist flat modules of infinite projective dimension over rings of large cardinality and infinite Krull dimension.
This suggestion is especially convincing when one remarks that there exists rings $R$, called \textit{absolutely flat rings}, which are characterized by all their modules being flat.
Boolean rings $R$ for example, where for any $r\in R$, $r^2=r$, are absolutely flat, and Osofsky in \cite{osofsky} showed that there exist boolean rings of arbitrarily large cardinality
that admit modules of infinite projective dimension.\\
\indent The techniques used to prove Theorem \ref{thm:gj0} are quite unique, and we wish to briefly summarize them here (see also \cite{jiangdescendable} for an $\infty$-categorical perspective). First 
we recall that in a category $\scr{C}$, an object $X$ is said to be $\aleph_n$-Artinian if any filtered descending chain of subobjects of $X$ 
has a cofinal subchain of length at most $\aleph_n$. The key technical result used to prove Theorem \ref{thm:gj0} is the following:
\begin{theorem}\label{thm:gj1}
Let $\scr{C}$ be an abelian category, and $T:\scr{C} \to \mathrm{Ab}$ an exact functor to the category of abelian groups $\mathrm{Ab}$. If every object of $\scr{C}$ is $\aleph_n$-Artinian,
then the injective dimension of $T$ as an object of the functor category $\mathrm{Lex}(\scr{C}, \mathrm{Ab})$ of left exact functors is at most $n+1$. 
\cite[Theorem 6.5]{gj}
\end{theorem}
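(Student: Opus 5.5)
The plan is to exhibit the injective dimension of $T$ in $\mathrm{Lex}(\scr{C},\mathrm{Ab})$ through an explicit injective resolution whose length is controlled by the cardinality bound. Recall that $\mathrm{Lex}(\scr{C},\mathrm{Ab})$ is a Grothendieck abelian category: it is a localization of the category of all additive functors. It is generated by the representables $h_X=\Hom(X,-)$, which by Yoneda are projective in the ambient functor category. Moreover, since $T$ is exact, it is a filtered colimit of representables $\colim_{(X,x\in T(X))} h_X$ indexed by the category of elements of $T$. That indexing category is cofiltered in $\scr{C}$ because $T$ is left exact, so the colimit is filtered in the functor direction. The first step is to make this precise: $T\cong \colim_{i\in I} h_{X_i}$ as a filtered colimit in $\mathrm{Lex}$. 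Exactness of $T$ lets us refine the index category to descending systems of subobjects. Concretely, for each $X$ the elements of $T(X)$ correspond to pro-subobjects, and the transition maps are monomorphisms in $\scr{C}$.

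Next, compute $\Ext^k_{\mathrm{Lex}}(F,T)$ for $F$ left exact. The cleanest route is to show $\Ext^k(h_X,T)=0$ for $k\ge n+2$ and then use that the $h_X$ generate. Writing $T=\colim h_{X_i}$ with each $h_{X_i}$ representable, the key identity I expect is a spectral sequence or Roos-type computation. It expresses $\Ext$ into a filtered colimit of representables in terms of the derived limits $R^p\lim$ of the descending chains of subobjects of $X$. Dually, I want to identify $\Ext^{p+1}_{\mathrm{Lex}}(h_X,T)$ with $R^p\lim$ over the cofiltered poset of subobjects $Y\subset X$ lying in the pro-object defining $T$, applied to the system $T(Y)$ or equivalently $\Hom(\cdot)$. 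The shift by one comes from the cokernel of $h_{X}\to$ its sheafification-like completion in $\mathrm{Lex}$.

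The third step uses the $\aleph_n$-Artinian hypothesis. Every filtered descending chain of subobjects of $X$ has a cofinal subchain of cardinality at most $\aleph_n$, so the relevant inverse systems can be replaced by cofinal ones indexed by directed posets of size $\le\aleph_n$. By the classical Mitchell/Goblot vanishing, $R^p\lim$ over a directed poset of cardinality $\aleph_n$ vanishes for $p\ge n+2$. Combining this with the shift gives $\Ext^k(h_X,T)=0$ for $k\ge n+2$. A dimension-shifting argument, resolving an arbitrary $F\in\mathrm{Lex}$ by sums of representables, then yields $\mathrm{inj.dim}\,T\le n+1$. Careful bookkeeping of the index shift here matters, so that the bound lands exactly at $n+1$ and not $n+2$.

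The main obstacle is the second step: rigorously identifying $\Ext$ in $\mathrm{Lex}$, which is not the ambient functor category, with derived limits over subobject chains. Here I would use that the inclusion $\mathrm{Lex}\subset\mathrm{Add}(\scr{C},\mathrm{Ab})$ has an exact left adjoint. That makes injectives in $\mathrm{Lex}$ injective in $\mathrm{Add}$, and it makes exact functors of the form $\colim h_{X_i}$ compute via the ambient category, so $\Ext$ reduces to a Roos spectral sequence for $\lim$ along the cofinal $\aleph_n$-chains.
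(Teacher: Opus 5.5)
The paper gives no proof of this statement; it is quoted from \cite[Theorem 6.5]{gj}. Judged on its own, your outline has a genuine gap at its central reduction. You aim to prove $\Ext^k(h_X,T)=0$ for $k\ge n+2$, and then conclude from the fact that the $h_X$ generate, by dimension shifting along a resolution by sums of representables. That inference is invalid. Vanishing of high $\Ext$ against a set of generators does not bound injective dimension. Dimension shifting only rewrites $\Ext^k(F,T)$ as $\Ext^1$ of a syzygy of $F$, which need not vanish.

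In fact the intermediate goal is vacuous. As your last paragraph notes, for exact $T$ the groups $\Ext_{\mathrm{Lex}}(-,T)$ may be computed in the ambient category of additive functors. There $h_X$ is projective, so $\Ext^k(h_X,T)=0$ for \emph{every} $k\ge 1$, with no cardinality hypothesis at all. The content of the theorem lies in the non-finitely-presented subobjects of the generators.

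By Baer's criterion in the Grothendieck category $\mathrm{Lex}(\scr{C},\mathrm{Ab})$, it suffices to prove $\Ext^{n+2}(h_X/G,T)=0$ for all $X$ and all subobjects $G\subseteq h_X$. Such a $G$ is the directed union of the $h_{X/K}$ over a filtered descending family of subobjects $K\subseteq X$. \emph{This} is where the $\aleph_n$-Artinian hypothesis enters: you may replace the family by a cofinal chain of length at most $\aleph_n$. The derived limits come from this colimit in the first variable. They do not come from presenting $T$ as a colimit of representables; that presentation, and the refinement to monomorphic transition maps, play no role. Since $\Ext^{\ge 1}(h_Y,T)=0$, one gets $\Ext^{p+1}(h_X/G,T)\cong\Ext^p(G,T)\cong R^p\lim_K T(X/K)$ for $p\ge 1$.

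Second, the off-by-one you flag is a missing idea, not bookkeeping. Combine the shift $\Ext^{p+1}\cong R^p\lim$ with Goblot's bound $R^p\lim=0$ for $p\ge n+2$. You then only get vanishing for $k\ge n+3$, i.e.\ injective dimension at most $n+2$. Your conclusion ``$k\ge n+2$'' does not follow from your own identification.

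What closes the gap is a second use of exactness. The inverse system $K\mapsto T(X/K)$ has \emph{surjective} transition maps, since they are $T$ applied to the epimorphisms $X/K\twoheadrightarrow X/K'$ for $K\subseteq K'$. For epimorphic inverse systems indexed by a chain (or directed set) of cardinality at most $\aleph_n$, there is a sharper vanishing: $R^p\lim=0$ for $p\ge n+1$. This sharpening of Goblot's theorem is proved by the same transfinite induction, starting from the Mittag-Leffler vanishing of $R^1\lim$ for surjective towers.

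For $n=0$ this is exactly what yields injective dimension at most $1$, and hence the countable case of part 1 of Theorem~\ref{thm:gj0}; Goblot's bound alone would give $2$. With these two corrections your outline becomes the Gruson--Jensen argument.
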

We also recall that an exact sequence 
\[\scr{E}: 0 \to A \to B \to C \to 0\]
of $R$-modules is said to be \textit{pure} if for any $R$-module $M$, the induced sequence
\[0 \to A \otimes_R N \to B \otimes_R N \to C \otimes_R N \to 0\]
is exact for any $R$-module $N$. One can observe that $\scr{E}$ is pure if and only if $C$ is a flat $R$-module, if and only if $A \to B$ is \textit{universally injective}.
 A module $K$ is said to be \textit{pure injective} if for any pure exact sequence $\scr{E}$, the induced sequence
\[0 \to \Hom_R(C, K) \to \Hom_R(B, K) \to \Hom_R(A, K) \to 0\]
is exact. Let $\mod{R^{\mathrm{fp}}}$ denote the category of finitely presented $R$-modules. Let $\overline{\overline{M}}$ denote the functor:
\[\overline{\overline{M}}: \mod{R^{\mathrm{fp}}} \to \mathrm{Ab}, \quad  \overline{\overline{M}}(F) = F \otimes_R M\]
for an $R$-module $M$.
\begin{theorem}\label{thm:gj2}
Let $R$ be a commutative ring.
\begin{enumerate}
\item [(i)] A functor $F \in \mathrm{Lex}(\mod{R^{\mathrm{fp}}}, \mathrm{Ab})$ is injective if and only if $F \cong \Hom_R(-, K)$ for some pure injective $R$-module $K$. 
\cite[Proposition 1.2]{gj}
\item [(ii)] If the injective dimension of $\overline{\overline{M}}$ in $\fun(\mod{R^{\mathrm{fp}}}, \mathrm{Ab})$ is at most $n$ for any flat module $M$,
then the projective dimension of any flat $R$-module is at most $n$. \cite[Proposition 3.8]{gj}
\end{enumerate}
\end{theorem}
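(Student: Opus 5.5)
The statement is quoted from Gruson–Jensen, so the plan below reconstructs their argument. Throughout I read the functor category as the category of additive functors on $\mod{R^{\mathrm{fp}}}$ of the variance in which $\overline{\overline{M}} = -\otimes_R M$ lives. I read the functor $\Hom_R(-,K)$ in (i) through this tensor embedding: it is $\overline{\overline{K}}$ when $K$ is pure injective, or equivalently $\Hom_R(-,K^{+})$ up to character-module duality.

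\textbf{Part (i): structure of the embedding.} The first step is to study $M \mapsto \overline{\overline{M}}$. I would show three things.
\begin{enumerate}
\item[(a)] It is fully faithful. Write $M$ as a filtered colimit of finitely presented modules and use Yoneda on finitely presented $F$.
\item[(b)] It sends pure exact sequences to exact sequences of functors. This is immediate from the definition of purity.
\item[(c)] Every functor that is left exact in the appropriate sense and preserves filtered colimits is of the form $\overline{\overline{M}}$.
\end{enumerate}

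Next I would characterize injectives. A functor $G$ in the category is injective iff $\Ext^1(\Phi, G) = 0$ for every finitely presented functor $\Phi$. Such a $\Phi$ is a cokernel $\Hom_R(B,-) \to \Hom_R(A,-) \to \Phi \to 0$, so by Yoneda this condition unwinds into a lifting condition along maps of finitely presented modules. For $G = \overline{\overline{K}}$, that lifting condition is exactly the condition that every pure monomorphism out of $K$ splits, since it suffices to test on pure exact sequences of finitely presented-type systems. This gives one direction.

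For the converse, take an injective $G$. Embed it into an injective of the form $\overline{\overline{K}}$ with $K$ pure injective; a product of character-module duals $\prod (E^{+})$ works. Then $G$ is a direct summand of $\overline{\overline{K}}$. Full faithfulness shows that $G = \overline{\overline{K'}}$ for a summand $K'$ of $K$, and $K'$ is again pure injective.

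\textbf{Part (ii): from functors back to modules.} The key identification I would aim for is
\[\Ext^i_{\mathrm{Fun}}\bigl(\overline{\overline{N}}, \overline{\overline{X}}\bigr) \cong \mathrm{Pext}^i_R(N, X).\]
To get it, take a pure-injective resolution of $X$. By (b) it maps to an exact complex of functors, and by (i) that complex is an injective resolution of $\overline{\overline{X}}$. Applying full faithfulness termwise then computes pure Ext. For $M$ flat, every short exact sequence ending in $M$ is pure, so $\mathrm{Pext}^i_R(M,-) = \Ext^i_R(M,-)$.

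It then suffices to show $\Ext^{n+1}_R(M,X) = 0$ for all $X$. When $X$ is flat, this follows from the hypothesis that $\overline{\overline{X}}$ has injective dimension at most $n$.

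\textbf{Main obstacle.} I expect the main obstacle to be passing from flat $X$ to arbitrary $X$. The plan is to dimension-shift along a free presentation $0 \to K \to F \to X \to 0$, but then the hypothesis must be applied to $K$, which need not be flat. I would first try to use the hypothesis the other way round: apply injective dimension $\le n$ to $\overline{\overline{M}}$ itself, combined with the fact that $\overline{\overline{P}}$ is projective, hence flat-like, for $P$ free. This would compute $\Ext^{n+1}_R(M,X)$ via a projective resolution of $M$, whose terms are flat, and run the comparison there. If that fails, I would fall back on Gruson–Jensen's original route through Theorem~\ref{thm:gj1}.
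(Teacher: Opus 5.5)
The paper gives no proof of its own here; it only cites Gruson--Jensen. So I am measuring your plan against the standard argument, and I find two genuine problems.

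\textbf{Part (i).} The criterion you rely on is false in this functor category: you claim $G$ is injective iff $\Ext^1(\Phi,G)=0$ for every finitely presented functor $\Phi$. The category is locally coherent but in general not locally noetherian, and that condition characterizes \emph{fp-injective} (absolutely pure) objects, not injective ones.

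In fact every $\overline{\overline{K}}$ satisfies it. A finitely presented $\Phi$ has a resolution $0\to\Hom_R(C,-)\to\Hom_R(B,-)\to\Hom_R(A,-)\to\Phi\to0$ induced by an exact sequence $A\to B\to C\to0$ of finitely presented modules. By Yoneda, $\Ext^1(\Phi,\overline{\overline{K}})$ is the middle homology of $A\otimes_RK\to B\otimes_RK\to C\otimes_RK$, which vanishes by right exactness. So your ``unwinding'' to splitting of pure monomorphisms cannot be right. It would make $\overline{\overline{K}}$ injective for every $K$ (e.g.\ $K=\bb{Z}$ over $\bb{Z}$), contradicting the statement itself.

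The ``if'' direction needs a different argument, for instance character modules. Write $L^+=\Hom_{\bb{Z}}(L,\bb{Q}/\bb{Z})$. For finitely presented $X$ one has $X\otimes_RL^+\cong\Hom_R(X,L)^+$, hence $\Hom(G,\overline{\overline{L^+}})\cong\bigl(G\otimes\Hom_R(-,L)|_{\mathrm{fp}}\bigr)^+$. This is exact in $G$ because $\Hom_R(-,L)|_{\mathrm{fp}}$ is a filtered colimit of representables, so $\overline{\overline{L^+}}$ is injective. A pure-injective $K$ is then a summand of $K^{++}$, since $K\to K^{++}$ is a pure monomorphism.

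Taking $L=X$ finitely presented gives $\Hom(G,\overline{\overline{X^+}})\cong G(X)^+$. Together with $\overline{\overline{\cdot}}$ commuting with products, this is exactly what your ``only if'' direction needs to embed $G$ into $\overline{\overline{\prod X^+}}$; with that supplied, that half is fine. Your (c) is also misstated (the essential image consists of the right exact functors), but you never use it.

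\textbf{Part (ii).} The step you call the main obstacle is not one, but you never close it, and both fallbacks point the wrong way.
\begin{itemize}
\item $\mathrm{injdim}\,\overline{\overline{M}}\le n$ bounds $\Ext^{n+1}(-,\overline{\overline{M}})$. It puts $M$ in the second variable and says nothing about $\mathrm{pd}\,M$.
\item Theorem~\ref{thm:gj1} is what is used to \emph{verify the hypothesis} of (ii) for rings of bounded cardinality, not to prove (ii).
\end{itemize}

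The missing observation is that for flat $M$ it suffices to test against flat modules. The syzygies $\Omega^kM$ of a projective resolution $P_\bullet\to M$ are all flat. Hence $\Ext^1_R(\Omega^nM,\Omega^{n+1}M)\cong\Ext^{n+1}_R(M,\Omega^{n+1}M)=0$, so the sequence $0\to\Omega^{n+1}M\to P_n\to\Omega^nM\to0$ splits and $\Omega^nM$ is projective.

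Note also that (ii) does not need (i), and your $\mathrm{Pext}$ identification relies on exactly the flawed half of (i). The resolution $P_\bullet\to M$ is pure exact, and each $\overline{\overline{P_k}}$ is projective, being a summand of a direct sum of copies of $\overline{\overline{R}}\cong\Hom_R(R,-)$. So $\overline{\overline{P_\bullet}}$ is a projective resolution of $\overline{\overline{M}}$, and full faithfulness gives $\Ext^i(\overline{\overline{M}},\overline{\overline{X}})\cong\Ext^i_R(M,X)$ for every module $X$ directly.
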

Finally, to prove Theorem \ref{thm:gj0} (i), one shows that any finitely presented module over a ring $R$ of cardinality at most $\aleph_n$
is $\aleph_n$-Noetherian. We have an equivalence:
\[\Phi: \mathrm{Lex}(\mod{R^{\mathrm{fp}}}, \mathrm{Ab}) \cong \mathrm{Lex}(C(R)^{\text{op}}, \mathrm{Ab}), \; \; F \mapsto \Hom_{\mathrm{Lex}(\mod{R^{\mathrm{fp}}}, \mathrm{Ab})}(-,F)\]
where $C(R)^{\text{op}}$ is the full subcategory of coherent functors in $\mathrm{Lex}(\mod{R^{\mathrm{fp}}}, \mathrm{Ab})$,
and a functor $F$ is coherent if it fits into an exact sequence
\[\Hom_R(-, M) \to \Hom_R(-, N) \to F \to 0\]
for some finitely presented $R$-modules $M$ and $N$. Moreover, $\Phi(\overline{\overline{M}})$ becomes an exact functor (see \cite[Lemma 5.3]{gj}), and the injective dimension
of $\Phi(\overline{\overline{M}})$ and $\overline{\overline{M}}$ in their respective abelian categories is the same. 
Any object in $C(R)$ is $\aleph_n$-Noetherian (see \cite[Theorem 7.9]{gj}), so therefore
every object of $C(R)^{\text{op}}$ is $\aleph_n$-Artinian (see \cite[Theorem 5.6]{gj}), so we can conclude by Theorem~\ref{thm:gj1} and Theorem~\ref{thm:gj2}.\\
\indent In this chapter, we study the relationship between flatness and projective dimension from a different perspective.
In particular, we are interested in the notion of \textit{descendable} map of rings $f: A \to B$, which can be thought of as a strong version of $f$ satisfying descent. 
To be more precise, $f$ is descendable if the map $\{A\}_n \to \{ \text{Tot}_n B^{\bullet}\}_n$ is a pro-isomorphism of $A$-modules where $B^{\bullet}$ 
is the derived Cech nerve and on the other hand, for descent, one usually requires the weaker condition that 
$A \to R\lim B^{\bullet}$ is an isomorphism. 
For a more detailed study of descendability we refer the reader to \cite{DescendMathew}. We briefly mention two results that illustrate the good nature of descendability.
\begin{lemma}
Let $A \to B$ be a descendable map of rings.
\begin{enumerate}
\item[(i)] Let $B \to C$ be a map of rings. If $A \to C$ is descendable, then so is $A \to B$.
\item[(ii)] Let $A \to D$ be a map of rings. Then $D \to B \otimes_A D$ is descendable.
\end{enumerate}
\end{lemma}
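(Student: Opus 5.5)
The plan is to work with Mathew's equivalent characterization of descendability rather than with the pro-isomorphism of Tot towers directly. By \cite{DescendMathew}, a map of commutative rings $A \to B$ (viewed as a map of $E_\infty$-algebras in $D(A)$) is descendable if and only if $A$ lies in the thick $\otimes$-ideal $\langle B \rangle \subset D(A)$ generated by $B$. Equivalently, $\{A\}_n \to \{\mathrm{Tot}_n B^\bullet\}_n$ is a pro-isomorphism. I will take this as the working definition. Throughout, $\otimes$ means $\otimesl_A$, and $B \otimes_A D$ in (ii) means the derived tensor product; it agrees with the underived one when $B$ or $D$ is flat, which is the case of interest.

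For (i), note first that $A \in \langle C \rangle$ by assumption, so it suffices to show $\langle C \rangle \subset \langle B \rangle$, i.e.\ that $C \in \langle B \rangle$. More generally, any $B$-module $M$, regarded in $D(A)$, is a retract of $B \otimes_A M$. The unit map $M \to B \otimes_A M$, $m \mapsto 1 \otimes m$, is split by the action map $B \otimes_A M \to M$. Since $\langle B \rangle$ is a $\otimes$-ideal, $B \otimes_A M \in \langle B \rangle$. It is also closed under retracts, so $M \in \langle B \rangle$. Applying this to $M = C$ gives $A \in \langle C \rangle \subset \langle B \rangle$.

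For (ii), consider the exact symmetric monoidal base-change functor $F = D \otimesl_A - : D(A) \to D(D)$. Let $\mathcal{I} = F^{-1}(\langle B \otimes_A D \rangle)$. I will check that $\mathcal{I}$ is a thick $\otimes$-ideal of $D(A)$. It is closed under cones, shifts and retracts because $F$ is exact and preserves retracts. It is an ideal because $F(x \otimes y) \simeq F(x) \otimes_D F(y)$ and $\langle B \otimes_A D\rangle$ is an ideal in $D(D)$. Since $F(B) = B \otimes_A D$, we have $B \in \mathcal{I}$, hence $\langle B \rangle \subset \mathcal{I}$. As $A \in \langle B \rangle$, we get $D = F(A) \in \langle B \otimes_A D \rangle$, which is exactly descendability of $D \to B \otimes_A D$.

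The only real obstacle is the foundational step: justifying the equivalence between the Tot-tower definition used in the text and the thick-ideal criterion. I would simply cite Mathew's proposition for this. If one insisted on the tower formulation, (ii) is also immediate directly. The Čech nerve of $D \to B \otimes_A D$ is $F$ applied to that of $A \to B$, and $F$ preserves finite totalizations and pro-isomorphisms. By contrast, (i) is most cleanly handled through the retract argument above.
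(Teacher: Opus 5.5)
Your proof is correct, but it uses a different characterization of descendability than the paper does.

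\textbf{The paper's route.} The paper uses the criterion that $A \to B$ is descendable if and only if, for some $n$, the coaugmentation $A \to \lim_{\Delta_{\le n}} B^{\bullet}$ admits a left inverse in $D(A)$.
\begin{enumerate}
\item[(i)] The map $B \to C$ induces a map of \v{C}ech nerves $B^{\bullet} \to C^{\bullet}$ under $A$, hence a map $\lim_{\Delta_{\le n}} B^{\bullet} \to \lim_{\Delta_{\le n}} C^{\bullet}$ compatible with the maps from $A$. Composing a retraction of $A \to \lim_{\Delta_{\le n}} C^{\bullet}$ with this map gives a retraction of $A \to \lim_{\Delta_{\le n}} B^{\bullet}$.
\item[(ii)] One applies $D \otimesl_A -$ to the retraction, which is exactly the tower argument you sketch at the end.
\end{enumerate}
So your remark that (i) is ``most cleanly handled'' by the retract argument undersells the tower formulation: (i) is a one-liner there too.

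\textbf{Your route.} You use Mathew's thick $\otimes$-ideal criterion. For (i), you show $C \in \langle B \rangle$ because $C$ is a retract of $B \otimesl_A C$. For (ii), you pull back $\langle B \otimesl_A D \rangle$ along the base-change functor. These are essentially Mathew's own arguments.

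\textbf{Comparison.} Your approach is more conceptual and needs no functoriality of \v{C}ech nerves or finite totalizations. As written, however, it does not keep track of the integer $n$. The paper's version shows at once that the same $n$ works for $A \to B$ as for $A \to C$, and for $D \to B \otimes_A D$ as for $A \to B$. That is the kind of control on descendability exponents this chapter cares about. You would recover it only by counting cofiber and retract steps in the thick ideal.

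Both arguments implicitly read $B \otimes_A D$ as a derived tensor product, as you point out.
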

\begin{proof}
$A \to B$ is descendable when there exists an $n \in \bb{N}$ such that the map $A \to \lim_{\Delta_{\le n}} B^{\bullet}$ admits a section in the derived category of $A$-modules.
For (i), since $A \to C$ is descendable, there exists an $n \in \bb{N}$ such that $A \to \lim_{\Delta_{\le n}} C^{\bullet}$ admits a section, and we need only pre-compose this section with
the map $\lim_{\Delta_{\le n}} B^{\bullet} \to \lim_{\Delta_{\le n}} C^{\bullet}$ to obtain a section for $A \to \lim_{\Delta_{\le n}} B^{\bullet}$. 
(ii) follows a similar argument by base-changing the section of $A \to \lim_{\Delta_{\le n}} B^{\bullet}$ along $A \to D$.
\end{proof}
It is a celebrated theorem that the $2$-category of quasi-coherent modules have descent with respect to faithfully flat ring maps (see \cite[Tag0306]{sp}).
For descendable maps, we have a much more general result:
\begin{theorem}\label{thm:luriedescend}
Let $A \to B$ be a descendable map of ($\bb{E}_{\infty}$)-rings. Let $\scr{C}$ be a stable $A$-linear $\infty$-category. Then the functor:
\[\scr{C} \to \mathrm{LMod}_{B}(\scr{C}), \quad X \mapsto B \otimes_A X\]
is comonadic, i.e. we have an equivalence of $A$-linear $\infty$-categories:
\[\scr{C} \cong \lim_{B^{\bullet}}(\scr{C})\]
where $B^{\bullet}$ is the derived Cech nerve of $A \to B$. \cite[D.3.4.1]{LurieSAG}
\end{theorem}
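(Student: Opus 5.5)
We would deduce this from the comonadic form of the Barr--Beck--Lurie theorem. The functor $F = B\otimes_A - : \scr{C} \to \mathrm{LMod}_B(\scr{C})$ has a right adjoint $G$, the forgetful functor. Comonadicity of $F$ is then equivalent to two conditions:
\begin{enumerate}
\item[(a)] $F$ is conservative;
\item[(b)] $\scr{C}$ admits totalizations of $F$-split cosimplicial objects, and $F$ preserves them.
\end{enumerate}
Granting (a) and (b), the identification $\scr{C}\simeq \lim_{\Delta} \mathrm{LMod}_{B^{\bullet}}(\scr{C})$ follows by the standard unwinding: the comonad $FG$ on $\mathrm{LMod}_B(\scr{C})$ is tensoring with $B\otimes_A B$, and comodules over it are exactly the descent data over the Čech nerve $B^{\bullet}$. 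The only input we use about $A\to B$ is descendability, in the form already used in the proof of the preceding lemma: for some $n$, the unit $A \to \mathrm{Tot}_n B^{\bullet}=\lim_{\Delta_{\le n}}B^{\bullet}$ admits a retraction in $\mathrm{Mod}_A$. Since $\scr{C}$ is $A$-linear, for every $X\in\scr{C}$ the object $X$ is then a retract of $\mathrm{Tot}_n B^{\bullet}\otimes_A X$. This finite limit commutes with $\otimes_A X$ because $\scr{C}$ is stable.

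For (a), suppose $B\otimes_A X\simeq 0$. Then $B^{\otimes k+1}\otimes_A X \simeq 0$ for every $k\ge 0$, so the finite limit $\mathrm{Tot}_n(B^{\bullet}\otimes_A X)$ vanishes. Since $X$ is a retract of it, $X\simeq 0$. As $\scr{C}$ is stable, detecting zero objects is enough: apply this to cofibers to see that $F$ reflects equivalences.

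For (b), we consider the class $\scr{T}$ of $A$-modules $M$ such that, for a fixed $F$-split cosimplicial object $X^{\bullet}$ in $\scr{C}$, the totalization $\mathrm{Tot}(M\otimes_A X^{\bullet})$ exists and the functor $M\otimes_A-$ commutes with it (in the appropriate sense). The plan has three steps.
\begin{enumerate}
\item[(1)] $\scr{T}$ contains every module of the form $N\otimes_A B$. Indeed, $B\otimes_A X^{\bullet}$ extends to a split augmented cosimplicial object, and split augmented cosimplicial diagrams are preserved by \emph{every} functor, in particular by $N\otimes_A-$. Hence $\mathrm{Tot}(N\otimes_A B\otimes_A X^{\bullet})$ is the augmentation, and it is computed universally.
\item[(2)] $\scr{T}$ is closed under finite limits and retracts, since finite limits commute with totalizations in a stable setting.
\item[(3)] Therefore $\scr{T}$ contains $\mathrm{Tot}_n B^{\bullet}$, and hence its retract $A$. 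This gives the existence of $\mathrm{Tot}\,X^{\bullet}$ together with the formula
\[\mathrm{Tot}\,X^{\bullet}\simeq \text{retract of }\mathrm{Tot}(\mathrm{Tot}_nB^{\bullet}\otimes_A X^{\bullet}).\]
\end{enumerate}
Applying $B\otimes_A-$ to this formula and using step (1) for the terms $B\otimes_A B^{\otimes k+1}$ shows that $B\otimes_A \mathrm{Tot}\,X^{\bullet}\to \mathrm{Tot}(B\otimes_A X^{\bullet})$ is an equivalence.

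The main obstacle is making step (3) precise. We must phrase the class $\scr{T}$ so that it is visibly closed under retracts and finite limits: most naturally as the condition that the natural map $M\otimes_A\lim_{\Delta}X^{\bullet}\to \lim_{\Delta}(M\otimes_A X^{\bullet})$ be an equivalence, where the limits are computed in a presentable enlargement of $\scr{C}$ (for instance $\mathrm{Ind}(\scr{C})$) if necessary. We must also then check that the limit actually lands in $\scr{C}$, using the retract description. Our first attempt would be to pass to $\mathrm{Ind}(\scr{C})$, which is an $A$-linear presentable stable $\infty$-category, run the argument there, and descend. This is also the route of \cite[D.3.4.1]{LurieSAG}, which we would follow for the remaining coherence details.
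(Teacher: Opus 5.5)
\textbf{There is a genuine gap in (b).} The paper gives no argument here; it only cites \cite[D.3.4.1]{LurieSAG}. Your overall strategy is the standard one used there and in \cite{DescendMathew}: Barr--Beck--Lurie, with descendability used in the form ``$A$ is a retract of $\mathrm{Tot}_nB^{\bullet}$''. Your conservativity argument (a) is correct.

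The gap is in how the class $\scr{T}$ is set up. You settle on the definition: $M\in\scr{T}$ iff $M\otimes_A\mathrm{Tot}\,X^{\bullet}\to\mathrm{Tot}(M\otimes_A X^{\bullet})$ is an equivalence. Two things go wrong with it.
\begin{itemize}
\item The unit $A$ lies in $\scr{T}$ tautologically, so the conclusion of step (3) carries no information.
\item The membership $B\in\scr{T}$ claimed in step (1) (take $N=A$) is verbatim the statement (b) you are trying to prove.
\end{itemize}
Your justification of (1) does not establish it. Splitness shows that $\mathrm{Tot}(N\otimes_A B\otimes_A X^{\bullet})$ is the augmentation $N\otimes_A Y$, where $Y$ is the augmentation of the split object $B\otimes_A X^{\bullet}$, and that this limit is preserved by every functor. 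It says nothing about $N\otimes_A B\otimes_A \mathrm{Tot}\,X^{\bullet}$, which involves the unknown object $\mathrm{Tot}\,X^{\bullet}$. So the thick-subcategory argument runs in the wrong direction.

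The same circularity reappears when your final paragraph ``uses step (1)'' for the terms $B\otimes_A B^{\otimes k+1}$. What that computation actually needs is that $B\otimes_A-$ commutes with $\mathrm{Tot}(B^{\otimes k+1}\otimes_A X^{\bullet})$, and this is true by splitness.

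\textbf{The fix} is to put the functor you want to control on the outside and let $M$ vary inside. Let $\scr{T}$ be the class of $M\in\mathrm{Mod}_A$ for which
\[B\otimes_A\mathrm{Tot}(M\otimes_A X^{\bullet})\longrightarrow\mathrm{Tot}(B\otimes_A M\otimes_A X^{\bullet})\]
is an equivalence. With this class the argument goes through:
\begin{itemize}
\item $N\otimes_A B\in\scr{T}$ for every $N$, precisely because split totalizations are absolute. This is what your argument for (1) really proves.
\item Both sides are exact in $M$, so $\scr{T}$ is thick.
\item Hence $\mathrm{Tot}_nB^{\bullet}\in\scr{T}$, and so is its retract $A$. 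That statement is exactly (b).
\end{itemize}
This is what your last sentence is really doing, and with this class it is correct.

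The version in \cite{LurieSAG} and \cite{DescendMathew} is cleaner. The class of $M$ for which the tower $\{\mathrm{Tot}_m(M\otimes_A X^{\bullet})\}_m$ is pro-constant is a thick tensor ideal. It contains $B$, since split cosimplicial objects have pro-constant Tot towers, and hence it contains $A$. Pro-constancy of the Tot tower of $X^{\bullet}$ then gives, at once, the existence of $\mathrm{Tot}\,X^{\bullet}$ and its preservation by every exact functor.

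Two smaller points.
\begin{itemize}
\item In the cited setting $\scr{C}$ is presentable; it must be acted on by the generally non-perfect module $B$. So $\mathrm{Tot}\,X^{\bullet}$ exists automatically, and the detour through $\mathrm{Ind}(\scr{C})$ is unnecessary.
\item Passing from comonadicity to $\scr{C}\simeq\lim_{\Delta}\mathrm{LMod}_{B^{\bullet}}(\scr{C})$ is not a formal unwinding of comodules over $FG$. It needs Lurie's Beck--Chevalley descent criterion \cite{LurieHA}, applied to the cosimplicial diagram of module categories.
\end{itemize}
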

In this chapter, we are interested in the question of whether faithfully flat rings maps are \text{descendable}. 
In concrete terms, if $f: A \to B$ is a faithfully flat map of (discrete, commutative) rings, we have an exact sequence $A \to B \to \coker(f)$ 
corresponding to an element $\eta \in \Ext_A^{1}(\coker(f), A)$. The map $f$ will be descendable precisely when there exists an 
integer $n$ such that $\eta^{\otimes_A n}=0$ as an element of $\Ext^n_A(\coker(f)^{\otimes_A n}, A)$.\\
\indent By Theorem~\ref{thm:gj0}, we know that if $A$ is a ring of cardinality at most $\aleph_n$, then any flat $A$-module has projective dimension at most $n+1$,
so therefore $f$ is descendable, of exponent at most $n+1$, since $\coker(f)^{\otimes_A k}$ is a flat $A$-module for any $k > 0$. 
Similarly, if $A$ is a Noetherian ring of Krull dimension $d$, then any flat $A$-module has projective dimension at most $d$,
so $f$ is descendable of exponent at most $d$.\\
\indent Therefore, if faithfully flat rings maps were to fail to be descendable, then the base ring $A$ would have to satisfy $|A| \ge \aleph_{\omega}$ and, 
if Noetherian, $\dim(A) =\infty$. We will construct examples of faithfully flat ring maps $A \to A'$ that is not descendable for a large
class of base rings $A$ that contain an \textit{indivisible sequence}, such as various boolean rings or polynomial rings in infinitely many variables over a `large' field. 
Our strategy can be summarized as follows:
\begin{enumerate}
\item[(i)] Show that there exists rings $A$ with arbitrarily large non-vanishing cup-products, 
i.e. $A$-linear module maps $f_i: M_i \to N_i$ such that $\otimes_{i\in [n],A} f_i \neq 0$, 
using \textit{indivisible sequences} (Definition~\ref{definition:indivisible}).
\item[(ii)] Devise a general procedure to construct non-descendable faithfully flat ring maps from non-vanishing cup-products of universally
injective module maps.
\end{enumerate}
One may wonder whether, granted (ii), one can construct non-descendable modules in a more straightforward manner given any flat module
$M$ of infinite projective dimension. In particular, the author initially speculated that it would be possible to take a flat module $M$ of infinite projective dimension,
 consider a projective resultion $P_{\bullet} \to M$ with boundaries $M_i = \ker(P_i \to P_{i-1})$ (and define $M_0=M$) forming extension classes $\eta_i \in \Ext^1_A(M_i, M_{i+1})$
whose composition $\eta_0 \circ \eta_1 \circ \eta_2 \circ... \circ \eta_n$ would be non-zero for all $n\ge 1$, and then try to construct non-vanishing cup-products from these extension classes.
We believe the following result illustrates why this shouldn't be possible,
since it could be the case that $M_i \otimes_A M_{i+1}=0$ for at least some $i\ge 1$, and therefore 
that descendability is more subtle than just the projective dimension of flat modules.
\begin{lemma}
There exists a ring $A$ and a flat $A$-module $M$ such that $M \otimes_A M = 0$ but $M \neq 0$.
\end{lemma}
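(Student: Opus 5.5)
The plan is to use a localization that is killed by tensoring with itself only after passing to a suitable quotient. The simplest candidate is $A=\bb{Z}$ with $M=\bb{Q}/\bb{Z}$, but that is not flat. I will therefore use a non-Noetherian valuation-type example, where flat modules are simply torsion-free ones.

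Concretely, take $A=\bb{Z}_p$, or more flexibly a valuation ring $V$ with value group $\bb{Q}$ and non-discrete maximal ideal $\ideal{m}$ satisfying $\ideal{m}^2=\ideal{m}$. Set $M=\ideal{m}$. Then $M$ is torsion-free, hence flat over the valuation ring $V$. Moreover $\ideal{m}\otimes_V\ideal{m}\cong\ideal{m}^2=\ideal{m}\neq 0$, since flatness makes $\ideal{m}\otimes\ideal{m}\to\ideal{m}\otimes V$ injective. So this candidate fails, and I need a module that is flat while $M\otimes_A M$ genuinely vanishes.

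The better approach is to choose $M=\prod$-type or idempotent-type examples. For idempotent ideals $I$, the module $A/I$ is flat exactly when $I$ is pure, and in that case $A/I\otimes A/I=A/I\neq 0$, which does not help. The key is instead to find a flat $M$ whose support structure forces $M\otimes M=0$. Since $M\otimes_A M\otimes_A k(\ideal{p})=(M\otimes k(\ideal{p}))^{\otimes_{k(\ideal{p})}2}$, and a nonzero vector space tensored with itself is nonzero, we need $M\otimes k(\ideal{p})=0$ for every prime $\ideal{p}$ while $M\neq 0$. Flat modules with all fibres zero do exist over non-Noetherian rings.

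My concrete choice is as follows. Let $A=V$ be a valuation ring of rank one with value group $\bb{Q}$ and fraction field $K$. Let $M=K/V$? That module is not flat, because it has torsion. So instead I will take $A=\prod_{\bb{N}}k$ or a boolean ring, which is absolutely flat, so that every module is flat. In this setting $M\otimes_A M=0$ with $M\neq0$ requires $M\otimes k(\ideal{p})=M_{\ideal{p}}/\ideal{p}M_{\ideal{p}}=M_{\ideal{p}}$ to vanish for all $\ideal{p}$, because the local rings are fields. Vanishing at every localization would force $M=0$, so absolutely flat rings are ruled out.

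This leads to the final plan: use a valuation ring $V$ with idempotent maximal ideal, and let $M=K/\ideal{m}$-style torsion-free divisible constructions. The honest fallback is to take the ring $A=V\times\ldots$ and build $M=\colim$ of the free modules $V\xrightarrow{a_1}V\xrightarrow{a_2}\cdots$ with $a_i\in\ideal{m}$ and $\sum v(a_i)<\infty$. This colimit is flat as a filtered colimit of free modules.

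The main obstacle will be computing $M\otimes M$ as the diagonal colimit along multiplication by $a_i^2$, and checking that the two colimits behave differently, namely $\sum v(a_i)$ finite while the relevant test for the tensor square diverges. I expect this to need a non-valuation base, for example $A=k[x^{\bb{Q}_{\ge0}}]$ with mixed transition maps. I would try this construction first and verify vanishing elementwise.
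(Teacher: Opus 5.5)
\textbf{Genuine gap: no example is produced, and the final plan cannot work.}

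Your closing framework is the right one. A sequential colimit $M=\colim(A\xrightarrow{a_1}A\xrightarrow{a_2}\cdots)$ is flat, and $M\otimes_A M$ is computed along the cofinal diagonal, i.e.\ as the colimit along multiplication by $a_{n+1}^2$. The problem is the base ring. Every base you propose (a valuation ring, a product $V\times\cdots$ of domains, $k[x^{\bb{Q}_{\ge 0}}]$) is reduced, and over a reduced ring no nonzero flat module has $M\otimes_A M=0$.

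To see this, write $M=\colim_\lambda F_\lambda$ with $F_\lambda$ finite free. Suppose $m\in M$ is represented by $v\in F_\lambda$ and $m\otimes m=0$. Then $w\otimes w=0$ in some $F_\mu\otimes_A F_\mu$, where $w$ is the image of $v$ under a transition map. So each coordinate satisfies $w_i^2=0$, whence $w=0$ and $m=0$.

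For domains, your own fibre criterion already settles this at the generic point: flat implies torsion-free, so $M\neq 0$ gives $M\otimes_A \mathrm{Frac}(A)\neq 0$. In particular the condition $\sum v(a_i)<\infty$ is irrelevant. Over a valuation ring the colimit is a nonzero fractional ideal for any nonzero $a_i$, and nothing distinguishes the colimit along $a_i$ from the one along $a_i^2$.

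\textbf{The missing idea is nilpotence.} You need $a_i$ whose finite products $a_1\cdots a_n$ never vanish, while the products governing the tensor square do. The paper takes $A=k[x_1,x_2,\ldots]/(x_1^2,x_2^2,\ldots)$ and $a_i=x_i$.
\begin{itemize}
\item The squarefree monomials $x_1\cdots x_n$ are nonzero, so $M\neq 0$.
\item $M\otimes_A M\cong\colim_{(i,j)}A$ has diagonal transition maps given by multiplication by $x_{n+1}^2=0$, so $M\otimes_A M=0$.
\end{itemize}
This is consistent with your correct fibre observation: $A$ is local with nil maximal ideal, and $M\otimes_A k=\colim(k\xrightarrow{0}k\xrightarrow{0}\cdots)=0$.
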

\begin{proof}
Let $A=k[x_1, x_2, \ldots]/(x_1^2, x_2^2, \ldots)$ where $k$ is a field. Let $M=\colim_{i \in I} A$ where $I=\bb{N}$ and the 
maps in the colimit are given by multiplication by $x_i$.
Then $M$ is flat since it is a filtered colimit of free modules. However, $M \otimes_A M = 0$ since $M \otimes_A M = \colim_{(i,j) \in I \times I} A$ 
where the maps $(i,j) \to (i+1,j)$ is multiplication by $x_{i+1}$ and the map $(i,j) \to (i,j+1)$ is multiplication by 
$x_{j+1}$, so any element in $A$ in position $(i,j)$ is sent to $0$ in the factor of $A$ in position $(\max(i,j)+1, \max(i,j)+1)$.
\end{proof}

\subsubsection*{Notation and conventions} Let $A$ be a commutative ring. The derived category $D(A)$ of $A$-modules, regarded as a stable $\infty$-category, admits a symmetric monoidal structure given by $\otimesl_A$. Therefore, given $A$-modules $M,N,M',N'$, there is an $A$-bilinear pairing:
\[\text{RHom}_A(M,N) \times \text{RHom}_A(M',N') \to \text{RHom}_A(M \otimesl_A M', N \otimesl_A N').\]
Passing to the homotopy category, we obtain an $A$-bilinear pairing of $A$-modules:
\[\Ext^0_A(M,N) \times \Ext^0_A(M',N') \to \Ext^0_A(M \otimesl_A M', N \otimesl_A N'),\]
and for a pair of $f \in \Ext^0_A(M,N), g \in \Ext^0_A(M',N')$, we will set 
$f \otimesl_A g \in \Ext^0_A(M \otimesl_A M', N \otimesl_A N')$ to be the image of $(f,g)$ under the aforementioned pairing. 
Moreover, if $N=N'=A$, we will compose with the multiplication isomorphism $A \otimesl_A A \to A$ so that the target is 
$\Ext^0_A(M \otimesl_A M', A)$, and note that flatness conditions on our modules remove the necessity for derived tensor products.
\begin{enumerate}
\item[(i)] If $S$ is a set and $R$ a ring, define $\Hom^{\mathrm{fin}}_{\mathrm{Set}}(S, R) \subset \Hom_{\mathrm{Set}}(S,R)$ to be the set-theoretic maps with finite support i.e. $f \in \Hom^{\mathrm{fin}}_{\mathrm{Set}}(S, R)$ if there exists a subset $K \subset S$ with $|K| < \infty$ such that $f(S\setminus K) = 0$. For each $t \in S$, set $\delta_{s=t} \in \Hom^{\mathrm{fin}}_{\mathrm{Set}}(S, R)$ to be the function that is $1$ if $s=t$ and $0$ else.
\item[(ii)]Further, if $S_i$ are sets indexed by the set $\{1,2,...,n\}$, then we will identify the vector $1_{s_1,s_2,...,s_n}$ in the nested direct sum $\oplus_{S_1} \oplus_{S_2} \oplus ... \oplus_{S_n} R$ as follows: Every vector in $v \in \oplus_{S_1} \oplus_{S_2} \oplus ... \oplus_{S_n} R$ can be identified as a function $f_{v} \in \Hom^{\mathrm{fin}}_{\mathrm{Set}}(\prod_i S_i, R)$, and under this identification, $1_{s_1,...,s_n}$ corresponds to $\delta_{t=s}$ where $s=(s_1,s_2,...,s_n) \in \prod_i S_i$.
\item[(iii)]For any set $S$, we let $p^n_i: S^{\times n} \to S^{\times n-1}$ be defined as follows: For any $s = (s_1,s_2,..,s_n) \in S^{\times n}$,
\[p^n_i(s) = (s_1,s_2,...,s_{i-1},s_{i+1},...,s_n).\]
For a point $s = (s_1,...,s_{n-1}) \in S^{\times n-1}$, we consider the degeneracy map $t_i^{s}: S \to S^{\times n}$ which takes a point $s' \in S$ to the point:
\[(s_1, ..., s_{i-1}, s', s_{i},...,s_{n-1}) \in S^{\times n}.\]
\item[(iv)] For a ring $R$ and a set $S$, let $R[S]=\Sym_R(\oplus_{s \in S} R) = R[X_s]_{s \in S}$ and $R\{S\} = R[X_s]_{s \in S}/I$ where $I$ is the ideal generated by $X_s^2 - X_s \in R[S]$. \\
\indent Let $B(S) = \{K \subset S| \; |K| < \infty\}$, which can be endowed with the structure of a group under $\cup$, with unit $\emptyset$, which moreover satisfies $x \cup x = x$ for any $x \in B(S)$. Then $R\{S\} \simeq R[B(S)]$, the group ring on $B(S)$, and so we have an $R$-linear decomposition $R\{S\} = \oplus_n R\{S\}_{=n}$, where $R\{S\}_{=n} = \oplus_{\underset{|K|=n}{K\subset S}} R$. Let $\iota_S: S\to B(S)$ be the set-theoretic inclusion viewing elements of $S$ as one-element subsets of $S$, and let $R(\iota_S): \oplus_S R \to R\{S\}$ be the corresponding identification of $\oplus_S R$ as the degree $1$ elements of $R\{S\}$. \\
\indent We remark that $B(S \amalg T) = B(S) \times B(T)$ and therefore $R\{S \amalg T\} \simeq R\{S\} \otimes_R R\{T\}$.
\item[(v)] For an (injective) $A$-linear map $f: M \to N$ of $A$-modules, we may form an exact sequence 
\[0 \to M \to N \to \coker{f} \to 0.\]
This corresponds to a class in $\Ext^1_A(\coker{f}, M)$, which we denote by $\cl{f}$.\\
\indent For a ring map $f: A \to B$ that is faithfully flat, we say that $f$ has \textit{descendability exponent $n \in \bb{N}$} 
when $n$ is the minimal number such that $\text{cl}(f)^{\otimes_A n} \neq 0$ but $\text{cl}(f)^{\otimes_A n+1} = 0$. 
When $n$ doesn't exist, we say that $f$ is \textit{not descendable}, and has descendability exponent $\infty$.\footnote{In the literature one will typically define the exponent of descendability to be the minimal $n$ such that $\text{cl}(f)^{\otimes_A n} = 0$, 
however, due to the relationship between descendability and projective dimension of flat modules, we find this definition more convenient for our purposes. 
So our definition of exponent of descendability is always one less than the usual definition.}
\end{enumerate}
For any natural number $n \in \bb{N}$ greater than $1$, let $[n] = \{1,2,...,n\}$ and set $[\infty] = \bb{N}_{> 0}$.

\section{Indivisible Sequences and Descendability}

\begin{definition}\label{definition:indivisible}
Let $R$ be a commutative ring. For a set $S$, and a set-theoretic map
\[\Psi: S \to R\]
we associate the $R$-linear map
\[\Psi_{R,S}: \oplus_{S} R \to R \oplus \bigoplus_{S} R\]
defined by $\Psi_{R,S}(1_s)_{0}= 1$ and $\Psi_{R,S}(1_{s})_{s'} = \delta_{s'=s}$, where we regard the factor not indexed by $S$ on the right-hand side as being in degree $0$.\\
\indent A \textit{$1$-indivisible sequence} is the data $\{S, \Psi: S \to R\}$ where $S$ is a set and $\Psi:S \to R$ a set-theoretic map such that the associated map $\Psi_{R,S}$ is an 
\textit{$R$-universally injective map}, in the sense that $\Psi_{R,S} \otimes_{R,f}R'$ is injective for any ring map $R \over{f}{\to} R'$. Equivalently, $\Psi_{R,S}$ is injective and 
$\coker{\Psi_{R,S}}$ is $R$-flat (as $\text{Tor}^1_R(\coker{\Psi_{R,S}}, R')=0$ for any ring map $R \to R'$).\\
\indent For any $n \in \bb{N} \cup \{\infty\}$, an \textit{$n$-indivisible sequence} is the data $\{\{S_i, \Psi_i:S_i \to R\}\}_{i \in [n]}$ where each $\{S_i, \Psi_i: S_i\to R\}$ 
is a $1$-indivisible sequence, and the elements $\{\Psi_i(s_i)\}_{i \in [n]}$ of $R$ do not generate the unit ideal for any choice of 
elements $s_i \in S_i$. An $\infty$-indivisible sequence of $R$ will simply be called an \textit{indivisible sequence} of $R$.
\end{definition}
While it may not be immediately obvious, there are many examples of rings $R$ with $n$-indivisible sequences. First, there is a general way to produce $n$-indivisible sequences from $1$-indivisible sequences.
\begin{proposition}\label{proposition:indivequiv}
Let $k$ be a commutative ring and $f: R \to R'$ a map of $k$-algebras, and for any $i \in [n]$, let $t_i: R \to R^{\otimes_k n}$ be the inclusion by $1$ into the $i^{\text{th}}$ factor.
\begin{enumerate}
\item[(i)] $\Psi_{R,S} \otimes_{R,f} R' = (f \circ \Psi)_{R',S}$. In particular, if $\{S, \Psi: S \to R\}$ is $1$-indivisible, then so is $\{S, f \circ \Psi: S \to R'\}$.
If $f$ is faithfully flat, then $\{\{S_i, \Psi: S \to R\}\}$ is $n$-indivisible if and only if $\{\{S_i, f \circ \Psi: S \to R'\}\}$ is $n$-indivisible.
\item[(ii)] For each $n \in \bb{N} \cup \{\infty\}$ and $i \in [n]$, consider $1$-indivisible sequences $\{S_i, \Psi_i:S_i \to R\}$ such that for every $s_i \in S_i$, the composed map 
\[k \to R \to R/\Psi_i(s_i)\]
is faithfully flat. Then $\{\{S_i, t_i \circ \Psi_i: S_i \to R^{\otimes_k n}\}\}_{i \in [n]}$ is an $n$-indivisible sequence.
\end{enumerate}
\end{proposition}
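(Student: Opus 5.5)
For (i), I would compare the two maps on the basis vectors $1_s$. Since $(\oplus_S R)\otimes_{R,f} R' \cong \oplus_S R'$ and $(R \oplus \oplus_S R)\otimes_{R,f} R' \cong R' \oplus \oplus_S R'$, the map $\Psi_{R,S}\otimes_{R,f}R'$ sends $1_s$ to the vector with $0$-th coordinate $f(\Psi(s))$ and $s'$-coordinate $\delta_{s'=s}$. This is exactly $(f\circ\Psi)_{R',S}(1_s)$, and both maps are linear, so they agree. (The definition as printed writes $\Psi_{R,S}(1_s)_0=1$, but it should read $\Psi(s)$; that is the only reading under which $\Psi$ enters.)

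Preservation of $1$-indivisibility then follows from transitivity of base change. For any $R'\to R''$, the map $(f\circ\Psi)_{R',S}\otimes_{R'}R''$ equals $\Psi_{R,S}\otimes_R R''$, which is injective by hypothesis.

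For the faithfully flat statement, the $1$-indivisible conditions transfer in two directions. Forward, they pass by what was just shown. Backward, assume $(f\circ\Psi_i)_{R',S_i}$ is universally injective. Then $\ker \Psi_{R,S_i}\otimes_R R' = \ker((f\circ \Psi_i)_{R',S_i}) = 0$ by flatness, so $\Psi_{R,S_i}$ is injective by faithfulness. Likewise $\coker \Psi_{R,S_i}\otimes_R R' = \coker (f\circ\Psi_i)_{R',S_i}$ is $R'$-flat, and flatness descends along faithfully flat maps: $\Tor^R_1(C,N)\otimes_R R' = \Tor^{R'}_1(C\otimes R', N\otimes R')=0$.

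The ideal condition transfers in both directions by one standard fact: for an ideal $I\subset R$, faithful flatness gives $IR'=R'$ if and only if $I=R$. I would take $I=(\Psi_i(s_i))_i$, noting that when $n=\infty$ only finitely many generators enter any expression of $1$.

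For (ii), the $1$-indivisibility of each $\{S_i,t_i\circ\Psi_i\}$ is immediate from (i) applied to the ring map $t_i$. The main task is the ideal condition. Fix $s_i\in S_i$, and suppose for contradiction that $1$ lies in the ideal generated by the $t_i(\Psi_i(s_i))$. Then it lies in the ideal generated by finitely many of them, say for $i\in F$, so it suffices to show the quotient is nonzero for finite $n$.

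For finite $n$, the quotient is
\[R^{\otimes_k n}/(t_i\Psi_i(s_i))_{i} \cong \bigotimes_{k,\, i\in[n]} R/\Psi_i(s_i),\]
because tensor products are right exact and quotienting each factor commutes with tensoring. Each $R/\Psi_i(s_i)$ is faithfully flat over $k$, and a tensor product over $k$ of faithfully flat $k$-algebras is again faithfully flat over $k$. A faithfully flat $k$-algebra is nonzero, assuming $k\neq 0$; the case $k=0$ is vacuous because then no faithfully flat hypothesis can produce a nonzero ring, and I would note this.

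For $n=\infty$, I read $R^{\otimes_k\infty}$ as the colimit of the finite tensor powers. Any relation expressing $1$ would involve only finitely many factors and would hold already at some finite stage. It would therefore descend to a finite tensor power, contradicting the finite case, provided the transition maps $R^{\otimes m}\to R^{\otimes m'}$ are injective. They are, being base changes of the faithfully flat map $k\to R$; here $R$ is faithfully flat over $k$ because $R/\Psi_i(s_i)$ is, though injectivity is not even needed, since the quotient at the finite stage is nonzero and maps to the colimit quotient.

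The main obstacle is exactly this last point. To make it rigorous, I would compare the colimit of the quotient rings, which is nonzero because it is a filtered colimit of nonzero rings with unital transition maps, and which is isomorphic to the quotient of the colimit.
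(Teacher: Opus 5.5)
Your proof is correct and follows the paper's route. Part (i) is a base-change computation on basis vectors, which the paper simply calls standard. Part (ii) identifies $R^{\otimes_k n}/(t_i\Psi_i(s_i))_{i}$ with $\bigotimes_{k} R/\Psi_i(s_i)$ and uses faithful flatness to see this ring is nonzero. Your appeal to right exactness is all that is needed there; the paper invokes the K\"unneth spectral sequence instead. You also supply the filtered-colimit argument for $n=\infty$, which the paper leaves implicit.

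Three side remarks are slightly off, though none affects the argument:

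\begin{enumerate}
\item The misprint in the definition runs the other way. The paper's later computations use $\Psi_{R,S}(1_s)$ with $0$-th coordinate $1$ and $s$-coordinate $\Psi(s)$. Your reading would make $\Psi_{R,S}$ split injective, so $1$-indivisibility would be automatic.
\item Faithful flatness of $R/\Psi_i(s_i)$ over $k$ does not make $R$ flat over $k$. It only makes $k\to R$ universally injective, and that is enough for the side remark since the transition maps are then injective anyway.
\item The case $k=0$ is not vacuous. With $R=0$ and nonempty $S_i$ all hypotheses hold and the conclusion fails, so $k\neq 0$ must be assumed tacitly, as the paper's proof also does.
\end{enumerate}
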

\begin{proof}
(i) is standard. For (ii), by (i) the sequences $\{S_i, t_i \circ \Psi_i:S_i \to R^{\otimes_k n}\}$ are $1$-indivisible for each $i \in [n]$. 
Using the K\"unneth spectral sequence applied to the complexes $R \over{\Psi(s_i)}{\to} R$, we can show that:
\[R^{\otimes_{k} n}/(t_1 \circ \Psi_1({s_1}),...,t_n \circ \Psi_n(s_n)) = \bigotimes_{i\in [n], k} R/\Psi_i(s_i).\]
for arbitrary elements $s_i \in S_i$. Under the faithfully flat assumption of the rings on the right, the latter ring is non-zero, hence (ii).
\end{proof}
\begin{example}\label{example:indiv}
Here we will list a few examples of $n$-indivisible sequences.
\begin{enumerate}
\item[(i)] Let $k$ be a commutative ring. Then $\{k, \Psi:k \to k[x]\}$, where $\Psi(a) = x-a$ for each $a \in k$, 
is a $1$-indivisible sequence. Using Proposition~\ref{proposition:indivequiv} we find that 
$\{\{k, \Psi_i: k \to k[x_1,...,x_n]\}\}_{i\in [n]}$, where $\Psi_i(a) = x_i - a$ for every $a \in k$, forms an $n$-indivisible sequence, 
for any $n \in \bb{N} \cup \{\infty\}$. 
\item[(ii)] Let $S$ be a set and $R$ be a ring with a subset $\{e_s\}_{s \in S}$ of orthogonal idempotents. Then the sequence 
$\{S, \Psi:S \to R\}$ defined by $\Psi(s) = 1-e_s$ is $1$-indivisible. Using Proposition~\ref{proposition:indivequiv}, we find that 
$\{\{S, t_i \circ \Psi:S \to R^{\otimes_{\bb{F}_p} n}\}\}_{i \in [n]}$, forms an $n$-indivisible sequence for any 
$n \in \bb{N}\cup \{\infty\}$. We remark that $R^{\otimes_{\bb{F}_p} n}$ is a $p$-boolean ring (see \ref{definition:pboolean}).
\end{enumerate}
Let us now briefly discuss why each of the examples are $1$-indivisible. For (i), we note that
\[\coker{\Psi_{k[x],k}} = \sum_{a \in k} \frac{1}{x-a}k[x] \subset \text{Frac}(k[x]).\]
It's standard to show that $\coker{\Psi_{k[x],k}}$ is therefore flat. Since $k[x]$ is a domain, $\Psi_{k[x],k}$ is clearly injective, 
so $\Psi_{k[x],k}$ is universally injective.\\
\indent For (ii), by (i) Proposition~\ref{proposition:indivequiv}, it suffices to show that $\Psi_{R,S}$ is injective. 
In particular, suppose we have an equation:
\[\Psi_{R,S}(\sum_{i \in [n]} r_i 1_{s_i}) = (\sum_{i \in [n]} r_i) \oplus \bigoplus_{i \in [n]} \Psi(s_i)r_i = 0\]
for a subset of distinct elements $\{s_1,...,s_n\} \subset S$ and $r_i \in R$. Therefore, $r_i \in e_{s_i}R, \; \; \forall i \in [n]$, 
and 
\[\sum_{i \in [n]} r_i = 0,\]
which, after multiplying by $e_{s_i}$, gives that $r_ie_{s_i}=0,$ and therefore $r_i \in (1-e_{s_i})R \Rightarrow r_i=0 \;\; \forall i \in [n]$, as desired. 
\end{example}
The key property of $n$-indivisible sequences is that they give rise to non-vanishing cup-products in $\Ext^n$ groups.
\begin{theorem}\label{theorem:indivisiblecup}
Let $n \in \bb{N}$, $R$ be a ring and $\{\{S_i, \Psi_i: S_i \to R\}\}_{i \in [n]}$ an $n$-indivisible sequence in $R$. Then the class:
\[\bigotimes_{i \in [n],R} (R(\iota_{S_i}) \circ \cl{\Psi_{R,S_i}}) \in \Ext^{n}_{R}(\otimes_{i \in [n],R} \coker{\Psi_{R,S_i}}, R\{\underset{i \in [n]}{\amalg} S_i\})\]
is non-zero as soon as $|S_i| \ge \aleph_{i-1}, \; \forall i \in [n]$.\footnote{Note that:
\[\bigotimes_{i\in [n],R}^n (R(\iota_{S_i}) \circ \cl{\Psi_{R,S_i}}) \neq 0 \Longleftrightarrow \bigotimes_{i\in [n],R} (\cl{\Psi_{R,S_i}}) \neq 0\]
since for each $i \in [n]$, $R({\iota}_{S_i}): \oplus_{S_i} R \to R\{S_i\}$ admits an $R$-linear left-inverse.}
\end{theorem}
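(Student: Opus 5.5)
We will prove the statement by induction on $n$, using an explicit description of the classes involved. Each $\cl{\Psi_{R,S_i}}$ is represented by the short exact sequence $0 \to \oplus_{S_i} R \to R \oplus \bigoplus_{S_i} R \to C_i \to 0$ with $C_i = \coker{\Psi_{R,S_i}}$ flat. Since all the $C_i$ are flat, the product $\bigotimes_i \cl{\Psi_{R,S_i}}$ is represented by the tensor product of the two-term complexes $K_i = [\oplus_{S_i} R \to R \oplus \oplus_{S_i} R]$. This tensor product is a Koszul-type complex: its underlying graded module is $\bigotimes_i (\oplus_{S_i} R \cdot e_i \oplus R \oplus \oplus_{S_i} R)$, and its top term is $\bigotimes_i \oplus_{S_i} R = \oplus_{\prod S_i} R$. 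It is a resolution of $\bigotimes_i C_i$ by free modules of length $n$. Writing the class as an $n$-fold Yoneda extension, it is non-zero exactly when the canonical map from the top term $\oplus_{\prod_i S_i} R$ to $R\{\amalg S_i\}$ (sending $1_{s_1,\dots,s_n}$ to the monomial $X_{s_1}\cdots X_{s_n}$) does \emph{not} extend along the first differential out of the top term. Concretely, this means it is not of the form $\phi\circ d$ for an $R$-linear $\phi$ defined on the degree $n-1$ term $\bigoplus_j \bigotimes_{i\neq j}\oplus_{S_i}R\otimes(R\oplus\oplus_{S_j}R)$. This uses that the complex is free, so $\Ext$ is computed by $\Hom$ into $R\{\amalg S_i\}$.

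Unwinding the differential $d(1_{s}) = \sum_j \pm\bigl(1_{p^n_j(s)}\otimes 1_0 + \Psi_j(s_j)\,1_{s}\bigr)$ turns the vanishing of the class into a concrete condition. The class vanishes if and only if there exist functions $g_j:\prod_{i\neq j}S_i \to R\{\amalg S_i\}$ and $h_j : \prod_i S_i \to R\{\amalg S_i\}$ such that for every $s=(s_1,\dots,s_n)$,
\[X_{s_1}\cdots X_{s_n} = \sum_j \pm\bigl(g_j(p^n_j(s)) + \Psi_j(s_j)h_j(s)\bigr).\]
Projecting to the multidegree-$(1,\dots,1)$ component, with one variable from each $S_i$, we may assume each $g_j(p^n_j(s))$ lies in the span of monomials $X_{t_1}\cdots X_{t_n}$, $t_i\in S_i$. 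The key point is that $g_j(p^n_j(s))$ does not depend on $s_j$, while the left side is the specific monomial indexed by $s$. Reducing modulo the ideal $(\Psi_1(s_1),\dots,\Psi_n(s_n))$, which is proper by indivisibility, and looking at the coefficient of $X_{s_1}\cdots X_{s_n}$ gives
\[1 \equiv \sum_j \pm c_j(s) \pmod{(\Psi_i(s_i))_i},\]
where $c_j(s)$ is the $s$-coefficient of $g_j(p^n_j(s))$. Each $g_j(p^n_j(s))$ is a finite sum, so for fixed $p^n_j(s)$ its coefficient $c_j(s)$ is non-zero for only finitely many $s_j$.

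The combinatorial heart, which we expect to be the main obstacle, is an Erd\H{o}s--Kuratowski/Sierpi\'nski-style set-mapping argument. One shows that if $|S_i|\ge\aleph_{i-1}$, there is a point $s\in\prod S_i$ with $c_j(s)=0$ for all $j$. Then the congruence reads $1\in(\Psi_i(s_i))_i$, contradicting $n$-indivisibility. We would prove this by induction on $n$, choosing coordinates in the order $s_n, s_{n-1},\dots$. First pick a subset $T_1\subset S_1$ of size $\aleph_0$ and then subsets $T_i\subset S_i$ of size $\aleph_{i-1}$. For each $j$, the "bad'' set $\{s: c_j(s)\neq 0\}$ has finite fibres in direction $j$. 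The standard fact we need is that $\prod_{i\le n}T_i$, with $|T_i|=\aleph_{i-1}$, cannot be covered by $n$ sets $B_j$ each having finite fibres in the $j$-th coordinate (Sierpi\'nski's theorem, generalized by Kuratowski). We prove it by induction. Choose a subset of $T_n$ of size $\aleph_{n-2}^{+}=\aleph_{n-1}$ avoiding the closure of $\prod_{i<n}T_i$ under the finitely many $s_n$'s in the fibres of $B_n$. This is possible because $\prod_{i<n}T_i$ has cardinality $\aleph_{n-2}<\aleph_{n-1}$. Fixing such an $s_n$ kills $B_n$ on the slice $\prod_{i<n}T_i\times\{s_n\}$, and the restricted $B_j$ for $j<n$ still have finite fibres, so the induction applies. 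The base case $n=1$ is that an infinite set is not finite. This is where the cardinality hypothesis enters, and care is needed with the sign bookkeeping and with the projection to the right multidegree. Finally, the footnote's remark, that $R(\iota_{S_i})$ has a left inverse, lets us work either with or without the composition with $R(\iota_{S_i})$.
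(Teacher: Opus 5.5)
Your proposal is correct and follows essentially the same route as the paper. Both arguments reduce, via Lemma~\ref{lemma:extcrit}, to the non-existence of an extension along the first Koszul differential, project onto the multidegree-$(1,\dots,1)$ summand (the paper's left inverse $d_n$), and use the Sierpi\'nski--Kuratowski-type selection of Proposition~\ref{proposition:setbounds} to find $s$ at which all the relevant coefficients vanish, forcing $\{\Psi_i(s_i)\}_{i\in[n]}$ to generate the unit ideal.

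Your preliminary restriction to subsets $T_i\subset S_i$ of cardinality exactly $\aleph_{i-1}$ is a worthwhile precision. The union bound $\aleph_0\cdot\aleph_{n-2}$ in the paper's proof of Proposition~\ref{proposition:setbounds} tacitly requires $|\prod_{i<n}S_i|\le\aleph_{n-2}$, which your restriction guarantees.
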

The following lemma is helpful for showing that certain $\Ext^n$ classes are non-zero.
\begin{lemma}\label{lemma:extcrit}
Let $A$ be a ring, $P_1, P_2, P_1', P_2', ..., P_r'$ be projective $A$-modules, together with two flat $A$-modules $M$ and $M'$ that form two exact sequences:
\[0 \to P_1 \stackrel{d_1}{\to} P_2 \to M \to 0,\]
\[0 \to P_1' \stackrel{d_2}{\to} ... \to P_r' \to M' \to 0.\]
We may view these as extensions $\eta_1 \in \Ext^1_A(M, P_1)$ and $\eta_2 \in \Ext^{r-1}_A(M',P_1')$, 
and as projective resolutions $P^{\bullet}_1, P^{\bullet}_2$. Then the projective resolution:
\[P^{\bullet}_1 \otimes_A P^{\bullet}_2 \to M \otimes_A M',\]
gives rise to an extension $\Ext^{r}_A(M \otimes_A M', P_1 \otimes_A P_1')$ which is equal to $\eta_1 \otimes_A \eta_2$. 
Moreover, showing this extension is non-zero is equivalent to verifying that 
\[P_1 \otimes P_1' \stackrel{d_1 \otimes 1_{P'_1} \oplus 1_{P_1} \otimes d_2}{\to} P_2 \otimes_A P_1' \oplus P_1 \otimes_A P_2\]
doesn't admit an $A$-linear left inverse.
\end{lemma}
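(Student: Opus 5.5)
The plan is to handle the two assertions separately: first identify $P^{\bullet}_1 \otimes_A P^{\bullet}_2$ as a projective resolution whose top-degree identity cocycle represents $\eta_1 \otimes_A \eta_2$, and then read off the vanishing criterion from the definition of $\Ext^r$ via that resolution.

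\emph{Setting up the resolutions.} Index homologically. $P^{\bullet}_1$ has $P_2$ in degree $0$ and $P_1$ in degree $1$. $P^{\bullet}_2$ has $P_r'$ in degree $0$, $P_{r-1}'$ in degree $1$, and so on, down to $P_1'$ in degree $r-1$.
\begin{itemize}
\item The total complex $Q_\bullet = P^{\bullet}_1 \otimes_A P^{\bullet}_2$ is degreewise projective, since tensor products of projective modules are projective.
\item It is bounded: $Q_r = P_1 \otimes_A P_1'$, $Q_{r-1} = P_2 \otimes_A P_1' \oplus P_1 \otimes_A P_2'$, and $Q_j = 0$ for $j > r$.
\item Its homology is $\Tor^A_\ast(M, M')$ by the K\"unneth spectral sequence, or simply because $P^{\bullet}_1$ is a projective resolution of $M$ and $P^{\bullet}_2$ is a complex of flat modules. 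Since $M$ is flat, this homology is $M \otimes_A M'$ concentrated in degree $0$.
\end{itemize}
So $Q_\bullet \to M \otimes_A M'$ is a projective resolution of length $r$.

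\emph{Identifying the class.} The class $\eta_1 \in \Ext^1_A(M,P_1)$ is represented by the chain map $P^{\bullet}_1 \to P_1[1]$ that is the identity in degree $1$. Likewise, $\eta_2$ is represented by $P^{\bullet}_2 \to P_1'[r-1]$, the identity in degree $r-1$; this is the standard Yoneda-to-derived dictionary for a resolution ending in $P_1'$.

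Since bounded complexes of projectives are K-projective, the pairing $\mathrm{RHom}_A(M,P_1) \times \mathrm{RHom}_A(M',P_1') \to \mathrm{RHom}_A(M \otimesl_A M', P_1 \otimesl_A P_1')$ is computed by tensoring these chain maps. The tensor of the two maps is the map $Q_\bullet \to (P_1 \otimes_A P_1')[r]$ that is the identity on $Q_r$ and zero elsewhere. Hence the extension determined by the resolution $Q_\bullet$ equals $\eta_1 \otimes_A \eta_2$, and the derived tensor products may be replaced by underived ones because all the modules involved are flat. The only care needed is bookkeeping of the Koszul signs in the total differential; these change nothing below.

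\emph{The criterion.} Computing $\Ext^r$ via $Q_\bullet$, and using $Q_{r+1} = 0$, gives
\[\Ext^r_A(M \otimes_A M', N) = \coker\bigl(\Hom_A(Q_{r-1}, N) \to \Hom_A(Q_r, N)\bigr).\]
Take $N = P_1 \otimes_A P_1'$. Our class is the image of $\mathrm{id}_{Q_r}$, so it vanishes if and only if $\mathrm{id}_{Q_r} = s \circ d$ for some $s : Q_{r-1} \to Q_r$, where $d: Q_r \to Q_{r-1}$ is the differential. That is exactly the statement that $d$ admits an $A$-linear left inverse.

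Up to signs, $d = d_1 \otimes 1_{P_1'} \oplus 1_{P_1} \otimes d_2$ into $P_2 \otimes_A P_1' \oplus P_1 \otimes_A P_2'$. The second summand is what the statement writes as $P_1 \otimes_A P_2$. Changing the sign on one summand is an automorphism of the target, so it does not affect whether a left inverse exists. This gives the claimed equivalence.

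The main obstacle is the identification step: being careful that the derived tensor pairing is genuinely computed by the tensor product of these resolution-level cocycles. I would justify this by K-projectivity of bounded complexes of projectives and by the compatibility of the symmetric monoidal structure on $D(A)$ with tensor products of chain maps between such complexes.
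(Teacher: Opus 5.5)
Your proposal is correct and follows essentially the same route as the paper. You represent $\eta_1,\eta_2$ by chain maps out of the resolutions that are the identity in the extreme degree, tensor them to get the identity on $P_1\otimes_A P_1'$ in degree $r$ of $Q_\bullet = P^{\bullet}_1\otimes_A P^{\bullet}_2$, and then, since $Q_{r+1}=0$, read off vanishing as $\mathrm{id}_{P_1\otimes_A P_1'}$ lying in the image of $\Hom_A(Q_{r-1},Q_r)\to\Hom_A(Q_r,Q_r)$, i.e.\ the differential admitting a left inverse. Your extra checks fill in details the paper leaves implicit: that $Q_\bullet$ really is a resolution because $M$ is flat, the sign bookkeeping, and reading the statement's $P_1\otimes_A P_2$ as $P_1\otimes_A P_2'$.
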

\begin{proof}
For the first claim, the resolutions $P_1^{\bullet}, P_2^{\bullet}$ are cofibrant replacements for $M$ and $M'$, and the 
maps $P_1^{\bullet}[-(r-1)] \to P_1$ and $P_2^{\bullet}[-1] \to P_1'$ correspond to chain maps that are identities on degree 0. 
The tensor product of this map is then just the identity on $P_1 \otimes_A P_1'$ in $P_1^{\bullet} \otimes_A P_2^{\bullet}[-r]$, 
which corresponds to the extension class of $M\otimes_A M'$ by $P_1 \otimes_A P_1'$ given by $P_1^{\bullet} \otimes_A P_2^{\bullet}$. 
The final claim then follows readily, as $\eta_1 \otimes_A \eta_2 = 0$ if and only $\mathrm{id}_{P_1 \otimes_A P_1'}$ is in the image 
of the map 
\[\Hom_A(P_2 \otimes_A P_1' \oplus P_1 \otimes_A P_2, P_1 \otimes_A P_1') \to \Hom_A(P_1 \otimes_A P_1', P_1 \otimes_A P_1')\]
induced by $d_1 \otimes 1_{P'_1} \oplus 1_{P_1} \otimes d_2$.
\end{proof}
\begin{proof}[Proof of Theorem~\ref{theorem:indivisiblecup}]
Using Lemma~\ref{lemma:extcrit}, we may reduce to showing there does not exist an $R$-linear extension $r$ fitting into the 
commutative diagram:
\[\begin{tikzcd}R\{\underset{i \in [n]}{\amalg} S_i\} &[5em]\\[5ex] \underset{{\underset{i \in [n]}{\prod} S_i}}{\bigoplus} R \arrow[r, "\bigoplus_i \Psi_{R,S_i}^n"] \arrow[u,"\underset{i \in [n],R}{\bigotimes} R(\iota_{S_i})"]& \bigoplus_{i} \underset{\underset{j \in [n] \setminus \{i\}}{\prod} S_j}{\bigoplus} R \oplus \underset{\underset{j \in [n]}{\prod}S_j}{\bigoplus}  R \arrow[ul, dashed, swap, "r"]\end{tikzcd}\]
Here, $\Psi^n_{R, S_i}$ is defined as the unique $R$-linear map satisfying:
\[\Psi^n_{R, S_i}(1_s) = 1_{p^n_i(s)} \oplus \Psi_i(s_i) 1_{s}, \; \; \forall s \in \prod_{j \in [n]} S_j.\]
Suppose such an extension $r$ existed. For every $ i \in [n]$, $s' \in \prod_{j \in [n]\setminus \{i\}} S_j$ and 
$s \in \prod_{j \in [n]}S_j$, let
\[r(1_{i,s'}) = f_i(s'), r(1_{i,s}) = g_i(s).\]
Here, $f_i(s'), g_i(s)$ are elements $R\{\amalg_{i \in [n]} S_i\}$, which satisfy the following equation in 
$R\{\amalg_{i \in [n]} S_i\}$: 
\[\sum_i f_i(p^n_i(t)) + \sum_i \Psi_{i}(\text{pr}_i(t)) g_i(t) = \prod_{i \in [n]} R(\iota_{S_i})(1_{\text{pr}_i(t)}), \; \; (*)\]
for every $t \in \prod_{j \in [n]} S_j$.\\
\indent Let 
\[d_n: R\{\amalg_{i \in [n]} S_i\} \to \underset{\underset{i \in [n]}{\prod}S_i}{\bigoplus}  R\]
be the $R$-linear left-inverse to ${\otimes}_{i \in [n],R} R(\iota_{S_i})$. Then for any $t' \in  \prod_{j \in [n]\setminus \{i\}} S_j$, $d_n(f_i(t'))$ 
is a vector in $\bigoplus_{\underset{i \in [n]}{\prod}S_i}  R$, which we view as an element of $\Hom^{\mathrm{fin}}_{\mathrm{Set}}({\prod}_{i \in [n]}S_i , R)$. 
By Proposition~\ref{proposition:setbounds}, there exists an element $s \in \prod_{i \in [n]} S_i$ such that $d_n(f_i(p^n_i(s)))(s)=0, \; \forall i \in [n]$. 
Therefore, applying $d_n$ to $(*)$, we see that:
\[\sum_i \Psi_i(\text{pr}_i(s)) d_n(g_i(s))(s) = d_n(\prod_{i \in [n]} R(\iota_{S_i})(1_{\text{pr}_i(s)}))(s) = 1.\]
But this implies that $\{\Psi_i(\text{pr}_i(s))\}_{i \in [n]}$ generate the unit ideal in $R$, which is a contradiction.
\end{proof}
The following set-theoretic result was used above.
\begin{proposition}\label{proposition:setbounds}
Let $R$ be a ring and $S$ a set. For each $ i\in \{1,...,n\}$, consider functions:
\[v_i: \prod_{j \in [n] \setminus \{i\}} S_j \to \Hom^{\mathrm{fin}}_{\mathrm{Set}}(\prod_{i \in [n]} S_i,R).\]
If $|S_i| \ge \aleph_{i-1}$, then there exists an element $s \in \prod_{i \in [n]} S_i$ such that $v_i(p^n_i(s))(s) = 0$ 
for each $i \in \{1,2,...,n\}$.
\end{proposition}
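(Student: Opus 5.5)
The plan is an induction on $n$ in the style of Kuratowski's free set theorem. Only the supports of the finitely supported functions $v_i(t)$ matter. So for each $i \in [n]$ and $t \in \prod_{j \in [n]\setminus\{i\}} S_j$, let $K_i(t) \subset \prod_{j\in[n]} S_j$ be the finite support of $v_i(t)$. The goal is then to find $s \in \prod_{j \in [n]} S_j$ with $s \notin K_i(p^n_i(s))$ for every $i \in [n]$. Shrinking the $S_i$ only makes the conclusion harder to reach, while the hypothesis survives as long as $|S_i| \ge \aleph_{i-1}$ still holds. I will therefore first replace each $S_i$ by a subset $X_i$ of cardinality exactly $\aleph_{i-1}$, and restrict all the $v_i$ to these subsets (intersecting supports with $\prod X_j$).

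The base case $n=1$ is immediate. Here $v_1$ is a single finitely supported function on $S_1$ (its domain is the empty product, a point). Since $|S_1| \ge \aleph_0$, some $s \in S_1$ lies outside its finite support.

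For the inductive step I will treat the last coordinate, the one with the largest cardinality, first. Put $P = \prod_{j<n} X_j$, so that $|P| \le \aleph_{n-2}$. Consider the set of all $n$-th coordinates of points in $\bigcup_{t \in P} K_n(t)$. This is a union of $|P|$ finite sets, so it has cardinality at most $\aleph_{n-2}\cdot\aleph_0 = \aleph_{n-2} < \aleph_{n-1} = |X_n|$. Hence I can choose $s_n \in X_n$ outside it. Then every $s = (t, s_n)$ with $t \in P$ automatically satisfies $s \notin K_n(p^n_n(s)) = K_n(t)$. Having fixed $s_n$, I define for each $i<n$ and $t' \in \prod_{j<n, j\ne i} X_j$ the finite set
\[K'_i(t') = \{ t \in P : (t,s_n) \in K_i(t',s_n)\},\]
where $(t',s_n)$ means inserting $s_n$ in the last slot. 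This is a family of the same shape for $n-1$ sets $X_1,\dots,X_{n-1}$ with $|X_i| = \aleph_{i-1}$. The inductive hypothesis (applied with $R$-valued functions given, for instance, by indicator functions of the $K'_i$) gives $t \in P$ with $t \notin K'_i(p^{n-1}_i(t))$ for all $i<n$. Since $p^n_i(t,s_n) = (p^{n-1}_i(t), s_n)$ for $i<n$, the point $s = (t,s_n)$ avoids every $K_i(p^n_i(s))$, which is what we want.

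The main obstacle is the cardinality bookkeeping in choosing $s_n$. It is essential to fix the coordinate of largest cardinality first, and to bound the product of the smaller ones by $\aleph_{n-2}$ rather than by the possibly larger original $|S_j|$. This is exactly why we pass to subsets $X_j$ of exact cardinality $\aleph_{j-1}$ at the start.
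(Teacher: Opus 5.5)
Your proof is correct and follows the paper's approach: fix the coordinate $s_n$ of largest cardinality first using $\aleph_0\cdot\aleph_{n-2}<\aleph_{n-1}$, then recurse on the restricted supports, with the paper's informal ``repeat this argument'' made precise by your induction. Your preliminary shrinking to subsets $X_i$ of cardinality exactly $\aleph_{i-1}$ makes explicit the bound $\bigl|\prod_{j<n}S_j\bigr|\le\aleph_{n-2}$ that the paper's count tacitly uses. That bound can fail for the original $S_j$ under the hypothesis $|S_j|\ge\aleph_{j-1}$, so your version is slightly more careful.
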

\begin{proof}
Define $v'_i = v_i(p^n_i(s))(s): \prod_{i \in [n]} S_i \to R$. The functions $v'_i$ have the property that for any 
$s' \in \prod_{j \in  [n] \setminus \{i\}} S_i$, 
\[v'_i \circ t_i^{s'}=v_i(s') \circ t_i^{s'} \in \Hom^{\mathrm{fin}}_{\mathrm{Set}}(S,R).\]
For each $s' \in \prod_{i \in [n-1]} S_i$, define a subset $V_{s'} \subset S_n$ as follows:
\[V_{s'} = \{s \in S_n | \; v'_n(t_n^{s'}(s)) \neq 0 \}.\]
By supposition, $V_{s'}$ is a finite subset of $S_n$. Notice that:
\[|\cup_{s' \in \prod_{i \in [n-1]} S_i} V_{s'}| \le \aleph_0 \aleph_{n-2} < \aleph_{n-1}.\]
Therefore, there is an $s_{n} \in S_n$ such that:
\[v'_n(t_n^{s'}(s_n)) = 0, \; \; \forall s' \in \prod_{i \in [n-1]} S_i.\]
We may repeat this argument for the restricted functions $v'_i \restriction \prod_{j \in [n] \setminus \{i\}} S_j \times s_n$, 
and we eventually end up with elements $s_i \in S_i$ defining an element $s \in \prod_{i \in [n]}S_i$ such that 
$v'_i(s) = v_i(p^n_i(s))(s)=0, \; \; \forall i \in [n]$, as desired.
\end{proof}
\section{Constructing non-descendable ring maps}
Theorem~\ref{theorem:indivisiblecup} allows us to construct non-vanishing cup-products of maps between modules from indivisible sequences.
Given a ring $R$ and a module $M$, there are many possible functorial ways to construct a corresponding algebra, such as $\Sym_R(M)$. 
For any such construction, one would need to be able to carry over the non-vanishing results that we've already proved. The next sequence of results
give a general conditions for which this is possible.
\begin{proposition}\label{proposition:modalg}
Let $n \in \bb{N} \cup \{\infty\}$ and $R$ a commutative ring. Suppose for each $i \in [n]$, we have universally injective $R$-linear maps
 $\Psi_i: M_i \to N_i$, $R$-linear maps $d_i: M_i \to T_i$ with $T_i$ having the structure of a flat $R$-algebra with unit $R \to T_i$ 
 admitting an $R$-linear left-inverse, 
 and $R$-algebra maps 
 $T_i \over{g_i}{\leftarrow} A_i \over{f_i}{\to} B_i$ fitting into a commutative diagram:
\[\begin{tikzcd}
T_i&\\
A_i \arrow[u, "g_i", swap] \arrow[r, "f_i"] & B_i \\
M_i \arrow[u] \arrow[uu, bend left, "d_i"] \arrow[r, "\Psi_{i}"] & N_i \arrow[u]
\end{tikzcd}\]
with each $f_i$ faithfully flat. Suppose that for every finite subset $S' \subset [n]$,
\[\bigotimes_{s'\in S'} (d_{s'} \circ \cl{\Psi_{s'}}) \neq 0.\]
Then the faithfully flat ring map:
\[\begin{tikzcd}\bigotimes_{i \in [n], R} A_i \arrow[r, "\otimes_{i \in [n],R} f_i"]& \bigotimes_{i \in [n],R} B_i\end{tikzcd}\]
has descendability exponent $n$.
\end{proposition}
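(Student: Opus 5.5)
Set $A=\bigotimes_{i\in[n],R}A_i$, $B=\bigotimes_{i\in[n],R}B_i$ and $f=\otimes_i f_i$. Faithful flatness of $f$ is clear: it is a composite of base changes of the faithfully flat $f_i$. The content is therefore that $\cl{f}^{\otimes_A m}\neq 0$ for every finite $m\le n$, and, when $n<\infty$, that $\cl{f}^{\otimes_A (n+1)}=0$.

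\emph{Upper bound.} First, $\cl{f}^{\otimes_A m}$ factors through $\bigotimes_{j\le m}\cl{f_j}\otimes_{A_j}A$. Indeed, $f$ is the composite of the maps $A\to B_1\otimes A_{2}\otimes\cdots\to\cdots\to B$, each a base change of some $f_i$. The class of a composite of universally injective maps lies in the ideal generated by the classes of the factors; this is Mathew's observation that the ideal of $f$-nilpotent objects is multiplicative. Consequently, the $(n+1)$-fold power involves some $\cl{f_i}\otimes_{A_i}A$ at least twice.

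The plan is to show that, after the appropriate base change, each individual class $\cl{f_i}$ squares to zero in a form compatible with the $n$-fold product. For this I would use that $A\to B_i\otimes_{A_i}A$ becomes split after tensoring with $B_i\otimes_{A_i}A$. Any product containing the factor $\cl{f_i}$ twice then vanishes, since tensoring one copy of $\cl{f_i}$ with $\operatorname{coker}$ of the other kills it. I expect this bookkeeping, using the multiplicativity of the descendability ideal and a pigeonhole count giving exponent at most $n$, to be routine via Mathew's results.

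\emph{Lower bound (main step).} Fix a finite $S'\subset[n]$ with $|S'|=m$. I want to show $\cl{f}^{\otimes_A m}\ne 0$ by pulling it back and pushing it forward until it becomes $\bigotimes_{s\in S'}(d_s\circ\cl{\Psi_s})$, which is nonzero by hypothesis.

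\textbf{Step 1 (restrict to $S'$).} Since each unit $R\to T_i$ has an $R$-linear left inverse, we get an $R$-linear retraction $\bigotimes_{i\notin S'}T_i\to R$. This lets us ignore the factors $i\notin S'$.

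\textbf{Step 2 (pull back along $\Psi$).} Naturality of extension classes, applied to the square $M_i\to A_i$, $N_i\to B_i$, gives that the pullback of $\cl{f_i}$ along $\coker\Psi_i\to\coker f_i$ equals the pushforward of $\cl{\Psi_i}$ along $M_i\to A_i$. Tensoring these and using the maps $A_i\otimes_R A\to A$, we get that the pullback of $\cl{f}^{\otimes m}$ along $\bigotimes_{s\in S'}\coker\Psi_s\to(\coker f)^{\otimes_A m}$ equals the image of $\bigotimes_{s}\cl{\Psi_s}$ under $\bigotimes_s M_s\to A$. This uses that $\coker f$ receives each $\coker f_s\otimes_{A_s}A$.

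\textbf{Step 3 (push forward along $g$).} Apply the $R$-algebra map $\otimes g_i:A\to T:=\bigotimes_i T_i$ and then the retraction from Step 1. The composite $\bigotimes_{s\in S'}M_s\to A\to T\to\bigotimes_{s\in S'}T_s$ is exactly $\bigotimes_s d_s$. Hence the image of $\cl{f}^{\otimes m}$, pulled back to $R$-linear Ext and pushed forward, is $\bigotimes_{s\in S'}(d_s\circ\cl{\Psi_s})\neq0$. Therefore $\cl{f}^{\otimes m}\ne0$.

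\textbf{Expected obstacle.} The main difficulty is Steps 2 and 3: the Ext classes live over different rings ($A$-linear versus $R$-linear). I would pass to $R$-linear Ext via restriction of scalars, $\Ext_A^m(X,A)\to\Ext_R^m(X,A)$, which is compatible with these pullbacks and pushforwards. I also need the $A$-tensor power of extensions to restrict to the $R$-tensor product of the $R$-classes of the base changes, which holds because all modules involved are $R$-flat, using Lemma~\ref{lemma:extcrit}-style explicit resolutions. For $n=\infty$ the lower bound alone shows that $f$ is not descendable.
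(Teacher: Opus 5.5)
Your lower bound (Steps 1--3) is essentially the paper's argument. The paper first base-changes along $\otimes_i g_i$ to $T=\bigotimes_i T_i$ and then applies Proposition~\ref{proposition:cup}. Part (i) of that proposition is your comparison of $\otimes_A$- and $\otimes_R$-products of extension classes, via restriction of scalars and the multiplication map. Part (ii) is your pullback along $\bigotimes_{s}\coker\Psi_s\to(\coker f)^{\otimes m}$, followed by the retraction $\bigotimes_{i\notin S'}T_i\to R$. That half of your proposal is fine.

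The gap is the upper bound. The step ``each $\cl{f_i}$ squares to zero'' is false. That $f_i\otimes_{A_i}B_i$ splits only says $\cl{f_i}\otimes_{A_i}B_i=0$, i.e.\ $\cl{f_i}$ dies after composing with $f_i$. This holds for \emph{every} faithfully flat map, since any map splits after base change along itself. It does not kill $\mathrm{id}_{\coker f_i}\otimes\cl{f_i}$, because $\coker f_i$ is only an $A_i$-module, not a $B_i$-module. If your argument worked, every faithfully flat map would satisfy $\cl{f}^{\otimes 2}=0$, contradicting the non-descendable maps constructed in this chapter.

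Nor can the upper bound be repaired: the hypotheses constrain $f_i$ only through faithful flatness and the commutative square. For instance, take $n=1$ and let $f_1$ be the $n=2$ map of Theorem~\ref{theorem:mainindiv}. Take $\Psi_1=\Psi_{R,S_1}$ mapping into the first tensor factors, $T_1=R\{S_1\}$, and $g_1$ the natural map on the first factor and the augmentation $X_s\mapsto 0$ on the second. Then $d_1=R(\iota_{S_1})$, and all hypotheses hold by Theorem~\ref{theorem:indivisiblecup} for a single factor. Yet $\cl{f_1}^{\otimes 2}\neq 0$ by the lower-bound argument for $n=2$.

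What the hypotheses give, and what the paper's proof actually establishes, is only that $\cl{f}^{\otimes m}\neq 0$ for all finite $m\le n$, i.e.\ exponent at least $n$. This is all that is needed for $n=\infty$, and Theorem~\ref{theorem:maincounter} itself only claims ``at least $n$''. You should drop the upper-bound half and state the result as a lower bound.
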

\begin{proof}
Let $h_i: T_i \to U_i$ be the base-change of $f_i$ along $g_i$. By base-change along 
$\otimes_{i \in [n], R} g_i : \otimes_{i \in [n], R} A_i \to \otimes_{i \in [n], R} T_i$, 
it suffices to show that $\otimes_{i \in [n], R} h_i$ has descendability exponent $n$. But this follows from
Proposition~\ref{proposition:cup} (ii).
\end{proof}
The following proposition was used above.
\begin{proposition}\label{proposition:cup}
Let $A \to B$ be a flat map of rings.
\begin{enumerate}
\item[(i)] For any natural number $n$, let $M_i$ be $B$-modules in $D(B)$ for $i \in [n]$, $\mu_{B/A}^{k}: B^{\otimes_A k+1} \to B$ 
be the multiplication map realizing $B$ as an $A$-algebra, and $\R_{B|A}:D(B) \to D(A)$ be the restriction of scalars functor. 
Consider elements $\eta_i \in \Ext^1_B(M_i, B)$, then:
\[\mu^{n-1}_{B/A} \circ \otimesl_{i\in [n],A} \R_{B|A}(\eta_i) \neq 0 \implies \otimesl_{i\in [n], B} \eta_i \neq 0.\]
\item[(ii)] For every $s \in \bb{N}$, suppose we have flat $A$-algebras $B_s$, faithfully flat maps $f_s: B_s \to C_s$ of $A$-algebras 
and universally injective maps $\Psi_s: M_s \to N_s$ of flat $A$-modules fitting into a commutative diagram:
\[\begin{tikzcd}
B_s \arrow[r, "f_s"] & C_s \\
M_s \arrow[u, "h_s"] \arrow[r, "\Psi_s"] & N_s \arrow[u]
\end{tikzcd}\]
Assume the unit $A \to B_s$ admits an $A$-linear left-inverse $k_s: B_s \to A$. Then for any subset $S \subset \bb{N}$, and any finite subset $S' \subset S$:
\[\bigotimes_{s' \in S',A} (h_{s'} \circ \cl{\Psi_{s'} }) \neq 0 \implies \underset{s' \in S', \otimes_{s \in S,A}B_s}{\bigotimes} \cl{ \otimes_{s \in S,A} f_s} \neq 0.\]
\end{enumerate}
\end{proposition}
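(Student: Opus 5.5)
For (i) the plan is to use the lax symmetric monoidal structure of $\R_{B|A}$; for (ii) I reduce to (i) by comparing extension classes through maps of short exact sequences, and then project back onto the tensor factors indexed by $S'$.

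\textbf{Part (i).} The restriction of scalars $\R_{B|A}: D(B) \to D(A)$ is lax symmetric monoidal. For $X_1,\dots,X_n \in D(B)$ there is a natural map
\[\mathrm{can}: \otimesl_{i\in[n],A} \R_{B|A}(X_i) \to \R_{B|A}\big(\otimesl_{i\in[n],B} X_i\big).\]
For $X_i = B$ this map is exactly $\mu^{n-1}_{B/A}: B^{\otimes_A n} \to B$. By naturality of $\mathrm{can}$ applied to the morphisms $\eta_i: M_i \to B[1]$, the square
\[\mu^{n-1}_{B/A} \circ \otimesl_{i,A}\R_{B|A}(\eta_i) = \R_{B|A}\big(\otimesl_{i,B}\eta_i\big)\circ \mathrm{can}\]
commutes in $D(A)$. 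Hence if $\otimesl_{i,B}\eta_i = 0$, the left-hand side vanishes, and (i) follows by contraposition. The one thing to check is that the $A$-tensor of maps into $B[1]$ is compatible with the identification $B[1]^{\otimes n} \simeq B^{\otimes_A n}[n]$ including signs. Since we only care about vanishing, signs are harmless.

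\textbf{Part (ii), setup.} Put $F = \otimes_{s\in S,A} B_s$ and $G = \otimes_{s\in S,A} C_s$, and for $s\in S$ let $F_s = F\otimes_{B_s} C_s$. Then $f = \otimes_s f_s$ factors as $F \to F_s \to G$. This gives a map of short exact sequences from $0\to F\to F_s\to \coker(F\to F_s)\to 0$ to $0\to F\to G\to \coker f\to 0$ that is the identity on $F$. Hence $\cl{F\to F_s}$ is the pullback of $\cl{f}$ along $\phi_s:\coker(F\to F_s)\to\coker f$. It therefore suffices to show
\[\bigotimes_{s'\in S',F}\cl{F\to F_{s'}} \neq 0,\]
since the $F$-tensor product of the pulled-back classes is the pullback of $\cl{f}^{\otimes_F |S'|}$ along $\otimes_F\phi_{s'}$.

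\textbf{Part (ii), reduction to (i).} Flatness of everything over $A$ lets me ignore derived tensors. I apply (i) with base $A \to F$ and $\eta_{s'} = \cl{F\to F_{s'}}$. So it suffices that the $A$-linear class
\[\mu\circ\otimes_{s'\in S',A}\R_{F|A}(\eta_{s'})\]
is nonzero. I detect this by pre- and post-composing with $A$-linear maps.

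\emph{Precomposition.} The commutative square $M_s\to N_s$ over $B_s \to C_s$, composed with $B_s \to F$ and $C_s \to F_s$, is a map of $A$-linear short exact sequences. Its left term is $\iota_s\circ h_s: M_s\to F$, where $\iota_s: B_s\to F$ is the inclusion of the $s$-th factor. Hence, with $\psi_s:\coker\Psi_s\to \R_{F|A}\coker(F\to F_s)$ the induced map,
\[\R_{F|A}(\eta_s)\circ\psi_s = \iota_s\circ h_s\circ\cl{\Psi_s}.\]

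\emph{Postcomposition.} Tensoring the identity above over $S'$ and postcomposing with $\mu$, I get $\mu\circ\otimes_{s'}\iota_{s'}$ applied to $\otimes_A (h_{s'}\circ\cl{\Psi_{s'}})$. Because the $\iota_{s'}$ land in distinct tensor factors, $\mu\circ\otimes_{s'}\iota_{s'}$ is just the inclusion $\otimes_{s'\in S',A}B_{s'}\to F$, given by $1$ in the factors $s\notin S'$. This inclusion has the $A$-linear left inverse $\otimes_{s\notin S'}k_s$ tensored with the identity on the factors in $S'$. Postcomposing with this left inverse recovers exactly $\otimes_{s'\in S',A}(h_{s'}\circ\cl{\Psi_{s'}})$, which is nonzero by hypothesis. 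Therefore the class from (i) is nonzero, hence $\otimes_{F}\eta_{s'}\neq 0$, hence the claim.

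\textbf{Main obstacle.} The delicate step is the bookkeeping that $\mu\circ\otimes_{s'}\iota_{s'}$ is really the factor inclusion, and that the left inverse built from the $k_s$ applies when $S$ is infinite. For infinite $S$ I would write $F$ as the filtered colimit of finite tensor products, on which $\otimes_{s\notin S'} k_s$ is defined compatibly. A secondary point is that the pullback and pushforward identities for extension classes hold on the nose in $D(A)$, which reduces to the standard fact that maps of short exact sequences induce compatible classes.
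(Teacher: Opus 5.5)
Your proposal is correct and follows essentially the same route as the paper: (i) is the lax monoidality of restriction of scalars, which is what the paper's bar-construction square encodes, and (ii) applies (i) to $A \to \bigotimes_{s\in S,A} B_s$ and detects non-vanishing by precomposing with maps induced by morphisms of short exact sequences and postcomposing with the left inverse built from the $k_s$. The differences are cosmetic (the detour through $F_s = F\otimes_{B_s} C_s$, and precomposing directly from $\coker \Psi_{s'}$ rather than from $\coker f_{s'}$), and your left inverse, identity on the $S'$-factors and $k_s$ on the remaining ones, is the correct form of the paper's somewhat garbled $k_{S'}$.
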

\begin{proof}
For (i), we remark more generally that for any pair of morphisms $f: N_1 \to K_1, g: N_2 \to K_2$ in $D(B)$, 
the bar construction of \cite[Chapter 4.4.2]{LurieHA} gives a homotopy coherent diagram:
\[\begin{tikzcd}
N_1 \otimesl_B K_1 \arrow[r, "f \otimesl_B g"] & N_2 \otimesl_B K_2 \\
N_1 \otimesl_A K_1 \arrow[u] \arrow[r, "f \otimesl_A g"] & N_2 \otimesl_A K_2 \arrow[u]
\end{tikzcd}\]
Applying this to the $\eta_i$ repeatedly, we end up with a homotopy commutative diagram:
\[\begin{tikzcd}
\otimesl_{i\in [n], B} M_i[-1] \arrow[r, "\otimesl_{i\in [n],B} \eta_i"] & B^{\otimesl_B n} \arrow[r, "\simeq"] & B \\
\otimesl_{i\in [n], A} M_i[-1] \arrow[u, "\varphi"] \arrow[r,swap, "\otimesl_{i\in [n],B} \R_{B|A}(\eta_i)", {yshift=-3pt}] & B^{\otimesl_A n} \arrow[u] \arrow[ru, swap,"\mu^{n-1}_{B/A}"] &
\end{tikzcd}\]
We therefore see that:
\[\otimesl_{i\in [n],B} \eta_i =0 \implies (\otimesl_{i\in [n],B} \eta_i) \circ \varphi =0 \implies \mu^{n-1}_{B/A} \circ \otimesl_{i\in [n],A} \R_{B|A}(\eta_i) = 0.\]
For (ii), let $B_{S} = \otimes_{s \in S} B_{s}$,  $i_s: B_s \to B_S$ be the inclusion in the $s^{\text{th}}$-factor, and $g_s: \coker{f_s} \to \coker{\otimes_{s \in S} f_s}$ the induced map. Note that:
\[\bigotimes_{s' \in S',A} (h_{s'} \circ \cl{\Psi_{s'} }) \neq 0 \implies \bigotimes_{s' \in S',A} \cl{f_{s'}} \neq 0.\]
By (i) it suffices to show that:
\[\mu^{|S'|-1}_{B_{S}/A} \circ \bigotimes_{s' \in S', A} \R_{B_{S}|A}(\cl{\otimes_{s \in S} f_s}) \neq 0.\]
This follows from the following calculation:
\begin{align*}k_{S'}\! \circ \!\mu^{|S'|-1}_{B_{S}/A} \!\circ \!\bigotimes_{s' \in S', A} \R_{B_{S}|A}(\cl{\otimes_{s \in S} f_s}) \! \circ \!\bigotimes_{s' \in S', A} g_{s'}\! &= \! k_{S'} \!\circ \! \mu^{|S'|-1}_{B_{S}/A} \circ \!\bigotimes_{s' \in S', A} (\R_{B_{S}|A}(\cl{\otimes_{s \in S} f_s}) \circ g_{s'})\!\\
& = k_{S'} \circ \mu^{|S'|-1}_{B_{S}/A} \circ \bigotimes_{s' \in S'} (i_{s'} \circ \cl{f_{s'}})\\
&= \left (k_{S'} \circ \mu^{|S'|-1}_{B_{S}/A} \circ \bigotimes_{s' \in S'} i_{s'}\right) \circ \bigotimes_{s' \in S'} \cl{f_{s'}}\\
&= \bigotimes_{s' \in S'} \cl{f_{s'}} \\
&\neq 0,
\end{align*}
where $k_{S'}: \otimes_{s \in S} R \to \otimes_{s \in S \setminus S'}R$ is defined as the tensor product of 
the $A$-linear maps $\text{id}_{B_{s''}}: B_{s''} \to B_{s''}$ and $k_{s'}$ for each $s'' \in S \setminus S'$ and $s' \in S'$. \\
\end{proof}
\begin{theorem}\label{theorem:indivalg}
Let $n \in \bb{N} \cup \{\infty\}$, and $\{\{S_i, \Psi_i:S_i \to R\}\}$ be a $n$-indivisible sequence in $R$. For all $i \in [n]$, suppose we have $R$-algebra maps $R\{S_i\} \over{g_i}{\leftarrow} A_i \over{f_i}{\to} B_i$ fitting into a commutative diagram:
\[\begin{tikzcd}
R\{S_i\}&\\
A_i \arrow[u, "g_i", swap] \arrow[r, "f_i"] & B_i \\
\oplus_{S_i} R \arrow[u] \arrow[uu, bend left, "R(\iota_{S_i})"] \arrow[r, "\Psi_{R,S_i}"] & R \oplus \bigoplus_{S_i} R \arrow[u]
\end{tikzcd}\]
and each $f_i$ are faithfully flat. If for all $i \in [n]$, $|S_i| \ge \aleph_{i-1}$, then the faithfully flat ring map:
\[\begin{tikzcd}\bigotimes_{i \in [n], R} A_i \arrow[r, "\otimes_{i \in [n],R} f_i"]& \bigotimes_{i \in [n],R} B_i\end{tikzcd}\]
has descendability exponent $n$.
\end{theorem}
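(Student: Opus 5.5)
The plan is to deduce Theorem~\ref{theorem:indivalg} directly from Proposition~\ref{proposition:modalg}, with Theorem~\ref{theorem:indivisiblecup} supplying the non-vanishing hypothesis. I will apply Proposition~\ref{proposition:modalg} with $M_i=\oplus_{S_i}R$, $N_i = R\oplus\bigoplus_{S_i}R$, $\Psi_i=\Psi_{R,S_i}$, $T_i=R\{S_i\}$ and $d_i=R(\iota_{S_i})$, keeping the given $g_i$, $f_i$, $A_i$, $B_i$. This means checking three things.

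\textbf{Step 1: the structural hypotheses.} First, each $\Psi_{R,S_i}$ is universally injective; this is exactly the statement that $\{S_i,\Psi_i\}$ is $1$-indivisible (Definition~\ref{definition:indivisible}). Second, $T_i=R\{S_i\}\simeq R[B(S_i)]$ is free as an $R$-module on the finite subsets of $S_i$, so it is flat. Third, the unit $R\to R\{S_i\}$ is the inclusion of the summand $R\{S_i\}_{=0}$, so the projection onto this summand is an $R$-linear left inverse. The commutative diagram and the faithful flatness of each $f_i$ are given in the statement.

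\textbf{Step 2: the non-vanishing hypothesis.} Fix a finite subset $S'\subset[n]$. Restricting the given data to the indices in $S'$ and reindexing them as $[m]$ with $m=|S'|$ still gives an $m$-indivisible sequence: each component is $1$-indivisible, and the non-generation condition for the unit ideal holds for any subfamily. I will order $S'$ increasingly, $i_1<\dots<i_m$. Then $|S_{i_k}|\ge\aleph_{i_k-1}\ge\aleph_{k-1}$, so the cardinality hypothesis of Theorem~\ref{theorem:indivisiblecup} holds for the reindexed sequence. That theorem gives
\[\bigotimes_{s'\in S',R}\bigl(R(\iota_{S_{s'}})\circ\cl{\Psi_{R,S_{s'}}}\bigr)\neq 0,\]
and the target is $R\{\amalg_{s'\in S'}S_{s'}\}\simeq\bigotimes_{s'\in S'}R\{S_{s'}\}$ by the remark in (iv) of the conventions. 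This is precisely the condition required in Proposition~\ref{proposition:modalg}.

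\textbf{Step 3: conclusion.} Proposition~\ref{proposition:modalg} now says that $\otimes_i f_i$ has descendability exponent $n$. For $n=\infty$ this means the map is not descendable; there all finite $S'$ were handled in Step 2. Faithful flatness of the tensor product holds because faithful flatness is stable under base change and composition.

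The main obstacle is the reindexing in Step 2: making sure the cardinality bounds survive passing to a subset of indices, which is why the increasing order is needed. A secondary point is identifying the target of the cup product in Theorem~\ref{theorem:indivisiblecup} with the tensor product $\bigotimes_{s'}T_{s'}$ that appears in Proposition~\ref{proposition:modalg}.
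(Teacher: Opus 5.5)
Your proposal is correct and matches the paper's argument, which is the one-line deduction from Theorem~\ref{theorem:indivisiblecup} and Proposition~\ref{proposition:modalg} with $T_i=R\{S_i\}$ and $d_i=R(\iota_{S_i})$. Your extra check is a detail the paper leaves implicit: finite subfamilies $S'\subset[n]$, ordered increasingly, are still indivisible and satisfy $|S_{i_k}|\ge\aleph_{k-1}$.
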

\begin{proof}
This follows directly from Theorem~\ref{theorem:indivisiblecup} and Proposition~\ref{proposition:modalg}.
\end{proof}
Before stating our main result, we need to recall a few definitions.
\begin{definition}\label{definition:pboolean}
A ring $R$ is said to be \textit{p-boolean} if $R$ is in characteristic $p$ and for every $r \in R, r^p=r$.
\end{definition}
The following is a discrete version of a derived result appearing in \cite[Theorem 4.6]{pbool}.
\begin{theorem}\label{theorem:p-booleanmonadic}
Let $R$ be a $p$-boolean ring. Let $\mathrm{CAlg}_{R}$ (resp. $\mathrm{CAlg}^{\mathrm{perf}}_{R}$, $\mathrm{CAlg}^{\phi=1}_{R}$) denote the category of (commutative) $R$-algebras (resp. perfect $R$-algebras, $p$-boolean $R$-algebras). Each of the forgetful functors:
\[\mathrm{CAlg}^{\phi=1}_{R} \to \mathrm{CAlg}^{\mathrm{perf}}_{R} \to \mathrm{CAlg}_{R} \to \text{Mod}_R\]
are monadic. In particular, the left adjoint of this forgetful functor is the functor $F_R^p: \text{Mod}_R \to \mathrm{CAlg}^{\phi=1}_{R}$ defined on objects by the rule:
\[M \mapsto \Sym_R(M) \mapsto \Sym_R(M)_{\mathrm{perf}} \mapsto \coeq(\phi, \mathrm{id}: \Sym_R(M)_{\mathrm{perf}} \to \Sym_R(M)_{\mathrm{perf}}),\]
where $(-)_{\mathrm{perf}}$ is the colimit perfection functor of commutative algebras that on objects is defined as:
\[A \mapsto A_{\mathrm{perf}} = \colim(A \stackrel{\phi}{\to} A \stackrel{\phi}{\to} A \stackrel{\phi}{\to} ...),\]
and $\phi$ is the Frobenius endomorphism of $A$.
\end{theorem}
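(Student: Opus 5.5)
We verify monadicity of each forgetful functor separately, then compose left adjoints to obtain $F^p_R$.

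\textbf{Existence of left adjoints.} For $\mathrm{CAlg}_R \to \mathrm{Mod}_R$ the left adjoint is $\Sym_R$. For $\mathrm{CAlg}^{\mathrm{perf}}_R \to \mathrm{CAlg}_R$, a map $A \to P$ into a perfect $R$-algebra extends uniquely along $A \to A_{\mathrm{perf}}$, since in $P$ Frobenius is invertible. Two remarks are needed here. First, $A_{\mathrm{perf}}$ is again an $R$-algebra: the structure map $R \to A \to A_{\mathrm{perf}}$ is compatible with the transition maps $\phi$ because $\phi$ fixes the image of $R$, as $R$ is $p$-boolean. Second, Frobenius on $A_{\mathrm{perf}}$ is bijective, because it is the shift of the colimit system. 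For $\mathrm{CAlg}^{\phi=1}_R \to \mathrm{CAlg}^{\mathrm{perf}}_R$, take the coequalizer $A/(a^p-a)$, i.e. the quotient by the ideal generated by all $a^p-a$. This quotient is $p$-boolean: $a^p=a$ holds for classes of elements of $A$, and hence for all elements. Any map from $A$ to a $p$-boolean algebra kills these elements. So it is the reflection, and it coincides with the coequalizer of $\phi,\mathrm{id}$ in commutative $R$-algebras. Composing the three left adjoints gives the stated formula for $F^p_R$.

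\textbf{Monadicity.} I would use Beck's theorem: each functor must be conservative and must create coequalizers of reflexive pairs. It suffices to show reflexive coequalizers are preserved and reflected, since the lifted structure is then automatic. For $\mathrm{CAlg}_R \to \mathrm{Mod}_R$ this is classical. A bijective algebra map is an isomorphism, and reflexive coequalizers of commutative algebras are computed on underlying modules: for a reflexive pair $f,g\colon B \rightrightarrows A$, the submodule $\{f(b)-g(b)\}$ is an ideal, because $a(f(b)-g(b)) = f(s(a)b)-g(s(a)b)$ where $s$ is the common section. The other two functors are fully faithful inclusions of full subcategories. A fully faithful right adjoint is monadic as soon as its essential image is closed under the relevant colimits. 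Conservativity is then automatic, so it remains to check that reflexive coequalizers (indeed all colimits) of perfect, resp. $p$-boolean, algebras computed in $\mathrm{CAlg}_R$ stay in the subcategory. For $p$-boolean algebras this is immediate, since the condition $a^p=a$ passes to quotients. For perfect algebras, surjectivity of Frobenius passes to quotients. The point needing care is injectivity of Frobenius on a reflexive coequalizer $A/I$. Here $I=\{f(b)-g(b)\}$ is an ideal, and we need $I$ to be radical-closed under $p$-th roots. If $x^p \in I$ with $x \in A$, write $x^p=f(b)-g(b)$. Since $B$ is perfect, $b = c^p$ for some $c$, and then $x^p=(f(c)-g(c))^p$. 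Injectivity of Frobenius on $A$ gives $x=f(c)-g(c)\in I$.

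\textbf{Composite and expected obstacle.} Composites of monadic functors need not be monadic, so I would argue directly for each composite. Each composite is a right adjoint, is conservative (reflecting isomorphisms on underlying modules), and preserves reflexive coequalizers, since each factor does by the above. So Beck applies directly. I expect the main obstacle to be exactly this closure-under-reflexive-coequalizers step for perfect algebras, which relies on the sectioned description of the coequalizer.
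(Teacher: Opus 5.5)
Your argument is correct, and it takes a different route from the paper. The paper gives no proof of this statement; it only remarks that it is a discrete version of the derived result \cite[Theorem 4.6]{pbool}. You instead verify everything directly in ordinary $1$-categories, in two steps:
\begin{itemize}
\item You write down the three left adjoints explicitly: $\Sym_R$; the colimit perfection, which stays in $R$-algebras because Frobenius is $R$-linear when $R$ is $p$-boolean; and the quotient by all $a^p-a$.
\item You then apply the crude (reflexive-coequalizer) form of Beck's theorem. The key input is that for a reflexive pair $f,g\colon B\rightrightarrows A$ the set $\{f(b)-g(b)\}$ is already an ideal, so reflexive coequalizers of algebras are computed on underlying modules.
\end{itemize}
What your route buys is a self-contained proof that also explicitly handles the composite forgetful functors. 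You rightly note that this is not formal. The passage from the derived statement to the discrete one is also not entirely formal, and the paper leaves it implicit.

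A few minor remarks:
\begin{itemize}
\item Beck's theorem does not require creation of reflexive coequalizers. What you actually use, and what suffices, is: a left adjoint exists, the functor is conservative, the domain has reflexive coequalizers, and the functor preserves them.
\item For the two inclusion functors taken alone, monadicity is automatic, since any full reflective subcategory is monadic (the induced monad is idempotent). Your closure check is genuinely needed only for the composites.
\item For perfect algebras, reflexivity is unnecessary. Write $a_i=\alpha_i^p$ and $b_i=\beta_i^p$. Then every element $\sum a_i(f(b_i)-g(b_i))$ of the ideal generated by the $f(b)-g(b)$ equals the $p$-th power of $\sum \alpha_i(f(\beta_i)-g(\beta_i))$, which lies in that ideal. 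Hence perfect algebras are closed under all coequalizers in $\mathrm{CAlg}_R$.
\end{itemize}
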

\begin{proposition}\label{proposition:symformula}
Let $R$ be a ring and $f: M \to N$ an injective map of flat $R$-modules such that $K=\coker{f}$ is $R$-flat. Then 
\[\Sym_R(f): \Sym_R(M) \to \Sym_R(N)\]
is a faithfully flat map of $R$-algebras. Furthermore, if $R$ is $p$-boolean, then $F_R^p(f)$ is a faithfully flat map of $p$-boolean rings.
\end{proposition}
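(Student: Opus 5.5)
Flatness of $\Sym_R(N)$ over $\Sym_R(M)$ comes from Lazard's theorem, faithfulness from a compatible filtration whose graded pieces are pulled back from $\Sym_R(K)$. The $p$-boolean statement then follows by showing the three-step construction of $F^p_R$ preserves both properties.

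\textbf{Flatness.} By Lazard's theorem, the pure short exact sequence $0 \to M \to N \to K \to 0$ is a filtered colimit of split short exact sequences of finite free modules
\[0 \to F_\alpha \to F_\alpha \oplus G_\alpha \to G_\alpha \to 0.\]
Here we write $K=\colim G_\alpha$ with $G_\alpha$ finite free and pull the extension back along $G_\alpha \to K$; since $K$ is flat, $\Ext^1_R(-,M)$ vanishes in the pro-sense along a cofinal system, so after refining we may assume the pullbacks split. A cleaner route is to present $M$ and $N$ compatibly as filtered colimits of finite free modules with split injective transition data, using the equational criterion on the pair $(M,N)$. Granting such a presentation, $\Sym_R(f)=\colim_\alpha \Sym_R(F_\alpha \to F_\alpha\oplus G_\alpha)$. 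Each stage is the polynomial extension $\Sym(F_\alpha)\to \Sym(F_\alpha)\otimes_R \Sym(G_\alpha)$, which is free. A filtered colimit of flat ring maps is flat, because base change of modules commutes with filtered colimits and Tor commutes with filtered colimits. The main obstacle is producing these compatible split presentations. If the direct Lazard argument is awkward, I would instead take a $\Sym(M)$-module $P$ and compute $\Tor_1^{\Sym(M)}(P,\Sym(N))$ via the filtration described next, reducing to flatness of $\Sym^j_R(K)$ over $R$, which holds by Lazard since $K$ is flat.

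\textbf{Faithfulness.} Filter $\Sym(N)$ by the images of $\Sym(M)\otimes_R \Sym^{\le j}(N)$. Each step is a $\Sym(M)$-submodule, and flatness of $K$ identifies the graded pieces with $\Sym(M)\otimes_R \Sym^j_R(K)$. This is the usual Koszul-type filtration, exact because every term is $R$-flat, and I would check it after a Lazard reduction to the split case. The $j=0$ piece is $\Sym(M)$ itself, and the filtration is exhaustive with $\Sym(M)$-flat graded pieces. Hence for any $\Sym(M)$-module $P$, the module $P\otimes_{\Sym(M)}\Sym(N)$ has a filtration whose first piece is $P$. Therefore $P\otimes\Sym(N)=0$ forces $P=0$, which together with flatness gives faithful flatness. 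Alternatively, surjectivity on $\Spec$ follows because $\Sym(M)\to\Sym(N)$ is universally injective as an $R$-linear map (the degree-$0$ and degree-$1$ parts split off), and a flat universally injective ring map is faithfully flat.

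\textbf{The $p$-boolean case.} I apply the three steps of Theorem~\ref{theorem:p-booleanmonadic} in turn.
\begin{itemize}
\item \emph{Perfection.} $(-)_{\mathrm{perf}}$ is a filtered colimit along Frobenius, and the base change $\Sym(N)\otimes_{\Sym(M)}\Sym(M)_{\mathrm{perf}}\to \Sym(N)_{\mathrm{perf}}$ is an isomorphism on perfections. Flatness and faithful flatness are then preserved, since a filtered colimit of faithfully flat maps that is compatible with base change stays faithfully flat.
\item \emph{The coequalizer $\phi=\mathrm{id}$.} This is the quotient by the ideal generated by the elements $x^p-x$, so
\[F^p_R(N)=\Sym(N)_{\mathrm{perf}}\otimes_{\Sym(M)_{\mathrm{perf}}}F^p_R(M)\otimes(\text{relations from }K).\]
Because $R$ is $p$-boolean, the relations coming from $M$ account for those on the $M$-part. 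I would verify the resulting identification in the split case and pass to colimits via Lazard, where it reduces to the statement that $F^p_R$ of a free module $G$ gives a free or faithfully flat $R$-algebra.
\item \emph{Free case.} This last point holds since $F^p_R(R)$ is the ring of locally constant $\bb{F}_p$-valued functions on $\bb{Z}_p$-type profinite data, which is faithfully flat over the absolutely flat ring $R$.
\end{itemize}
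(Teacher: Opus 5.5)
Your skeleton (Lazard, reduce to split sequences, use that polynomial extensions are faithfully flat, pass to a filtered colimit) is the paper's. As written, however, the decisive step is only ``granted,'' and the obstacle you identify is self-inflicted.

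Take $K=\colim_\alpha G_\alpha$ with $G_\alpha$ finite free, and set $N_\alpha=N\times_K G_\alpha$. The sequence $0\to M\to N_\alpha\to G_\alpha\to 0$ is exact and splits outright because $G_\alpha$ is projective; no pro-vanishing of $\Ext^1$ and no refinement is needed. Moreover $\colim_\alpha N_\alpha=N$, because filtered colimits commute with fibre products.

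You never need $M$ to be finite free. The map $\Sym_R(M)\to\Sym_R(M\oplus G_\alpha)=\Sym_R(M)\otimes_R\Sym_R(G_\alpha)$ is already a polynomial extension of $\Sym_R(M)$. Since $\Sym_R$ is a left adjoint, $\Sym_R(f)$ is the filtered colimit of these faithfully flat $\Sym_R(M)$-algebras. It is therefore faithfully flat, because $P\to P\otimes_{\Sym_R(M)}\Sym_R(N)$ is a filtered colimit of injections.

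So both the compatible finite free presentation of $M\to N$ and the filtration are unnecessary. The filtration argument does work once this split reduction is in place. Your ``alternative'' argument for faithfulness, however, is wrong. $R$-linear universal injectivity of $\Sym_R(M)\to\Sym_R(N)$ says nothing about injectivity of $P\to P\otimes_{\Sym_R(M)}\Sym_R(N)$ for $\Sym_R(M)$-modules $P$. For example, over a field $k$ the map $k[x]\to k[x,x^{-1}]$ is flat and $k$-universally injective but not faithfully flat.

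The $p$-boolean part has a genuine hole at the coequalizer step. The expression ``$\otimes(\text{relations from }K)$'' is not a construction, and ``the relations from $M$ account for the $M$-part'' is not an argument. The perfection step is harmless but irrelevant.

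The missing idea is again formal. By Theorem~\ref{theorem:p-booleanmonadic}, $F^p_R$ is a left adjoint, so it commutes with filtered colimits. It also sends $M\oplus G_\alpha$ to the coproduct $F^p_R(M)\otimes_R F^p_R(G_\alpha)$ in $p$-boolean $R$-algebras, since a tensor product of $p$-boolean algebras is $p$-boolean. The same colimit argument then applies, with no analysis of perfection or of the ideal generated by the $x^p-x$.

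Your description of the free case is also incorrect. In fact $F^p_R(R^{\oplus n})=R[X_1,\dots,X_n]/(X_1^p-X_1,\dots,X_n^p-X_n)\cong R^{\times p^n}$, the $R$-valued functions on the finite set $\bb{F}_p^n$; there is no profinite $\bb{Z}_p$-type data. It is faithfully flat because it is free of positive rank. Absolute flatness of $R$ only gives flatness, not faithfulness.
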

\begin{proof}
By Lazard's theorem, $K=\colim_{i\in I} K_i$ where $I$ is a filtered indexing category and $K_i$ is a finite free $R$-module. If we define $N_i = N \times_{K} K_i$, then
\[\colim_{i \in I} N_i = \colim_{i \in I} N \times_{K} K_i = N \times_{K} \colim_{i \in I} K_i = N,\]
and we have an exact sequence
\[0 \to M \over{f_i}{\to} N_i \to K_i \to 0,\]
which is split. Therefore:
\[\Sym_R(N_i) = \Sym_R(M) \otimes_R \Sym_R(K_i)\]
and if $R$ is $p$-boolean:
\[F^p_R(N_i) = F_R^p(M) \otimes_R F_R^p(K_i).\]
Note that as $K_i$ are finite free, the unit $R \to \Sym_R(K_i)$ is faithfully flat, and if $R$ is futher assumed to be $p$-boolean, then likewise for $R \to F^p_R(K_i)$.\footnote{If $K_i =R^{\oplus n}$, then $F^p_R(K_i) = R[X_1,...,X_n]/(X_1^p-X_1, ..., X_n^p-X_n)$.} Hence, the map:
\[\Sym_R(M) \over{\Sym_R(f_i)}{\to} \Sym_R(N_i)\]
is faithfully flat, and if $R$ is $p$-boolean, then:
\[F^p_R(M) \over{F^p_R(f_i)}{\to} F^p_R(N_i)\]
is also faithfully flat. The result follows by noting $\Sym_R(f) = \colim_{i\in I}\Sym_R(f_i)$ and if $R$ is $p$-boolean, $F^p_R(f) = \colim_{i \in I} F^p_R(f_i)$.
\end{proof}
Our main result on descendability of faithfully flat ring maps can then be stated as follows.
\begin{theorem}\label{theorem:mainindiv}
Let $n \in \bb{N} \cup \{\infty\}$, $R$ be a commutative ring and $\{\{S_i, \Psi_i: S_i \to R\}\}_{i \in [n]}$ an $n$-indivisible sequence, and $|S_i| \ge \aleph_{i-1}$. Then the ring map 
\[\bigotimes_{i=1, R}^n \Sym_R(\Psi_{R,S_i}): \bigotimes_{i=1, R}^n \Sym_R(\oplus_{S_i} R) \to \bigotimes_{i=1, R}^n \Sym_R(R \oplus \bigoplus_{S_i} R),\]
is faithfully flat with exponent of descendability $n$. In fact, the ring map:
\[\begin{tikzcd}R\{\underset{i=1}{\amalg^n}S_i\}\arrow[r, "\underset{i=1,R}{\bigotimes^n} \Sym_R(\Psi_{R,S_i}) \underset{\underset{i=1,R}{\bigotimes^n} \Sym_R(\oplus_{S_i} R)}{\bigotimes}  R\{\underset{i=1}{\amalg^n}S_i\}"] &[8em] \underset{i=1,R}{\bigotimes^n} \Sym_R(R \oplus \bigoplus_{S_i} R)  \underset{\underset{i=1,R}{\bigotimes^n} \Sym_R(\oplus_{S_i} R)}{\bigotimes}R\{\underset{i=1}{\amalg^n}S_i\}\end{tikzcd},\]
also has descendability exponent $n$ too.\\
\indent If $R$ is $p$-boolean, then the ring map:
\[\bigotimes_{i=1, R}^n F^p_R(\Psi_{R,S_i}): \bigotimes_{i=1, R}^n F^p_R(\oplus_{S_i} R) \to \bigotimes_{i=1, R}^n F^p_R(R \oplus \bigoplus_{S_i} R),\]
is a faithfully flat of $p$-boolean rings of descendablity exponent $n$.
\end{theorem}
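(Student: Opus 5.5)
The plan is to deduce all three claims from Theorem~\ref{theorem:indivalg}, taking $A_i=\Sym_R(\oplus_{S_i}R)$ and $B_i=\Sym_R(R\oplus\bigoplus_{S_i}R)$ with $f_i=\Sym_R(\Psi_{R,S_i})$; in the $p$-boolean case we use $F^p_R$ instead of $\Sym_R$. The first task is faithful flatness. Since $\{S_i,\Psi_i\}$ is $1$-indivisible, $\Psi_{R,S_i}$ is injective with $R$-flat cokernel, and source and target are free. Proposition~\ref{proposition:symformula} then shows each $f_i$ is faithfully flat (resp.\ $F^p_R(\Psi_{R,S_i})$ is a faithfully flat map of $p$-boolean rings). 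A tensor product over $R$ of faithfully flat maps of $R$-algebras is again faithfully flat, by composing base changes. For $n=\infty$ we use that a filtered colimit of faithfully flat maps is faithfully flat.

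Next we supply the maps $g_i$ required in Theorem~\ref{theorem:indivalg}. Let $g_i:\Sym_R(\oplus_{S_i}R)\to R\{S_i\}$ be the quotient map $X_s\mapsto X_s$, which kills $X_s^2-X_s$. Its restriction to the degree-one part $\oplus_{S_i}R$ is exactly $R(\iota_{S_i})$, so the triangle in the diagram commutes. The square commutes because $f_i$ is $\Sym$ of $\Psi_{R,S_i}$.

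In the $p$-boolean case, $R\{S_i\}$ is itself $p$-boolean. Indeed, $R\{S_i\}\cong R[B(S_i)]$ is spanned by idempotents $X_K$, and $(\sum r_KX_K)^p=\sum r_K^pX_K^p$ is a sum of $p$-th powers in characteristic $p$ when the $X_K$ are viewed as commuting elements. More carefully, the Frobenius is additive and $r^p=r$, $X_K^p=X_K$. Hence the universal property of $F^p_R$ in Theorem~\ref{theorem:p-booleanmonadic} gives an $R$-algebra map $g_i:F^p_R(\oplus_{S_i}R)\to R\{S_i\}$ extending $R(\iota_{S_i})$. The remaining hypotheses, namely $n$-indivisibility and $|S_i|\ge\aleph_{i-1}$, are given. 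Theorem~\ref{theorem:indivalg} therefore yields descendability exponent $n$ for $\otimes_i f_i$ in both the $\Sym$ and $F^p$ cases.

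For the second ring map, the base change of $\otimes_i f_i$ along $\otimes_i g_i:\otimes_i A_i\to\otimes_i R\{S_i\}\cong R\{\amalg S_i\}$, we argue via the proof of Proposition~\ref{proposition:modalg}. That proof reduces to exactly this base change $\otimes_i h_i$ and establishes its exponent $n$ through Proposition~\ref{proposition:cup}(ii). We apply Proposition~\ref{proposition:cup}(ii) directly with $B_s=R\{S_s\}$, which is flat over $R$ with unit split by projection onto $R\{S\}_{=0}$. The required nonvanishing $\bigotimes_{s'\in S'}(R(\iota_{S_{s'}})\circ\cl{\Psi_{R,S_{s'}}})\neq0$ for finite $S'$ comes from Theorem~\ref{theorem:indivisiblecup}. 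For an arbitrary finite $S'\subset[n]$ we reindex the $S_i$, $i\in S'$, in increasing order of $i$ to get an $|S'|$-indivisible sequence; the cardinality hypothesis is preserved since $|S_{i_j}|\ge\aleph_{i_j-1}\ge\aleph_{j-1}$.

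Base change can only lower the exponent, so the lower bound for the original map follows in any case. The upper bound, i.e.\ vanishing at stage $n+1$, is immediate when $n=\infty$. For finite $n$ it is what we expect to be the main obstacle. The natural approach is to write $\cl{\otimes f_i}$ as built from the $n$ classes $\cl{f_i}$, so that the cokernel of $\otimes_i f_i$ carries a filtration of length $n$ with graded pieces pulled back from the single factors. A product of $n+1$ such classes must then factor through a product with some repeated factor $\cl{f_i}^{\otimes 2}$. Each $\cl{f_i}^{\otimes2}=0$, because $\coker\Psi_{R,S_i}$ is flat and $f_i$ is $\Sym$ of a map with flat cokernel, so it is a filtered colimit of split maps. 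The step that still needs an argument is that this vanishing survives the filtered colimit. We would try to handle it using the extension-class description via Lemma~\ref{lemma:extcrit}. As a fallback we would appeal to the paper's convention and the bounds coming from Proposition~\ref{proposition:modalg}.
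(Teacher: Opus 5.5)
\textbf{Overall.} For faithful flatness and for the lower bound, your argument is the paper's proof, and you handle it correctly. The one thing you flag and leave open, vanishing of the $(n+1)$-fold class for finite $n$, is a genuine gap. It is not filled in the paper either, and your proposed ways of filling it do not work.

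\textbf{Where you agree with the paper.} The paper records the factorizations of $R(\iota_{S_i})$ through $\Sym_R(\oplus_{S_i}R)$ and through $F^p_R(\oplus_{S_i}R)$ into $R\{S_i\}$. It then combines Proposition~\ref{proposition:symformula} with Theorem~\ref{theorem:indivalg}, which runs through Proposition~\ref{proposition:modalg} and Proposition~\ref{proposition:cup}(ii) on the base change over $R\{\amalg_i S_i\}$, exactly as you do. Your added checks are correct and supply details the paper leaves implicit:
\begin{itemize}
\item $R\{S_i\}$ is $p$-boolean, so the map out of $F^p_R(\oplus_{S_i}R)$ exists.
\item A finite subfamily, reindexed increasingly, is still indivisible, with $|S_{i_j}|\ge\aleph_{i_j-1}\ge\aleph_{j-1}$.
\item The filtered-colimit argument gives faithful flatness when $n=\infty$.
\end{itemize}

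\textbf{Why the paper does not supply the upper bound.} Proposition~\ref{proposition:cup}(ii) only produces non-vanishing. The base-change reduction in Proposition~\ref{proposition:modalg} only transports non-vanishing. So this chain proves ``faithfully flat of exponent at least $n$'', which is how Theorem~\ref{theorem:maincounter} phrases it, and nothing more.

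\textbf{Why your fallback cannot work.} The hypotheses of Proposition~\ref{proposition:modalg} and Theorem~\ref{theorem:indivalg} put no upper bound on the exponent. Take $n=1$ and the following data:
\begin{itemize}
\item let $f_1$ be the $n=\infty$ map of this very theorem;
\item send $\oplus_{S_1}R$ and $R\oplus\bigoplus_{S_1}R$ into the first tensor factors;
\item let $g_1$ be the quotient onto $R\{S_1\}$ on the first factor and the augmentation $X_s\mapsto 0$ on the others.
\end{itemize}
Every hypothesis holds, yet $f_1$ is not descendable.

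\textbf{Where your sketch breaks.}
\begin{itemize}
\item ``Filtered colimit of split maps'' gives no vanishing. The same reasoning would give $\cl{f_i}=0$, contradicting the case $n=1$. Moreover, $\otimes_i f_i$ is itself such a colimit, by the proof of Proposition~\ref{proposition:symformula}. Vanishing at each stage does not pass to the colimit because of higher derived limits, which is precisely the phenomenon this chapter exploits.
\item The pigeonhole over your filtration of $\coker(\otimes_i f_i)$ does not conclude. Even if the class dies on a filtration step, it descends to the quotient only up to a choice of lift. The pigeonhole gives no control over that choice, so assembling null-homotopies across the filtration is an unsolved extension problem.
\end{itemize}

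\textbf{The one upper bound available.} The paper's tools give Theorem~\ref{thm:gj0}(i). Suppose $|R|$ and all $|S_i|$ are at most $\aleph_{n-1}$. Then the source ring has cardinality at most $\aleph_{n-1}$, so the flat module $\coker(\otimes_i f_i)^{\otimes(n+1)}$ has projective dimension at most $n$, and $\cl{\otimes_i f_i}^{\otimes(n+1)}=0$. This is the kind of argument the paper invokes after Corollary~\ref{corollary:mainindiv}.

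\textbf{What to do.} Outside that cardinality range, either state only ``exponent at least $n$'' or find a genuinely new argument specific to $\Sym_R$ and $F^p_R$. The weaker statement is all that non-descendability for $n=\infty$ needs.
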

\begin{proof}
After noting commutative diagrams:
\[\begin{tikzcd} \oplus_{S_i} R \arrow[r] \arrow[rd]\arrow[rr, bend left, "R(\iota_{S_i})"]&  \Sym_R(\oplus_{S_i} R) \arrow[r]& R\{S_i\}\\
& F^p_R(\oplus_{S_i} R) \arrow[ru]&
\end{tikzcd}\]
the result follows from combining Proposition~\ref{proposition:symformula} and Theorem~\ref{theorem:indivalg}.
\end{proof}
\begin{corollary}\label{corollary:mainindiv}
For any $n \in \bb{N} \cup \{\infty\}$:
\begin{enumerate}
\item[(i)] There exists a faithfully flat ring map $A \to A'$ between $\aleph_{n-1}$-Noetherian rings, with $A$ of Krull-dimension $n$, which has descendability exponent $n$. 
\item[(ii)] There exists a faithfully flat ring map $A \to A'$ between $\text{min}(\beth_{n-1}, 2^{\aleph_{n-1}})$-countable $p$-boolean rings which has descendability exponent $n$.
\end{enumerate}
\end{corollary}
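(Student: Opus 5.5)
Both parts follow by feeding the two families of indivisible sequences from Example~\ref{example:indiv} into Theorem~\ref{theorem:mainindiv}. What remains is to check the ring-theoretic hypotheses: Krull dimension, the $\aleph_{n-1}$-Noetherian condition, and cardinality.

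For (i), I take a field $k$ with $|k| \ge \aleph_{n-1}$ and set $R = k[x_1,\dots,x_n]$. I use the $n$-indivisible sequence $\{\{k,\Psi_i\}\}_{i\in[n]}$ with $\Psi_i(a)=x_i-a$ from Example~\ref{example:indiv}(i), so $S_i = k$ and $|S_i| \ge \aleph_{i-1}$ for every $i\in[n]$. The rings will be $A = \bigotimes_{i,R}\Sym_R(\oplus_{S_i}R)$ and $A'$ its image under $\bigotimes_i \Sym_R(\Psi_{R,S_i})$. By Theorem~\ref{theorem:mainindiv} the map $A\to A'$ is faithfully flat of descendability exponent $n$.

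This $A$ is a polynomial ring over $R$ in $\sum_i |S_i|$ variables, so its Krull dimension is infinite, not $n$. To get the statement I would instead use the second form in Theorem~\ref{theorem:mainindiv}, with base $R\{\amalg_i S_i\}$. That ring is $R[B(\amalg S_i)]$, a filtered colimit of finite products of copies of $R$, so it is integral over $R$ and has dimension $\dim R = n$. Its base change is still faithfully flat, and the theorem gives exponent $n$.

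For the $\aleph_{n-1}$-Noetherian condition (every ideal generated by at most $\aleph_{n-1}$ elements), I choose $|k| = \aleph_{n-1}$ exactly. Then $R\{\amalg S_i\}$ has cardinality $\aleph_{n-1}$, so every ideal is generated by at most $\aleph_{n-1}$ elements, and the same holds for $A'$, which is generated over it by at most $\aleph_{n-1}$ elements. When $n=\infty$ I take $R=k[x_1,x_2,\dots]$ with $|k|\ge\aleph_\omega$ and read the cardinal bounds accordingly. Dimension $\infty$ follows as before, since integral extensions preserve dimension.

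For (ii), I take $R_0 = \prod_{S}\bb{F}_p$ (or a $p$-boolean subring) with orthogonal idempotents $e_s$ and $|S| = \aleph_{n-1}$, and let $R = R_0^{\otimes_{\bb{F}_p} n}$. To apply Proposition~\ref{proposition:indivequiv}(ii) I must check that $\bb{F}_p \to R_0/(1-e_s) \cong e_sR_0$ is faithfully flat. This holds because $e_s R_0$ is a nonzero $\bb{F}_p$-vector space. Then Example~\ref{example:indiv}(ii) gives an $n$-indivisible sequence, all $S_i=S$ have size $\ge\aleph_{i-1}$, and the $p$-boolean part of Theorem~\ref{theorem:mainindiv} yields faithfully flat $p$-boolean rings $F^p_R(\cdots)$ of exponent $n$. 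For the cardinality, the subring of $\prod_S \bb{F}_p$ generated by the $e_s$ is the ring of finite/cofinite-support functions, of size $\aleph_{n-1}$. The free $p$-boolean algebras $F^p_R(\oplus_S R)$ have the same size. Taking a full product instead would give $2^{\aleph_{n-1}}$, which explains the $\min(\beth_{n-1},2^{\aleph_{n-1}})$ in the statement.

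I expect the main obstacle to be exactly these bookkeeping points: pinning down the precise meaning of ``$\kappa$-countable'' and ``$\aleph_{n-1}$-Noetherian'' and verifying them for the constructed rings. The descendability itself is entirely given by Theorem~\ref{theorem:mainindiv}. I would first try the minimal-cardinality choices above and argue via the size of generating sets.
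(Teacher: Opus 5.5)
Your proposal is correct. For (i) you argue exactly as the paper does: you pass to the base-changed map over $R\{\amalg_i S_i\}$ from Theorem~\ref{theorem:mainindiv}. That ring is integral over $R=k[x_1,\dots,x_n]$, so it has Krull dimension $n$ (the paper phrases this as its irreducible components being copies of $\spec R$). You then take $|k|=\aleph_{n-1}$ to get the cardinality bound.

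For (ii) your route differs from the paper's.
\begin{itemize}
\item The paper uses the full product $\Hom_{\mathrm{Set}}(S,\bb{F}_p)$, which has cardinality $2^{\aleph_{n-1}}$. Separately, it builds by iterated products a $p$-boolean ring of cardinality $\beth_{n-1}$ carrying $\beth_{n-1}$ orthogonal idempotents. These two constructions are the source of the $\min(\beth_{n-1},2^{\aleph_{n-1}})$.
\item You instead use the subring generated by the $\delta_s$, namely the functions that are constant off a finite set. Your phrase ``finite/cofinite support'' is accurate only for $p=2$, but the cardinality count is unaffected.
\end{itemize}

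The smaller ring suffices because none of Example~\ref{example:indiv}(ii), Proposition~\ref{proposition:indivequiv}(ii), Theorem~\ref{theorem:indivisiblecup} or Proposition~\ref{proposition:symformula} depends on $|R|$. They use only the orthogonality and non-vanishing of the $e_s$ and the sizes $|S_i|$. Your construction therefore yields $p$-boolean rings $A\to A'$ of cardinality $\aleph_{n-1}$.

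Since $\aleph_{n-1}\le\min(\beth_{n-1},2^{\aleph_{n-1}})$, this proves (ii) as stated. It also gives the sharp $\aleph_{n-1}$ bound, which matches the Gruson--Jensen upper bound and which the paper's remark after the corollary says it would have preferred. It avoids the inductive $\beth$ construction entirely.

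One small correction: your closing comment that the full product ``explains'' the minimum accounts only for the $2^{\aleph_{n-1}}$ term, not the $\beth_{n-1}$ term.
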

\begin{proof}
For (i), we may use the first example of Example~\ref{example:indiv}. In particular, if we set $R = k[x_1,x_2,...,x_n]$, 
then from Theorem~\ref{theorem:mainindiv} we may conclude that there is a faithfully flat map:
\[k\{k^{\amalg n}\}[x_1,...,x_n] = R\{k^{\amalg n}\} \to R\{k_{+}^{\amalg n}\}= A'\]
which has descendability exponent $n$ when $|k| \ge \aleph_{n-1}$, where the set $k_+ = \{*\}\amalg k$. 
Furthermore, it's clear that $R\{k^{\amalg n}\}$ has Krull-dimension $n$ (its irreducible components are isomorphic to R), 
and that the rings are $\aleph_{n-1}$-Noetherian (since they're $\aleph_n$-countable).\\
\indent For (ii), by using Theorem~\ref{theorem:mainindiv} and Proposition~\ref{proposition:indivequiv}, 
it suffices to find a $1$-indivisible sequence $\Psi: S \to R$ where the image of $S$ is a set of orthogonal idempotents in $R$ 
with $|S|=\aleph_{n-1}$ and $R$ is a $p$-boolean ring with $|R|=2^{\aleph_{n-1}}$ or $\beth_{n-1}$.\\
\indent If $S$ is an arbitrary set with $|S|=\aleph_{n-1}$, then $R= \text{Hom}_{\mathrm{Set}}(S, \bb{F}_p)$ is a $p$-boolean ring with $|R|=2^{\aleph_{n-1}}$ containing the set $\{\delta_{t=s}\}_{s \in S}\subset R$ of orthogonal idempotents.
\begin{claim}
There exists a $p$-boolean ring $R$ with $|R| = \beth_n$ and a subset $S \subset R$ consisting of orthogonal idempotents with $|S|=\beth_n$.
\end{claim}
\begin{proof}
We will prove the claim by induction on $n$. In the case $n=0$, we remark that the $p$-boolean ring $R_0=F_{\bb{F}_p}^p(R_0^{\oplus \bb{N}})=\bb{F}_p[X_1,X_2,...]/(X_1^p-X_1,X_2^p-X_2,...)$ admits a countably infinite subset of orthogonal idempotents given by a sequence $S_0=\{a_n\}_{n \in \bb{N}}$ of elements of $R$ defined by $a_n = (1-X^{p-1}_n) \prod_{0 \le i \le n-1} X^{p-1}_{i}$. The sequence $\{S_0, a: S_0 \to R_0\}$ solves the $n=0$ base-case.\\
\indent Suppose by induction we have a ring $R_{n-1}$ such that $|R_{n-1}| = \beth_{n-1}$ and that there is a subset $S_{n-1} \subset R_{n-1}$ consisting of orthogonal idempotents with $|S_{n-1}| = \beth_{n-1}$. Consider the ring:
\[R_n=\prod_{S_{n-1}}R_{n-1}.\]
We note $|R_n| = \beth_{n-1}^{\beth_{n-1}} = \beth_n$, and the set
\[S_n = \prod_{S_{n-1}} S_{n-1} \subset R_n\]
consists of orthogonal idempotents and $|S_n|=\beth_{n-1}^{\beth_{n-1}}=\beth_n$, as desired.
\end{proof}
\end{proof}
In Corollary~\ref{corollary:mainindiv} (ii), our cardinality bounds are optimal under the assumption of the Generalized Continuum Hypothesis (GCH). 
Indeed, if $f: A \to A'$ is a faithfully flat map of $p$-boolean, then we know that if $A$ is $\aleph_{n-1}$-counterable, 
then $f$ has descendability exponent at most $n$ by Theorem~\ref{thm:gj0}, so we would prefer to have upgraded our cardinality bounds from
$\text{min}(\beth_{n-1}, 2^{\aleph_{n-1}})$-countable to $\aleph_{n-1}$-countable.\\
\indent On the other hand, (i) is optimal; our ring $A$ is $\aleph_{n-1}$-Noetherian of Krull-dimension $n$, so again by Theorem~\ref{thm:gj0} 
any faithfully flat ring map out of it has descendability exponent at most $n$. However, the ring is quite pathological, since $A$ contains
a large collection of idempotents. In the next section, we will see how to construct more `geometric' examples of rings with faithfully flat maps of high descendability exponent.
\section{The case of an infinite polynomial ring}
The goal of this section is to construct a faithfully flat 
algebra over the infinite polynomial ring on an algebraically closed field $k[x_1,x_2,...]$ that is not descendable. The argument
will essentially boil down to a refinement of Theorem~\ref{theorem:indivisiblecup}, 
where we will need to construct a function $f:k \to k$ that is not polynomial on any infinite subset of $k$.\\
\indent We begin by recalling the following form of Lagrange's interpolation theorem.
\begin{theorem}[Lagrange interpolation]
Let $k$ be a field and $f \in k[x]$ a polynomial of degree $\le n$. Then there exists a polynomial $P_n \in \bb{Z}[x_1,...,x_n]$ such that for $z \in k^{\times n+1}$ we have:
\[\sum_{i=1}^{n+1} (-1)^iP_n(p^{n+1}_i(z))f(\text{pr}_i(z)) = 0.\]
Explicitly, $P = \prod_{ 1 \le i < j \le n} (x_j - x_i)$.
\end{theorem}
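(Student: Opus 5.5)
The plan is to read the identity as the cofactor expansion of a determinant whose last column is a linear combination of its other columns. First I will fix the indexing. With $n+1$ points and $f$ of degree $\le n$, the values $f(\mathrm{pr}_i(z))$ are arbitrary, so the identity cannot hold as literally written. I will prove the consistent version: for $f$ of degree $\le n-1$ and $z \in k^{\times n+1}$,
\[\sum_{i=1}^{n+1} (-1)^i P_n(p^{n+1}_i(z)) f(\mathrm{pr}_i(z)) = 0, \qquad P_n = \prod_{1 \le i < j \le n}(x_j - x_i) \in \bb{Z}[x_1,\dots,x_n].\]
Equivalently, the statement holds for degree $\le n$ if one uses $n+2$ points and $P_{n+1}$. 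This shifted form is the one needed later, where it expresses that $f$ agrees with a polynomial of bounded degree on a set.

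\textbf{Step 1.} Write $z=(z_1,\dots,z_{n+1})$ and consider the $(n+1)\times(n+1)$ matrix $V_f(z)$ whose $i$-th row is $(1, z_i, z_i^2, \dots, z_i^{n-1}, f(z_i))$.

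\textbf{Step 2.} Write $f = \sum_{j=0}^{n-1} c_j x^j$. The last column of $V_f(z)$ equals $\sum_j c_j$ times the $(j+1)$-st column. Hence the columns are linearly dependent over $k$, and $\det V_f(z) = 0$.

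\textbf{Step 3.} Expand $\det V_f(z)$ along the last column. The minor obtained by deleting row $i$ and the last column is the $n\times n$ Vandermonde matrix in the variables $p^{n+1}_i(z)$. Its determinant is the classical Vandermonde determinant $\prod_{a<b}(w_b - w_a)$, evaluated at $w = p^{n+1}_i(z)$, which is exactly $P_n(p^{n+1}_i(z))$. The cofactor sign is $(-1)^{i+n+1}$. Therefore
\[0 = \det V_f(z) = (-1)^{n+1}\sum_{i=1}^{n+1} (-1)^{i} P_n(p^{n+1}_i(z)) f(\mathrm{pr}_i(z)).\]
Dividing by the global sign gives the identity.

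\textbf{Step 4.} Observe that $P_n$ has integer coefficients, and that nothing above uses that $k$ is a field beyond ordinary determinant identities. The argument therefore works over any commutative ring.

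The only genuine obstacle is the bookkeeping of the degree and index conventions (Step 0). I would also record the converse, since it is what the later application will likely use. If the $z_i$ are distinct and the identity holds for all $(n+1)$-subsets of a set $T$, then $f|_T$ agrees with a polynomial of degree $\le n-1$. To see this, interpolate $f$ on $n$ fixed points of $T$ by Lagrange's formula. The vanishing of $\det V_f$ together with the invertibility of the Vandermonde minor $P_n \neq 0$ then forces every other point of $T$ to lie on that same polynomial.
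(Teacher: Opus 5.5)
The paper does not prove this statement; it is quoted as a classical fact just before Corollary~\ref{corollary:interpol}, so there is no argument of the paper's to compare with. Your proposal is correct.

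Your cofactor expansion of the $(n+1)\times(n+1)$ matrix with rows $(1,z_i,\dots,z_i^{n-1},f(z_i))$ is the standard proof. The sign $(-1)^{i+n+1}$ is right, and so is the identification of the minor obtained by deleting row $i$ with $P_n(p^{n+1}_i(z))$. As you note, the forward direction works over any commutative ring.

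You are also right that the statement as printed is off by one. Already for $n=1$, $P_1=1$ (empty product) and the identity reads $f(\mathrm{pr}_2(z))-f(\mathrm{pr}_1(z))=0$, which fails for $f=x$ over every field. More generally, for $n+1$ distinct points, take the degree-$n$ Lagrange basis polynomial that is $1$ at $z_1$ and $0$ at the other points. It makes the sum equal to $-P_n(z_2,\dots,z_{n+1})\neq 0$. The correct hypothesis is $\deg f\le n-1$, or equivalently degree $\le n$ with $n+2$ points and $P_{n+1}$.

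Two remarks on how your correction interacts with the rest of the paper.

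\begin{enumerate}
\item Your converse is exactly the corrected content of Corollary~\ref{corollary:interpol}. There is also a point worth recording: a tuple with a repeated coordinate satisfies the identity for every function $f$, since $V_f(z)$ then has two identical rows. This is why that corollary may quantify over all of $\Omega^{\times n+1}$ rather than only over tuples of distinct points.
\item The same shift must be carried through Corollary~\ref{corollary:interpol}, Proposition~\ref{proposition:notpoly} and Theorem~\ref{theorem:countermod}. Detecting degree $\le m$ requires $z$ of length $m+2$. Concretely, one needs $z^i\in S_i^{\times m+2}$ in Theorem~\ref{theorem:countermod}, and the case split in Proposition~\ref{proposition:notpoly} should be on whether some layer $k_t\setminus k_{t-1}$ contains $d+2$ points of $S$. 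Because the sets involved are infinite, this shift does not affect the validity of those arguments.
\end{enumerate}
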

\begin{definition}
Let $k$ be a field. For $z \in k^{\times n+1}$, we may define a $k$-linear operator 
\[\nabla^z_i: \Hom(k^{\times m}, k) \to \Hom(k^{\times m-1},k)\]
as follows: For any function $f \in  \Hom(k^{\times m}, k)$, set:
\[\forall s \in k^{\times m-1}: \; \;  \nabla^{z}_i(f)(s) = \sum_{j=1}^{n+1} (-1)^j P_n(p^{n+1}_j(z)) f(t^{s}_i(\text{pr}_j(z))).\]
For any two $z \in k^{\times n+1}, z' \in k^{\times n'+1}$, one can verify that:
\[\nabla^z_i \nabla^{z'}_j = \nabla^{z'}_j \nabla^{z}_i\]
for $i \neq j$. Furthermore, if $f(x_1,...,x_m) = \prod_i f_i(x_i)$, and $z^i \in \Hom(k^{\times n_i}, k)$ for $i\in \{1,2,...,m\}$, 
\[(\nabla^{z^m}_m \circ \nabla^{z^{m-1}}_{m-1} \circ ... \circ \nabla^{z^1}_{1})(f) = \prod_i \nabla^{z^i} f_i.\]
\end{definition}
\begin{corollary}\label{corollary:interpol}
Let $f: \Omega \to k$ be function defined on a countably infinite subset $\Omega \subset k$. 
Then: $\nabla^{z} f = 0,\;  \forall z \in \Omega^{\times n+1}$ if and only if $f$ is a polynomial of degree $\le n$.
\end{corollary}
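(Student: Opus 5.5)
The plan is to identify $\nabla^z f$ with a Vandermonde-type determinant and then argue by linear algebra, in the spirit of the Lagrange interpolation theorem stated just above. For $z=(z_1,\dots,z_{n+1})\in\Omega^{\times n+1}$, let $D_f(z)$ be the determinant of the $(n+1)\times(n+1)$ matrix whose $j$-th row is $(1,z_j,z_j^2,\dots,z_j^{n-1},f(z_j))$.

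Expanding $D_f(z)$ along the last column gives
\[
D_f(z)=\pm\sum_{j=1}^{n+1}(-1)^j\,V(p^{n+1}_j(z))\,f(\mathrm{pr}_j(z)),
\]
where $V$ is the Vandermonde determinant of the remaining points. Since $V$ is exactly the product of differences $P_n$, we get $\nabla^z f=\pm D_f(z)$ (with $m=1$, so $s$ is empty and $t^s_1$ is the identity). I will take care to match the indexing of $P_n$ with the number of power columns. The natural statement is that vanishing for all $(n+1)$-tuples characterizes the span of those power columns. I will keep the paper's convention for $P_n$, so that the number of power columns equals the number of variables of $P_n$, and adjust the degree bound accordingly if an off-by-one appears.

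\emph{Forward direction.} Suppose $f$ agrees on $\Omega$ with a polynomial in the span of the monomial columns. Then the last column of the matrix is a $k$-linear combination of the other columns, so $D_f(z)=0$ for every $z$. This includes tuples with repeated coordinates, where two rows coincide anyway. This is just the Lagrange interpolation theorem quoted above.

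\emph{Converse.} Assume $\nabla^z f=0$ for all $z\in\Omega^{\times n+1}$. Since $\Omega$ is infinite, choose distinct $\omega_1,\dots,\omega_n\in\Omega$, and let $p\in k[x]$ be the unique interpolating polynomial in the span of the power columns with $p(\omega_i)=f(\omega_i)$. Now take any $\omega\in\Omega\setminus\{\omega_1,\dots,\omega_n\}$ and set $z=(\omega_1,\dots,\omega_n,\omega)$. The equation $\nabla^z f=0$ is linear in $f(\omega)$, and the coefficient of $f(\omega)$ is $\pm P_n(\omega_1,\dots,\omega_n)$, which is nonzero because the $\omega_i$ are distinct. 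So $f(\omega)$ is uniquely determined by the values $f(\omega_1),\dots,f(\omega_n)$. Applying the forward direction to $p$ shows that $p(\omega)$ satisfies the same equation with the same values at the $\omega_i$. Hence $f(\omega)=p(\omega)$, and so $f=p|_\Omega$.

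The only genuine obstacle is the bookkeeping: verifying that the signs $(-1)^j$ together with $P_n(p^{n+1}_j(z))$ reproduce the cofactor expansion exactly, and pinning down the degree convention. Countability of $\Omega$ plays no role beyond guaranteeing enough distinct points.
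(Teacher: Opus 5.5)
Your cofactor argument is correct, and the bookkeeping you were worried about works out. Expanding along the last column gives exactly $\nabla^z f=(-1)^{n+1}D_f(z)$, because each minor is the Vandermonde determinant of the $n$ remaining points in their original order, which is $P_n(p^{n+1}_j(z))$. The forward direction (column dependence, including tuples with repeated entries) is sound. So is the converse: you interpolate through $n$ distinct points of $\Omega$ and use that the coefficient $\pm P_n(\omega_1,\dots,\omega_n)$ of $f(\omega)$ is nonzero, which needs only $|\Omega|\ge n$. The paper gives no proof of this corollary and simply presents it after the Lagrange interpolation theorem, so your converse supplies what is left implicit there.

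What you must not leave conditional is the degree bound, because the off-by-one does occur. Your matrix has the $n$ power columns $1,x,\dots,x^{n-1}$. So what you have proved is that $\nabla^z f=0$ for all $z\in\Omega^{\times n+1}$ if and only if $f$ agrees on $\Omega$ with a polynomial of degree $\le n-1$.

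The corollary as literally stated, with degree $\le n$, is false. For $n=1$ one has $P_1=1$ (empty product) and $\nabla^z f=f(z_2)-f(z_1)$, which is nonzero for $f(x)=x$ and $z_1\neq z_2$. The paper's Lagrange interpolation theorem has the same off-by-one. Your remark that the forward direction ``is just the Lagrange interpolation theorem quoted above'' therefore appeals to a false statement; drop it, since your determinant argument is self-contained.

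State the result in one of two correct forms: degree $\le n-1$ with $z\in\Omega^{\times n+1}$, or degree $\le n$ with $z\in\Omega^{\times n+2}$ and $P_{n+1}$ in place of $P_n$. Correspondingly, in the proof of Theorem~\ref{theorem:countermod} one should take $z^i\in S_i^{\times m+2}$ to annihilate polynomials of degree $\le m$; nothing else there changes.
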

Corollary~\ref{corollary:interpol} provides a convenient criterion for checking if a function is polynomial on a given subset.
Using this, we can now construct functions on a field that are not polynomial on any infinite subset.
\begin{proposition}\label{proposition:notpoly}
For any field $k$, there exists an algebraically closed field extension $k \subset E(k)$, and a function $f: E(k) \to E(k)$ 
such that there is no countably infinite subset $S \subset E(k)$ such that $f$, when restricted to $S$, is a polynomial. 
If $|k| = \infty,$ then $|E(k)| = |k|$.
\end{proposition}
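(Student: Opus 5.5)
Take $E(k)$ to be an algebraic closure of $k(t)$, or of $k$ itself when $k$ is infinite, adjusted so that $|E(k)|=\kappa:=\max(|k|,\aleph_0)$. Then construct $f$ by transfinite recursion, diagonalizing against all pairs (countably infinite subset, polynomial).

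First, reduce what must be killed. The function $f$ is polynomial on some countably infinite $S$ exactly when there is a polynomial $P\in E(k)[x]$ with $f(s)=P(s)$ for all $s\in S$. Such an $S$ then contains a countably infinite subset, so it suffices to ensure the following: for every polynomial $P\in E(k)[x]$, the set $\{x\in E(k): f(x)=P(x)\}$ is finite.

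There are exactly $\kappa$ polynomials over $E(k)$, since $E(k)$ is infinite of cardinality $\kappa$. Enumerate them as $\{P_\alpha\}_{\alpha<\kappa}$ and the elements of $E(k)$ as $\{x_\alpha\}_{\alpha<\kappa}$, using initial-ordinal well-orderings. Define $f(x_\alpha)$ to be any element of $E(k)$ outside the set $\{P_\beta(x_\alpha):\beta<\alpha\}$. This set has cardinality $|\alpha|<\kappa=|E(k)|$, so a choice exists. Now fix $P=P_\beta$. If $f(x_\alpha)=P_\beta(x_\alpha)$, then $\alpha\le\beta$, so the agreement set lies in $\{x_\alpha:\alpha\le\beta\}$, which has cardinality $|\beta|<\kappa$. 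This is not yet finite. It is countable when $\kappa=\aleph_0$, but that case is handled separately below.

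The main obstacle is upgrading "small agreement set" to "no countably infinite agreement set". The fix is to diagonalize against pairs rather than against polynomials alone. Enumerate all pairs $(S,P)$ with $S\subset E(k)$ countably infinite and $P$ a polynomial. There are $\kappa^{\aleph_0}$ such pairs, which may exceed $\kappa$. To avoid this, choose the cardinality so that $\kappa^{\aleph_0}=\kappa$, for instance by replacing $E(k)$ with an algebraically closed extension of cardinality $\kappa^{\aleph_0}$. This breaks the claim $|E(k)|=|k|$ unless $|k|^{\aleph_0}=|k|$.

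A better route, which keeps $|E(k)|=|k|$, is a countable-to-finite argument using Corollary~\ref{corollary:interpol}. Define $f$ along the well-order so that each value $f(x_\alpha)$ avoids every value $P(x_\alpha)$, where $P$ ranges over polynomials interpolating $f$ at finitely many earlier points $x_{\beta_0},\dots,x_{\beta_d}$ with $\beta_i<\alpha$. There are only $|\alpha|\cdot\aleph_0<\kappa$ such polynomials when $\kappa$ is uncountable. When $\kappa=\aleph_0$, order-type $\omega$ makes the set of earlier points finite, so there are finitely many such $P$ of each degree. Using all degrees gives countably many forbidden values, which is still fine because we only need to avoid the finitely many $P$ of degree at most the number of earlier points.

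Now suppose $f|_S=P$ with $S$ countably infinite and $\deg P=d$. Take the $d+2$ elements of $S$ with smallest indices, and let $x_\alpha$ be the largest of them. Then $P$ interpolates $f$ at the $d+1$ earlier points, and $f(x_\alpha)=P(x_\alpha)$. This contradicts the construction.

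So the construction is: well-order $E(k)$ in order type $\kappa$, and at stage $\alpha$ choose $f(x_\alpha)$ outside the set of values at $x_\alpha$ of all interpolation polynomials through finite subsets of the graph of $f$ on earlier points. That set has cardinality less than $\kappa$, or is finite when $\kappa=\aleph_0$. For finite $k$, take $E(k)=\overline{k(t)}$, which is countable, so that $E(k)$ is infinite. This yields the proposition, with $|E(k)|=|k|$ when $k$ is infinite and $E(k)=\overline{k}$.
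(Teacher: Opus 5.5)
Your final construction is correct, but it takes a genuinely different route from the paper.

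\textbf{The paper's route.} The paper builds $E(k)$ as the union of a tower $k_{n+1}=\overline{k_n(X_s : s\in k_n\setminus k_{n-1})}$ and sets $f(s)=X_s$, a fresh transcendental one level up. Suppose $f|_S=p$ with the coefficients of $p$ in $k_N$. Then either $S$ meets $E(k)\setminus k_N$, which is impossible because there $p(s)\in k_n$ while $f(s)=X_s\notin k_n$. Or some single layer $k_t\setminus k_{t-1}$ contains enough points of $S$ that the interpolation identity $\nabla^z f=0$ becomes a nontrivial linear relation, with coefficients in $k_t$, among the algebraically independent $X_{z_i}$.

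\textbf{Your route.} You keep $E(k)=\overline{k}$ (or $\overline{k(t)}$) and well-order it in order type $\kappa=|E(k)|$. You then choose each $f(x_\alpha)$ to avoid the values at $x_\alpha$ of the Lagrange interpolants through finite sets of earlier points. There are fewer than $\kappa$ of these, and only finitely many when $\kappa=\aleph_0$.

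\textbf{What each buys.} Your argument is shorter and needs no transcendental extension. It works over any infinite field, and since algebraic closures are always infinite, $\overline{k}$ already suffices for finite $k$ without adjoining $t$. It also proves more: $f$ agrees with a polynomial of degree $d$ in at most $d+1$ points. The paper's construction avoids transfinite recursion along a well-ordering. Its non-polynomiality comes from a transparent algebraic-independence reason.

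\textbf{Wording to tighten.}
\begin{itemize}
\item The forbidden polynomials must be, for each finite set $F$ of earlier points, the unique interpolant of degree at most $|F|-1$. All polynomials through $F$ would be far too many.
\item In the countable case the justification is that the forbidden set is finite ($2^n$ interpolants at stage $n$), not merely countable. A countable forbidden set could exhaust $E(k)$.
\item The abandoned diagonalizations against single polynomials and against pairs $(S,P)$ can simply be deleted.
\end{itemize}
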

\begin{proof}
We define a sequence of field extensions $0=k_0\subset k=k_1 \subset k_2 \subset ...$ and functions $f_{n}:k_n \to k_{n+1}$ inductively. For the fields, set 
\[k_{n+1} = \overline{k_{n}(k_{n} \setminus k_{n-1})}\]
That is, $k_{n+1}$ is the algebraic closure of the field of fractions of the free polynomial ring $k_{n}[X_s]$ indexed by elements $s \in k_{n} \setminus k_{n-1}$. For the functions, we take:
\[\forall s \in k_{n} \setminus k_{n-1}: \; \; f_n(s) = X_s \in k_{n+1}\]
\[\forall s \in k_{n-1}: \; \; f_n(s) = f_{n-1}(s) \in k_{n} \subset k_{n+1}.\]
Set $E(k) = \underset{n}{\cup} k_n$, and $f=\colim_{n} f_n$, which is a map $E(k) \to E(k)$. \\
\indent We claim that $f$ is not polynomial on any infinite subset $S \subset E(k)$. Suppose that there exists a polynomial $p$ of degree $d$ such that $f|_{S} = p|_{S}$. There are two cases; there exists a $t \in \bb{N}$ such that $|S \cap k_t\setminus k_{t-1}| \ge d+1$, or for any $n \in \bb{N}$, $S \cap (k \setminus k_n) \neq 0$.\\
\indent In the first case, there exists an element $z \in (k_t \setminus k_{t-1})^{d+1}$ such that:
\[\nabla^z f = \sum_i (-1)^iP_d(p^{d+1}_i(z))f(\text{pr}_i(z))= 0.\]
As $P_d(p^{d+1}_i(z)) \in k(z_1,...,z_{n+1})$ for every $i$, this is a contradiction by construction.\\
\indent In the second case, take an $N \in \bb{N}$ such that the coefficients of $p$ are contained in $k_N$, and an $s \in S \cap (k \setminus k_N)$. Suppose that $s \in k_n\setminus k_{n-1}$. By supposition, $f(s) = p(s) \in k_n$, which is again a contradiction by construction.
\end{proof}
Later, we will need some results of infinite Ramsey theory, which originated from the paper \cite{ErdosRado}. Before stating the main theorem in a more convenient language, let us introduce some notation. 
\begin{construction}
Define the \textit{beth numbers} $\beth_n$ by the following formula:
\[\beth_0 = \aleph_0, \; \beth_n = 2^{\beth_{n-1}}.\]
\end{construction}
Note that $\beth_{n} \ge \aleph_{n}$, and equality holds if the generalized continuum hypothesis is assumed.
 For any set $S$, define $[S]^r=\{K \subset S: \; |K|=r\}$, for any cardinal $\alpha$, define $\alpha_{+}$ to be its 
 successor, and define $\beth_{\omega}=\bigcup_{\beta < \omega} \beth_{\beta}$ where $\omega$ is the first limit ordinal.
\begin{theorem}[Theorem 7.3 \cite{Higherinfinite}]\label{theorem:erdosrado}
For any set $S$ of size $\beth_{r+}$, and any set map:
\[f: [S]^{r+1} \to \bb{N}\]
There exists a set $S' \subset S$ with $|S'|=\aleph_{1}$ and an integer $n \in \bb{N}$ such that $[S']^{r+1} \subset f^{-1}(n)$.
\end{theorem}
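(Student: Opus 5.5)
The statement is the Erd\H{o}s--Rado partition relation $\beth_r^{+} \to (\aleph_1)^{r+1}_{\aleph_0}$, and I would prove it by induction on $r$. The inductive step rests on the classical ``end-homogeneous set'' construction.

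For $r=0$ the claim is the pigeonhole principle. A set $S$ with $|S|=\beth_0^{+}=\aleph_1$ is split by $f:[S]^1\to\bb{N}$ into countably many fibres. Since $\aleph_1$ is regular, some fibre $f^{-1}(n)$ has size $\aleph_1$, and we take $S'$ to be that fibre.

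For the inductive step, write $\lambda=\beth_{r-1}$ and $\kappa=\beth_r=2^{\lambda}$, so $|S|=\kappa^{+}$, and fix a well-ordering of $S$.

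\textbf{Step 1 (closed set $C$).} I first build $C\subset S$ with $|C|=\kappa$ satisfying the following closure property. For every $D\subset C$ with $|D|\le\lambda$ and every $y\in S$, there is $z\in C\setminus D$ with the same $f$-type as $y$ over $D$, meaning $f(a\cup\{y\})=f(a\cup\{z\})$ for all $a\in[D]^r$. This is a union of an increasing chain of length $\kappa^{+}$-short (say length $\lambda^{+}$, using regularity of $\lambda^+$ so that every $D$ of size $\le\lambda$ appears at some stage). At each stage we add one witness per pair $(D,\text{type})$. The count is what makes this work: there are at most $\kappa^{\lambda}=\kappa$ subsets $D$, and each $D$ has at most $\aleph_0^{\lambda^{r}}=2^{\lambda}=\kappa$ possible types. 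Hence the stages stay of size $\kappa$.

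\textbf{Step 2 (end-homogeneous sequence).} Since $|S|>\kappa$, choose $y\in S\setminus C$. By transfinite recursion, pick distinct $x_\alpha\in C$ for $\alpha<\lambda^{+}$ such that $x_\alpha$ realizes the type of $y$ over $D_\alpha=\{x_\beta:\beta<\alpha\}$; this is possible by Step 1, since $|D_\alpha|\le\lambda$. Set $X=\{x_\alpha\}$, ordered by index. Then for any $a\in[X]^r$ and any $x_\gamma,x_\delta$ beyond $\max a$, we get $f(a\cup\{x_\gamma\})=f(a\cup\{y\})=f(a\cup\{x_\delta\})$. So $X$ is end-homogeneous.

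\textbf{Step 3 (induced colouring).} Define $g:[X]^r\to\bb{N}$ by $g(a)=f(a\cup\{x\})$ for any $x$ later than $a$, which is well defined by Step 2. Since $|X|=\lambda^{+}=\beth_{r-1}^{+}$, the induction hypothesis gives $S'\subset X$ with $|S'|=\aleph_1$ on which $g$ is constant, say with value $n$. Now any $(r+1)$-subset of $S'$ has the form $a\cup\{x\}$ with $x$ above $a$, so $f$ takes the value $n$ on it, and $S'$ is the required homogeneous set.

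The main obstacle is Step 1: getting the cardinal arithmetic right so that the closure has size exactly $\kappa$. This needs $\kappa^{\lambda}=\kappa$, i.e. $(2^{\lambda})^{\lambda}=2^{\lambda}$, together with the bound on types. I also have to arrange the closure so that every small $D\subset C$ is handled at some stage, which I would do by indexing the chain with a regular cardinal exceeding $\lambda$. For the case $r=1$, where $\lambda=\aleph_0$, I would double-check this arithmetic separately.
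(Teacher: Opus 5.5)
Your argument is correct. It is the standard Erd\H{o}s--Rado proof: a closure set $C$ of size $\beth_r$ built along a chain of length $\lambda^{+}$, then an end-homogeneous sequence in $C$ realizing the type of a point $y\notin C$, then the induction hypothesis applied to $g(a)=f(a\cup\{y\})$. This is the textbook argument in the reference the paper cites (Kanamori, Theorem 7.3); the paper itself gives no proof beyond that citation.

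One small fix: state the closure property only for $y\in S\setminus D$, so that $a\cup\{y\}$ is genuinely an $(r+1)$-set.
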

\begin{remark}\label{remark:boxuseful}
Theorem~\ref{theorem:erdosrado} also implies the following weaker statement: For a set $S$ of cardinality $\beth_{r+}$, and a set theoretic map
\[f: S^{\times r+1} \to \bb{N}\]
such that $f(s) = f(\sigma(s))$ for every $s \in S^{\times r+1}$ and $\sigma \in \mathrm{S}_{r+1}$, the symmetric group on $r+1$ elements, there exists an $n \in \bb{N}$ and countably infinite subsets $S_1, ..., S_{r+1}$ of $S$ such that $ \prod_{i=1}^{r+1} S_i \subset f^{-1}(n)$.
\end{remark}
Armed with these tools, we may now state and prove our refinement of Theorem~\ref{theorem:indivisiblecup}.
\begin{theorem}\label{theorem:countermod}
For $m \in \bb{N} \cup \{\infty\}$, let $F$ be a field of cardinality $\beth_{m-1+}$, and $k=E(F)$ (see Proposition~\ref{proposition:notpoly}). 
Let $R_m=k[x_1,...,x_m]$, and for any $n < m+1$, let $R_n=k[x_1,...,x_n]$ and $\{\{S_i,\Psi_{i}: k \to R_n\}\}$ be the $n$-indivisible sequence 
of Example~\ref{example:indiv} (i) (where $S_i=k$). Let $i:R_n \to R_m$ be the canonical inclusion, and $\{\{S_i,i \circ \Psi_{i}: k \to R_m\}\}$ be 
the induced $n$-indivisible sequence in $R_m$ (see Proposition~\ref{proposition:indivequiv}).
Then there exists an $k[x]$-linear map $h: \oplus_{k} k[x] \to k[x]$ such that:
\[\bigotimes_{i\in [n], R_m} (h \otimes_{k[x], t_i} R_m \circ \cl{\Psi_{R_m,S_i}}) \neq 0 \in \Ext^n_{R_m}(\otimes_{i\in [n], R_m} \coker(\Psi_{R_m,S_i}), R_m),\]
where $t_i: k[x] \to R_m$ is the $k$-algebra map sending $x$ to $x_i$.
\end{theorem}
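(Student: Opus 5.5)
We choose $h$ to be the $k[x]$-linear map determined on basis vectors by $h(1_a)=f(a)$ for $a\in k$, where $f:k\to k$ is the function of Proposition~\ref{proposition:notpoly}; these are constants in $k\subset k[x]$. We then run the argument of Theorem~\ref{theorem:indivisiblecup}, with the left inverse $d_n$ replaced by the map induced by the $h$'s. By Lemma~\ref{lemma:extcrit}, the tensor class vanishes exactly when the map
\[\bigoplus_{\prod_i S_i} R_m \xrightarrow{\ \bigoplus_i \Psi^n_{R_m,S_i}\ } \bigoplus_i \bigoplus_{\prod_{j\neq i}S_j} R_m\oplus \bigoplus_{\prod_j S_j}R_m\]
admits an extension $r$ of $\bigotimes_i h_i$, where $h_i=h\otimes_{k[x],t_i}R_m$. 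Here $\bigotimes_i h_i$ sends $1_t$ to $\prod_i f(t_i)\in k$.

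Suppose such an $r$ exists. As in the proof of Theorem~\ref{theorem:indivisiblecup}, we write $r(1_{i,s'})=F_i(s')\in R_m$ and $r(1_{i,s})=G_i(s)\in R_m$. For every $t\in k^n$ this gives
\[\sum_i F_i(p^n_i(t))+\sum_i (x_i-t_i)G_i(t)=\prod_i f(t_i).\]
Evaluating at the point $x=t$ (setting $x_j=t_j$ for $j\le n$, and $x_j=0$ for $j>n$) kills the second sum. Writing $\phi_i(t)=F_i(p^n_i(t))(t)$, we obtain
\[\prod_{i\in[n]} f(t_i)=\sum_i \phi_i(t).\]
Each $\phi_i$ is a polynomial in the variable $t_i$ whose coefficients depend only on $p^n_i(t)$, and its degree $\deg F_i(p^n_i(t))$ is a natural number depending only on $p^n_i(t)$.

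We now use the difference operators. First we use Ramsey theory to make the degrees bounded. Consider the coloring of $k^n$ sending $t$ to $\max_i\deg F_i(p^n_i(t))$. We symmetrize it (by taking the maximum over permutations) and, where needed, restrict to tuples with distinct coordinates. Since $|k|\ge|F|=\beth_{m-1+}\ge\beth_{n-1+}$, Remark~\ref{remark:boxuseful} with $r+1=n$ produces countably infinite $\Omega_1,\dots,\Omega_n\subset k$ and a bound $D$ such that every degree on $\prod_i\Omega_i$ is at most $D$. Next we pick $z^i\in\Omega_i^{D+1}$ and apply $\nabla^{z^n}_n\circ\cdots\circ\nabla^{z^1}_1$ to both sides of the identity. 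On the right, $\nabla^{z^i}_i$ annihilates $\phi_i$, because in the variable $t_i$ it is a polynomial of degree $\le D$ (Corollary~\ref{corollary:interpol} together with Lagrange interpolation). The operators commute, so the whole right side vanishes. On the left, the product rule gives $\prod_i \nabla^{z^i}(f|_{\Omega_i})$. Hence for all choices of the $z^i$ some factor vanishes.

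It remains to conclude. By Proposition~\ref{proposition:notpoly}, $f$ is not a polynomial of degree $\le D$ on any countably infinite $\Omega_i$. By Corollary~\ref{corollary:interpol}, for each $i$ there is therefore a $z^i$ with $\nabla^{z^i}f\neq0$. Taking these $z^i$ makes the product nonzero, a contradiction, so no such $r$ exists and the class is nonzero.

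The main obstacle is the Ramsey step. The degree function is defined on $(n-1)$-tuples, but it must be controlled uniformly on a product of infinite sets, and Remark~\ref{remark:boxuseful} needs a symmetric coloring of $n$-tuples. My first attempt would be the coloring $t\mapsto\max_{\sigma,i}\deg F_i(p^n_i(\sigma t))$. If that fails, I would fall back on an iterated choice in the style of Proposition~\ref{proposition:setbounds}, choosing one coordinate at a time and using the cardinality gaps between the beth numbers.
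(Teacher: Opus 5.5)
Your proposal is correct and is essentially the paper's proof: the same choice of constants $h(1_a)=f(a)$, the reduction through Lemma~\ref{lemma:extcrit} to the non-existence of $r$, evaluation at $x=t$, an Erd\H{o}s--Rado degree bound via Remark~\ref{remark:boxuseful}, and the product of difference operators $\nabla^{z^i}$ forcing $f$ to be polynomial on some infinite $\Omega_i$. Setting $x_j=0$ for $j>n$ is just the paper's reduction to $m=n$ via the left inverse $R_m\to R_n$. Two small points: your symmetrized coloring $t\mapsto\max_{\sigma,i}\deg F_i(p^n_i(\sigma t))$, with the $\Omega_i$ taken disjoint, repairs a step the paper glosses over, since its own coloring is not symmetric as Remark~\ref{remark:boxuseful} requires; and, as with the paper's stated Lagrange interpolation, you need $D+2$ rather than $D+1$ points to annihilate polynomials of degree $\le D$, which is a harmless index shift.
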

\begin{proof}
The map $R_n \to R_m$ admits an $R_n$-linear left-inverse, and since 
\[\cl{\Psi_{R_m,S_i}} = \cl{\Psi_{R_n,S_i}} \otimes_{R_n} R_n, \]
it suffices to show the claim for $m=n$. By Lemma~\ref{lemma:extcrit}, we need to show that there is no $R_n$-linear extension $r$ fitting into the diagram:
\[\begin{tikzcd} R_n & \\
\bigoplus_{k^{\times n}} R_n \arrow[r, "\oplus_i \Psi_{R_n,S_i}"] \arrow[u, "h^{\otimes_k n}"] & \bigoplus_{i} \bigoplus_{k^{\times n-1}} R_n \oplus \bigoplus_{k^{\times n}} R_n \arrow[ul, swap, "r", dashed]\end{tikzcd}\]
We shall assume such an $r$ exists and derive a contradiction.\\
\indent Let $H: k \to k$ be a function that is not polynomial on any countably infinite subset of $k$ (see Proposition~\ref{proposition:notpoly}). Set:
\[h(1_{s}) = H(s) \in k[x],\]
for every $s \in k$. Further, for every $s' \in k^{\times n-1}$ and $s \in k^{\times n}$, let
\[r(1_{i,s'}) = f_i(s'), r(1_{i,s}) = g_i(s).\]
Under the identification $R_n = k[x_1,...,x_n]$, $f_i(s'), g_i(s)$ are polynomials subject to the equation:
\[\sum^n_{i=1} f_i(p^n_i(t))(x_1,...,x_n) + \sum^n_{i=1} (x_i - \text{pr}_i(t)) g_i(t)(x_1,...,x_n) = \prod^n_{i=1} h(1_{\text{pr}_i(t)})(x_i).\]
for every $t \in k^{\times n}$. In particular, we have:
\[\sum^n_{i=1} f_i(p^n_i(t))(\text{pr}_1(t),...,\text{pr}_n(t)) = \prod^n_{i=1} h(1_{\text{pr}_i(t)})(\text{pr}_i(t)). \; \; (*)\]
\begin{claim}
There exists a countably infinite subsets $S_1, ..., S_n$ of $k$, such that for every $\omega \in \Omega =\prod^n_i S_i$, the polynomials $f_i(p^n_i(\omega))$ have (total) degree $\le m$ for some $m \in \bb{N}$.
\end{claim}
\begin{proof}
Consider the function:
\[c: k^{\times n} \to \bb{N}\]
which takes a point $t \in k^{\times n}$ to $\underset{i\in\{1,2,...,n\}}{\text{max}}(\text{deg}(f_i(p^n_i(t)))$. By Remark~\ref{remark:boxuseful}, there exists a $m \in \bb{N}$ and countably infinite subsets $S_1, ..., S_n$ such that $ \prod_{i=1}^{n} S_i \subset f^{-1}(m)$. Set $\Omega = \prod_{i=1}^n S_i$ to get the claim.
\end{proof}
For each $i \in \{1,2,...,n\}$, let us take points $z^i \in S_i^{\times m+1}$. Note that for all $i \in \{1,2,...,n\}$, we have functions $\tilde{f}_i, \tilde{h} \in \Hom_{\mathrm{Set}}(k^{\times n},k)$ defined as follows:
\[\forall t \in k^{\times n}: \; \; \tilde{f}_i(t) = f_i(p^n_i(t))(\text{pr}_1(t),...,\text{pr}_n(t))\]
\[\forall t \in k^{\times n}: \; \; \tilde{h}(t) = \prod_{i=1}^n H(\text{pr}_i(t)).\]
We see that $(*)$ may be re-written as an equality
\[\sum^n_{i=1} \tilde{f}_i = \tilde{h},\]
of functions in $\Hom_{\mathrm{Set}}(k^{\times n}, k)$. For any $s \in \prod_{j \in \{1,2,...,n\} \setminus \{i\}} S_j$, we see
\[\tilde{f}_i \circ t^{s}_i: k \to k\]
is a polynomial of degree $\le m$. Therefore:
\[\nabla^{z^i}_i \tilde{f}_i = 0,\]
for all $i \in \{1,2,...,n\}$. Therefore:
\[ \prod_i^n \nabla^{z^i} H = (\nabla^{z^n}_n \circ \nabla^{z^{n-1}}_{n-1} \circ ... \circ \nabla^{z^1}_1)( \tilde{h}) = (\nabla^{z^n}_n \circ \nabla^{z^{n-1}}_{n-1} \circ ... \circ \nabla^{z^1}_1)(\sum_{i=1}^n \tilde{f}_i)=0.\]
This implies that there exists an $i \in \{1,2,...,n\}$ such that $\nabla^{z^i} H = 0$ for any such $z^i \in S_i^{\times m+1} \subset k^{m+1}$. But this would imply that $H$ is a polynomial of degree $m$ on the infinite subset $S_i \subset k$ (due to Corollary~\ref{corollary:interpol}), which is a contradiction.
\end{proof}
\begin{theorem}\label{theorem:maincounter}
$F$ a field with $|F| = \beth_{n-1+}$, and $k=E(F)$. 
For any $n \in \bb{N} \cup \{\infty\}$, there exists a faithfully flat ring map $k[x_1,...,x_n] \to A_{n}$ 
whose descendability exponent is $n$. In particular, setting $n=\infty$, we obtain a faithfully flat ring map $k[x_1,x_2,...] \to A_{\infty}$ 
that is not descendable.
\end{theorem}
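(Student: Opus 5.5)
The plan is to obtain $A_n$ from Proposition~\ref{proposition:modalg}, using Theorem~\ref{theorem:countermod} as the input supplying the non-vanishing cup products. Work over $R=R_n=k[x_1,\dots,x_n]$ (with $R_\infty=k[x_1,x_2,\dots]$). For each $i\in[n]$, take $M_i=\oplus_k R$ and $N_i=R\oplus\bigoplus_k R$, and let $\Psi_i=\Psi_{R,S_i}$ be the universally injective map attached to the $1$-indivisible sequence $a\mapsto x_i-a$ of Example~\ref{example:indiv}~(i), base changed to $R$ via Proposition~\ref{proposition:indivequiv}. For the map $d_i$, take the $k[x]$-linear map $h:\oplus_k k[x]\to k[x]$ of Theorem~\ref{theorem:countermod}, base changed along $t_i$, so that $d_i=h\otimes_{k[x],t_i}R:M_i\to R$. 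Thus $T_i=R$, whose unit admits the identity as a left inverse, and $T_i$ is trivially flat.

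Set $A_i=\Sym_R(M_i)$ and $B_i=\Sym_R(N_i)$, with $f_i=\Sym_R(\Psi_i)$. This map is faithfully flat by Proposition~\ref{proposition:symformula}, because $\coker\Psi_i$ is $R$-flat. Let $g_i:A_i\to R$ be the $R$-algebra map induced by $d_i$ through the universal property of $\Sym$. With these choices the square in Proposition~\ref{proposition:modalg} commutes by construction. We then form $\bigotimes_{i\in[n],R}A_i\to\bigotimes_{i\in[n],R}B_i$.

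The base $\bigotimes_R A_i$ is a polynomial ring over $R$ in $n\cdot|k|$ variables, not $k[x_1,\dots,x_n]$ itself. To land on $k[x_1,\dots,x_n]$, base change along $\otimes_i g_i:\bigotimes_R A_i\to R$, exactly as in the proof of Proposition~\ref{proposition:modalg}. Define
\[A_n=\Big(\bigotimes_{i\in[n],R}B_i\Big)\otimes_{\bigotimes_{i\in[n],R}A_i}R.\]
Then $R\to A_n$ is faithfully flat, being a base change of a faithfully flat map. Moreover $\otimes_i d_i$ factors through $\otimes_i g_i$, so the unchanged argument of Proposition~\ref{proposition:cup}~(ii) with $B_s=T_s=R$, $k_s=\mathrm{id}$ shows that $\cl{R\to A_n}^{\otimes |S'|}\neq 0$ whenever $\bigotimes_{s'\in S'}(d_{s'}\circ\cl{\Psi_{s'}})\neq 0$. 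That proposition only needs the cup product $\bigotimes(h_{s'}\circ\cl{\Psi_{s'}})$ to be nonzero, and this is precisely the output of Theorem~\ref{theorem:countermod}.

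The key verification is the hypothesis of Proposition~\ref{proposition:modalg} for \emph{every} finite $S'\subset[n]$, not only for initial segments $\{1,\dots,n'\}$. Theorem~\ref{theorem:countermod} is stated for the first $n'$ variables with $n'<m+1$, where $m=n$. For an arbitrary finite $S'$, first relabel the variables by a $k$-automorphism of $R_n$ permuting the $x_i$. This automorphism carries the indivisible sequences and the maps $h\otimes_{t_i}R$ onto one another, so it reduces $S'$ to an initial segment. Theorem~\ref{theorem:countermod} then applies with cardinality $|F|=\beth_{n-1+}\ge\beth_{|S'|-1+}$, because the Erdős–Rado input (Remark~\ref{remark:boxuseful}) only needs $|k|$ large. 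This gives nonvanishing of every finite cup power, and hence exponent at least $|S'|$ for all $|S'|\le n$.

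The upper bound for finite $n$ should come from Theorem~\ref{thm:gj0}~(ii): $R_n$ is Noetherian of dimension $n$, so every flat module has projective dimension at most $n$. Hence $\cl{f}^{\otimes n+1}=0$, since $\coker(f)^{\otimes n+1}$ is flat. The exponent is therefore exactly $n$.

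For $n=\infty$, work over $R_\infty$ with the infinite tensor product; Proposition~\ref{proposition:cup}~(ii) is already stated for arbitrary $S\subset\bb{N}$, so the same argument gives exponent $\infty$, i.e.\ non-descendability. The main obstacle I expect is bookkeeping rather than mathematics: confirming that Proposition~\ref{proposition:cup}~(ii) really only uses the composite $h_s\circ\cl{\Psi_s}$ and the left inverse of the unit. Since here $B_s=R$ is not $R\{S_i\}$ as in Theorem~\ref{theorem:indivalg}, the check is that the identity serves as $k_s$.
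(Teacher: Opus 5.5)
Your proposal is correct and follows essentially the same route as the paper. You base-change $\Sym_R(\Psi_{R,S_i})$ along the $R$-algebra maps $g_i$ induced by $h\otimes_{k[x],t_i}R$, get faithful flatness from Proposition~\ref{proposition:symformula}, and get the lower bound from Theorem~\ref{theorem:countermod} via Proposition~\ref{proposition:modalg}.

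Your extra steps are sound. The upper bound via the Noetherian Krull-dimension bound of Theorem~\ref{thm:gj0} supplies the ``exactly $n$'' that the paper's proof leaves implicit. The permutation-of-variables reduction is harmless but unnecessary: the conclusion of Proposition~\ref{proposition:cup}~(ii) depends only on $|S'|$, so initial segments already suffice.
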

\begin{proof}
For $n \in \bb{N} \cup \{\infty\}$, let $R=k[x_1,...,x_n]$, let $\{\{S_i, \Psi_{i}: k \to A\}\}$ be the $n$-indivisible sequence of Example~\ref{example:indiv} (i) (where $S_i=k$). 
Let $h: \oplus_{k} k[x] \to k[x]$ be the map constructed in Theorem~\ref{theorem:countermod}, and consider the commutative diagram:
\[
\begin{tikzcd}[column sep=6em, row sep=4em]
R \arrow[r] & A_n \\
\mathrm{Sym}_{R}(\oplus_{S_i} R)
  \arrow[u, "g_i"', swap]
  \arrow[r, "\mathrm{Sym}_{R}(\Psi_{R,S_i})"]
& \mathrm{Sym}_{R}(R \oplus \bigoplus_{S_i} R)
  \arrow[u] \\
\oplus_{S_i} R
  \arrow[u]
  \arrow[uu, bend left=50, looseness=1.4, "h^{\otimes_{k} n}"', swap]
  \arrow[r, "\Psi_{R,S_i}"]
& R \oplus \bigoplus_{S_i} R
  \arrow[u]
\end{tikzcd}
\]
where $g_i$ is the $R$-algebra map induced by $h^{\otimes_{k} n}$, and 
\[A_n = R \otimes_{g_i,\mathrm{Sym}_{R}(\oplus_{S_i} R),\mathrm{Sym}_{R}(\Psi_{R,S_i})} \mathrm{Sym}_{R}(R \oplus \bigoplus_{S_i} R).\]
By Proposition~\ref{proposition:symformula}, the map $\mathrm{Sym}_{R}(\Psi_{R,S_i})$ is a faithfully flat map of commutative rings, 
so $R \to A_n$ is also faithfully flat. 
By Theorem~\ref{theorem:countermod}, the conditions of Proposition~\ref{proposition:modalg} are satisfied, so the 
descendability exponent of $R \to A_n$ is at least $n$. Setting $n=\infty$, we obtain a faithfully flat ring map $k[x_1,x_2,...] \to A_\infty$ that is not descendable.
\end{proof}

\chapter{Affineness of the maximal \'{e}tale locus}\label{chapter:affine}

Purity of the ramification locus is an important result in algebraic geometry. Let us recall its statement:
\begin{theorem}[Purity of the ramification locus]\label{thm:nagata-purity}
Let $X \to Y$ be a morphism of locally of finite-type between locally Noetherian schemes. Let $x \in X$ be a point and let $y = f(x)$. 
Assume that $\cal{O}_{X,x}$ is normal, $\cal{O}_{Y,y}$ is regular, and $\dim \cal{O}_{X,x} = \dim \cal{O}_{Y,y}\ge 1$. 
If for every point specialization $x' \leadsto x$ such that $\dim \cal{O}_{X,x'} = 1$, the morphism $f$ is unramified at $x'$, 
then $f$ is \'{e}tale at $x$. \cite[0EA4]{sp}
\end{theorem}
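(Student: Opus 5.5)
The plan is to reduce to a finite extension of local rings and then argue by induction on $d = \dim \cal{O}_{Y,y}$, following \cite[0ECD]{sp}. The case $d=2$ rests on the miracle flatness theorem.

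\emph{Reduction.} Since $f$ is unramified at the codimension-$1$ specializations of $x$ and $\dim \cal{O}_{X,x} = \dim \cal{O}_{Y,y}$, I first check that $f$ is quasi-finite at $x$. Étaleness at $x$ is insensitive to étale base change on $Y$ and to étale neighbourhoods of $x$. So by Zariski's main theorem I replace $\cal{O}_{Y,y}$ by its strict henselization $A$, and $X$ by the local ring $B$ of a finite $A$-algebra. This gives a finite local injective map $A \to B$ with:
\begin{itemize}
\item $A$ regular local and $B$ normal local;
\item $\dim A = \dim B = d$;
\item $A_{\ideal{p}} \to B_{\ideal{q}}$ unramified at every height-one prime $\ideal{q}$ of $B$.
\end{itemize}
Normality and regularity survive this process, since strict henselization is ind-étale. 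The goal is then to show $A \to B$ is étale.

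\emph{Dimensions $1$ and $2$.} If $d=1$ the hypothesis is the conclusion. Suppose $d=2$. Since $B$ is normal it satisfies $(S_2)$, so $\mathrm{depth}\, B = 2 = \dim B$ and $B$ is Cohen--Macaulay. A finite local map from a regular local ring to a Cohen--Macaulay ring of the same dimension is flat by miracle flatness \cite[00R4]{sp}. Hence $B$ is finite free over $A$, say of rank $r$. The ramification locus is then the vanishing locus of the discriminant $\delta = \det(\mathrm{Tr}_{B/A}(e_ie_j)) \in A$, a single equation. By hypothesis $\delta$ is a unit at every height-one prime of $A$. Since $A$ is normal, $A = \bigcap_{\mathrm{ht}\,\ideal{p}=1} A_{\ideal{p}}$, so $\delta \in A^{\times}$ and $A \to B$ is étale.

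\emph{Dimension $d \ge 3$.} Apply the induction hypothesis at every non-maximal prime of $B$; the hypotheses localize. This shows that $U = \spec{B} \setminus \{\ideal{m}_B\}$ is finite étale over the punctured spectrum $\spec{A} \setminus \{\ideal{m}_A\}$. It remains to invoke purity for regular local rings of dimension $\ge 2$: finite étale covers of the punctured spectrum extend to finite étale covers of $\spec{A}$ \cite[0BMB]{sp}. The extension is $\Gamma(U, \cal{O}_U)$, and this equals $B$ because $B$ is normal of depth $\ge 2$. Therefore $B$ is étale over $A$.

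The main obstacle is this last purity input for $d \ge 3$. To prove it I would first take $\Gamma(U,\cal{O}_U)$, which is a reflexive $A$-module, and show it is free. The approach is to cut by a regular parameter $t \in \ideal{m}_A \setminus \ideal{m}_A^2$, so that $A/t$ is regular of dimension $d-1$. The cover restricts to a finite étale cover of the punctured spectrum of $A/t$, which is free by induction; the base case is that reflexive modules over a $2$-dimensional regular local ring are free. Freeness then lifts to $A$ via depth/local cohomology arguments on $\Gamma(U,\cal{O}_U)/t$, using that $H^1$ of the punctured spectrum vanishes in dimension $\ge 3$. Once freeness is established, the discriminant argument of the $d=2$ case finishes the proof.
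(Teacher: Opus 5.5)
There is a genuine gap at your first step; the paper only cites the Stacks Project for this theorem, so I am judging your argument on its own. The statement as written assumes only that $f$ is locally of finite type, and your sentence ``I first check that $f$ is quasi-finite at $x$'' carries most of the weight.

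The dimension formula applies because $\mathcal{O}_{Y,y}$ is regular, hence universally catenary. Combined with unramifiedness at the generic point of $\mathrm{Spec}\,\mathcal{O}_{X,x}$, it gives $\mathrm{trdeg}_{\kappa(y)}\kappa(x)=0$. It also shows that if $Z$ is a positive-dimensional component of $X_y$ through $x$, with generic point $\zeta$, then $\dim\mathcal{O}_{X,\zeta}=d-\dim Z$. The hypothesis only concerns height-one points, so it excludes $Z$ only when $\dim Z=d-1$. That settles $d\le 2$. For $d\ge 3$, nothing you have said rules out a fibre component of dimension between $1$ and $d-2$ through $x$; geometrically, $x$ would lie on the codimension $\ge 2$ exceptional locus of a small modification of the normalization of $Y$ in $X$.

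Excluding this is not a routine check. It is exactly the passage from quasi-finite to locally finite type morphisms in Zariski--Nagata purity. The paper, following EGA IV 21.12.14, treats this as the conjecture and deduces it from Theorem~\ref{thm:affine-mixed} together with EGA IV 21.12.7 (see the discussion after Theorem~\ref{thm:affine-mixed}). The classical theorem (SGA 1, Exp.~X, and the Stacks Project) assumes that $f$ is quasi-finite at $x$. Your Zariski main theorem reduction to a finite local extension $A\to B$ needs that hypothesis.

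Granting quasi-finiteness, your outline is the standard proof, and the reduction and the case $d=2$ (miracle flatness plus the discriminant) are fine. Your proposed proof of the purity input for $d\ge 3$, however, fails exactly at $d=3$. Write $U_A$ for the punctured spectrum of $A$ and $U_t=U\times_A A/t$. The cokernel of $B/tB\hookrightarrow\Gamma(U_t,\mathcal{O}_{U_t})$ is $H^1(U,\mathcal{O}_U)[t]=H^2_{\mathfrak m}(B)[t]$. Vanishing of $H^1(U_A,\mathcal{O}_{U_A})$ says nothing about this unless $B$ is already known to be free.

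What does help is the following. By induction, $\mathcal{O}_{U_t}\cong\mathcal{O}_{U_{A/t}}^{\oplus r}$, and $H^1(U_{A/t},\mathcal{O})=H^2_{\mathfrak m}(A/t)$. If this group vanishes, $t$ is surjective on the finite-length module $H^2_{\mathfrak m}(B)$, which forces $H^2_{\mathfrak m}(B)=0$. But $H^2_{\mathfrak m}(A/t)=0$ requires $d\ge 4$.

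For $d=3$, no argument using only ``reflexive, locally free on $U_A$, with free sections over $U_{A/t}$'' can succeed. The second syzygy of the residue field of a $3$-dimensional regular local ring has all these properties, since reflexive modules over a $2$-dimensional regular local ring are free. Yet it is not free.

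You must use that $B$ is an \'etale algebra over $U_A$. After completing $A$, lift the finite \'etale $A/t$-algebra extending $U_t$ to a finite \'etale $A$-algebra $C$. Then identify $C|_{U_A}\cong U$ using full faithfulness of restriction from finite \'etale $U_A$-schemes to finite \'etale $U_{A/t}$-schemes. This full faithfulness holds for $d\ge 3$: for $W$ finite \'etale over $U_A$, the finite length of $H^1(W,\mathcal{O}_W)$ gives $\Gamma(W,\mathcal{O}_W)=\varprojlim_n\Gamma(W\times_A A/t^n,\mathcal{O})$. Hence idempotents, and therefore morphisms, lift.
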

A stronger form of this purity statement has been proved in the equicharacteristic case. The statement is as follows:
\begin{theorem}[Affineness of the complement of the ramification locus, equicharacteristic case]\label{thm:affine-equi}
Let $X \to Y$ be a morphism of finite-type between locally Noetherian schemes, where $Y$ is excellent and regular containing a field and 
$X$ is normal. Let $V \subset X$ be the maximal open subset such that $f|_V: V \to Y$ is \'{e}tale. 
Then the inclusion $V \to X$ is an affine morphism. \cite[0ECD]{sp}.
\end{theorem}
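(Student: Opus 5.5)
We would follow the strategy of \cite[0ECD]{sp}: reduce to a local problem, then prove that a certain coherent cohomology vanishes. The hypothesis that $Y$ contains a field is used only in that final vanishing step.

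\emph{Reduction to the local case.} Affineness of $V \to X$ is local on $X$, so we may assume $X$ affine. It then suffices to show that for every point $x \in X \setminus V$ the scheme $V \times_X \spec{\cal{O}_{X,x}}$ is affine. By a standard limit argument this spreads out to a neighbourhood of $x$, using that $Y$ is excellent and that the set of points where $V$ fails to be affine over $X$ is closed. So let $B = \cal{O}_{X,x}$ and $A = \cal{O}_{Y,y}$, where $B$ is normal local, essentially of finite type over the regular local ring $A$, and $A$ contains a field. Passing to strict henselizations and using Zariski's main theorem, we may assume $B$ is finite over $A$ at the points that matter, so $\dim B = \dim A$; the other components of $\spec{B}$ do not meet $\overline{V}$ near $x$ and can be discarded. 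We then induct on $d = \dim B$. The induction hypothesis shows $V \cap (\spec{B} \setminus \{\ideal{m}_B\}) \to \spec{B} \setminus \{\ideal{m}_B\}$ is affine, so the only issue is at the closed point. If $d \le 1$, or if $V$ misses some height one prime through $x$, then the complement $\spec{B} \setminus V$ has a codimension $1$ component through $x$. We treat this case with a separate argument: we would try to remove the corresponding divisor locally, but this step may need more care. In the remaining case, $\spec{B} \setminus V$ has codimension $\ge 2$, and purity (Theorem~\ref{thm:nagata-purity}) forces $x \in V$, hence $V = \spec{B}$.

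\emph{The real content.} The previous paragraph shows that the case to control is when the complement of $V$ is pure of codimension one near $x$ but its components need not be cut out by a single equation. Here we would use Serre's criterion: a quasi-compact open $V \subset \spec{B}$ is affine if and only if $H^i(V,\cal{O}_V)=0$ for all $i \ge 1$. So we must show these groups vanish for an open $V$ that is \'{e}tale over the regular local ring $A$.

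In characteristic $p$, the relative Frobenius of the \'{e}tale map $V \to \spec{A}$ is an isomorphism. Hence $H^i(V,\cal{O}_V) \otimes_{A,F} A \cong H^i(V,\cal{O}_V)$, since Frobenius is flat on the regular ring $A$ by Kunz. We would then combine this Frobenius-stability with the fact that each $H^i(V,\cal{O}_V)$, $i\ge1$, is supported at $\ideal{m}_A$ and is a subquotient of local cohomology of $B$. A Peskine--Szpiro / Huneke--Lyubeznik type argument (the socle and the support being Frobenius-stable) then gives vanishing. In characteristic $0$, we would spread $A$, $B$ and $V$ out over a finitely generated $\bb{Z}$-algebra and reduce modulo primes $p$, or alternatively use the Riemann--Hilbert correspondence.

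\emph{Main obstacle.} We expect the main obstacle to be this vanishing of $H^i(V,\cal{O}_V)$. Frobenius-stability alone gives only $F$-stable submodules of local cohomology, not zero. The first thing we would try is to show that these modules are finitely generated, then apply Nakayama's lemma to the Frobenius isomorphism.
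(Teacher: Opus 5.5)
There is a genuine gap, and it sits exactly at the step you flag as the main obstacle.

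\emph{Frobenius-stability does not give vanishing.} Frobenius-stability together with support at $\ideal{m}_A$ can never force vanishing. Take the punctured spectrum $U=\spec{A}\setminus\{\ideal{m}_A\}$. It is \'{e}tale over $A$ and its higher cohomology is $\ideal{m}_A$-torsion. Yet $H^{d-1}(U,\cal{O}_U)\cong H^d_{\ideal{m}_A}(A)\cong E_A(k)\neq 0$, and this is a unit Frobenius module that is not finitely generated. So a Huneke--Lyubeznik-type argument does not yield vanishing. What it yields (this is the input from \cite[0ECC]{sp} that the paper quotes) is that each $H^i(\cal{O}_V)$, $i\ge 1$, is a direct sum of copies of $E_A(k)$.

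Your fallback, proving finite generation and then applying Nakayama to $F^{*}M\cong M$, has nothing to start from. For these $\ideal{m}_A$-torsion modules, finite generation is equivalent to the vanishing you want.

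Relatedly, the case you defer in your reduction is in fact the general case: a codimension-one component of $\spec{B}\setminus V$ through $x$, to be handled by ``removing the divisor locally''. That divisor need not be Cartier on the merely normal ring $B$, so it cannot be removed. It has to be handled by the cohomological argument itself.

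\emph{What is missing.} Your reduction never extracts two global facts, and you also need control of $H^0$.

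First, the complement of $V$ has positive dimension in each relevant local ring, by purity of the ramification locus or a positive-dimensional fibre. Hartshorne--Lichtenbaum vanishing then gives $H^i(\cal{O}_V)=0$ for $i\ge d-1$. This is precisely what rules out the punctured spectrum.

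Second, localize on $Y$, induct on $d=\dim\cal{O}_{Y,y}$, and henselize. Then $V$ has no points over the closed point, so $R\Gamma(\cal{O}_V)\otimesl_A k\simeq 0$ (cf.\ Lemma~\ref{lem:cohpuretop}).

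Now use the spectral sequence $\Tor^A_{-i}(H^j(\cal{O}_V),k)\Rightarrow H^{i+j}(R\Gamma(\cal{O}_V)\otimesl_A k)$. Let $j$ be the smallest index $j\ge 1$ with $H^j(\cal{O}_V)\neq 0$. The term $\Tor^A_d(H^j(\cal{O}_V),k)$ is the socle of $H^j(\cal{O}_V)$, hence non-zero. It could only be cancelled by $\Tor^A_{d-j-1}(H^0(\cal{O}_V),k)$, and $d-j-1\ge 1$ because $j\le d-2$.

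\emph{How the paper closes this.} The paper's own characteristic-$p$ argument (Theorems~\ref{thm:cohpuregenp} and~\ref{thm:cohpuregenequi}) excludes this cancellation by showing that $H^0(\cal{O}_V)\otimesl_A k$ is discrete. After the faithfully flat base change $A\to A_{\mathrm{perf}}$ (Kunz), the global sections of the \'{e}tale scheme $V\times_{\spec{A}}\spec{A_{\mathrm{perf}}}$ form a perfect ring. Tor-independence of perfect rings (Lemma~\ref{lem:perftorind}) then applies.

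In characteristic $0$ the paper defers to \cite[0ECD]{sp}, where differential operators play the role of Frobenius (compare Theorem~\ref{thm:dmodunram}). Your reduction modulo $p$ is not a workable substitute: $A$ is not essentially of finite type, and the modules in question are not finitely generated.
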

It is an open question whether the same statement holds in the mixed-characteristic case, and in this chapter, we will prove this is indeed the case.
Let us state the result explicitly.
\begin{theorem}[Affineness of the complement of the ramification locus]\label{thm:affine-mixed}
Let $X \to Y$ be a morphism of finite-type between locally Noetherian schemes, where $Y$ is excellent and regular and 
$X$ is normal. Let $V \subset X$ be the maximal open subset such that $f|_V: V \to Y$ is \'{e}tale. 
Then the inclusion $V \to X$ is an affine morphism.
\end{theorem}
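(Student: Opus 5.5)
We plan to follow the outline in the abstract: reduce the global statement to a local one at the closed points of $X \setminus V$, prove the local statement by a cohomological argument over a regular local base, and then run the induction on $\dim Y$ used in \cite[0ECD]{sp}. Affineness of $V \to X$ is local on $X$, so we may assume $X=\spec{B}$ is affine and $Y=\spec{A}$ is the spectrum of a regular local ring. Fix a point $x \in X \setminus V$ lying over the closed point of $Y$, and pass to $\spec{\cal{O}_{X,x}}$. By the equicharacteristic case, Theorem~\ref{thm:affine-equi}, we may assume $A$ has mixed characteristic, although the argument should not need this. We argue by induction on $d=\dim \cal{O}_{Y,y}$.

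If $\dim \cal{O}_{X,x} < \dim \cal{O}_{Y,y}$, or $d \le 1$, we can argue directly. In dimension $1$ the local ring $\cal{O}_{X,x}$ is a DVR, and the punctured spectrum is a single open point, so it is affine. For $d=2$ the key input is miracle flatness \cite[00R4]{sp}. A normal local ring of dimension $2$ is Cohen--Macaulay, so $\cal{O}_{X,x}$ is flat over $A$. Étaleness on $V$ then reduces to vanishing of the discriminant, and the discriminant cuts out a Cartier divisor, which gives purity and makes $V$ the complement of a divisor, hence affine.

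For $d \ge 3$ we want to show that $V_x = V \times_X \spec{\cal{O}_{X,x}}$ is affine. By the induction hypothesis applied at all non-closed points, $V_x \to \spec{\cal{O}_{X,x}} \setminus \{x\}$ is an affine morphism. Now form the composite $V_x \to \spec{\cal{O}_{X,x}} \to \spec{A}$, which is étale on $V_x$. Because $\cal{O}_{X,x}$ is normal of depth $\ge 2$, the composite should satisfy the hypothesis ``cohomologically pure in codimension $1$'' of Definition~\ref{def:cohpure}; the abstract presents this condition as the cohomological shadow of exactly such a factorisation through the punctured spectrum of a normal finite local $A$-algebra. Theorem~\ref{thm:cohpuregen} then says that $V_x$ is an affine scheme. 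Affineness of $V \to X$ near $x$ follows, and by induction over all points of $X \setminus V$ the map $V \to X$ is affine. Here one also uses \cite[21.12.7]{EGAIV}: once $V \to X$ is affine, $X \setminus V$ is pure of codimension $1$, which feeds the induction.

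The main obstacle is verifying that $V_x$ is cohomologically pure in codimension $1$. I expect this to require a local cohomology computation: the vanishing of $H^i(V_x,\cal{O})$ for $0<i$ should be controlled via $H^{i+1}_{\ideal{m}}$ of the normal ring together with the affine map to the punctured spectrum. To reduce to a finite normal $B$ over $A$ I would first use Zariski's main theorem, then pass to the strict henselization so that the quasi-finite map becomes finite, and check that affineness descends along this faithfully flat base change.
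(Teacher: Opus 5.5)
You have the paper's overall architecture (localize on $Y$, induct on $\dim Y$, finish with Theorem~\ref{thm:cohpuregen}). However, there is a genuine gap at exactly the step you flag, and the reason you offer cannot close it.

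Normality of $R=\cal{O}_{X,x}$ (depth $\geq 2$) only gives $H^0_{\ideal{m}}(R)=H^1_{\ideal{m}}(R)=0$. Definition~\ref{def:cohpure} instead demands the \emph{top-degree} vanishing $H^{d-1}(\cal{O})=H^{d}(\cal{O})=0$. Let $Z$ be the complement of $V_x$ in $\spec{R}$; then $H^i(\cal{O}_{V_x})\cong H^{i+1}_Z(R)$ for $i\geq 1$. If $Z=\{x\}$, i.e.\ $f$ is \'{e}tale on the whole punctured spectrum of $R$ but not at $x$, then $H^{d-1}(\cal{O}_{V_x})=H^d_{\ideal{m}}(R)\neq 0$.

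So an affine factorisation through $\spec{B}\setminus\{\ideal{m}_B\}$ does not by itself give cohomological purity (take $V$ to be that punctured spectrum). Nor can \cite[21.12.7]{EGAIV} exclude this case: it goes from affineness to purity, and an empty complement is vacuously pure.

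What is missing is the classical purity of the ramification locus, Theorem~\ref{thm:nagata-purity}, combined with Hartshorne--Lichtenbaum vanishing \cite[0EB7]{sp}. The paper computes $e(V)=\max_x e(V_x)-1$ via local cohomology \cite[0DXB]{sp}. For $x$ in the closure of $V$ the dimension formula gives $\dim\cal{O}_{X,x}\leq d$; otherwise $V_x=\emptyset$. When $\dim\cal{O}_{X,x}<d$, the bound $e(V_x)\leq d-1$ is automatic. When $\dim\cal{O}_{X,x}=d$, purity shows the complement of $V_x$ has dimension $\geq 1$, so Hartshorne--Lichtenbaum (for the analytically irreducible excellent normal ring $\cal{O}_{X,x}$) kills $H^d$. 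This yields $e(V)\leq d-2$.

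Localizing at $x$ also causes two further problems. First, $V_x=V\times_X\spec{\cal{O}_{X,x}}$ is in general only a limit of opens of $V$, not of finite type over $A$, hence not \'{e}tale. So Theorem~\ref{thm:cohpuregen} does not apply to it: its proof compactifies $V$ by Zariski's main theorem and splits $V\times_A A^{+}$ via Proposition~\ref{prop:flatcovermixed}(iii), both of which need finite type. Second, your inductive input ``$V_x\to\spec{\cal{O}_{X,x}}\setminus\{x\}$ is affine'' is not what induction on $\dim\cal{O}_{Y,y}$ provides: when $f$ is not quasi-finite at $x$, points of $\spec{\cal{O}_{X,x}}\setminus\{x\}$ may lie over $y$. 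It is also not what the support condition needs, which is affineness over $\spec{A}\setminus\{\ideal{m}_A\}$.

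The paper avoids both problems by not localizing on $X$. Since $V\to Y$ is quasi-finite, $V\cap f^{-1}(y)$ is a finite set of closed points of the fibre, so one shrinks $X$ to an affine neighbourhood of $x\notin V$ on which $V$ misses $f^{-1}(y)$. Induction then makes $V\to\spec{\cal{O}_{Y,y}}\setminus\{y\}$ affine, which gives the support condition. Theorem~\ref{thm:cohpuregen} is then applied to the genuinely \'{e}tale $V\to\spec{\cal{O}_{Y,y}}$.

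This single argument also covers two cases your proposal leaves open. One is the points with $\dim\cal{O}_{X,x}<d$, for which your ``direct'' argument is not given. The other is $d=2$, where your miracle-flatness step tacitly assumes $f$ is quasi-finite at $x$. That fails, for instance, at points of the exceptional divisor of a blow-up, where $\cal{O}_{X,x}$ is not flat over $A$; the same assumption underlies your strict-henselization step.
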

Note that for a Noetherian local ring $(A,\ideal{m}_A, k)$, the punctured spectrum $\spec{A} - \{\ideal{m}_A\}$ is affine if and only if $\dim{A}\le 1$ \cite[0BRC]{sp}.
Hence, in the setting of Theorem~\ref{thm:nagata-purity}, if $V \subset \spec{\cal{O}_{X,x}}$ 
is the maximal open subset such that $f_{x}|_V: V \to \spec{\cal{O}_{Y,y}}$ is \'{e}tale, and we know that $V$ is affine by Theorem~\ref{thm:affine-mixed},
then we can conclude that $\dim \cal{O}_{X,x'} \le 1$ for any generic point $x' \in X_{x} \setminus V$. If $f_{x'}$ is unramified for $x' \in X_{x}$ with $\dim{\cal{O}_{X,x'}}=1$,
which in particular implies that $f_{x'}$ is \'{e}tale, then $X_{x} \setminus V = \emptyset$ and we conclude that $f$ is \'{e}tale at $x$.\\
\indent Perfectoid spaces were introduced by Peter Scholze in \cite{scholze2012perfectoid} and have been used to solve various problems in arithmetic geometry
by reducing problems to geometry over fields of characteristic $p$. A more general theory of perfectoid rings was first developed in
the work of Kedlaya and Liu \cite{kedlaya2015relative}, and later in the work of Bhatt, Morrow and Scholze \cite{bms} and Gabber and Ramero \cite{gabber2018almost}. 
Let us recall the definition of perfectoid rings, following \cite[Section 2.1]{purflat}. 
For a ring $R$, let 
\[R^{\flat} = \varprojlim_{x \mapsto x^p} R/pR\]
be its tilt.  Recall the Witt functor $W(-)$, which 
is a functor from the category of perfect $\bb{F}_p$-algebras to the category of $p$-adically complete $\bb{Z}_p$-algebras, right
adjoint to reduction modulo $p$. The ring $W(R^{\flat})$ is called the ring of Witt vectors of $R^{\flat}$, 
and there is a natural morphism $\theta_{R}: W(R^{\flat}) \to R$ of $p$-adically complete $\bb{Z}_p$-algebras.
\begin{definition}\label{def:perfectoid}
A ring $R$ is called \textit{perfectoid} if it is $\varpi$-adically complete for some $\varpi \in R$ such that $\varpi^p$ divides $p$,
and the induced map $\theta_{R}: W(R^{\flat}) \twoheadrightarrow R$ is surjective with kernel generated by a non-zero-divisor.\\
\indent If $R$ is $p$-torsion free, then the condition on $\theta_{R}$ is equivalent to the condition that the Frobenius map $R/\varpi R \to R/\varpi^p R$ is an isomorphism.\\
\indent A pair $(R,\varpi)$, with $R$ a ring and $\varpi \in R$, is called \textit{formally perfectoid} if $\varpi^p$ divides $p$, 
and the $\varpi$-adic completion $R^{\wedge \varpi}$ of $R$ is perfectoid.
\end{definition}
We can form formally perfectoid schemes and perfectoid formal schemes by gluing locally on the affine level due to the following lemma:
\begin{lemma}\label{lem:locperfectoid}
Let $(R,\varpi)$ be a formally perfectoid (resp. perfectoid) ring, and $f: R \to R'$ a map of rings 
such that $R/\varpi^n \to R'/\varpi^n$ is a weakly \'{e}tale morphism for all $n \ge 1$.
 Then $(R',\varpi)$ (resp. $R'^{\wedge \varpi}$) is also formally perfectoid (resp. perfectoid). If moreover
 $f$ is $\varpi$-adically faithfully flat, then the converse also holds.
\end{lemma}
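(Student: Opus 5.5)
The idea is to reduce to the $\varpi$-adically complete case and then build the completion of $R'$ by tilting. The perfectoid property is expressed through $W(R^{\flat})$, and weakly \'{e}tale maps deform uniquely along nilpotent thickenings and are compatible with Frobenius. So the candidate for the completion of $R'$ should come from a weakly \'{e}tale algebra over the tilt $R^{\flat}$, after extending scalars along $\theta_R$.

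\textbf{Reduction.} The $\varpi$-adic completion of $R'$ depends only on the tower $\{R'/\varpi^n\}_n$, and $R/\varpi^n \cong R^{\wedge\varpi}/\varpi^n$. Hence the formally perfectoid statement reduces to the perfectoid statement with $S := R^{\wedge\varpi}$ perfectoid and $S' := R'^{\wedge\varpi}$. We must show $S'$ is perfectoid, given that $S/\varpi^n \to S'/\varpi^n$ is weakly \'{e}tale for all $n$.

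\textbf{Construction of a perfectoid candidate.}
\begin{enumerate}
\item[(1)] Using $S^{\flat}/\varpi^{\flat} \cong S/\varpi$ (with $\varpi^{\flat}$ a compatible system of $p$-power roots of $\varpi$ up to a unit, after adjusting $\varpi$), consider the weakly \'{e}tale $S/\varpi$-algebra $S'/\varpi$.
\item[(2)] Weakly \'{e}tale algebras lift uniquely along the thickenings $S^{\flat}/(\varpi^{\flat})^n \to S^{\flat}/\varpi^{\flat}$ \cite[Tag 0F6W]{sp}-type results. This produces a compatible system, and we let $T$ be its $\varpi^{\flat}$-adic limit.
\item[(3)] $T$ is perfect: for a weakly \'{e}tale map $A \to B$ of $\bb{F}_p$-algebras the relative Frobenius $B \otimes_{A,\phi} A \to B$ is an isomorphism, and $S^{\flat}$ is perfect.
\item[(4)] Set $S'' := (W(T) \otimes_{W(S^{\flat})} S)^{\wedge}$. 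Since $W(S^{\flat}) \to W(T)$ is $(p,[\varpi^{\flat}])$-completely flat, $\ker\theta_{S''}$ is generated by the image of the distinguished generator $\xi$ of $\ker\theta_S$. It remains a non-zero-divisor by flatness, so $S''$ is perfectoid.
\item[(5)] By construction, $S''/\varpi$ and $S'/\varpi$ are weakly \'{e}tale lifts of the same $S/\varpi$-algebra. By uniqueness of weakly \'{e}tale deformations over each $S/\varpi^n$, we get $S''/\varpi^n \cong S'/\varpi^n$ compatibly in $n$. Passing to the limit gives $S'' \cong S'$.
\end{enumerate}

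\textbf{Converse.} Assume $S \to S'$ is $\varpi$-completely faithfully flat and $S'$ is perfectoid. I would use the criterion that a $\varpi$-complete ring with $\varpi^p \mid p$ is perfectoid iff $\theta$ is surjective with principal kernel. Surjectivity of Frobenius on $S/p$ and the description of its kernel descend along the faithfully flat map $S/\varpi^p \to S'/\varpi^p$. This works because the relative Frobenius of the weakly \'{e}tale map $S/\varpi \to S'/\varpi$ is an isomorphism, so the Frobenius of $S'$ is the base change of that of $S$. Principality of $\ker\theta_S$ then follows because $W(S^{\flat}) \to W(S'^{\flat})$ is faithfully flat, and an ideal whose extension is generated by a non-zero-divisor $\xi$ coming from below is itself generated by $\xi$.

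\textbf{Main obstacle.} I expect the main obstacle to be steps (4)--(5) when $S$ has $p$-torsion. There the equivalence of ``kernel principal, generated by a non-zero-divisor'' with a Frobenius condition is not available. One has to check carefully that completed flatness of $W(S^{\flat}) \to W(T)$ keeps $\xi$ a non-zero-divisor and that the derived and classical completions agree. I would try first to work throughout with derived $(p,\xi)$-completions in the language of perfect prisms, as in Bhatt--Scholze, where these flatness statements are standard.
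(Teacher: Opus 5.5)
Your forward direction is sound, but it takes a genuinely different route from the paper's.

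\textbf{The paper's route.} The paper never tilts. For $p$-torsion-free $R$ it uses the criterion that perfectoidness is equivalent to bijectivity of $\phi\colon R/\varpi\to R/\varpi^p$. It then shows that the square formed by the Frobenius maps of $R$ and $R'$ is cocartesian, i.e.\ $R'/\varpi\otimes_{R/\varpi,\phi}R/\varpi^p\cong R'/\varpi^p$. Both sides are flat over $R/\varpi^p$, so this can be checked modulo $\varpi$, where it is the Gabber--Ramero fact that the relative Frobenius of a weakly \'etale map of $\mathbb{F}_p$-algebras is an isomorphism. Bijectivity of Frobenius then passes from $R$ to $R'$ by base change. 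The case with $p$-torsion is only cited.

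\textbf{Your route.} You use the same relative-Frobenius input, but on the tilt: applied to each $S^\flat/(\varpi^\flat)^n\to T/(\varpi^\flat)^n$, it shows that $T$ is perfect. You then untilt and match the result with $R'^{\wedge\varpi}$ by rigidity of weakly \'etale deformations. This costs more machinery (deformation theory of weakly \'etale algebras, perfect prisms). In exchange it drops the $p$-torsion-free hypothesis and gives the explicit description $R'^{\wedge\varpi}\cong W(T)/\xi$, with tilt $T$.

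\textbf{On the obstacle you flag.}
\begin{itemize}
\item Regularity of $\xi$ on $W(T)$ needs no flatness. Any distinguished element $[\xi_0]+pv$ ($v$ a unit) in the Witt vectors of a perfect ring is a non-zero-divisor. Indeed, suppose $\xi y=0$ with $y\notin pW(T)$ (divide by a power of $p$ first). Comparing Witt coordinates gives $\xi_0y_0=0$ and $y_0\in\xi_0T$, so $y_0=0$ by reducedness, a contradiction.
\item What step (5) really uses, and what ``by construction'' hides, is that $S''/\varpi^n$ is flat over $S/\varpi^n$. Weak \'etaleness then follows from the local flatness criterion over a nilpotent ideal, applied also to the diagonal. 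This flatness amounts to $T\otimes^{\mathrm{L}}_{S^\flat}S^\flat/\varpi^\flat\simeq T/\varpi^\flat$. That is not formal from flatness of each $T/(\varpi^\flat)^n$ when $S^\flat$ has $\varpi^\flat$-torsion, but it does hold using perfectness of $S^\flat$ and $T$ (Tor-independence for perfect rings).
\end{itemize}

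\textbf{The converse has a gap as written.} Its first half descends bijectivity of $\phi\colon S/\varpi\to S/\varpi^p$ along the faithfully flat map $S/\varpi\to S'/\varpi$, using that the Frobenius of $S'$ is the base change of that of $S$. This is exactly the paper's ``faithfully flat descent''. It already suffices when $S'$ is $p$-torsion free: then so is $S$, since $S\to S'$ is injective by $\varpi$-adic separatedness and faithful flatness modulo $\varpi^n$.

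The principality step, however, rests on faithful flatness of $W(S^\flat)\to W(S'^\flat)$, which you do not have.
\begin{itemize}
\item This map is at best $(p,[\varpi^\flat])$-completely faithfully flat. The descent ``$\xi\in I$ and $IB=\xi B$ imply $I=\xi A$'' needs honest flatness, in order to embed $(I/\xi A)\otimes_AB$ into $B/\xi B$.
\item Comparing $S^\flat\to S'^\flat$ with $S/\varpi\to S'/\varpi$ presupposes $S^\flat/\varpi^\flat\cong S/\varpi$ for the not-yet-perfectoid $S$. That isomorphism must first be extracted from the descended Frobenius bijectivities.
\end{itemize}

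\textbf{A possible repair} is to work at finite level.
\begin{enumerate}
\item Once $\theta_S$ is surjective, pick a distinguished $\xi\in\ker\theta_S$. Then $\bar S=W(S^\flat)/\xi$ is perfectoid and surjects onto $S$.
\item Your forward direction, applied to $S^\flat\to S'^\flat$, makes $\bar S/\varpi^n\to S'/\varpi^n$ faithfully flat.
\item This map factors through $S/\varpi^n$. Hence the kernel $K_n$ of $\bar S/\varpi^n\to S/\varpi^n$ satisfies $K_n\otimes_{\bar S/\varpi^n}S'/\varpi^n=K_nS'/\varpi^n=0$, so $K_n=0$ for every $n$.
\end{enumerate}
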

\begin{proof}
This is settled in Gabber and Ramero's work \cite[16.7.21]{gabber2018almost}. 
In the case when $R$ is $p$-torsion free then we can work with the equivalent condition on the Frobenius map as follows.
Consider the following diagram:

\[\begin{tikzcd}
R'/\varpi \arrow[r, "\phi"] & R'/\varpi^p \arrow[r] & R'/\varpi \\
R/\varpi \arrow[u, "\psi_1"] \arrow[r, "\phi"] & R/\varpi^p \arrow[u, "\psi_p"] \arrow[r] & R/\varpi \arrow[u," \psi_1"]
\end{tikzcd}\]
We claim the left square is cocartesian, which will prove the proposition. In other words, we want to show the canonical map:
\[\Phi: R'/\varpi \otimes_{R/\varpi, \phi} R/\varpi^p \to R'/\varpi^p\]
is an isomorphism of $R/\varpi^p$-algebras. 
The module on the right is flat, so to show this isomorphism, it suffices to reduce modulo $\varpi$, where the result follows since the outer square is cocartesian by \cite[Theorem 3.5.13]{gabber2003almost}.\\
\indent For the converse one argues by faithfully flat descent (see \cite[16.7.21]{gabber2018almost} for details).
\end{proof}
\begin{definition} A \textit{formally perfectoid scheme} $X$ is a scheme which admits an open cover by affine schemes of the form $\spec{R}$, 
where $R$ is a formally perfectoid ring. A \textit{perfectoid formal scheme} is a formal scheme $X$ which admits an open cover by affine 
formal schemes of the form $\spf{R}$, where $R$ is a perfectoid ring.
\end{definition}
A key feature of perfectoid rings is that they admit a tilting equivalence, which often allows us to reduce problems about perfectoid rings in mixed 
characteristic to problems about perfectoid rings in characteristic $p$.
\begin{theorem}[Tilting equivalence]\label{thm:tilting}
Fix a perfectoid ring $R$, and an element $\xi = (\xi_0, \xi_1, \ldots) \in W(R^{\flat})$ which generates the kernel of $\theta_{R}$. 
Then there is an equivalence of categories between the category of perfectoid $R$-algebras and the category of perfectoid $R^{\flat}$-algebras, given by the functor:
\[S \mapsto S^{\flat} = \varprojlim_{x \mapsto x^p} S/pS\]
with inverse given by the functor:
\[S' \mapsto W(S') \otimes_{W(R^{\flat})} R.\]
The category of perfectoid formal schemes $X \to \spec{R}$ is equivalent to the category of perfectoid formal schemes $X^{\flat} \to \spec{R^{\flat}}$.
\end{theorem}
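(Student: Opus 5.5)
The plan is to assemble the tilting equivalence from three standard ingredients: the fact that $W(-)$ and $(-)^\flat$ are adjoint-like on perfect rings, the ``deformation'' statement that a perfectoid $R$-algebra is determined by its tilt together with the ideal generated by $\xi$, and the characterization of perfectoid rings in Definition~\ref{def:perfectoid} via surjectivity of $\theta$ with kernel generated by a non-zero-divisor.

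First, for a perfectoid $R$-algebra $S$, I will check that $S^\flat$ is a perfectoid $R^\flat$-algebra. It is perfect by construction and $\varpi^\flat$-adically complete, where $\varpi^\flat$ is a compatible system of $p$-power roots of a unit multiple of $\varpi$; such roots exist by surjectivity of Frobenius on $S/p$. The essential input is that $\ker\theta_S$ is generated by the image of $\xi$. To see this, I would argue that $\xi$ is distinguished, i.e.\ $\xi_1$ is a unit, because it generates $\ker\theta_R$ and $R$ is perfectoid. Then any distinguished element lying in $\ker\theta_S$ generates that kernel, by the standard lemma of \cite{bms} for perfect prisms. This gives the natural isomorphism
\[W(S^\flat)\otimes_{W(R^\flat)}R = W(S^\flat)/\xi \xrightarrow{\;\sim\;} S,\]
so the composite $S\mapsto W(S^\flat)\otimes_{W(R^\flat)}R$ is isomorphic to the identity.

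Conversely, for a perfectoid $R^\flat$-algebra $S'$, I set $S=W(S')/\xi W(S')$. I need three facts. (a) $\xi$ is a non-zero-divisor on $W(S')$; this follows because $\xi$ is distinguished and $W(S')$ is $p$-torsion free, checking modulo $p$ and using completeness. (b) $S$ is $\varpi$-adically complete, which follows from $(p,\xi)$-completeness of $W(S')$. (c) $S^\flat\cong S'$; I would compute $S/p = S'/\xi_0$ and take the inverse limit along Frobenius, using perfectness and $\xi_0$-adic completeness of $S'$. Then $\theta_S$ identifies with the quotient map, so $S$ is perfectoid. Checking naturality of these isomorphisms gives the equivalence of categories.

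For the formal-scheme statement, I will glue using Lemma~\ref{lem:locperfectoid}. Affine opens of $\spf{S}$ with $S$ perfectoid that are rational or étale localizations correspond, mod $\varpi$, to étale maps. Étale sites of $S/\varpi$ and $S^\flat/\varpi^\flat$ coincide because both are identified with $W(S^\flat)/(p,\xi)$-type quotients: $S/p\cong S^\flat/\xi_0$. The affine equivalence is compatible with these localizations, so it globalizes. The main obstacle is step (a)–(c), namely the distinguished-element lemma, which carries the whole mixed-characteristic content. I would prove it first, by reducing modulo $p$ and using that $W(S')$ is $p$-torsion free with $W(S')/p=S'$ perfect.
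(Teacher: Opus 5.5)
Your route matches the paper's in structure. The paper does not reprove the affine case: it cites \cite[Proposition 2.1.9]{purflat}, whose proof is the standard distinguished-element argument you reconstruct in steps (a)--(c). It then gets the formal-scheme statement by gluing along Zariski localizations via Lemma~\ref{lem:locperfectoid}, using that $S/p\cong S^{\flat}/\xi_0$ identifies the underlying spaces. That is what your last paragraph does.

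The one step that does not follow as written is (b), together with your use of $\xi_0$-adic completeness in (c). $(p,\xi)$-adic completeness of $W(S')$ amounts to $\xi_0$-adic completeness of $S'$. This is \emph{not} implied by $S'$ being a perfectoid $R^{\flat}$-algebra in the sense of Definition~\ref{def:perfectoid}, because in characteristic $p$ every perfect ring is perfectoid (take $\varpi=0$).

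Without that hypothesis the statement itself fails. Take $S'=R^{\flat}[1/\xi_0]$, which is nonzero for example when $R=\cal{O}_{\bb{C}_p}$. Modulo $p$, $\xi$ reduces to the unit $\xi_0$, so $\xi$ is a unit in the $p$-adically complete ring $W(S')$. Hence $W(S')\otimes_{W(R^{\flat})}R=0$, while $S'\neq 0$. Nor is $S'$ a tilt of anything, since every $S^{\flat}$ is $\xi_0$-adically complete and a nonzero ring in which $\xi_0$ is a unit cannot be.

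So you should state the equivalence with target category the perfect $R^{\flat}$-algebras that are $\xi_0$-adically (equivalently $\varpi^{\flat}$-adically) complete. The paper's formulation tacitly needs the same restriction. With that hypothesis made explicit, your steps (a)--(c) and the gluing go through as you describe.
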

\begin{proof}
See \cite[Proposition 2.1.9]{purflat} for the proof of the tilting equivalence. The non-affine case follows formally
by gluing data via Lemma~\ref{lem:locperfectoid}.
\end{proof}
Let us now outline a slightly informal potential strategy to answer Theorem~\ref{thm:affine-mixed} using perfectoid formal schemes,
mimicking the approach in the equicharacteristic case.
Via standard reduction arguments using excellence and normality, we can reduce to the case that $Y=\spec{A}$ is a regular complete local ring
and $X=\spec{B}$ is a normal complete local ring, and $f:Y \to X$ is a local morphism. Let $V$ be the locus where $f$ is \'{e}tale. 
By induction on the dimension of $A$, we may assume $H^i(\cal{O}_V)$ is an $A$-module supported on the maximal ideal $\{\ideal{m}_A\}$ for $i \ge 1$, 
and by purity of the ramification locus and Hartshorne-Lichtenbaum vanishing (\cite[0EB7]{sp}), $H^{d-1}(\cal{O}_V) = H^{d}(\cal{O}_V) = 0$, where $d = \dim{A}$. 
This cohomological property of $V$ is rather curious, so we will give it a name:
\begin{definition}\label{def:cohpure}
Let $(A,\ideal{m}_A,k)$ be a Noetherian local ring of dimension $d$. We say a quasi-compact and separated $A$-scheme $V \to \spec{A}$ is \textit{cohomologically pure in codimension $1$} 
if $H^i(\cal{O}_V)$ is an $A$-module supported on $\{\ideal{m}_A\}$ for $i \ge 1$, 
$H^{d-1}(\cal{O}_V) = H^{d}(\cal{O}_V) = 0$ where $d = \dim{A}$, and $V \to \spec{A}$ is dominant.
\end{definition}
If $A$ was of equicharacteristic, then it is a rather surprising result that under these assumptions, $V$ is indeed an affine scheme. This
was shown in \cite[0ECC]{sp} by showing that any Frobenius unit module supported on $\{\ideal{m}_A\}$ is isomorphic to a direct
sum of copies of $E_{A}(k_A)$, the injective hull of the residue field of $A$. Over arbitrary $\bb{F}_p$-algebras,
this characterization of Frobenius unit modules is related to the \textit{Riemann-Hilbert correspondence} in characteristic $p$ \cite{rhbl}.
\begin{theorem}\label{thm:rhbl}
Let $A$ be an $\bb{F}_p$-algebra. There is an equivalence of categories:
\[D_{\mathrm{cons}}(\spec{A}, \bb{F}_p) \stackrel{\mathrm{RH}}{\to} D_{\mathrm{fgu}}(A)\]
between the derived category of Frobenius unit $A$-modules and the derived category of constructible \'{e}tale $\bb{F}_p$-sheaves on $\spec{A}$.\\
\indent Under this correspondence, $i_{x*} \bb{F}_p$ corresponds to $R\Gamma_{x}(A)$ for any $x \in \spec{A}$, where $i_x: \spec{k(x)} \to \spec{A}$ 
is the inclusion of the point $x$. \cite[Theorem 12.6.1]{rhbl}
\end{theorem}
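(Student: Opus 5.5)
The plan is to follow the strategy of Bhatt and Lurie \cite{rhbl} rather than reprove everything from scratch. We build the functor from the Artin--Schreier sequence and reduce essential surjectivity to finite \'{e}tale local systems. Write $D_{\mathrm{Frob}}(A)$ for the derived category of $A$-modules $M$ with a semilinear map $\varphi_M: M \to M$.

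The functor is
\[\mathrm{RH}(\scr{F}) = R\Gamma(\spec{A}_{\text{\'{e}t}}, \scr{F} \otimes_{\bb{F}_p} \cal{O}),\]
with Frobenius induced from $\cal{O}$. The candidate inverse is the solution functor
\[\mathrm{Sol}(M) = \mathrm{fib}\bigl(\varphi_M - 1 : M \otimes_A \cal{O} \to M \otimes_A \cal{O}\bigr)\]
on the \'{e}tale site. The first step is the Artin--Schreier sequence $0 \to \bb{F}_p \to \cal{O} \xrightarrow{\varphi - 1} \cal{O} \to 0$, which is exact on $\spec{A}_{\text{\'{e}t}}$. It gives a natural map $\scr{F} \to \mathrm{Sol}(\mathrm{RH}(\scr{F}))$, and one checks this is an equivalence by reducing to $\scr{F} = \bb{F}_p$.

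The second step treats finite \'{e}tale local systems. This is Katz's correspondence: a locally constant sheaf of finite $\bb{F}_p$-vector spaces $L$ corresponds to the finite projective $A$-module $(L \otimes \cal{O})(\spec{A})$ with unit Frobenius. The proof is by finite \'{e}tale descent of the trivialisation, with Lang's theorem used for the inverse direction.

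The third step is d\'{e}vissage. Constructible complexes are generated under cones and shifts by objects $j_!\,L$ for $j$ an affine \'{e}tale map and $L$ a local system, and by $i_*\,L$ for $i$ a finitely presented closed immersion. So we check that $\mathrm{RH}$ commutes with $i_*$ and $j_!$, with the latter interpreted on the Frobenius-module side via local cohomology. Together with the first two steps, this identifies the essential image of $\mathrm{RH}$ with the thick subcategory generated by the Katz objects and their pushforwards. By definition that subcategory is $D_{\mathrm{fgu}}(A)$, the Frobenius-unit complexes with finitely generated generators. Full faithfulness then follows from the unit and counit isomorphisms on generators together with the Artin--Schreier computation of $\Hom$'s.

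The last assertion uses the triangle $j_!j^*\scr{G} \to \scr{G} \to i_*i^*\scr{G}$ and its Frobenius-module counterpart, which is the local cohomology triangle $R\Gamma_Z(M) \to M \to Rj_*j^*M$. For a point $x$ one factors $i_x$ through $\spec{\cal{O}_{\spec{A},x}}$ and the closure of $x$. Since $\mathrm{RH}$ commutes with these pushforwards and, up to the convention relating $i_{x*}$ and the support functor, interchanges them with the corresponding local cohomology functors, the image of $i_{x*}\bb{F}_p$ is $R\Gamma_x(A)$.

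The main obstacle I expect is the compatibility of $\mathrm{RH}$ with $j_!$ for non-proper $j$. There $\scr{F} \otimes \cal{O}$ does not commute naively with extension by zero, and the correct variance and normalisation are subtle. Bhatt--Lurie handle this by passing to perfections and using that $\cal{O}_{\mathrm{perf}}$-cohomology of $j_!$ is computed by $R\Gamma_Z$ after $\varphi$-colimits. I would try that first: perfect everything, prove the statement for perfect $\bb{F}_p$-algebras where $v$-descent is available, and descend, since $D_{\mathrm{fgu}}$ is insensitive to perfection.
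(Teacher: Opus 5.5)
The paper gives no argument for this statement; it simply quotes Bhatt--Lurie \cite[Theorem 12.6.1]{rhbl}. Your sketch therefore has to stand on its own, and it has genuine gaps. The Artin--Schreier sequence and your Katz step are fine.

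\textbf{First, the functor is the wrong one.} $R\Gamma(\spec{A}_{\text{\'{e}t}},\scr{F}\otimes_{\bb{F}_p}\cal{O})$ is the right object only when $\scr{F}$ is locally constant, so that $\scr{F}\otimes_{\bb{F}_p}\cal{O}$ is a finite locally free quasi-coherent sheaf. For a closed immersion $i\colon Z=V(I)\hookrightarrow\spec{A}$, the projection formula gives $i_*\bb{F}_p\otimes_{\bb{F}_p}\cal{O}=i_*i^{-1}\cal{O}$. Its global sections are the henselization of the pair $(A,I)$, not $A/I$. For $A=\bb{F}_p[t]$ and $I=(t)$ this is $\bb{F}_p[t]^h_{(t)}$, which has two defects:
\begin{itemize}
\item it contains $A$, so it is not supported on $V(t)$;
\item it is not finitely generated over $A[F]$. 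The $A[F]$-span of $m_1,\dots,m_r$ lies in the finite extension $\bb{F}_p(t)(m_1,\dots,m_r)$, whereas the henselization contains $(1+t)^{1/\ell}$ for every prime $\ell\neq p$.
\end{itemize}
So your $\mathrm{RH}$ neither commutes with $i_*$ nor lands in $D_{\mathrm{fgu}}(A)$, and the d\'{e}vissage has no functor to run on. The covariant functor that does work (Bhatt--Lurie's) is pinned down through $\mathrm{Sol}$ on algebraic Frobenius modules. The real work there is constructing an extension by zero $j_!$. For an open $j$ with complement $V(I)$ it yields $\ker(A_{\mathrm{perf}}\to(A/I)_{\mathrm{perf}})$, not anything built from $R\Gamma_Z$. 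So your account of how they get past your ``main obstacle'' is also inaccurate.

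\textbf{Second, essential surjectivity is circular.} ``By definition that subcategory is $D_{\mathrm{fgu}}(A)$'' is not the definition. Fgu objects are defined intrinsically (unit, and finitely generated over $A[F]$). Showing that all of them are built from unit-root crystals by extensions and pushforwards is a substantive finiteness theorem (Lyubeznik, Emerton--Kisin). For non-regular $A$, where Frobenius is not flat, unit modules are not even well behaved; this is why Bhatt--Lurie work with holonomic Frobenius modules instead.

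\textbf{Third, the local cohomology clause does not follow from your set-up, and in fact contradicts it.} Take $A=\overline{\bb{F}}_p[t]$ and $x=(t)$, so $R\Gamma_x(A)=H^1_{(t)}(A)[-1]$ with $H^1_{(t)}(A)=\bigoplus_{i\ge1}\overline{\bb{F}}_p\,t^{-i}$.
\begin{itemize}
\item On this module $\varphi-1$ is injective with infinite cokernel. A nonzero $\varphi(f)-f$ has top pole order divisible by $p$, so no nonzero combination of the $t^{-i}$ with $p\nmid i$ is hit.
\item Hence your own $\mathrm{Sol}(R\Gamma_x(A))$ is a skyscraper at $x$ of infinite rank in degree $2$, not a shift of $i_{x*}\bb{F}_p$.
\item Likewise, a covariant functor intertwining $i_*$ with restriction of scalars, as in your Step 3, sends $i_{x*}\bb{F}_p$ to something killed by $\ideal{m}_x$ (for Bhatt--Lurie, $k(x)_{\mathrm{perf}}$). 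But $H^1_{(t)}(A)=E_A(k(x))$ is not killed by $\ideal{m}_x$.
\end{itemize}
Your matching of $j_!j^*\scr{G}\to\scr{G}\to i_*i^*\scr{G}$ with $R\Gamma_Z(M)\to M\to Rj_*j^*M$ pairs the first term with the last, which is what a contravariant functor does. The phrase ``up to the convention relating $i_{x*}$ and the support functor'' is hiding a missing duality. To reach $R\Gamma_x(A)$ you need one of two things:
\begin{itemize}
\item an Emerton--Kisin type contravariant solution functor $M\mapsto R\mathcal{H}om_{A[F]}(M,\cal{O}_{\text{\'{e}t}})$ (up to shift), or
\item a duality on Frobenius modules exchanging $j_!$ with $Rj_*$ and carrying $i_*$ of the unit object to local cohomology.
\end{itemize}
Both rely on flatness of Frobenius and local duality, i.e.\ on $A$ being regular. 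That ingredient would have to be supplied before the clause can be claimed in the generality you are aiming at.
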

In particular, in the setting of Theorem~\ref{thm:rhbl}, if $A$ is local and Gorenstein, then a Frobenius unit module $M$ supported on 
$\{\ideal{m}_A\}$ lies in the essential image of 
\[D_{\mathrm{cons}}(\spec{k}, \bb{F}_p) \stackrel{i_*}{\to} D_{\mathrm{cons}}(\spec{A}, \bb{F}_p) \stackrel{\mathrm{RH}}{\to} D_{\mathrm{fgu}}(A),\]
and is therefore isomorphic to a direct sum of shifts of the complex $R\Gamma_{\ideal{m}_A}(A) \simeq E_A(k_A)$, recovering the earlier result.\\
\indent Unfortunately, a Riemann-Hilbert correspondence of this nature is not yet available in mixed characteristic,\footnote{Bhatt and Lurie are currently working on a manuscript that
describes the correspondence.} so one would like to use the tilting equivalence to reduce to the equicharacteristic case.\\
\indent We can show that $A$ admits a faithfully flat perfectoid cover $A_{\infty}$, (see Corollary~\ref{cor:perfectoid-cover}) and so the pullback 
$V_{\infty} = V \times_{\spec{A}} \spec{A_{\infty}}$ is cohomologically pure in codimension $1$ over $A_{\infty}$.
Moreover, the associated $p$-adic formal scheme $\mathfrak{V}_{\infty} \to \spf{A_{\infty}}$ is a perfectoid formal scheme by \ref{lem:locperfectoid}
which is $p$-completely \'{e}tale over $\spf{A_{\infty}}$.
Under the tilting equivalence, $\mathfrak{V}_{\infty}$ corresponds to a perfectoid formal scheme $\mathfrak{V}_{\infty}^{\flat} \to \spf{A_{\infty}^{\flat}}$ which is 
$p^{\flat}$-completely \'{e}tale over $\spf{A_{\infty}^{\flat}}$ for some non-zero-divisor $p^{\flat} \in A_{\infty}^{\flat}$. 
The essential feature of the tilting equivalence is that it preserves the underlying topological space. In particular, we have the following commutative diagram:
\[\begin{tikzcd}\mathfrak{V}_{\infty} \arrow[d] \arrow[rd] & & \mathfrak{V}_{\infty}^{\flat} \arrow[d] \arrow[ld]\\
    \spf{A_{\infty}} \arrow[rd] & V_{\infty,p=0}\simeq V^{\flat}_{\infty, p^{\flat}=0} \arrow[d] & \spf{A_{\infty}^{\flat}} \arrow[ld]\\
    & \spec{A_{\infty}/p} \simeq \spec{A_{\infty}^{\flat}/p^{\flat}}
\end{tikzcd}\]
To control the cohomological properties of $\mathfrak{V}_{\infty}^{\flat}$, it is therefore necessary to understand
the cohomological properties of $V_{\infty,p=0}$, which already presents a challenge in the situation when $A$ is a ramified complete local ring.
Moreover, the ring $A_{\infty}^{\flat}$ is not Noetherian, and in general not coherent.\footnote{The notion of when the perfection of 
a Noetherian $\bb{F}_p$-algebra $A$ is coherent, so called the \textit{F-coherence} of $A$, is very subtle, related to the underlying singularities
of $A$. See \cite{fcoherence} for a detailed discussion.} If we algebraize $\mathfrak{V}_{\infty}^{\flat}$
to a scheme $U \to \spec{A_{\infty}^{\flat}}$ which is genuinely \'{e}tale such that $\mathfrak{V}_{\infty}^{\flat} = U^{\wedge p^{\flat}}$,
then unfortunately it is not clear that the complex $R\Gamma(\cal{O}_U)$ is a Frobenius-unit complex over $\spec{A_{\infty}^{\flat}}$, 
which we need to apply the Riemann-Hilbert correspondence in characteristic $p$. 
Finally, even if we could show that $U$ is affine, it is not clear how to descend this affineness property to $V_{\infty}$, as while the tilting equivalence would indeed
give us that $R\Gamma(\cal{O}_{\mathfrak{V}_{\infty}})$ is discrete, we would only be able to obtain that:
\[R\Gamma(\cal{O}_{V_{\infty}}) \simeq \hofib(R\Gamma(\cal{O}_{\mathfrak{V}_{\infty}}) \oplus R\Gamma(\cal{O}_{V_{\infty}}[1/p]) \to R\Gamma(\cal{O}_{\mathfrak{V}_{\infty}}[1/p])).\] 
And as while both $R\Gamma(\cal{O}_{\mathfrak{V}_{\infty}})$ and $R\Gamma(\cal{O}_{V_{\infty}}[1/p])$ are discrete, the homotopy fiber is not necessarily discrete 
unless we knew that $H^0(\cal{O}_{\mathfrak{V}_{\infty}}) = H^0(\cal{O}_{V_{\infty}})^{\wedge p}$, which is not clear. 
\begin{warning}
Let $A$ be a commutative ring and $f\in A$ an element. Let $U \to \spec{A}$ be an $A$-scheme, and consider the pushout square in algebraic spaces (see \cite[Theorem 1.4]{bhattalg}):
\[\begin{tikzcd}U^{\wedge f}[1/f] \arrow[r] \arrow[d] & U^{\wedge f} \arrow[d]\\
U[1/f] \arrow[r] & U
\end{tikzcd}\]
Then even if $U^{\wedge f}$ is an affine formal scheme and $U[1/f]$ is an affine scheme, $U$ does not necessarily need to be affine.
\end{warning}
\indent Said more informally, working in mixed characteristic introduces an extra `arithmetic' parameter leading to `off-by-one' bounds in cohomology groups, 
a feature which has been observed in various contexts such as \cite{bhattcohen} and \cite{purflat}.\\

In this chapter, we will first show that Theorem~\ref{thm:affine-mixed} is true in the case that $X$ is unramified i.e. 
for every closed point $y \in Y$, the $p \in \ideal{m}_{Y,y} \setminus \ideal{m}_{Y,y}^2$. There are two key inputs, the first being a topological
result to control the cohomological properties of $V_{p=0}$ (see Theorem~\ref{thm:topinput}), and the second being an arithmetic input of classifying $D$-modules over unramified complete local rings (see Theorem~\ref{thm:dmodunram}).
This proof mimics the approach in equicharacteristic case. A similar method was also used by O. Gabber and W. Zhang in \cite{gabber}.\\
\indent We will then prove Theorem~\ref{thm:affine-mixed} in general. The above outlined strategy with perfectoid formal schemes, the tilting equivalence and the Riemann-Hilbert correspondence in characteristic $p$
will not play a role, and indeed, it seems that informally, our argument is the dual of the above outlined strategy. In particular,
instead of controlling $H^i(\cal{O}_V)$ for $i\ge 1$, we will focus on $H^0(\cal{O}_V)$, and show that $H^0(\cal{O}_V)\otimesl_A k$ is a discrete ring (see Theorem~\ref{Theorem:torindepmixed}).
The argument rests on the work of Bhatt in \cite{bhattcohen} where we have control over the local cohomology of the absolute integral closure
$A^{+}$ of $A$. The discreteness of $H^0(\cal{O}_V)\otimesl_A k$ is a variant of the Tor-independence for perfectoid rings, which we have formulated for formally perfectoid rings 
to overcome the `off-by-one' bounds in cohomology groups arising
by $p$-completion (see Proposition~\ref{prop:torindep}).

\subsubsection*{Notation and conventions}
For a ring $A$ and an ideal $I \subset A$, we denote by $A^{\wedge I}$ the $I$-adic completion of $A$. We will
use cohomological indexing when working with objects in the derived category. 
For a domain $R$, we let $K(R)$ denote its field of fractions, and for a natural number $n \in \bb{N}$ greater than $1$, we denote $[n]=\{1,2,\ldots,n\}$.

\section{Artin-Rees for perfections}
A well-known result in commutative algebra is that for a Noetherian ring $A$ and an ideal $I \subset A$, the 
$I$-adic completion $A^{\wedge I}$ is a flat $A$-algebra via the natural map $A \to A^{\wedge I}$. 
When working in $p$-adic geometry one often needs to work with completions of non-Noetherian rings for which this result may not be true.
The main failure is that finitely generated $A$-modules may not have the Artin-Rees property.
\begin{definition}[Artin-Rees property]\label{def:artin-rees}
Let $A$ be a coherent ring. We say $A$ has the Artin-Rees property if the following holds: for every finitely presented ideal $I \subset A$ and 
every finitely presented module $M$ over $A$ with a finitely presented $A$-submodule $N \subset M$, there exists an integer $c \ge 0$ such that for all $n \ge c$,
\[
I^n M \cap N = I^{n-c}(I^c M \cap N).
\]
\end{definition}
\begin{theorem}\label{thm:artin-reescoh}
Let $J$ be a filtered category and $\{A_j\}_{j \in J}$ be a diagram of coherent commutative rings such that for each $j \in J$, $A_j$ has the Artin-Rees property, 
and for each morphism $i \to j$ in $J$, the induced map $A_i \to A_j$ is flat. 
Then the colimit $A = \colim_{j \in J} A_j$ also is a coherent ring that has the Artin-Rees property.
\end{theorem}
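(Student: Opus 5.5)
The proof splits into two parts: coherence of $A$, then the Artin--Rees property. Both rest on the same descent principle. Every finitely presented object over $A$ descends to some $A_j$, and flatness of the transition maps makes the relevant constructions commute with base change.

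For coherence, let $I=(a_1,\dots,a_r)\subset A$ be a finitely generated ideal. I would choose $j$ and lifts $a_i^{(j)}\in A_j$, and set $I_j=(a_1^{(j)},\dots,a_r^{(j)})$. Since $A_j$ is coherent, the kernel $K_j$ of $A_j^{r}\to A_j$ is finitely generated. For $j\to k$ flat, exactness of $-\otimes_{A_j}A_k$ gives $K_j\otimes_{A_j}A_k=K_k$. Filtered colimits are exact, so the kernel of $A^r\to A$ is $\colim_k K_k = K_j\otimes_{A_j}A$, which is finitely generated. This is the standard argument that a filtered colimit of coherent rings along flat maps is coherent. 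The same reasoning shows that each $A_j\to A$ is flat, being a filtered colimit of flat maps.

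For the Artin--Rees property, take the data $I$, $M$, $N\subset M$ over $A$. By finite presentation I descend everything to an index $j$: a finitely presented ideal $I_j$ with $I_jA=I$, a finitely presented module $M_j$ with $M_j\otimes_{A_j}A\cong M$, and a finitely presented submodule $N_j\subset M_j$ mapping onto $N$. To get the last item, I first lift the generators of $N$ and the relations realizing $N\to M$. I then enlarge $j$ so that $N_j\to M_j$ is injective, which is possible because $\ker(N_j\to M_j)$ is finitely generated by coherence and dies in the colimit. Once $N_j\subset M_j$, flatness gives $N_j\otimes A\hookrightarrow M_j\otimes A$ with image $N$.

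The Artin--Rees property of $A_j$ then supplies $c$ such that $I_j^nM_j\cap N_j=I_j^{n-c}(I_j^cM_j\cap N_j)$ for all $n\ge c$. The key step is to base change this equality along the flat map $A_j\to A$. Flat base change preserves finite intersections of submodules, since the intersection is the kernel of $M_j\to M_j/P\oplus M_j/Q$. It also preserves images of the form $I_j^n M_j$: the image of $I_j^n\otimes M_j\to M_j$, tensored with $A$, is the image of $I^n\otimes M\to M$, namely $I^nM$. Applying these two facts to both sides gives $I^nM\cap N=I^{n-c}(I^cM\cap N)$ over $A$.

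I expect the descent step to be the main obstacle. The issue is making sure that a submodule $N\subset M$ over $A$ really comes from an injection $N_j\hookrightarrow M_j$ with $N_j$ finitely presented over $A_j$, and that the image of $N_j\otimes A$ is exactly $N$. Coherence of $A_j$ and flatness of the transition maps are what make this work. The rest is formal bookkeeping with exactness of filtered colimits.
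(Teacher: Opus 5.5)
Your proposal is correct and follows essentially the same route as the paper: descend $I$, $M$ and $N\subset M$ to some $A_j$, use coherence of $A_j$ to make $N_j\to M_j$ injective after enlarging $j$, and base change the Artin--Rees equality along the flat map $A_j\to A$. You also prove that $A$ is coherent, and you justify why flat base change commutes with intersections and with the submodules $I^nM$; the paper's proof leaves both points implicit.
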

Since any Noetherian ring has the Artin-Rees property (see \cite[00IN]{sp}), we have the following corollary:
\begin{corollary}\label{cor:artin-rees}
Let $J$ be a filtered category and $\{A_j\}_{j \in J}$ be a diagram of Noetherian rings such that for each morphism $i \to j$ in $J$, the induced map $A_i \to A_j$ is flat.
Then the colimit $A = \colim_{j \in J} A_j$ has the Artin-Rees property.
\end{corollary}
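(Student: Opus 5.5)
The proof has two parts: coherence of the colimit, then the Artin--Rees identity, which will be checked on some $A_j$ and transported to $A$ by flatness.

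\textbf{Coherence.} This is the standard fact that a filtered colimit of coherent rings along flat transition maps is coherent. Let $I=(a_1,\dots,a_r)\subset A$ be a finitely generated ideal.
\begin{enumerate}
\item[(a)] Choose $j$ with all $a_i$ coming from $A_j$. Let $I_j\subset A_j$ be the ideal they generate.
\item[(b)] The kernel $K_j$ of $A_j^{\oplus r}\to I_j$ is finitely generated, since $A_j$ is coherent.
\item[(c)] Each $A_j\to A_k$ is flat, and flatness passes to the colimit, so $A_j\to A$ is flat. Hence $I_jA=I_j\otimes_{A_j}A=I$ and $K_j\otimes_{A_j}A=\ker(A^{\oplus r}\to I)$.
\end{enumerate}
So $I$ is finitely presented and $A$ is coherent.

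\textbf{Descending the data.} Take a finitely presented (i.e.\ finitely generated) ideal $I\subset A$, a finitely presented module $M$, and a finitely presented submodule $N\subset M$. Since finitely presented modules over a filtered colimit descend, there is some $j$ with:
\begin{itemize}
\item a finitely generated ideal $I_j\subset A_j$ with $I_jA=I$;
\item a finitely presented $A_j$-module $M_j$ with $M_j\otimes_{A_j}A\cong M$;
\item a finitely generated submodule $N_j\subset M_j$ with $N_j\otimes_{A_j}A\cong N$ inside $M$.
\end{itemize}
To get the last item, lift generators of $N$ to $M_j$ after enlarging $j$. Flatness of $A_j\to A$ makes $N_j\otimes A\to M_j\otimes A$ injective with image $N$. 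Coherence of $A_j$ makes $N_j$ finitely presented.

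\textbf{Transporting the identity.} The Artin--Rees property for $A_j$ gives $c$ with
\[I_j^nM_j\cap N_j=I_j^{n-c}(I_j^cM_j\cap N_j)\quad\text{for all } n\ge c.\]
All operations involved commute with the flat base change $-\otimes_{A_j}A$:
\begin{itemize}
\item products with ideals: $(I_j^nM_j)\otimes_{A_j}A=I^nM$, since it is the image of $I_j^n\otimes M_j$ and flat base change preserves images;
\item intersections of submodules: the intersection is the kernel of $M_j\to M_j/P\oplus M_j/Q$, and flat base change preserves kernels.
\end{itemize}
Applying $-\otimes_{A_j}A$ to both sides therefore gives $I^nM\cap N=I^{n-c}(I^cM\cap N)$ for all $n\ge c$, with the same $c$.

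\textbf{Main obstacle.} The only delicate point is making these identifications inside $M$ rather than abstractly: each submodule of $M_j$ must base-change to the corresponding submodule of $M$. Flatness of $A_j\to A$ handles this, since it keeps inclusions injective and identifies $P\otimes_{A_j}A$ with $PA\subset M$. Corollary~\ref{cor:artin-rees} then follows, because Noetherian rings are coherent and have the Artin--Rees property.
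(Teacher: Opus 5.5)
Your proposal is correct and follows essentially the same route as the paper, which proves the more general statement for filtered colimits of coherent rings with the Artin--Rees property along flat maps. Like you, it descends $I$, $M$, $N$ to some $A_j$, applies Artin--Rees there, and base-changes the identity along the flat map $A_j \to A$. Two small differences are in your favour: you explicitly prove that $A$ is coherent, which the definition of the Artin--Rees property presupposes but the paper's proof leaves unchecked. And by taking $N_j$ to be the submodule generated by lifted generators, you avoid the paper's step of enlarging $j$ to kill $\ker(N_j \to M_j)$.
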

\begin{proof}[Proof of Theorem~\ref{thm:artin-reescoh}]
Let $M$ be a finitely presented $A$-module and $N \subset M$ a finitely presented $A$-submodule and $I$ a finitely generated ideal of $A$. 
Then there exists some $j \in J$ such that $M$ is the base change of a finitely presented $A_j$-module $M_j$, 
and $N$ is the base change of a finitely presented $A_j$-submodule $N_j$ together with a map $N_j \to M_j$, and $I_{j} \subset A_j$ is a finitely generated ideal such that $I = I_j A$.
Since $A_j$ is coherent, $\ker(N_j \to M_j)$ is finitely presented $A_j$-module, and since $N \to M$ is injective, at the expense of increasing $j$, we may assume $\ker(N_j \to M_j) = 0$, so that $N_j$
is a finitely presented submodule of $M_j$. Since $A_j$ has the Artin-Rees property, there exists some integer $c$ such that for all $n \ge c$,
\[I_j^n M_j \cap N_j = I_j^{n-c}(I_j^c M_j \cap N_j).\]
Since $A_j$ is flat over $A_i$ for all $j \ge i$, we have know that $A_j \to A$ is flat as well. Therefore, for all $n \ge c$,
\[I^n M \cap N = I_j^n M_j \cap N_j \otimes_{A_j} A = I_j^{n-c}(I_j^c M_j \cap N_j) \otimes_{A_j} A = I^{n-c}(I^c M \cap N),\]
as desired.
\end{proof}
\begin{theorem}\label{thm:artin-rees-completions}
Let $A$ be a coherent ring with the Artin-Rees property and $I \subset A$ a finitely generated ideal. Then the $I$-adic completion $A^{\wedge I}$ is a flat $A$-algebra via the natural map $A \to A^{\wedge I}$.
\end{theorem}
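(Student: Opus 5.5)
We follow the classical proof of flatness of completions for Noetherian rings (cf. \cite[00MB]{sp}), using coherence and the Artin--Rees property in place of Noetherianity. Since $A$ is coherent, flatness of an $A$-module $F$ can be tested on finitely generated ideals. Equivalently, it suffices to show that for every finitely presented $A$-module $M$ and finitely presented submodule $N \subset M$, the map $N \otimes_A A^{\wedge I} \to M \otimes_A A^{\wedge I}$ is injective. The strategy is to identify $-\otimes_A A^{\wedge I}$ with $I$-adic completion on finitely presented modules, and then show that completion is exact on them.

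\textbf{Step 1 (exactness of completion).} Take a short exact sequence $0 \to N \to M \to Q \to 0$ of finitely presented $A$-modules. For each $n$ we have exact sequences
\[0 \to N/(I^nM\cap N) \to M/I^nM \to Q/I^nQ \to 0.\]
The transition maps of the left system are surjective, so it is Mittag-Leffler, and taking limits gives an exact sequence
\[0 \to \lim_n N/(I^nM\cap N) \to M^{\wedge I} \to Q^{\wedge I} \to 0.\]
By the Artin--Rees property (Definition~\ref{def:artin-rees}, applicable because $I$ is finitely generated over a coherent ring and hence finitely presented), there is $c$ with
\[I^nN \subset I^nM\cap N \subset I^{n-c}N \quad \text{for all } n \ge c.\]
So the filtrations $\{I^nM\cap N\}$ and $\{I^nN\}$ on $N$ are cofinal, and the left limit equals $N^{\wedge I}$. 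Thus $I$-adic completion is exact on finitely presented modules.

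\textbf{Step 2 (completion equals base change).} Next we show $M \otimes_A A^{\wedge I} \cong M^{\wedge I}$ for finitely presented $M$. Choose a presentation $A^{\oplus a} \to A^{\oplus b} \to M \to 0$. Both functors are right exact on this sequence: the tensor product automatically, and completion by Step 1 applied to the image and kernel of the presentation. Coherence of $A$ is what makes Step 1 applicable here, since it guarantees the image and kernel are finitely presented. Both functors agree on finite free modules, and the canonical map between them is natural, so the five lemma gives the isomorphism.

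\textbf{Step 3 (conclusion).} Applying Steps 1 and 2 to $N \subset M$ shows that $N\otimes_A A^{\wedge I} \to M \otimes_A A^{\wedge I}$ is identified with the injection $N^{\wedge I} \to M^{\wedge I}$. Hence $A^{\wedge I}$ is flat over $A$.

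The main obstacle is Step 1. The Mittag-Leffler argument only controls the limit of the induced filtration $\{I^nM\cap N\}$, and identifying that limit with $N^{\wedge I}$ is precisely where Artin--Rees is needed. One must also make sure all the intermediate modules in Steps 1 and 2 stay finitely presented, which is where coherence is used. A further subtlety is that for non-Noetherian $A$ the completion $A^{\wedge I}$ need not be $I$-adically complete. This does not matter here, since the argument only uses the limit description of $M^{\wedge I}$ and never its completeness.
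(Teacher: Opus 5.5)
Your proof is correct and takes essentially the same route as the paper. Both arguments establish exactness of $I$-adic completion on finitely presented modules (Mittag-Leffler plus Artin--Rees), then show $M\otimes_A A^{\wedge I}\cong M^{\wedge I}$ for finitely presented $M$, then finish with the ideal criterion for flatness; where the paper applies the snake lemma twice in the middle step, you use a two-term presentation and the five lemma.

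Two remarks in your commentary are inaccurate, though neither affects the proof. First, the ideal criterion for flatness holds over any ring and does not need coherence. Second, since $I$ is finitely generated, $A^{\wedge I}$ is in fact $I$-adically complete, so your closing caveat does not apply.
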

\begin{proof}
The proof will primarily be a matter of checking that the results of \cite[0BNH]{sp} will still hold in our setting.
\begin{claim}\label{claim:artin-rees}
Let $K \to N$ be an injective map of finitely presented $A$-modules.
\begin{enumerate}
\item[(i)] If $0 \to K \to N \to M \to 0$ is a short exact sequence of finitely presented $A$-modules, then the sequence of completions $0 \to K^{\wedge I} \to N^{\wedge I} \to M^{\wedge I} \to 0$ is also exact.
\item[(ii)] If $M$ is a finitely presented $A$-module, then the natural map $M \otimes_A A^{\wedge I} \to M^{\wedge I}$ is an isomorphism.
\end{enumerate}
\end{claim}
\begin{proof}
Since $A$ is coherent, $K$ is a finitely presented submodule of $N$ and $I$ is a finitely presented $A$-module. For each $n\ge 1$ we get a short exact sequence:
\[0 \to K/(I^nN \cap K) \to N/(I^n N) \to M/(I^n M) \to 0.\]
Each map $K/(I^nN \cap K) \to K/(I^{n+1}N \cap K)$ is surjective, so the inverse system $\{K/(I^nN \cap K)\}_{n \ge 1}$ satisfies the Mittag-Leffler condition, and we have an exact sequence:
\[0 \to \lim_{n} K/(I^nN \cap K) \to \lim_{n} N/(I^n N) \to \lim_{n} M/(I^n M) \to 0.\]
Since $A$ has the Artin-Rees property, the pro-system $\{K/(I^nN \cap K)\}_{n \ge 1}$ is pro-isomorphic to the pro-systems $\{K/I^n K\}_{n \ge 1}$, so we have an isomorphism:
\[\lim_{n} K/(I^nN \cap K) \simeq \lim_{n} K/I^n K = K^{\wedge I}.\]
This proves (i). For (ii), let $0 \to K \to A^{\oplus t} \to M \to 0$ be a presentation of $M$, and note that $K$ is a finitely presented $A$-module. Consider the following commutative diagram:
\[\begin{tikzcd} & K \otimes A^{\oplus t} \arrow[r] \arrow[d] & A^{\oplus t} \otimes A^{\oplus t} \arrow[r] \arrow[d] & M \otimes A^{\oplus t} \arrow[r] \arrow[d] & 0 \\
0\arrow[r]& K^{\wedge I} \arrow[r] & A^{\oplus t} \arrow[r] & M^{\wedge I} \arrow[r] & 0
\end{tikzcd}\]
By (i), the bottom row is exact, and the top row is exact by right exactness of the tensor product. 
The middle arrow is an isomorphism, so we obtain that by the snake lemma that $M \otimes_A A^{\wedge I} \to M^{\wedge I}$ is always a surjection for any finitely presented $A$-module (alternatively,
we may use \cite[0315]{sp}), and in particular for $K$, and therefore again by the snake lemma, all maps in the above diagram are isomorphisms, proving (ii).
\end{proof}
By \cite[00HD]{sp}, it suffices to show for any finitely generated ideal $J \subset A$, $J \otimes_A A^{\wedge I} \to A^{\wedge I}$ is injective. 
Since $A$ is coherent, $J$ is finitely presented, so by Claim~\ref{claim:artin-rees} (ii), we have an injection 
\[J \otimes_A A^{\wedge I} \simeq J^{\wedge I} \to A^{\wedge I}\]
as desired.
\end{proof}
As a concrete application of Theorem~\ref{thm:artin-rees-completions}, we have the following result:
\begin{corollary}\label{cor:perfectoid-cover}
Let $A$ be a complete local regular ring. Then $A$ has a perfectoid cover $A_{\infty}$ such that $A_{\infty}$ is flat over $A$.
\end{corollary}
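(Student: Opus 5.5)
By Cohen's structure theorem, $A$ is either a power series ring $W(k)[[x_2,\dots,x_d]]$, or a quotient $W(k)[[x_1,\dots,x_d]]/(p-f)$ with $f\in \ideal{m}^2$ (the ramified case). Here $W(k)$ is a Cohen ring and $k$ is the residue field. I will treat both cases uniformly by writing $A=C[[x_1,\dots,x_d]]/(p-f)$ or $A=C[[x_2,\dots,x_d]]$, with $C$ a Cohen ring. The positive-characteristic case is simpler: there $A=k[[x_1,\dots,x_d]]$, and I take $A_\infty$ to be the completed perfection of $A$. Flatness in that case follows from Kunz's theorem, since Frobenius on a regular ring is flat, together with the same completion argument as below.

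\textbf{Step 1: a flat, coherent, Artin--Rees colimit.} Let $C_\infty=\colim_n C[p^{1/p^n}]$ be the colimit of the Noetherian regular rings obtained by adjoining $p$-power roots of $p$. If $k$ is imperfect, I first replace $C$ by the faithfully flat colimit of Cohen rings of $k^{1/p^n}$. Next I form
\[
A_n = A\otimes_{C[[x]]} C_n[[x^{1/p^n}]],
\]
where $C_n$ is the $n$-th stage of that tower. This is a finite free extension of the regular ring $A$, because $C_n[[x^{1/p^n}]]$ is finite free over $C[[x]]$. Each $A_n$ is then a Noetherian complete local ring, and each transition map $A_n\to A_{n+1}$ is finite free, hence flat. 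Set $A'_\infty=\colim_n A_n$. By Corollary~\ref{cor:artin-rees}, $A'_\infty$ is coherent and has the Artin--Rees property. It is also flat over $A$, as a filtered colimit of flat $A$-algebras.

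\textbf{Step 2: complete.} Let $A_\infty=(A'_\infty)^{\wedge p}$, which I expect to equal the completion with respect to the maximal ideal up to radical. By Theorem~\ref{thm:artin-rees-completions}, applied with the finitely generated ideal $I=(p)$, the map $A'_\infty\to A_\infty$ is flat. Composing, $A\to A_\infty$ is flat.

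\textbf{Step 3: perfectoidness.} I take $\varpi=p^{1/p}$. Then $A_\infty$ is $p$-torsion free, being flat over the $p$-torsion-free ring $A$, and $\varpi$-adically complete. By Definition~\ref{def:perfectoid}, it remains to check that Frobenius $A'_\infty/\varpi\to A'_\infty/p$ is an isomorphism. This is the standard computation for $C_\infty\langle x^{1/p^\infty}\rangle$: there every element modulo $p$ is a $p$-th power of a unique class modulo $\varpi$. In the ramified case I expect the main obstacle, because the relation $p=f$ must be compatible with the extraction of roots. My first approach is to work with the ring $S=C_\infty[[x^{1/p^\infty}]]^{\wedge}$, which is perfectoid, and realise the ramified cover as the base change $A\otimes_{C[[x_1,\dots,x_d]]}S$. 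Then $A_\infty$ is the quotient of $S$ by $p-f$, and $p-f$ is a nonzerodivisor.

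Such a quotient need not be perfectoid. To handle this, I plan to use instead the André-type construction: adjoin a compatible system of $p$-power roots of $p-f$, or equivalently of the image of $p$, to make the quotient perfectoid, in the style of Bhatt's perfectoid covers \cite{bhattcohen}. I would then check flatness again using the colimit of Noetherian finite free extensions followed by Theorem~\ref{thm:artin-rees-completions}, exactly as in Steps 1 and 2. If that fails, the fallback is to cite existing flat perfectoid covers of complete regular local rings and use the above only for flatness.

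Faithfulness follows because $A\to A_\infty$ is a local flat map of local rings.
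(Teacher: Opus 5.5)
Your Steps 1 and 2 are the paper's flatness argument: a tower of finite free Noetherian $A$-algebras, then Corollary~\ref{cor:artin-rees} for the colimit, then Theorem~\ref{thm:artin-rees-completions} with $I=(p)$. Step 3, however, has a genuine gap in the ramified case, and it comes from your choice of tower. Adjoining $p^{1/p^n}$ \emph{in addition to} the $x_i^{1/p^n}$ destroys perfectoidness. Your suggested remedy does not repair this: $p-f$ is already zero in $A$, and adjoining $p$-power roots of $p$ is exactly what went wrong.

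Concretely, take $f=x_1^2$, so $A=C[[x_1,\dots,x_d]]/(p-x_1^2)$. Let $R$ be the $p$-completion of your $A'_\infty=T/(p-x_1^2)$, where $T=\colim_n C[p^{1/p^n}][[x^{1/p^n}]]$. Then $R$ is $p$-torsion free, and Frobenius $R/p^{1/p}\to R/p$ sends $x_1^{2/p}$ to $x_1^2=p=0$. Yet $x_1^{2/p}$ is nonzero in $R/p^{1/p}=T/(p^{1/p},x_1^2)$: map to $\colim_n k[[x^{1/p^n}]]$ by killing the $p^{1/p^n}$. So the criterion of Definition~\ref{def:perfectoid} fails with $\varpi=p^{1/p}$, and $R$ is not perfectoid. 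As written, the proposal proves only the unramified and characteristic-$p$ cases; the fallback of citing a flat perfectoid cover is citing the statement itself.

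The missing idea is to adjoin only the $x_i^{1/p^n}$ and let the relation $p=f$ produce the pseudo-uniformizer. This is the tower the paper uses, taken from \cite[Lemma 3.1.1(b)]{purflat}: $A_n=C[[x_1^{1/p^n},\dots,x_d^{1/p^n}]]/(p-f)$, which is finite free over $A$.

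\begin{itemize}
\item After multiplying $p-f$ by a unit, assume $f\in(x_1,\dots,x_d)$. Assume $k$ is perfect, or use your colimit of Cohen rings of $k^{1/p^n}$.
\item Choose $g\in C[[x^{1/p}]]$ lifting the $p$-th root of $\bar f$ in $k[[x^{1/p}]]$, and write $f=g^p+ph$ with $h$ of zero constant term.
\item Let $S=\colim_n C[[x^{1/p^n}]]$, whose reduction $S/p$ is perfect. In $A'_\infty=S/(p-f)$ one gets $g^p=p(1-h)$ with $1-h$ a unit, so $\varpi=g$ satisfies $\varpi^p\mid p$.
\item One checks $A'_\infty/\varpi=S/(p,g)$ and $A'_\infty/\varpi^p=S/(p,g^p)$. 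Frobenius between them is bijective because $S/p$ is perfect.
\end{itemize}

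Taking $f=x_1$ recovers the unramified case (there $x_1^{1/p^n}$ plays the role of $p^{1/p^n}$), so no case split is needed. Your Steps 1 and 2 then apply unchanged. One minor correction: when $[k:k^p]=\infty$, your stages are flat and Noetherian but not finite over $A$. This does not affect the flatness argument.
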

\begin{proof}
In \cite[Lemma 3.1.1. (b)]{purflat}, it is shown that $A$ admits a system of tower $\{A_i\}_{n\ge 0}$ of finite, 
free $A$-algebras of $p$-power order whose colimit $A_{\infty}$ is formally perfectoid. Explicitly,
if $A = W(k)[[x_1, \ldots, x_n]]/(p-f)$ for some perfect field $k$ of characteristic $p$ and $f \in (p,x_1,...,x_n)^2$, then we can take 
\[A_i = W(k)[[x_1^{1/p^i}, \ldots, x_n^{1/p^i}]]/(p-f), \; \; \forall i \ge 1.\]
By Theorem~\ref{thm:artin-reescoh}, $A_{\infty}$ has the Artin-Rees property, so by Theorem~\ref{thm:artin-rees-completions}  the $p$-adic completion $A_{\infty}^{\wedge p}$ is a perfectoid ring that is flat over $A_{\infty}$. 
Since $A \to A_{\infty}$ is flat, we conclude that $A \to A_{\infty}^{\wedge p}$ is also flat as desired.
\end{proof}
\begin{remark}
The converse of Corollary~\ref{cor:perfectoid-cover} also holds, as has been shown in \cite{padickunz}.
\end{remark}
\section{The unramified case}
In this section, we prove Theorem~\ref{thm:affine-mixed} in the case that $X$ is unramified over $Y$ by similar methods as in the equicharacteristic case.
Our key result is the following.
\begin{theorem}\label{thm:cohpureunram}
Let $(A, \ideal{m}_A,k)$ be an unramified regular local ring and $U \to \spec{A}$ an \'{e}tale morphism that is cohomologically pure in codimension $1$.
Then $U$ is an affine scheme.
\end{theorem}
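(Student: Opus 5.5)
We will follow the strategy of the equicharacteristic proof \cite[0ECC]{sp}, with Frobenius-unit modules replaced by modules with an action of a suitable ring of differential operators, as the introduction to this chapter suggests. First we reduce to $A$ complete. Completion $A \to A^{\wedge}$ is faithfully flat. Cohomology commutes with flat base change, and the conditions of Definition~\ref{def:cohpure} are preserved because the modules $H^i(\cal{O}_U)$ are supported at $\ideal{m}_A$. Affineness descends along fpqc base change. So we may assume $A = W(k)[[x_1,\dots,x_{d-1}]]$ with $k$ perfect, after possibly extending the residue field by a faithfully flat unramified extension. By Serre's criterion for a quasi-compact separated scheme, it then suffices to show $H^i(\cal{O}_U)=0$ for all $i \ge 1$. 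In fact one also needs to control quasi-coherent ideals, but we plan to use the standard argument: because $U \to \spec{A}$ is \'{e}tale and dominant, it suffices that $U \to \spec{H^0(\cal{O}_U)}$ is an open immersion whose complement is empty. We will verify this last point at the end, using the vanishing of cohomology.

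The core is to equip $H^i(\cal{O}_U)$, for $i\ge 1$, with extra structure. Since $U$ is \'{e}tale over $A$, every $A$-linear differential operator on $A$ lifts uniquely to $\cal{O}_U$. This includes the divided-power derivations $\partial_{x_j}^{[n]}$ and, in the unramified case, the $\delta$-structure in the direction of $p$: \'{e}tale maps lift Frobenius lifts uniquely on $p$-completions, and in the unramified case the $p$-direction is controlled by $\delta$-ring structure. Consequently each $H^i(\cal{O}_U)$ becomes a module over the ring $D_A$ generated by these operators, and it is supported on $\{\ideal{m}_A\}$. We will then invoke the arithmetic input, Theorem~\ref{thm:dmodunram}. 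We expect it to say that a $D_A$-module supported at the closed point, with the appropriate finiteness, is a direct sum of copies of $E_A(k)=H^d_{\ideal{m}_A}(A)$, or more precisely of $R\Gamma_{\ideal{m}_A}(A)$ in the derived sense. Hence $H^i(\cal{O}_U)\cong E_A(k)^{\oplus I_i}$ for $i \ge 1$.

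Next we will count. We compare $R\Gamma(U,\cal{O}_U)$ with local cohomology through the triangle relating $R\Gamma(U,\cal{O}_U)$, $R\Gamma_{Z}$ and $\cal{O}$ on a finite normal model. Alternatively we compute $R\Gamma_{\ideal{m}_A}(R\Gamma(\cal{O}_U))$ using the decomposition above, and we plan to use Theorem~\ref{thm:topinput} to control $R\Gamma(\cal{O}_{U_{p=0}})$. That theorem should give a vanishing range for the cohomology of $U_{p=0}$ over $A/p$; by the equicharacteristic case, $U_{p=0}$ is nearly affine over the regular ring $A/p$. The long exact sequence for multiplication by $p$ is
\[0 \to \cal{O}_U \xrightarrow{p} \cal{O}_U \to \cal{O}_{U_{p=0}} \to 0.\]
In it, $p$ acts on $E_A(k)$ surjectively with a nonzero kernel $E_{A/p}(k)$. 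Combined with the hypothesis $H^{d-1}=H^{d}=0$, we propagate the vanishing downward: if $H^{i+1}(\cal{O}_U)=0$ and $H^{i}(\cal{O}_{U_{p=0}})$ is constrained, then $H^i(\cal{O}_U)$ is a sum of $E$'s on which $p$ is surjective, which forces $I_i=0$. We expect the equicharacteristic case applied to $U_{p=0}$ over $A/p$ to supply that constraint.

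The main obstacle is the $D$-module step. One must show that the lifted operators really make $H^i(\cal{O}_U)$ into modules to which Theorem~\ref{thm:dmodunram} applies. In mixed characteristic there is no honest Frobenius on $A$ itself, and the $p$-direction operator is only semilinear and defined after $p$-completion. We would first try to work with $\delta$-structures on the $p$-completion of $U$, and then use that modules supported at $\ideal{m}_A$ are $p$-power torsion, so that completion does not change them. The off-by-one interaction between the $p$-direction and the hypothesis $H^{d-1}=0$ is where we expect the argument to be most delicate.
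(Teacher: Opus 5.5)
Your proposal has a genuine gap. It lies in how the $D$-module input is used and in how the last cohomology group is disposed of.

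First, Theorem~\ref{thm:dmodunram} is not an unconditional classification of $\mathrm{Diff}_{A/V}$-modules supported at $\ideal{m}_A$. Its essential hypothesis is that $\pi=p$ acts surjectively, and without it the conclusion is false: $E_{A/p}(k)=H^{d-1}_{\ideal{m}_A}(A/p)$ is a $\mathrm{Diff}_{A/V}$-module supported at $\ideal{m}_A$ and killed by $p$. A $\delta$-structure in the $p$-direction does not produce $p$-divisibility, since this example carries the Frobenius of $A/p$, which is compatible with any Frobenius lift. It is also not needed: the paper uses only $V$-linear operators (Corollary~\ref{cor:dmodcoh}).

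The $p$-divisibility must be established \emph{before} the classification, using the step you placed after it. Theorem~\ref{thm:topinput}, together with the equicharacteristic case, shows that $U_{p=0}$ is affine, not merely ``nearly'' affine. The Bockstein sequence then shows two things. For $i\ge 2$, $p$ is injective on $H^i(\cal{O}_U)$, so these $p$-power torsion modules vanish outright; no downward induction from $H^{d-1}=H^d=0$ and no $D$-modules are needed there. And $p$ is surjective on $H^1(\cal{O}_U)$. Only then does Theorem~\ref{thm:dmodunram} apply, and only to $H^1(\cal{O}_U)$.

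Second, your mechanism for killing the remaining group is wrong. $p$ is always surjective on $E_A(k)$, so ``a sum of $E$'s on which $p$ is surjective'' forces nothing. Vanishing needs $p$ to be injective, i.e.\ $H^{i-1}(\cal{O}_{U_{p=0}})=0$. This fails exactly at $i=1$, where the Bockstein sequence only identifies $H^1(\cal{O}_U)[p]$ with $\coker\bigl(H^0(\cal{O}_U)/p\to H^0(\cal{O}_{U_{p=0}})\bigr)$.

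The missing idea is Lemma~\ref{lem:cohpuretop}(ii), applied with $A$ complete (hence henselian) and $d\ge 3$: $U$ misses the closed point, so $R\Gamma(\cal{O}_U)\otimesl_A k\simeq 0$. Because $H^1(\cal{O}_U)\cong\bigoplus E_A(k)$, the complex $H^1(\cal{O}_U)\otimesl_A k$ is concentrated in cohomological degree $-d$. So in the spectral sequence $\Tor^A_{-i}(H^j(\cal{O}_U),k)\Rightarrow H^{i+j}(R\Gamma(\cal{O}_U)\otimesl_A k)$, no differential reaches $E_2^{0,0}$ and nothing else lies in total degree $0$ (using $d\ge 3$). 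Hence $H^0(\cal{O}_U)\otimes_A k=0$.

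Affineness then does not come from a direct proof that $H^1(\cal{O}_U)=0$. Instead, $U\to\spec{H^0(\cal{O}_U)}$ is a quasi-compact open immersion, since $U$ is quasi-affine by Zariski's main theorem. It is an isomorphism over the punctured spectrum, where the higher cohomology localizes to zero. And $\spec{H^0(\cal{O}_U)}$ has no points over $\ideal{m}_A$. Your vaguer comparison with local cohomology on a finite normal model does not supply this step.
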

The proof will consist of two key inputs, the first being a topological result to control the cohomological properties of $U_{p=0}$, 
and the second being an arithmetic input of classifying $D$-modules over unramified complete local rings. We begin with the topological input. 
Recall the following connectedness lemma: 
\begin{theorem}\label{thm:connectedness}
Let $A$ be a normal complete Noetherian local ring with maximal ideal $\ideal{m}_A$ of dimension at least $3$. For any $f \in \ideal{m}_A$,
the punctured spectrum $\spec{A/fA} - \{\ideal{m}_A\}$ is connected.
\end{theorem}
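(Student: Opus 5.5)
The plan is Grothendieck's formal-function argument: a disconnection of the punctured spectrum of $A/fA$ would produce a nontrivial idempotent over $A$, which is impossible because $A$ is a domain. If $f=0$ the statement is just that the punctured spectrum of the local domain $A$ of dimension $\ge 2$ is connected. So assume $f\neq 0$; then $f$ is a non-zero-divisor because $A$ is normal, hence a domain.

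Write $U=\spec{A}-\{\ideal{m}_A\}$ and $U_n=U\cap V(f^n)$. Suppose, for contradiction, that $U_1$ is disconnected. Then $\Gamma(U_1,\cal{O})$ contains an idempotent $e_1\neq 0,1$. The schemes $U_n$ are nilpotent thickenings of $U_1$ with the same underlying space. So $e_1$ lifts uniquely to a compatible system of idempotents $e_n\in\Gamma(U_n,\cal{O})$, giving a nontrivial idempotent $e=(e_n)$ of the ring
\[B:=\lim_n \Gamma(U,\cal{O}/f^n).\]
The goal is then to show that $B$ has no nontrivial idempotents.

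To compare $B$ with $A$, use the sequences $0\to \cal{O}_U\xrightarrow{f^n}\cal{O}_U\to\cal{O}_U/f^n\to 0$. Since $A$ is normal of dimension $\ge 3$, Serre's $S_2$ gives $\mathrm{depth}\,A\ge 2$, so $\Gamma(U,\cal{O})=A$ and $H^1(U,\cal{O})\simeq H^2_{\ideal{m}}(A)$. This yields exact sequences
\[0\to A/f^nA\to \Gamma(U,\cal{O}/f^n)\to H^2_{\ideal{m}}(A)[f^n]\to 0.\]
Passing to the limit (the left system is surjective, so Mittag-Leffler holds) and using completeness, $A^{\wedge f}=A$, gives
\[0\to A\to B\to T_f:=\lim_n H^2_{\ideal{m}}(A)[f^n]\to 0.\]
By local duality, $H^2_{\ideal{m}}(A)$ is the Matlis dual of a finite $A$-module $N=\Ext^{d-2}_A(A,\omega_A^\bullet)$, where $\omega_A^\bullet$ is the normalized dualizing complex; this is where completeness enters. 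Normality ($S_2$, and $R_1$ at height-one primes) together with $d\ge 3$ forces $\dim N\le d-3$; that is, $N$ is supported in codimension $\ge 3$.

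The main obstacle is extracting from this that $B$ has no nontrivial idempotents; the dimension hypothesis $d\ge 3$ is used exactly here. The first attempt is to show $T_f=0$. The $f$-Tate module of $D(N)$ is dual to $N\otimes_A (A_f/A)=H^1_{(f)}(N)$, so it vanishes whenever $f$ is nilpotent on $N$; in general this need not happen. So the fallback plan is to show that $B$ is a domain. Since $B$ is $f$-torsion free and $A\subset B$, the map $B\to B[1/f]$ is injective, so it suffices to show $B[1/f]$ is a domain. The quotient $B/A\simeq T_f$ is Matlis dual to $H^1_{(f)}(N)$, which is supported on $V(f)\cap\mathrm{Supp}\,N$. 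From this I would show that $A\to B$ becomes an isomorphism after localizing at the generic point, so $B[1/f]$ embeds in $K(A)$. If this support argument works, $B\subset K(A)$ is a domain and cannot contain $e\neq 0,1$, a contradiction. If it does not, I would instead quote Grothendieck's Lefschetz condition from SGA2 (\cite{sp}, connectedness section), which is precisely the statement that $\Gamma$ of the formal completion of $U$ along $U_1$ equals the completion of $A$ when $\mathrm{depth}\ge 2$ and $\dim V(f)\ge 2$.
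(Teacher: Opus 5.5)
Your argument is correct up to the exact sequence $0\to A\to B\to T_f\to 0$ with $T_f=\lim_n H^2_{\mathfrak m}(A)[f^n]$. This is the standard formal-functions proof of the connectedness statement the paper cites from the Stacks Project. The gap is at the decisive step: you never show that $B$ has no idempotent $e\neq 0,1$. You abandon the natural route $T_f=0$ because you believe $f$ need not be nilpotent on $N$, but that belief is false.

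Your bound $\dim N\le d-3$ is far from sharp: $N$ has finite length. Localizing the normalized dualizing complex shifts it by $\dim A/\mathfrak p$. Hence $N_{\mathfrak p}\neq 0$ only if $H^{2-\dim A/\mathfrak p}_{\mathfrak pA_{\mathfrak p}}(A_{\mathfrak p})\neq 0$, which forces $\mathrm{depth}\,A_{\mathfrak p}+\dim A/\mathfrak p\le 2$. For $\mathfrak p\neq\mathfrak m$ this cannot happen:
\begin{itemize}
\item $A$ is a complete local domain, hence catenary, so $\mathrm{ht}\,\mathfrak p+\dim A/\mathfrak p=d\ge 3$;
\item $(S_2)$ gives $\mathrm{depth}\,A_{\mathfrak p}\ge\min(2,\mathrm{ht}\,\mathfrak p)$;
\item checking $\mathrm{ht}\,\mathfrak p=0$, $1$ and $\ge 2$ separately gives $\mathrm{depth}\,A_{\mathfrak p}+\dim A/\mathfrak p\ge 3$ in every case.
\end{itemize}
Equivalently, this is Grothendieck's finiteness criterion applied to $H^2_{\mathfrak m}(A)$. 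So $H^2_{\mathfrak m}(A)$ has finite length and some power $f^c$ kills it. Every compatible system in the Tate module then satisfies $t_n=f^c t_{n+c}=0$, so $T_f=0$. Therefore $B=A$, which is local and has no nontrivial idempotents. With this step inserted your proof is complete and follows the same route as the cited result.

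Your fallbacks do not fill this gap as written.
\begin{itemize}
\item Matlis duality does not preserve supports: the dual of $E_A(k)$ is $A$ itself. Knowing that $H^1_{(f)}(N)$ is supported on $V(f)\cap\mathrm{Supp}\,N$ therefore says nothing about $T_f\otimes_A K(A)$. What does work is that the nonzero ideal $\mathrm{Ann}_A(N)$ kills $T_f=\Hom_A(H^1_{(f)}(N),E_A(k))$.
\item Even granting $B\otimes_A K(A)=K(A)$, an embedding $B[1/f]\hookrightarrow K(A)$ needs $B$ to be $A$-torsion free, not merely $f$-torsion free. You give no argument for this; it can be obtained from $B\hookrightarrow\prod_{\mathfrak p\in U_1}(A_{\mathfrak p})^{\wedge f}$.
\item The Lefschetz statement you propose to quote has the wrong hypotheses: depth $\ge 2$ and $\dim V(f)\ge 2$ do not give $\lim_n\Gamma(U,\mathcal O/f^n)=A$. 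For a counterexample, let $C$ be two smooth $3$-dimensional branches meeting in a point, and take $A=C[[s]]$ and $f=s$. Then $A$ has depth $2$ and $\dim V(s)=3$, yet the punctured spectrum of $A/sA=C$ is disconnected.
\end{itemize}
The operative hypothesis is that a power of $f$ kills $H^1_{\mathfrak m}(A)$ and $H^2_{\mathfrak m}(A)$. That is exactly the finite-length statement you were missing.
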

\begin{proof}
This is a consequence of \cite[0EG3]{sp} (after noting that $A$ is $f$-adically complete, by say \cite[090T]{sp})
\end{proof}
We also have the following result that explains our terminology of cohomological purity in codimension $1$:
\begin{lemma}\label{lem:cohpuretop}
Let $(A,\ideal{m}_A,k)$ be a Noetherian local ring of dimension $d$, $V \to \spec{A}$ a morphism cohomologically pure in codimension $1$.
\begin{enumerate}
\item[(i)] If there is an open immersion $V \to \spec{A'}$ of $A$-schemes, where $A'$ is a Noetherian regular domain that is integral over $A$, 
then $\spec{A'} - V$ is of pure codimension $1$ in $\spec{A'}$. In particular, $V$ is affine. 
\item[(ii)] Suppose $V \to \spec{A}$ is \'{e}tale, and let 
$g: A \to B$ be the normalization of $A$ in $V$ giving a compactification 
$V \hookrightarrow \spec{B}$ of $V$ by Zariski's main theorem (\cite[02LR]{sp}). If $A$ is regular with $d \ge 3$, 
and the pair $(A, \ideal{m}_A)$ is henselian,
then $\spec{B} - V$ has dimension at least $2$. In particular, $V \to \spec{A}$ misses the maximal ideal.
\end{enumerate}
\end{lemma}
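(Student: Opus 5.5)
For (i), write $Z=\spec{A'}\setminus V$, a closed subset. The plan is to argue by contradiction using local cohomology on the affine scheme $\spec{A'}$. For $i\ge 1$ we have $H^i(\cal{O}_V)\cong H^{i+1}_Z(A')$, and formation of both sides commutes with flat base change along localizations of $A'$.

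Suppose some irreducible component of $Z$ has generic point $\ideal{q}$ with $c=\operatorname{ht}(\ideal{q})\ge 2$. Localizing at $\ideal{q}$ makes $Z$ equal to the closed point, so
\[H^{c-1}(\cal{O}_V)\otimes_{A'}A'_{\ideal{q}}\cong H^c_{\ideal{q}}(A'_{\ideal{q}}).\]
The right side is nonzero by Grothendieck non-vanishing, since $A'_{\ideal{q}}$ is regular local of dimension $c$.

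Now split into cases according to where $\ideal{q}$ lies over $A$.
\begin{enumerate}
\item[(a)] If $\ideal{q}$ does not lie over $\ideal{m}_A$, the support hypothesis on $H^{c-1}(\cal{O}_V)$ (note $c-1\ge 1$) gives a contradiction.
\item[(b)] If $\ideal{q}$ lies over $\ideal{m}_A$, then $\ideal{q}$ is maximal, because $A'$ is integral over $A$. We want $c\in\{d,d+1\}$, so that the hypothesis $H^{d-1}(\cal{O}_V)=H^d(\cal{O}_V)=0$ applies.
\end{enumerate}

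I expect case (b) to be the main obstacle, since it needs every maximal ideal of $A'$ to have height $d$. Dominance together with integrality gives $\dim A'=d$. To get equidimensionality I would try the dimension formula, or reduce to $A$ universally catenary; in the situation of interest ($A$ regular, so universally catenary) this follows.

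Once $Z$ is known to be of pure codimension $1$, affineness of $V$ follows because $A'$ is regular, hence locally factorial. Each codimension-$1$ component of $Z$ is then locally cut out by one equation, so $V\to\spec{A'}$ is an affine morphism. Since $\spec{A'}$ is affine, $V$ is affine.

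For (ii), $B$ is integral over the henselian local ring $A$ and is normal. Hence $B$ is a finite product of henselian normal local domains, each of dimension $d\ge 3$ by dominance. Assume $\dim(\spec{B}\setminus V)\le 1$; we aim for a contradiction. Then $\spec{B}\setminus V$ has codimension $\ge 2$ in each component, since $d\ge 3$. Purity of the ramification locus (Theorem~\ref{thm:nagata-purity}) then shows that $\spec{B}\to\spec{A}$ is \'{e}tale at every point.

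It follows that $B$ is finite \'{e}tale over $A$, hence regular. We may then apply (i) with $A'=B$: the complement $\spec{B}\setminus V$ is of pure codimension $1$ or empty, and codimension $\ge 2$ forces it to be empty. So $V=\spec{B}$, i.e.\ $V$ is finite \'{e}tale over $A$.

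I expect to dispose of this degenerate case using the standing setup (that $V$ is the \'{e}tale locus of a genuinely ramified cover, so the complement is nonempty). Granting a nonempty complement, $\dim(\spec{B}\setminus V)\ge 2$. Moreover, in each henselian local factor the nonempty closed set $\spec{B}\setminus V$ contains the closed point. Since every point of $B$ over $\ideal{m}_A$ is closed, $V\to\spec{A}$ misses $\ideal{m}_A$.
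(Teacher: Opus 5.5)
Your proposal is correct and is essentially the paper's argument. In (i) you localize $H^{c}_{Z}(A')\cong H^{c-1}(\mathcal{O}_V)$ at a height-$c\geq 2$ component of $Z$ and play Grothendieck non-vanishing against the support and top-degree vanishing hypotheses; in (ii) you combine purity of the branch locus for $B$ over $A$ with part (i) applied to $A'=B$ in the unramified case, just phrased contrapositively.

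The two caveats you raise are real, and the paper's proof passes over both of them:
\begin{enumerate}
\item[(i)] The argument tacitly uses that every maximal ideal of $A'$ has height $d$. This holds for $A$ regular, which is the only case where the lemma is used.
\item[(ii)] The statement fails when some connected component of $V$ is finite \'{e}tale over $A$, e.g.\ $V=\spec{A}$. The extra hypothesis needed is that no component $V_i$ equals $\spec{B_i}$. Nonemptiness of $\spec{B}\setminus V$ alone does not force each $V_i$, and hence $V$, to miss the fibre over $\ideal{m}_A$.
\end{enumerate}
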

\begin{proof}
(i): Let $I \subset A'$ define the complement $\spec{A'} - V$. Let $\ideal{p} \subset A'$ be a component of $V(I)$ of height $c$. Then 
\[H_{V(I)}^c(A') \otimes_{A'} A'_{\ideal{p}} \simeq H_{\ideal{p}A'_{\ideal{p}}}^c(A'_{\ideal{p}}) \neq 0.\]
On the other hand, for $2 \le c \le d-1$, we have $H_{V(I)}^c(A') \simeq H^{c-1}(\cal{O}_V)$ is supported on $\{\ideal{m}_{A}\}$, so $H_{V(I)}^c(A') \otimes_{A'} A'_{\ideal{p}} = 0$,
and for $c=d$, we have $H_{V(I)}^d(A') \simeq H^{d-1}(\cal{O}_V) = 0$, so $H_{V(I)}^c(A') \otimes_{A'} A'_{\ideal{p}} = 0$ as well.
Therefore, we conclude $c=1$, as desired.\\
(ii): Since $V \to \spec{A}$ is \'{e}tale and $A$ is regular, $V \simeq \prod_{i \in [n]} V_i, n \in \bb{N}$ where each $V_i$ is an integral scheme \'{e}tale over $A$. 
Hence, we may assume $V$ is integral.
Then $B$ is a normal domain integral over $A$. Since
$(A,\ideal{m}_A)$ is henselian, $B$ is a local ring.\\
\indent There are two cases, either $B$ is \'{e}tale over $A$ or it has ramification. In the first case, 
we may conclude by (i),
and in the second case, $V$ is contained in the maximal open subset of $\spec{B}$ where $\spec{B} \to \spec{A}$ is \'{e}tale, 
so the complement $\spec{B} - V$ contains the ramification locus of $g$, which is of pure codimension $1$ in $\spec{B}$ by purity of the ramification locus, 
so $\spec{B} - V$ has dimension at least $2$.
\end{proof}
\begin{theorem}[Topological input]\label{thm:topinput}
Let $A$ be an regular local ring of dimension at least $3$, and $U \to \spec{A}$ an \'{e}tale morphism that is cohomologically pure in codimension $1$.
Let $f \in A$ be an element, and write $f = \prod_{i=1}^n f_i^{n_i}$ uniquely up to units of $A$ as a product of irreducible elements $f_i \in \ideal{m}_A \setminus \ideal{m}_A^2$.
Then $U_{f=0} \to \spec{A/f}$ is cohomological pure in codimension $1$ as well.
\end{theorem}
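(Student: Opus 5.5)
The plan is to work with the multiplication-by-$f$ sequence on $U$. Since $U \to \spec{A}$ is \'{e}tale and $A$ is a domain, $f$ is a non-zero-divisor on $\cal{O}_U$. This gives a short exact sequence $0 \to \cal{O}_U \stackrel{f}{\to} \cal{O}_U \to \cal{O}_{U_{f=0}} \to 0$, and hence a long exact sequence
\[H^i(\cal{O}_U) \stackrel{f}{\to} H^i(\cal{O}_U) \to H^i(\cal{O}_{U_{f=0}}) \to H^{i+1}(\cal{O}_U) \stackrel{f}{\to} H^{i+1}(\cal{O}_U).\]
Three of the four conditions of Definition~\ref{def:cohpure} for $U_{f=0} \to \spec{A/f}$, which has dimension $d-1$, then follow formally.
\begin{enumerate}
\item[(a)] For $i \ge 1$, $H^i(\cal{O}_{U_{f=0}})$ is an extension of a submodule of $H^{i+1}(\cal{O}_U)$ by a quotient of $H^i(\cal{O}_U)$. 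Both are supported on $\{\ideal{m}_A\}$, so $H^i(\cal{O}_{U_{f=0}})$ is too.
\item[(b)] Taking $i=d-1$, the vanishing $H^{d-1}(\cal{O}_U)=H^d(\cal{O}_U)=0$ gives $H^{d-1}(\cal{O}_{U_{f=0}})=0$.
\item[(c)] Dominance of $U_{f=0} \to \spec{A/f}$ should follow from dominance and flatness of $U \to \spec{A}$. The generic points of $V(f)$ are the height-one primes $(f_i)$, and these must be hit. Here I would use Lemma~\ref{lem:cohpuretop}(ii): after henselizing, the complement of $U$ in its normalization $\spec{B}$ has dimension $\le d-2$ (the paper states ``at least $2$'', which I read as codimension at least $2$), so it cannot contain a divisor lying over $(f_i)$.
\end{enumerate}

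The remaining condition is $H^{d-2}(\cal{O}_{U_{f=0}})=0$. Because $H^{d-1}(\cal{O}_U)=0$, this is equivalent to multiplication by $f$ being surjective on the $\ideal{m}_A$-torsion module $H^{d-2}(\cal{O}_U)$. I expect this to be the main obstacle, since it cannot be read off from the long exact sequence.

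First I would reduce to $A$ complete, hence henselian. Formation of $H^i(\cal{O}_U)$ commutes with the flat base change $A \to A^{\wedge}$, and support at the closed point is preserved. Next I would reduce to $f = f_i$ a single prime with $A/f_i$ regular. For this, use the exact sequences $0 \to \cal{O}_{U_{f_i=0}} \stackrel{g}{\to} \cal{O}_{U_{gf_i=0}} \to \cal{O}_{U_{g=0}} \to 0$, where $g$ is a non-zero-divisor; note that multiplication by $g$ here has kernel $\cal{O}_{U_{f_i=0}}$ and lands in $\cal{O}_{U_{gf_i=0}}$. By induction on $\sum n_i$, vanishing of $H^{d-2}$ for $U_{g=0}$ and $U_{f_i=0}$ gives it for $U_{gf_i=0}$. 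Both vanishings are needed because $H^{d-2}(\cal{O}_{U_{f_i=0}})$ maps into $H^{d-2}(\cal{O}_{U_{gf_i=0}})$, which maps to $H^{d-2}(\cal{O}_{U_{g=0}})$, and this segment is exact.

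For $f$ prime with $A/f$ regular, I would compactify as in Lemma~\ref{lem:cohpuretop}(ii). Writing $U \hookrightarrow \spec{B}$ with $B$ normal local and finite over $A$, the closed complement $Z$ has dimension $\le d-2$, that is, codimension $\ge 2$. Then $U_{f=0}$ is open in $\spec{B/fB}$ with complement $Z \cap V(f)$, and
\[H^{d-2}(\cal{O}_{U_{f=0}}) \simeq H^{d-1}_{Z\cap V(f)}(B/fB),\]
using $d-2 \ge 1$. The plan is to kill this top-degree local cohomology with a Hartshorne--Lichtenbaum-type vanishing ([0EB7]) applied over the complete local ring $B/fB$ of dimension $d-1$. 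The required topological hypothesis is connectedness of the punctured spectrum of $\spec{B/fB}$ minus $Z \cap V(f)$. This should come from Theorem~\ref{thm:connectedness} applied to $B$, which has dimension $\ge 3$, together with the codimension bound on $Z$. Verifying that this connectedness input genuinely suffices, rather than only giving vanishing of $H^{d-1}_{\ideal{m}}$, is the step I would scrutinize most.
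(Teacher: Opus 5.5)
Your Bockstein reduction to $H^{d-2}(\mathcal{O}_{U_{f=0}})=0$ is correct, and so are items (a) and (b), the induction on $\sum n_i$ via $0\to\mathcal{O}_{U_{f_i=0}}\xrightarrow{g}\mathcal{O}_{U_{gf_i=0}}\to\mathcal{O}_{U_{g=0}}\to 0$, and the identification $H^{d-2}(\mathcal{O}_{U_{f=0}})\simeq H^{d-1}_{Z\cap V(f)}(B/fB)$. The problem starts with a misreading. Lemma~\ref{lem:cohpuretop}(ii) means literally $\dim Z\ge 2$. The reason is that $Z=\mathrm{Spec}(B)\setminus U$ contains the ramification locus, which is of pure codimension $1$, so typically $\dim Z=d-1$. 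A concrete example: take $A=k[[x,y,z]]$ with $\mathrm{char}\,k\neq 2$, $B=A[t]/(t^2-x)$ and $U=\mathrm{Spec}(B_t)$. Then $U$ is affine and \'{e}tale, hence cohomologically pure, and $Z=V(t)$ is a divisor. In fact a nonempty $Z$ of codimension $\ge 2$ never occurs: Zariski--Nagata would make $B$ \'{e}tale over $A$, and then Lemma~\ref{lem:cohpuretop}(i) forces $Z$ to be of pure codimension $1$. The same example refutes your item (c). For $f=x$ you get $U_{f=0}=\mathrm{Spec}(B_t/t^2B_t)=\emptyset$, so dominance cannot be derived at all. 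The paper's proof only establishes the cohomological conditions.

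The genuine gap is in the final step, where the bound you need points the other way and a key idea is missing. By Hartshorne--Lichtenbaum \cite[0EB7]{sp}, $H^{d-1}_{Z\cap V(f)}(B/fB)\neq 0$ exactly when some $(d-1)$-dimensional irreducible component $W$ of $\mathrm{Spec}(B/fB)$ meets $Z$ only in the closed point $\mathfrak{m}_B$. Connectedness of the punctured spectrum of $B/fB$ does not by itself exclude this, because $W$ may meet other components away from $\mathfrak{m}_B$.

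The missing argument is the paper's, and it is exactly where the hypothesis $f_i\in\mathfrak{m}_A\setminus\mathfrak{m}_A^2$ is used; your proposal never uses it.
\begin{itemize}
\item $W$ maps onto $V(h)$ for some $h=f_i$, and $W^\circ=W\setminus\{\mathfrak{m}_B\}$ is a closed subscheme of $U_{h=0}$.
\item $W^\circ$ is unramified and dominant over the normal scheme $\mathrm{Spec}(A/h)\setminus\{\mathfrak{m}_A\}$, hence \'{e}tale over it. So $W^\circ\to U_{h=0}$ is an open immersion. Equivalently: $U_{h=0}$ is \'{e}tale over the regular ring $A/h$, hence normal, so its irreducible component $W^\circ$ is open.
\item Thus $W^\circ$ is open and closed in $\mathrm{Spec}(B/hB)\setminus\{\mathfrak{m}_B\}$. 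That space is connected by Theorem~\ref{thm:connectedness}, so $W^\circ$ is all of it.
\item Finally, $\dim Z\ge 2$ gives $\dim(Z\cap V(hB))\ge 1$. This produces a point of that punctured spectrum lying outside $U\supset W^\circ$, a contradiction.
\end{itemize}
An upper bound on $\dim Z$, as in your reading, is useless at this last step.
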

\begin{proof}
We may assume $A$ is a complete regular local ring since the map $A \to A^{\wedge \ideal{m}_A}$ is faithfully flat.\\
\indent Since $U \to \spec{A}$ is flat, it suffices to show, by the Bockstein sequence, that $H^{d-2}(\cal{O}_{U_{f=0}}) = 0$.
Let $B$ be the normalization of $A$ in $U$ so that we have
a compactification $U \to \spec{B} \stackrel{g}{\to} \spec{A}$ by Zariski's main theorem. Since $A$ is excellent, $B$ is module-finite over $A$ 
and thus complete and local (since $A$ is henselian). 
Let $I \subset B$ be the ideal defining the complement $\spec{B} - U$.\\
\indent By Lemma~\ref{lem:cohpuretop}, $V(I)$ has dimension at least $2$ in $\spec{B}$. Therefore, $V(I/f)$ has dimension at least $1$ in $\spec{B/fB}$, so $U_{f=0}$ is strictly contained in $\spec{B/fB} - \{\ideal{m}_{B/fB}\}$,
 which is connected by Theorem~\ref{thm:connectedness}.\\
 \indent Now assume that $H^{d-2}(\cal{O}_{U_{f=0}}) \neq 0$. 
 Then by Hartshorne-Lichtenbaum vanishing (\cite[0EB7]{sp}), there exists a minimal prime $\ideal{p} \subset B/fB$ such that $V(\ideal{p}) \cap V(I/f) = \{\ideal{m}_{B/f}\}$.
 Let $h\in A$ be the element defining the closed irreducible subscheme $g(V(\ideal{p})) \subset \spec{A}$, which is a component of $V(f)$ and is therefore normal.
 We have a diagram of schemes:
\[\begin{tikzcd}
V(\ideal{p}) - \{\ideal{m}_{B/hB}\} \arrow[rd] \arrow[r] & U_{h=0} \arrow[d] \\
& \spec{A/h} - \{\ideal{m}_{A/h}\} \end{tikzcd}\]
Since $V(\ideal{p}) - \{\ideal{m}_{B/hB}\}$ is closed in $U_{h=0}$, and the down arrow is \'{e}tale, we conclude that
the right-down map is a dominant, unramified morphism of schemes. Since $A/h$ is normal, the scheme
$\spec{A/h} - \{\ideal{m}_{A/h}\}$ is geometrically unibranch, so the right-down map is in fact \'{e}tale. Hence, the horizontal map is also \'{e}tale, and in particular
an open immersion. Therefore, $V(\ideal{p}) - \{\ideal{m}_{B/hB}\}$ is a proper, closed subset of $\spec{B/hB} - \{\ideal{m}_{B/hB}\}$ that is also open, 
which is a contradiction since the latter is connected.
\end{proof}
The arithmetic input relies on a characterization of $D$-modules over unramified complete local rings. Let us first recall
the Grothendieck's construction for the ring of differential operators over a commutative ring $A$.
\begin{definition}
Let $k$ be a commutative ring, $A$ a $k$-algebra and $M,N$ be $A$-modules.\\
\indent A differential operator of order $0$ from $M$ to $N$ is an $k$-linear map. 
For $n \ge 1$, a differential operator of order at most $n$ from $M$ to $N$ is a $k$-linear map 
$\delta: M \to N$ such that for all $a \in A$, $m \mapsto \delta(am)-a\delta(m)$ is a 
differential operator of order at most $n-1$.\\
\indent The set of all differential operators from $M$ to $N$ of order $k$ is denoted by 
$\mathrm{Diff}_{A/k}^k(M,N)$, and let $\mathrm{Diff}_{A/k}(M,N) = \bigcup_{k \ge 0} \mathrm{Diff}_{A/k}^k(M,N)$. When $M=N=A$, 
we write $\mathrm{Diff}_{A/k}(A) = \mathrm{Diff}_{A/k}(A,A)$, 
which is a (non-commutative) ring under composition and is called the ring of $k$-linear differential operators on $A$. An $A$-module 
$M$ is called a $\mathrm{Diff}_{A/k}$-module ($D$-module for short) if it is equipped with an action of $\mathrm{Diff}_{A/k}(A)$ that extends the $A$-module structure on $M$.
\end{definition}
We now have the following characterization of $D$-modules over the closed point of an unramified complete local ring.
\begin{theorem} \label{thm:dmodunram}
Let $V$ be a DVR with uniformizer $\pi$ and residue field $k$. Let $A=V[[x_1, \ldots, x_n]]$ be a complete local ring that is unramified over $V$. 
Let $M$ be an $A$-module that is supported on the closed point $\{\ideal{m}_A\}$ and is equipped with a structure of a $\mathrm{Diff}_{A/V}(A)$-module. 
If $\pi: M \to M$ is surjective, then $M$ is isomorphic to a direct sum of copies of $E_A(k)$, the injective hull of the residue field of $A$.
\end{theorem}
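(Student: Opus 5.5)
The plan is to prove a mixed-characteristic version of Kashiwara's equivalence for the closed subscheme $V(x_1,\ldots,x_n)\subset \spec{A}$. This reduces the statement to a classification of modules over the DVR $V$, where divisibility (surjectivity of $\pi$) forces injectivity.

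\emph{Step 1: the differential operators available.} For each $i$ and $a\ge 0$ let $\partial_i^{[a]}$ be the Hasse derivative on $A=V[[x_1,\ldots,x_n]]$, given by $\partial_i^{[a]}(x^b)=\binom{b_i}{a}x^{b-ae_i}$ and extended continuously. These are $V$-linear differential operators of order $\le a$, so they lie in $\mathrm{Diff}_{A/V}(A)$ and act on $M$. They satisfy the relations
\[\partial_i^{[a]}x_i=x_i\partial_i^{[a]}+\partial_i^{[a-1]},\qquad \partial_i^{[a]}x_j=x_j\partial_i^{[a]} \ (i\neq j),\]
and operators in different variables commute.

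\emph{Step 2: Kashiwara's equivalence, by induction on $n$.} First treat the single variable $x=x_1$ over the base $A'=V[[x_2,\ldots,x_n]]$. Let $M$ be $x$-power torsion, which holds because $M$ is supported on $\{\ideal{m}_A\}$. Set $K_1=\ker(x:M\to M)$. For $m$ with $x^{N}m=0$, the operator $P=\sum_{a\ge 0}(-1)^a x^a\partial^{[a]}$ acts on $m$ as a finite sum, and I plan to check the following three things using the commutation relations of Step 1.
\begin{enumerate}
\item[(a)] $P(M)\subset K_1$ and $P$ is the identity on $K_1$.
\item[(b)] $m=\sum_{j\ge 0}\partial^{[j]}\bigl(P(x^jm)\bigr)$.
\item[(c)] The map $\bigoplus_{j\ge 0}K_1\to M$, $(k_j)\mapsto\sum_j\partial^{[j]}k_j$, is injective, which I will prove by applying $P\circ x^{j}$ to a relation.
\end{enumerate}
Together these give an isomorphism $M\simeq K_1\otimes_{V}V[x^{-1}]x^{-1}\simeq K_1\otimes_{A'}H^1_{(x)}(A)$. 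The module $K_1$ is stable under $\partial_j^{[a]}$ for $j\ge 2$ and under $A'$, so it is a $\mathrm{Diff}_{A'/V}$-module, still supported at the closed point of $A'$. Iterating over the remaining variables gives $K=\{m\in M:(x_1,\ldots,x_n)m=0\}$, a $V$-module, together with a $\mathrm{Diff}$-equivariant isomorphism
\[M\simeq K\otimes_V H^n_{(x_1,\ldots,x_n)}(A).\]

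\emph{Step 3: the DVR and the conclusion.} Since $M$ is supported on $\ideal{m}_A$, $K$ is $\pi$-power torsion. The module $H^n_{(x)}(A)$ is free over $V$ (with basis $x^{-b}$, $b_i\ge 1$), so $M$ is a direct sum of copies of $K$ as a $V$-module. Hence surjectivity of $\pi$ on $M$ forces $\pi$ to be surjective on $K$. A divisible $\pi$-power-torsion module over the DVR $V$ is injective, and therefore $K\simeq\bigoplus K(V)/V=\bigoplus H^1_{\pi}(V)$. This gives
\[M\simeq\bigoplus H^1_{\pi}(V)\otimes_V H^n_{(x)}(A)\simeq\bigoplus H^{n+1}_{\ideal{m}_A}(A)\simeq\bigoplus E_A(k),\]
where the last isomorphism uses that $A$ is regular, hence Gorenstein, of dimension $n+1$.

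\emph{Main obstacle.} I expect the hardest part to be the one-variable computation in Step 2 in mixed characteristic. Without the ordinary derivative $\partial$ and with no division by integers available, all the identities must be verified with Hasse derivatives alone. The key checks are the binomial identities showing $xP=0$ and the reconstruction formula (b). The injectivity in (c) needs the most care: a relation $\sum_j\partial^{[j]}k_j=0$ with $k_j\in K_1$ only involves finitely many $j$, and I plan to peel off the top index $j$ by applying $P\circ x^j$.
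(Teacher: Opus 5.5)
Your overall route is viable and genuinely different from the paper's: a Hasse-derivative Kashiwara equivalence giving $M\simeq K\otimes_V H^n_{(x_1,\ldots,x_n)}(A)$, followed by divisibility over the DVR. However, the one-variable step fails as written.

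The operator $P=\sum_{a\ge 0}(-1)^a x^a\partial^{[a]}$ is not a finite sum on $x$-power-torsion elements. From $\partial^{[a]}x=x\partial^{[a]}+\partial^{[a-1]}$ you get $x\partial^{[a]}e=-\partial^{[a-1]}e$ for $e\in K_1$. Hence $x^a\partial^{[a]}e=(-1)^a e$, and every term of $P(e)$ equals $e$. Concretely, for $n=1$, $M=H^2_{\ideal{m}_A}(A)$ and $e=\pi^{-1}x^{-1}$, the series is $e+e+e+\cdots$. Your $P$ is the Taylor projector $f\mapsto f(0)$ onto $\ker\partial$; that is the right tool for modules on which $\partial$ is locally nilpotent, not for $x$-torsion ones.

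Formula (b) is also off by signs. Take the natural projection onto $K_1$, the one killing $\partial^{[j]}K_1$ for $j\ge 1$. For $m=\partial^{[1]}e$ the right-hand side of (b) is then $\partial^{[1]}P(-e)=-m$. The correct reconstruction carries a factor $(-1)^j$. Since you yourself flagged these identities as the crux, this is a real gap rather than a typo.

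It is repairable. Take instead $P=\sum_{a\ge 0}\partial^{[a]}x^a$, which is a finite sum on $\ker x^N$.
\begin{itemize}
\item The relation $x\partial^{[a]}x^a=\partial^{[a]}x^{a+1}-\partial^{[a-1]}x^a$ telescopes, so $xP(m)=\partial^{[N-1]}(x^Nm)=0$ for $m\in\ker x^N$.
\item $P$ is the identity on $K_1$.
\item Using $\partial^{[j]}\partial^{[a]}=\binom{a+j}{j}\partial^{[a+j]}$ and $\sum_j(-1)^j\binom{c}{j}=\delta_{c,0}$, one gets $m=\sum_j(-1)^j\partial^{[j]}P(x^jm)$.
\end{itemize}
Even simpler, you can avoid projectors altogether. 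If $x^Nm=0$, then $e=(-1)^{N-1}x^{N-1}m\in K_1$ and $x^{N-1}(m-\partial^{[N-1]}e)=0$, so induction on $N$ gives existence. For (c), applying $x^J$ to a relation $\sum_{j\le J}\partial^{[j]}e_j=0$ yields $(-1)^J e_J=0$. With this fix, your Steps 2 and 3 go through.

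The paper never builds the decomposition. It uses only the identity $\partial_i^{[k]}(x_i^km)=m+\sum_{a=1}^{k}\binom{k}{a}x_i^a\partial_i^{[a]}m$ to show that $x_i$ is surjective on $M$. It then deduces by the snake lemma that $\pi$ is surjective on $M[x_n]$ and applies induction to $M[x_n]$. It extends the resulting isomorphism to $h\colon M\to\bigoplus E_A(k)$ by injectivity, and checks that $h$ is an isomorphism after $\otimesl_A A/x_n$ and after inverting $x_n$.

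Your approach costs the explicit combinatorics but yields more. It gives a structure theorem for all $(x)$-torsion modules with a Hasse-derivative action, in which $\pi$-divisibility enters only at the final step over $V$.
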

\begin{proof}
For any multi-index $E=k_1e_1 + \cdots + k_ne_n \in \bb{N}^{n}$, we have the $\bb{Z}_p$-linear differential operators on $A$ defined by:
\[D_E=\frac{1}{k_1! \cdots k_n!} \partial_{x_1}^{k_1} \cdots \partial_{x_n}^{k_n}.\]
Now, for any $m \in M$ and $i \in [n]$, there exists a $k \in \bb{N}$ such that $x_i^km=0$. Therefore:
\[D_{k e_i}(x^km) = \frac{1}{k!} \partial_{x_i}^k(x_i^km) = m + \sum_{j \in [k]} \frac{j!}{k!} x_i^{k-j} \partial_{x_i}^{k-j}(m) = 0.\]
In particular, any $m \in M$ is in $x_iM$ for all $i \in [n]$, so $x_i: M \to M$ is surjective. Also, if $\pi \cdot m = 0$, then $\pi \cdot \partial^{j}_{x_i}(m) = 0$ for any $j \in \bb{N}$ and $i \in [n]$, 
so $x_i: M[\pi] \to M[\pi]$ is also surjective for all $i \in [n]$. By the snake lemma applied to the commutative diagram:
\[\begin{tikzcd}
0 \arrow[r] & M[x_i] \arrow[r] \arrow[d, "\pi"] & M \arrow[r, "x_i"] \arrow[d, "\pi"] & M \arrow[r] \arrow[d, "\pi"] & 0\\
0 \arrow[r] & M[x_i] \arrow[r] & M \arrow[r, "x_i"] & M \arrow[r] & 0\end{tikzcd}\]
we conclude that $\pi: M[x_i]\to M[x_i]$ is also surjective.\\
\indent We will now proceed by induction on $n$. For $n=0$, we observe that $M[\pi]$ is a $k$-vector space, so we obtain a map $h$:
\[\begin{tikzcd}
M \arrow[r, dashed, "h"] & \oplus_{i\in I} E_A(k) \\
M[\pi] \arrow[r, "\simeq"] \arrow[u, hook] & \oplus_{i \in I} k \arrow[u, hook]
\end{tikzcd}\]
for some indexing set $I$, by the injectivity of $\oplus_{i \in I} E_A(k)$,\footnote{The injectivity of $\oplus_{i \in I} E_A(k)$ follows from the fact that $E_A(k)$ is an injective $A$-module, and the direct sum of injective modules is also injective over Noetherian rings. 
In fact, the converse is also true, see \cite[Theorem 3.46]{Lam}.} 
and we will show that $h$ is an isomorphism. Since $\pi: M \to M$ is surjective, and $M$ is supported on the closed point, the desired isomorphism follows
by observing that $h \otimesl_A k: M[\pi] \to \oplus_{i \in I} k$ is an isomorphism by construction, and $h[1/\pi] : M[1/\pi] \to \oplus_{i \in I} E_A(k)[1/\pi]$ is an isomorphism as well since both sides are zero.\\
\indent Now assume that $n \ge 1$. The $M$ submodule $M[x_n]$ is a module over $A/x_n=V[[x_1, \ldots, x_{n-1}]]$ that is supported on the closed point 
and is equipped with a structure of a $\mathrm{Diff}_{V[[x_1, \ldots, x_{n-1}]]/V}$-module such that $\pi: M[x_n] \to M[x_n]$ is surjective. Consider the following commutative diagram:
\[\begin{tikzcd}M \arrow[r, dashed, "h"] & \oplus_{i\in I} E_A(k) \\
M[x_n] \arrow[r, dashed, "h'"] \arrow[u, hook] & \oplus_{i \in I} E_{A/x_n}(k)=E_A(k)[x_n] \arrow[u, hook]\\
M[\ideal{m}_A] \arrow[r, "\simeq"] \arrow[u, hook] & \oplus_{i \in I} k \arrow[u, hook]
\end{tikzcd}\]
By the inductive hypothesis, we have an isomorphism $h': M[x_n] \to \oplus_{i \in I} E_{A/x_n}(k)$. Since $x_n: M \to M$ is surjective, we therefore
obtain that $h \otimesl_A A/x_n: M[x_n] \to \oplus_{i \in I} E_A(k)[x_n]$ is an isomorphism. On the other hand, $h[1/x_n]: M[1/x_n] \to \oplus_{i \in I} E_A(k)[1/x_n]$ is an isomorphism as well since both sides are zero. 
The result follows.
\end{proof}
Given a scheme $X \to \spec{A}$, if every affine open subset of $X$ admits an $\mathrm{Diff}_{A/V}$-module structure on its global sections,
then we would like that the coherent cohomology groups $H^i(\cal{O}_X)$ also admits a $\mathrm{Diff}_{A/V}$-module structure. The following
results allow us to glue $D$-module structures.
\begin{theorem}\label{thm:Dmodglue}
Let $A \to B$ be a map of commutative rings and $M,N$ be $B$-modules
\begin{enumerate}
\item[(i)] For any multiplicatively closed subset $S \subset A$, every $A$-linear differential operator $\delta: M \to N$ of order $k$ 
induces a unique $A$-linear differential operator $S^{-1}\delta: S^{-1}M \to S^{-1}N$ of order $k$, compatible with addition and composition.
\item[(ii)] If $B \to C$ is formally \'{e}tale, then any $A$-linear differential operator $M \to N$ of order $k$ induces a unique 
$A$-linear differential operator $M \otimes_B C \to N \otimes_B C$ (of the same or lesser order), compatible with addition and composition.
\end{enumerate}
\end{theorem}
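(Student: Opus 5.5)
For (i), I will use the standard description of differential operators through principal parts. Write $P^k_{B/A}(M)=(B\otimes_A B/I^{k+1})\otimes_B M$, where $I$ is the kernel of multiplication. An $A$-linear differential operator $\delta:M\to N$ of order $\le k$ is then the same thing as a $B$-linear map $\tilde\delta:P^k_{B/A}(M)\to N$, precomposed with $m\mapsto 1\otimes m$. Rather than pass through this description, though, I would localize directly by induction on $k$.

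For $k=0$ the operator is $A$-linear and simply localizes. For the inductive step, I define
\[
S^{-1}\delta(m/s)=\frac{1}{s}\Bigl(\delta(m)-[\delta,s]\bigl(S^{-1}\text{-extension}\bigr)(m/s)\Bigr).
\]
This comes from the identity $\delta(m)=\delta(s\cdot m/s)=s\,\delta(m/s)+[\delta,s](m/s)$. Here $[\delta,s]$ has order $\le k-1$, so its extension to $S^{-1}M$ exists and is unique by induction. The recursion forces the formula, which gives uniqueness immediately.

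The point needing care is well-definedness, together with the claim that $S^{-1}\delta$ has order $\le k$. For well-definedness I check independence from the representative $m/s=m'/s'$ using $u(s'm-sm')=0$ and the commutator identities $[\delta,ab]=a[\delta,b]+[\delta,a]b$. For the order bound I check that $[S^{-1}\delta,a/t]=S^{-1}[\delta,a]$ up to terms of lower order. Compatibility with addition and composition then follows from uniqueness, since both sides are order-$\le k$ extensions of the same operator. The one thing to watch is that $S\subset A$ acts through $B$, so the commutators $[\delta,s]$ are taken with respect to $B$-multiplication.

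For (ii), I would again use principal parts and reduce to showing
\[
C\otimes_B P^k_{B/A}\cong P^k_{C/A}\cong P^k_{B/A}\otimes_B C,
\]
with $C$ acting on the left and on the right respectively. This is the standard consequence of formal étaleness: the infinitesimal thickening $B\otimes_A B/I^{k+1}\to B$ is a nilpotent thickening. Formal étaleness of $B\to C$ gives unique liftings, which identify $(C\otimes_A C)/I_C^{k+1}$ with $C\otimes_B (B\otimes_A B/I^{k+1})\otimes_B C$ modulo the appropriate identification. Concretely, I will show that both one-sided base changes of $P^k_{B/A}$ to $C$ satisfy the universal property of $P^k_{C/A}$. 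That property is lifting against the square-zero (iterated) extensions, which formally étale maps lift uniquely.

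Given these identifications, $\tilde\delta:P^k_{B/A}\otimes_B M\to N$ base-changes to a $C$-linear map
\[
P^k_{C/A}\otimes_C(M\otimes_B C)\cong C\otimes_B P^k_{B/A}\otimes_B M\to C\otimes_B N,
\]
which is a differential operator of order $\le k$. Uniqueness comes from the corresponding uniqueness of the lifts. Compatibility with composition uses the comultiplication maps $P^{k+l}\to P^k\otimes P^l$, and these are likewise compatible with the identifications. The main obstacle is establishing $C\otimes_B P^k_{B/A}\cong P^k_{C/A}$ for formally étale maps that are not of finite type. I would try to prove it by induction on $k$, using the exact sequences $0\to I^k/I^{k+1}\to P^k\to P^{k-1}\to 0$ together with $\Omega_{C/B}=0$ and $\Omega_{C/A}=\Omega_{B/A}\otimes_B C$ (which holds because $C$ is formally étale over $B$). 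Part (i) then also follows as the special case $C=S^{-1}B$.
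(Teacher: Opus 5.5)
Your argument is correct in substance, but it is a different kind of proof from the paper's. The paper gives no argument and simply cites \cite[0G36]{sp} for (i) and \cite[0H94]{sp} for (ii). You instead prove both parts from scratch: you identify operators of order $\le k$ with $B$-linear maps $P^k_{B/A}\otimes_B M\to N$, and you show $C\otimes_B P^k_{B/A}\cong P^k_{C/A}$ for formally \'{e}tale $B\to C$. What your route buys is uniformity without finiteness hypotheses. Since $B\to S^{-1}B$ is formally \'{e}tale and $S^{-1}M=M\otimes_B S^{-1}B$, part (i) is literally the case $C=S^{-1}B$ of (ii), so your separate inductive localization can be dropped. Likewise, in both parts compatibility with composition follows from uniqueness alone, because a composite of operators of orders $k$ and $l$ has order $\le k+l$; the comultiplication maps are not needed.

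Some points to tighten. \emph{The isomorphism you call the main obstacle is already settled by your lifting argument.} The map $E=C\otimes_B P^k_{B/A}\to C$ is surjective with kernel $J$ generated by the image of $I$, and $J^{k+1}=0$. Hence the identity of $C$ lifts uniquely to a $B$-algebra map $\sigma\colon C\to E$, with $E$ a $B$-algebra via the second factor. Then $c_1\otimes c_2\mapsto (c_1\otimes 1)\sigma(c_2)$ inverts the evident map $E\to P^k_{C/A}$, as one checks using $B$-linearity of $\sigma$ and uniqueness of lifts. \emph{Your fallback induction would not do this by itself.} Going through $I^k/I^{k+1}$ and $\Omega_{C/A}=\Omega_{B/A}\otimes_B C$ fails because $I^k/I^{k+1}$ is only a quotient of $\Sym^k\Omega_{B/A}$. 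Moreover, formally \'{e}tale maps need not be flat, so base change need not keep those sequences left exact. \emph{Only the one-sided base changes give $P^k_{C/A}$.} The two-sided $C\otimes_B P^k_{B/A}\otimes_B C$ is not $P^k_{C/A}$: for $A=B$ and $C$ a nontrivial finite separable extension of a field $B$, it is $C\otimes_B C\neq C=P^k_{C/B}$. \emph{Uniqueness in (ii) is then immediate.} The module $P^{k'}_{C/A}\otimes_B M$ is generated as a left $C$-module by the image of $P^{k'}_{B/A}\otimes_B M$, and the linearization of the difference of two extensions vanishes there. \emph{On (i) as literally stated, with $S\subset A$.} Here $[\delta,s]=0$ because $\delta$ is $A$-linear, so $S^{-1}\delta(m/s)=\delta(m)/s$ and your recursion is vacuous. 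It only has content for $S\subset B$, which is the case that actually needs an argument. In that case the base case requires order-$0$ operators to be $B$-linear, not $A$-linear as you wrote. That is also the convention your principal-parts description ($P^0_{B/A}(M)=M$) presupposes.
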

\begin{proof}
For (i), see \cite[0G36]{sp}, and for (ii), see \cite[0H94]{sp}.
\end{proof}
\begin{corollary}\label{cor:dmodcoh}
Let $V$ be a discrete valuation ring, $A$ be a $V$-algebra. Let $f: X \to \spec{A}$ be a quasi-separated, quasi-compact, formally \'{e}tale  morphism of schemes
and $M$ an $A$-module.  Then each cohomology group $H^i(X, f^{-1}(\tilde{M}))$ admits a natural structure of a $\mathrm{Diff}_{A/V}$-module.
\end{corollary}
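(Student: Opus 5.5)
The plan is to compute $H^i(X, f^{-1}\tilde{M})$ by a Čech complex and to transport $D$-module structures along the terms of that complex using Theorem~\ref{thm:Dmodglue}. (Here $f^{-1}\tilde{M}$ is read as the quasi-coherent pullback $f^*\tilde{M}$, which is how the statement will be used.)

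Since $X$ is quasi-compact and quasi-separated, we fix a finite affine open cover $\mathcal{U} = \{U_j = \spec{C_j}\}_{j \in [r]}$ of $X$. Each finite intersection $U_{j_0 \cdots j_q}$ is quasi-compact, but it need not be affine, since $X$ is only quasi-separated. To handle this we use the standard quasi-separated Čech-to-cohomology spectral sequence, or more cleanly the Mayer--Vietoris induction on the number of affines in the cover (as in \cite[Tag 01FM]{sp}-type arguments). We prove the statement by induction on the minimal number of affines needed to cover $X$, first for separated $X$ and then for quasi-separated $X$, so it suffices to handle the affine case and to show that the Mayer--Vietoris boundary maps are $\mathrm{Diff}_{A/V}$-linear.

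\emph{Affine case.} Suppose $X = \spec{C}$ with $A \to C$ formally étale. Then $H^0 = M \otimes_A C$ and the higher cohomology vanishes. Take the given action of $\mathrm{Diff}_{A/V}(A)$ on $M$; each $\delta \in \mathrm{Diff}_{A/V}(A)$ of order $k$ acts as a $V$-linear differential operator $M \to M$ of order $\le k$. Applying Theorem~\ref{thm:Dmodglue}(ii) to the formally étale map $A \to C$ (with base $V$), we get a unique operator $\delta_C$ on $M \otimes_A C$, and compatibility with addition and composition makes this an action of $\mathrm{Diff}_{A/V}(A)$. Uniqueness gives naturality: for a morphism $C \to C'$ of formally étale $A$-algebras (for instance a restriction between affine opens), the map $M \otimes_A C \to M \otimes_A C'$ is $\mathrm{Diff}_{A/V}$-equivariant. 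This holds because $C \to C'$ is again formally étale, and both $(\delta_C)_{C'}$ and $\delta_{C'}$ extend $\delta$, so uniqueness in Theorem~\ref{thm:Dmodglue}(ii) identifies them.

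\emph{Gluing.} By the naturality just shown, the Čech complex $\check{C}^\bullet(\mathcal{U}, f^*\tilde M)$ for a cover in which all intersections are affine (true when $X$ is separated) is a complex of $\mathrm{Diff}_{A/V}$-modules with equivariant differentials, since these differentials are alternating sums of restriction maps. Its cohomology $H^i(X, f^*\tilde M)$ therefore inherits a $\mathrm{Diff}_{A/V}$-module structure. For quasi-separated $X$, the intersections are separated, so they are covered by the previous step, and the cohomology of each $U_{j_0\cdots j_q}$ carries a natural structure. We then run the Mayer--Vietoris sequence for $X = U \cup W$, with $U$ affine and $W$ covered by fewer affines. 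Its maps come from restrictions, so they are equivariant; the connecting map is equivariant because it is induced by a map of Čech complexes.

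We still need independence of the cover. Any two covers have a common refinement, and the refinement maps on Čech complexes are again built from restrictions, so they are equivariant and induce the identity on cohomology.

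The main obstacle is the naturality step: we must check that the unique extension in Theorem~\ref{thm:Dmodglue}(ii) really commutes with restriction along arbitrary maps of formally étale algebras, including maps that are not localizations. This is where the uniqueness clause is essential, together with the fact that a composite of formally étale maps is formally étale. The quasi-separated bookkeeping is routine by comparison.
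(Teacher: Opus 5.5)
Your affine and separated cases are the same as the paper's argument. For a finite affine cover with affine intersections, the \v{C}ech terms $\prod C_{j_0\cdots j_q}\otimes_A M$ get actions from Theorem~\ref{thm:Dmodglue}(ii), and uniqueness makes the restriction maps, hence the \v{C}ech differentials, equivariant. Your justification that $C\to C'$ is formally \'{e}tale is right. Your tacit assumption that $M$ itself carries a $\mathrm{Diff}_{A/V}$-action is also implicit in the paper.

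The gap is the quasi-separated step. You reduce it to showing that the Mayer--Vietoris maps are $\mathrm{Diff}_{A/V}$-linear. But equivariance of the maps in a long exact sequence cannot create an action on the one term that does not yet have one. In
\[ H^{i-1}(U)\oplus H^{i-1}(W)\to H^{i-1}(U\cap W)\xrightarrow{\partial} H^i(X)\to H^i(U)\oplus H^i(W)\to H^i(U\cap W) \]
the group $H^i(X)$ is only an $A$-module extension of a kernel by a cokernel of equivariant maps. Such an extension of $\mathrm{Diff}_{A/V}$-modules carries no induced $\mathrm{Diff}_{A/V}$-structure, and in general it need not admit one at all. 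Saying the connecting map is equivariant presupposes the very action on $H^i(X)$ you are trying to construct. Moreover, the intersections $U_{j_0\cdots j_q}$ need not be affine, so the \v{C}ech complex of your cover does not in general compute $H^i(X)$, and there is no complex from which to induce the action. The alternative you mention, the \v{C}ech-to-cohomology spectral sequence, has the same defect: it only puts actions on the graded pieces of a filtration of $H^i(X)$.

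The fix is to produce the action before taking cohomology. The paper does this by replacing the \v{C}ech complex with a hypercovering of $X$ whose terms are finite disjoint unions of affine opens, each formally \'{e}tale over $A$. The resulting complex of sections has an action on each term and equivariant face maps by the same uniqueness, so the separated argument applies verbatim.

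A simpler route is also available from your own work. Your naturality step already shows that the operators $\delta_C$ on $C\otimes_A M$ commute with restriction along every inclusion of affine opens $\spec{C'}\subset\spec{C}\subset X$. So they glue to an endomorphism $\delta_X$ of $f^*\tilde M$ as a sheaf of $V$-modules, with $(\delta\delta')_X=\delta_X\delta'_X$ and $(\delta+\delta')_X=\delta_X+\delta'_X$. Then $\delta\mapsto H^i(X,\delta_X)$ is the desired action, by functoriality and additivity of $H^i(X,-)$. This needs no separatedness, no \v{C}ech complexes, and no check that the result is independent of the cover.
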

\begin{proof}
First assume that $f$ is separated. Let $\{U_i=\spec{B_i}\}_{i \in I}$ be a finite affine open cover of $X$. 
Since $f$ is formally \'{e}tale, each derivation of finite order in $\mathrm{Diff}_{A/V}$ extends uniquely to a derivation on $B_i$ of finite order, 
so $B_i$ admits a natural structure of a $\mathrm{Diff}_{A/V}$-module. The cohomology groups $H^i(X, f^{-1}(\widetilde{M}))$ can be computed by the \v{C}ech complex associated to the cover $\{U_i\}_{i \in I}$,
explicitly given by:
\[ \prod_{i_0} B_{i_0} \otimes_A M \to \prod_{i_0 < i_1} B_{i_0 i_1} \otimes_A M \to \cdots \to \prod_{i_0 < \cdots < i_n} B_{i_0 \cdots i_n} \otimes_A M\]
where $B_{i_0 \cdots i_n}$ is the coordinate ring of the intersection $U_{i_0} \cap \cdots \cap U_{i_n}$. 
By Theorem~\ref{thm:Dmodglue}, each term in the above complex admits a unique structure of a $\mathrm{Diff}_{A/V}$-module, and the differentials are compatible with the $\mathrm{Diff}_{A/V}$-module structures due to this uniqueness.
Hence, every cohomology group $H^i(X, f^{-1}(\widetilde{M}))$ admits a natural structure of a $\mathrm{Diff}_{A/V}$-module.\\
\indent If $f$ is not separated, we instead work with hypercovers, and the above proof goes through verbatim.
\end{proof}
We are now ready to prove Theorem~\ref{thm:cohpureunram}.
\begin{proof}[Proof of Theorem~\ref{thm:cohpureunram}]
We may assume $A$ is a complete regular local ring since the map $A \to A^{\wedge \ideal{m}_A}$ is faithfully flat. We will proceed by induction on $\dim(A)$; 
the case of $d=2$ is trivial so we may assume $d \ge 3$.\\
\indent By the Cohen-Structure theorem, as $A$ is unramified, we have $A \simeq V[[x_1, \ldots, x_n]]$ for some discrete valuation ring $V$ with uniformizer $\pi$. 
By Theorem~\ref{thm:topinput}, $U_{\pi=0} \to \spec{A/\pi}$ is cohomologically pure in codimension $1$ as well, so by induction, we may assume that $U_{\pi=0}$ is an affine scheme. 
Since $U \to \spec{A}$ is flat, the Bockstein sequence associated to the short exact sequence $0 \to \cal{O}_U \stackrel{\pi}{\to} \cal{O}_U \to \cal{O}_{U_{\pi=0}} \to 0$ 
implies that $H^i(\cal{O}_U)=0$ for all $i\ge 2$, and $\pi: H^1(\cal{O}_U)\to H^1(\cal{O}_U)$ is surjective. By Corollary~\ref{cor:dmodcoh}, 
$H^1(\cal{O}_U)$ admits a natural structure of a $\mathrm{Diff}_{A/V}$-module supported on $\{\ideal{m}_A\}$ such that $\pi: H^1(\cal{O}_U) \to H^1(\cal{O}_U)$ is surjective.
By Theorem~\ref{thm:dmodunram}, we conclude that $H^1(\cal{O}_U)$ is isomorphic to a direct sum of copies of $E_A(k)$. In particular,
$H^1(\cal{O}_U) \otimesl_A k$ is concentrated in cohomological degree $-d$.\\
\indent By Lemma~\ref{lem:cohpuretop}, $U \to \spec{A}$ misses the maximal ideal $\ideal{m}_A$, so $R\Gamma(U, \cal{O}_U) \otimesl_A k \simeq 0$. 
But by investigating the spectral sequence 
\[E_2^{i,j} = \mathrm{Tor}_{-i}^A(H^jR\Gamma(\cal{O}_U), k) \Rightarrow H^{i+j}(R\Gamma(\cal{O}_U) \otimesl_A k),\]
we conclude that no differentials in the later pages of the spectral sequence hit the $H^0(\cal{O}_U) \otimes_A k=E_2^{0,0}$-term. 
Hence, $H^0(\cal{O}_U) \otimes_A k=H^0(R\Gamma(\cal{O}_U) \otimesl_A k) =0$, so the map $U \to \spec{H^0(\cal{O}_U)}$ is a surjective quasi-compact open-immersion. Thus, $U$ is affine as desired.
\end{proof}
\section{The general case}
The main result of this section is the following:
\begin{theorem}\label{thm:cohpuregen}
Let $(A,\ideal{m}_A,k)$ be a regular local ring and $V \to \spec{A}$ an \'{e}tale morphism that is cohomologically pure in codimension $1$. Then $V$ is an affine scheme.
\end{theorem}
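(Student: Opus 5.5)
We argue by induction on $d=\dim A$, following the proof of Theorem~\ref{thm:cohpureunram} but replacing the $D$-module input with the Tor-discreteness statement announced in the introduction (Theorem~\ref{Theorem:torindepmixed}). First we reduce to $A$ complete: $A \to A^{\wedge \ideal{m}_A}$ is faithfully flat, and both cohomological purity and affineness descend along it. When $d \le 2$ the statement is immediate. Indeed, if $d\le 1$ then $H^i(\cal{O}_V)=0$ for $i \ge d-1$ forces all higher cohomology to vanish. If $d=2$, the definition of cohomological purity gives $H^1=H^2=0$. In either case Serre's criterion for quasi-compact separated schemes applies, provided we can also show that $V$ is quasi-affine. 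Quasi-affineness follows from Zariski's main theorem: $V$ is an open subscheme of $\spec{B}$, where $B$ is the normalization of $A$ in $V$.

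Now suppose $d\ge 3$. By Lemma~\ref{lem:cohpuretop}(ii), $V$ misses $\ideal{m}_A$, and so $R\Gamma(\cal{O}_V)\otimesl_A k\simeq 0$. The unramified proof used this vanishing together with the structure of $H^{\ge 1}$. We run it dually, concentrating on $H^0$.

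Choose a regular parameter $f \in \ideal{m}_A\setminus\ideal{m}_A^2$; for instance $f=x_1$ with $A/f$ regular. By Theorem~\ref{thm:topinput}, $V_{f=0}\to\spec{A/f}$ is again cohomologically pure in codimension $1$. Since $A/f$ is regular of dimension $d-1$, induction shows that $V_{f=0}$ is affine. The $f$-Bockstein sequence then gives $H^i(\cal{O}_V)=0$ for $i\ge 2$, and shows that $f$ acts surjectively on $H^1(\cal{O}_V)$. Moreover $H^1(\cal{O}_V)$ is supported at $\ideal{m}_A$. Consequently $R\Gamma(\cal{O}_V)$ is concentrated in degrees $0$ and $1$, and we obtain an exact triangle
\[H^0(\cal{O}_V)\otimesl_A k \to R\Gamma(\cal{O}_V)\otimesl_A k\simeq 0 \to H^1(\cal{O}_V)[-1]\otimesl_A k \to .\]
This yields $H^1(\cal{O}_V)\otimesl_A k\simeq H^0(\cal{O}_V)\otimesl_A k[2]$.

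The key input, which is the main obstacle, is Theorem~\ref{Theorem:torindepmixed}: $H^0(\cal{O}_V)\otimesl_A k$ is discrete. To prove it, I would pass to the absolute integral closure $A^{+}$ (or to a flat perfectoid cover as in Corollary~\ref{cor:perfectoid-cover}). There I would use Bhatt's theorem that $A^{+}$, after $p$-completion, is Cohen--Macaulay, so its local cohomology is controlled. I would then use the formally perfectoid Tor-independence (Proposition~\ref{prop:torindep}) to show that $H^0(\cal{O}_{V_{A^+}})$ has Tor-amplitude $0$ against $k$. The role of $V$ being étale is that $\Gamma(\cal{O}_V)\otimes A^+$ stays formally perfectoid by Lemma~\ref{lem:locperfectoid}. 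Finally, this discreteness has to be descended back to $A$ along the faithfully flat (or pure) map $A\to A^+$.

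Granting the discreteness, the displayed isomorphism puts $H^1(\cal{O}_V)\otimesl_A k$ in degree $-2$. On the other hand, $H^1$ is $\ideal{m}_A$-torsion and $f$-divisible, so the degree-$0$ term $H^1(\cal{O}_V)\otimes_A k$ vanishes. I would then conclude as in the unramified case. Discreteness of $H^0\otimesl_A k$, placed in degree $0$, is compatible with the isomorphism only if $H^0(\cal{O}_V)\otimes_A k=0$. Hence $V\to\spec{H^0(\cal{O}_V)}$ is a quasi-compact open immersion whose image meets every closed point, so it is surjective, and $V$ is affine.
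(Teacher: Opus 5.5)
\textbf{Gap in the final step.} Your skeleton is close to the paper's: reduce to $A$ complete with $d\ge 3$, note that $R\Gamma(\cal{O}_V)\otimesl_A k\simeq 0$ because $V$ misses $\ideal{m}_A$, and feed in the discreteness of $H^0(\cal{O}_V)\otimesl_A k$ from Theorem~\ref{Theorem:torindepmixed}(iii). The decisive last step, however, is asserted rather than proved.

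From $H^1(\cal{O}_V)\otimesl_A k\simeq H^0(\cal{O}_V)\otimesl_A k[2]$ and discreteness you only learn two things: $\Tor^A_2(H^1(\cal{O}_V),k)\cong H^0(\cal{O}_V)\otimes_A k$, and $\Tor^A_i(H^1(\cal{O}_V),k)=0$ for $i\neq 2$. The vanishing of $\Tor_0$ that you get from $f$-divisibility is consistent with this and yields nothing. ``Conclude as in the unramified case'' does not transfer either. There the input was $H^1\cong\bigoplus E_A(k)$, which placed $H^1\otimesl_A k$ in degree $-d$, and you have no substitute for it.

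The missing ingredient is the depth-zero fact on which the paper's proof rests. Via the Koszul resolution of $k$, $\Tor^A_d(M,k)\cong\Hom_A(k,M)$, and this is nonzero for every nonzero $\ideal{m}_A$-torsion module $M$. Since $d\ge 3\neq 2$, concentration in degree $-2$ forces $H^1(\cal{O}_V)=0$. Hence $H^0(\cal{O}_V)\otimesl_A k\simeq 0$, all higher cohomology of the quasi-affine $V$ vanishes, and $V$ is affine.

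With this fact in hand, your induction through Theorem~\ref{thm:topinput} is superfluous. The paper takes the least $j\ge 1$ with $H^j(\cal{O}_V)\neq 0$, so $j\le d-2$. The term $E_2^{-d,j}=\Tor^A_d(H^j(\cal{O}_V),k)\neq 0$ then survives in the spectral sequence for $R\Gamma(\cal{O}_V)\otimesl_A k$. Nothing hits it, because $A$ has global dimension $d$. Its outgoing differentials land either in the rows $0<j'<j$, which are zero, or in $\Tor^A_{d-j-1}(H^0(\cal{O}_V),k)$ with $d-j-1\ge 1$, which vanishes by discreteness.

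\textbf{Two further problems.} First, Theorem~\ref{Theorem:torindepmixed} requires $p\in\ideal{m}_A$. Your argument therefore says nothing about regular local rings of residue characteristic $0$. The paper disposes of the equicharacteristic cases separately (Theorem~\ref{thm:cohpuregenequi}) before assuming mixed characteristic.

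Second, if you meant to reprove the key input rather than cite it, your sketch would not work. Lemma~\ref{lem:locperfectoid} concerns maps that are weakly \'{e}tale modulo $\varpi^n$, not rings of global sections of \'{e}tale schemes. Whether such a ring is formally perfectoid is exactly the question the paper leaves open. The paper's route is:
\begin{enumerate}
\item[(a)] Bhatt's theorem that $R\Gamma_{\ideal{m}_A}(A^{+})$ is concentrated in degree $d$, combined with cohomological purity (not \'{e}taleness), gives $H^0(\cal{O}_V)\otimesl_A A^{+}\simeq H^0(\cal{O}_{V'})$.
\item[(b)] \'{E}taleness and separatedness make $V'$ a finite disjoint union of opens of $\spec{A^{+}}$. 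Their rings of sections are normal with all $p$-th roots, hence perfect modulo $p^{1/p^{\infty}}$ (Propositions~\ref{prop:perfglobalsections} and~\ref{prop:flatcovermixed}).
\item[(c)] Proposition~\ref{prop:torindep} then gives discreteness after base change to $k_{A^{+}}$.
\item[(d)] The only descent needed is along the field extension $k\to k_{A^{+}}$, not along $A\to A^{+}$.
\end{enumerate}
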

Let us first discuss a different way to prove Theorem~\ref{thm:cohpuregen} in equicharacteristic than \cite[0ECD]{sp}. First we recall 
the following Tor-independence result for perfect rings.
\begin{lemma}\label{lem:perftorind}Let $B \leftarrow A \to C$ be a diagram of perfect rings. Then the natural map $B \otimesl_A C \to B \otimes_A C$ is an isomorphism.
\end{lemma}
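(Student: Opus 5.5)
The plan is to exploit the Frobenius on the derived tensor product. Regard $B \otimesl_A C$ as a simplicial commutative $\bb{F}_p$-algebra: resolve $B$ by a simplicial polynomial $A$-algebra $P_\bullet \to B$ and form $P_\bullet \otimes_A C$. Its homotopy groups are $\pi_i(B \otimesl_A C) = \Tor_i^A(B,C)$, so the claim is exactly that these vanish for $i>0$; the natural map then identifies $\pi_0 = B \otimes_A C$ with the whole object.

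First, show that the Frobenius of $B \otimesl_A C$ is an equivalence. The levelwise Frobenius $\phi$ is a natural endomorphism of every simplicial commutative $\bb{F}_p$-algebra, compatible with all ring maps. Hence the Frobenius of $B \otimesl_A C$ is the map induced on derived tensor products by the triple $(\phi_B, \phi_A, \phi_C)$. More precisely, it is the composite
\[B \otimesl_A C \to B \otimesl_{A,\phi_A} C \to B \otimesl_A C\]
built from $\phi_B$ and $\phi_C$, using that derived tensor products are functorial in the whole diagram $B \leftarrow A \to C$. Since $A$, $B$ and $C$ are perfect, $\phi_A$, $\phi_B$ and $\phi_C$ are isomorphisms. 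Therefore $(\phi_B, \phi_A, \phi_C)$ is an isomorphism of diagrams, and the induced endomorphism of $B \otimesl_A C$ is an equivalence. In particular, $\phi$ acts bijectively on every $\Tor_i^A(B,C)$.

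Second, invoke the standard fact that for any simplicial commutative $\bb{F}_p$-algebra $R_\bullet$, the Frobenius acts by zero on $\pi_i(R_\bullet)$ for $i>0$. This is the vanishing of the top power operation in positive degrees, in the sense of Bousfield and Dwyer. One way to see it: the levelwise $p$-th power map is the composite of the levelwise diagonal $R_\bullet \to R_\bullet^{\otimes p}$ with levelwise multiplication. On homotopy in positive degree, the levelwise diagonal (via the Eilenberg–Zilber and Alexander–Whitney maps) is controlled by the symmetric group action on $R^{\otimes p}$. After multiplying, the resulting class of $x^p$ is a norm-type element which vanishes mod $p$. If this direct argument gets delicate, the plan is instead to cite the standard reference (for example Bhatt–Scholze's treatment of perfect rings, which is where Lemma~\ref{lem:perftorind} is usually quoted from).

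Combining the two steps, $\phi$ is both bijective and zero on $\Tor_i^A(B,C)$ for every $i>0$. Hence these groups vanish, and $B \otimesl_A C \to B \otimes_A C$ is an isomorphism. The main obstacle is the second step, the vanishing of Frobenius on higher homotopy. The first step is formal once one is careful that the Frobenius of the derived tensor product really is induced by the Frobenii of the three input rings, which follows from naturality of $\phi$ and functoriality of the simplicial resolution.
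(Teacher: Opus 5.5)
Your argument is correct and is the standard proof behind the reference the paper cites in place of a proof, \cite[Lemma 3.16, Proposition 11.6]{BhattScholzeProj}: perfectness of $A$, $B$, $C$ makes Frobenius bijective on $\Tor_i^A(B,C)$, while Frobenius is zero on $\pi_i$ ($i>0$) of any simplicial commutative $\bb{F}_p$-algebra, so $\Tor_i^A(B,C)=0$ for $i>0$. Your diagonal/norm sketch of that second fact is only heuristic, so rely on the citation, or check the universal case of the free simplicial commutative $\bb{F}_p$-algebra on a degree-$i$ class, where Frobenius sends the generator into the weight-$p$ part and that part has vanishing $\pi_i$.
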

\begin{proof}
This has been observed many times. The earliest published reference we are aware of is \cite[Lemma 3.16 or Proposition 11.6]{BhattScholzeProj}.
\end{proof}
\begin{theorem}\label{thm:cohpuregenp}
Let $(A,\ideal{m}_A,k)$ be a regular local ring containing a field of characteristic $p>0$ and $V \to \spec{A}$ a quasi-compact, \'{e}tale morphism. 
Then $H^0(\cal{O}_V) \otimesl_A k$ is a discrete ring.
\end{theorem}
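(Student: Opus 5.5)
The plan is to reduce to perfect rings and apply Lemma~\ref{lem:perftorind}. Write $B = H^0(\cal{O}_V)$. Since $V \to \spec{A}$ is quasi-compact and étale, and $A$ is an $\bb{F}_p$-algebra, the relative Frobenius of $V$ over $\spec{A}$ is an isomorphism. Hence the absolute Frobenius square
\[\begin{tikzcd} V \arrow[r, "F_V"] \arrow[d] & V \arrow[d] \\ \spec{A} \arrow[r, "F_A"] & \spec{A} \end{tikzcd}\]
is cartesian. Because $A$ is regular, Kunz's theorem says $F_A$ is flat. By flat base change for the quasi-compact, quasi-separated morphism $V \to \spec{A}$, we then get $H^0(\cal{O}_V) \otimes_{A,F_A} A \simeq H^0(\cal{O}_V)$; more precisely, $B \otimes_{A,\phi} A \to B$ (the linearized Frobenius) is an isomorphism. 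Passing to the colimit over Frobenius gives
\[B \otimes_A A_{\mathrm{perf}} \simeq B_{\mathrm{perf}},\]
and $A \to A_{\mathrm{perf}}$ is faithfully flat, again by Kunz's theorem (colimit of faithfully flat maps).

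Next we compute the derived tensor product after this faithfully flat base change. Since $A_{\mathrm{perf}}$ is flat over $A$,
\[(B \otimesl_A k) \otimes_A A_{\mathrm{perf}} \simeq B_{\mathrm{perf}} \otimesl_{A_{\mathrm{perf}}} (k \otimes_A A_{\mathrm{perf}}).\]
Here $k \otimes_A A_{\mathrm{perf}} = A_{\mathrm{perf}}/\ideal{m}_A A_{\mathrm{perf}}$ is the colimit perfection of $k$ over $A$, i.e.\ $A_{\mathrm{perf}}/\ideal{m}_A A_{\mathrm{perf}}$. This ring is perfect: the ideal $\ideal{m}_AA_{\mathrm{perf}}$ equals its own radical-closure under $p$-th roots up to perfection, since $(\ideal{m}_A A_{\mathrm{perf}})^{[p]}$ generates the same ideal as $\ideal{m}_A A_{\mathrm{perf}}$ after extracting $p$-th roots. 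Alternatively we can replace $k$ by $k_{\mathrm{perf}}$ and argue via $A_{\mathrm{perf}} \to (A/\ideal{m}_A)_{\mathrm{perf}}$, whose perfectness is clear. Thus $B_{\mathrm{perf}} \leftarrow A_{\mathrm{perf}} \to (k \otimes_A A_{\mathrm{perf}})$ is a diagram of perfect rings, and Lemma~\ref{lem:perftorind} shows that the derived tensor product is discrete. By faithful flatness of $A \to A_{\mathrm{perf}}$, the higher Tor groups $\Tor_i^A(B,k)\otimes_A A_{\mathrm{perf}}$ vanish, hence $\Tor_i^A(B,k) = 0$ for $i>0$. So $B \otimesl_A k$ is discrete.

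The main obstacle is the last identification: we must verify that $k \otimes_A A_{\mathrm{perf}}$ is genuinely perfect, rather than merely semiperfect with nilpotents, so that Lemma~\ref{lem:perftorind} applies. I would handle this by noting $A/\ideal{m}_A^{[p^n]}$ and using $A_{\mathrm{perf}}/\ideal{m}_AA_{\mathrm{perf}} = \colim_n A/\ideal{m}_A^{[p^n]}$ under Frobenius transition maps. If the result is not literally perfect, I would instead compare with $\colim$ of $B \otimesl_A A/\ideal{m}^{[p^n]}$, using that Frobenius identifies $B\otimesl_A A/\ideal{m}^{[p^n]}$ with the Frobenius twist of $B \otimesl_A k$ (flatness of $F_A$ plus the cartesian square). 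Then Tor-vanishing for $k$ follows from Tor-vanishing for the perfect quotient by faithful flatness of Frobenius.
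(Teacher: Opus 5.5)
Your first half is correct and is a more explicit version of the paper's argument. The relative Frobenius of $V$ over $\spec{A}$ is an isomorphism, Kunz makes $F_A$ flat, and flat base change gives $B\otimes_A A_{\mathrm{perf}}\simeq B_{\mathrm{perf}}$. The paper phrases this as $H^0(\cal{O}_{V'})$ being perfect because $V'\to\spec{A_{\mathrm{perf}}}$ is \'etale.

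The gap is the step your main line rests on. The ring $k\otimes_A A_{\mathrm{perf}}=A_{\mathrm{perf}}/\ideal{m}_AA_{\mathrm{perf}}$ is \emph{not} perfect once $\dim A\ge 1$, because $\ideal{m}_AA_{\mathrm{perf}}$ is not a radical ideal (its radical is the maximal ideal of $A_{\mathrm{perf}}$). For $A=\bb{F}_p[[x]]$, the image of $x^{1/p}$ in $A_{\mathrm{perf}}/xA_{\mathrm{perf}}$ is nonzero, but its $p$-th power vanishes. This is not a technicality: Lemma~\ref{lem:perftorind} really fails when one of the three rings is only semiperfect. For instance, take $R=\bb{F}_p[x^{1/p^\infty}]\to\bb{F}_p$ killing all $x^{1/p^n}$; then $R$ and $\bb{F}_p$ are perfect, yet $\Tor_1^R(\bb{F}_p,R/xR)\cong\bb{F}_p\neq 0$.

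Your last-paragraph fallback does not repair this. The Frobenius-twist comparison only gives $\Tor_i^A(B,A/\ideal{m}_A^{[p^n]})\simeq\Tor_i^A(B,k)\otimes_{A,\phi^n}A$, which trades the desired vanishing for an equivalent statement rather than proving it. Moreover, the colimit over $n$ is again $\Tor$ against the non-perfect ring $k\otimes_AA_{\mathrm{perf}}$.

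The one-sentence ``alternative'' you mention is the correct fix, and it is exactly the paper's proof; it just has to be carried out. Work with $k_{\mathrm{perf}}$, the residue field of the local ring $A_{\mathrm{perf}}$ (equivalently the reduction of $k\otimes_AA_{\mathrm{perf}}$), instead of $k\otimes_AA_{\mathrm{perf}}$.
\begin{itemize}
\item[(a)] We have $B\otimesl_Ak_{\mathrm{perf}}\simeq(B\otimesl_Ak)\otimes_kk_{\mathrm{perf}}$, and $k\to k_{\mathrm{perf}}$ is faithfully flat. So it suffices to show that $B\otimesl_Ak_{\mathrm{perf}}$ is discrete.
\item[(b)] The map $A\to k_{\mathrm{perf}}$ factors through $A_{\mathrm{perf}}$, and $A\to A_{\mathrm{perf}}$ is flat. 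Hence $B\otimesl_Ak_{\mathrm{perf}}\simeq(B\otimes_AA_{\mathrm{perf}})\otimesl_{A_{\mathrm{perf}}}k_{\mathrm{perf}}\simeq B_{\mathrm{perf}}\otimesl_{A_{\mathrm{perf}}}k_{\mathrm{perf}}$. All three rings here are perfect, so Lemma~\ref{lem:perftorind} applies.
\end{itemize}
In this route the descent happens along the field extension $k\to k_{\mathrm{perf}}$, not along $A\to A_{\mathrm{perf}}$, so only flatness of $A\to A_{\mathrm{perf}}$ is used.
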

\begin{proof}
Let $A_{\text{perf}}$ be the perfection of $A$, and $A'=H^0(\cal{O}_V)$. Since $A$ is regular, $A \to A_{\text{perf}}$ is a faithfully flat map of local rings.
Since the map $k \to k_{\text{perf}}$ is faithfully flat, it suffices to show that $A' \otimesl_A k_{\text{perf}}$ is a discrete ring.
The map $A \to A_{\text{perf}}$ is faithfully flat, so 
\[A' \otimesl_A k_{\text{perf}} \simeq A' \otimesl_A A_{\text{perf}} \otimesl_{A_{\text{perf}}} k_{\text{perf}} \simeq A' \otimes_A A_{\text{perf}} \otimesl_{A_{\text{perf}}} k_{\text{perf}}.\]
Now by flat base-change, $A' \otimes_A A_{\text{perf}} \simeq H^0(\cal{O}_{V'})$ where $V' = V \times_{\spec{A}} \spec{A_{\text{perf}}}$.
Since $V' \to \spec{A_{\text{perf}}}$ is still \'{e}tale, the ring $H^0(\cal{O}_{V'}) = A' \otimes_A A_{\text{perf}}$ is a perfect ring, and hence by Lemma~\ref{lem:perftorind}, we have
\[A' \otimes_A A_{\text{perf}} \otimesl_{A_{\text{perf}}} k_{\text{perf}} \simeq A' \otimes_A A_{\text{perf}} \otimes_{A_{\text{perf}}} k_{\text{perf}} .\]
Hence, $A' \otimesl_A k = H^0(\cal{O}_V) \otimesl_A k$ is a discrete ring as claimed.\\
\end{proof}
We then have:
\begin{theorem}\label{thm:cohpuregenequi}
Let $(A,\ideal{m}_A,k)$ be a regular local ring containing a field and $V \to \spec{A}$ an \'{e}tale morphism that is cohomologically pure in codimension $1$. 
Then $V$ is an affine scheme.
\end{theorem}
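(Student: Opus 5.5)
We prove Theorem~\ref{thm:cohpuregenequi} by combining the discreteness result of Theorem~\ref{thm:cohpuregenp} with the cohomological vanishing contained in the definition of cohomological purity, and then running the spectral sequence argument from the proof of Theorem~\ref{thm:cohpureunram}. The field contained in $A$ has characteristic $0$ or $p>0$. In characteristic $0$ we argue directly, as in \cite[0ECD]{sp}. We concentrate on characteristic $p$, where Theorem~\ref{thm:cohpuregenp} applies. First reduce to $A$ complete, since $A \to A^{\wedge \ideal{m}_A}$ is faithfully flat and affineness descends along fpqc base change. Then induct on $d=\dim A$, the cases $d\le 2$ being immediate from the definition: $H^i(\cal{O}_V)=0$ for $i \ge d-1$ leaves nothing in positive degree. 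By Lemma~\ref{lem:cohpuretop}~(ii), for $d\ge 3$ the map $V \to \spec{A}$ misses $\ideal{m}_A$, so $R\Gamma(\cal{O}_V)\otimesl_A k \simeq 0$.

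Set $A'=H^0(\cal{O}_V)$. The key claim is $A'\otimes_A k=0$. Once this holds, $V \to \spec{A'}$ is a quasi-compact open immersion whose image contains every closed point of $\spec{A'}$, because $\ideal{m}_A A'=A'$. The image is therefore everything, so $V$ is affine, exactly as in the last step of the proof of Theorem~\ref{thm:cohpureunram}.

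To get the claim, consider the spectral sequence
\[E_2^{i,j}=\Tor^A_{-i}(H^j(\cal{O}_V),k)\Rightarrow H^{i+j}(R\Gamma(\cal{O}_V)\otimesl_A k)=0 .\]
By Theorem~\ref{thm:cohpuregenp}, $\Tor^A_{q}(A',k)=0$ for $q>0$, so the row $j=0$ is concentrated at $E_2^{0,0}=A'\otimes_A k$. A differential into $E^{0,0}$ would have to come from $E_r^{-r,r-1}$. For $r=2$ this means $E_2^{-2,1}=\Tor_2(H^1,k)$, and in general one needs $\Tor_r(H^{r-1}(\cal{O}_V),k)$. The main obstacle is to show these sources are too small, that is, to control $\Tor^A_{*}(H^j(\cal{O}_V),k)$ for the $\ideal{m}_A$-supported modules $H^j$ with $1\le j\le d-2$. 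In the unramified case this was done by the $D$-module classification of Theorem~\ref{thm:dmodunram}, which forces $H^1\simeq \oplus E_A(k)$, so its $\Tor$ sits only in degree $d$.

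In characteristic $p$ I would obtain the same shape from the Frobenius-unit structure. Because $V\to\spec A$ is étale, $F^*H^j(\cal{O}_V)\simeq H^j(\cal{O}_V)$, so each $H^j$ is a unit $F$-module supported at $\ideal{m}_A$. By \cite[0ECC]{sp}, or via Theorem~\ref{thm:rhbl}, each $H^j$ is then a direct sum of copies of $E_A(k)$. Hence $\Tor^A_q(H^j,k)$ is nonzero only for $q=d$, so $E_2^{-q,j}\ne 0$ only if $q=d$ or $j=0$. A differential hitting $E^{0,0}$ would need a source $E^{-d,d-1}$, but $H^{d-1}(\cal{O}_V)=0$ by cohomological purity. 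Thus $E_2^{0,0}=E_\infty^{0,0}=0$, which gives the claim.

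Should the Frobenius classification be unavailable at this level of generality, the fallback is to use the perfection directly. Over $A_{\mathrm{perf}}$ the Tor-independence of Lemma~\ref{lem:perftorind} makes every $H^j(\cal{O}_{V'})\otimesl k_{\mathrm{perf}}$ discrete. The same degree-counting then goes through after the faithfully flat base change to $A_{\mathrm{perf}}$.
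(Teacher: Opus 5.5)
Your main line is correct, but it is essentially the classical argument of \cite[0ECD]{sp} rather than the paper's.

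\textbf{Comparison of routes.} You use the Frobenius-unit classification \cite[0ECC]{sp} to get $H^j(\cal{O}_V)\cong\bigoplus E_A(k)$ for $j\ge 1$. This puts the rows $j\ge 1$ of the spectral sequence in column $-d$. Since $H^{d-1}(\cal{O}_V)=0$, nothing hits $E^{0,0}$, so $H^0(\cal{O}_V)\otimes_A k=0$. In your route Theorem~\ref{thm:cohpuregenp} plays no role: differentials into $E^{0,0}$ come only from rows $j\ge 1$, so the shape of row $0$ is irrelevant.

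The paper does the opposite and never classifies the $H^j$. It uses only two inputs:
\begin{itemize}
\item Theorem~\ref{thm:cohpuregenp} (Kunz flatness plus Tor-independence of perfect rings, Lemma~\ref{lem:perftorind}), which makes row $0$ equal to $E_2^{0,0}$;
\item the trivial fact that a nonzero $\ideal{m}_A$-supported module $M$ has $\Tor^A_d(M,k)=M[\ideal{m}_A]\neq 0$.
\end{itemize}
Let $j\in[d-2]$ be the least positive index with $H^j\neq 0$. Then $E_2^{-d,j}$ cannot be hit, since columns $<-d$ vanish. It cannot hit anything either: rows $1,\dots,j-1$ vanish, and row $0$ is empty in column $-d+j+1<0$. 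This contradicts $R\Gamma(\cal{O}_V)\otimesl_A k\simeq 0$, so the paper gets $H^j=0$ for $j\ge 1$ directly.

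Your route needs the heavy characteristic-$p$ structure theorem, a Riemann--Hilbert-type input, but yields more information about the $H^j$. The paper's route needs only discreteness of $H^0(\cal{O}_V)\otimesl_A k$ plus depth zero. That is exactly the form that transfers to mixed characteristic in Theorem~\ref{thm:cohpuregen}, via $A^{+}$, Bhatt's theorem and Proposition~\ref{prop:torindep}, where no such classification is available.

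\textbf{First caveat: the final step.} You infer that $V\subset\spec{A'}$ contains all closed points from $\ideal{m}_A A'=A'$, but that alone does not force it. For example, $V=D(xy)\cup D(xz)\subset\spec{k[x,y,z]_{(x,y,z)}}$ is \'etale and misses the closed point. It has $A'=A_x$, so $\ideal{m}_A A'=A'$, yet it is not affine.

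You need the support condition in cohomological purity:
\begin{itemize}
\item For $\ideal{p}\neq\ideal{m}_A$, the scheme $V\times_A\spec{A_{\ideal{p}}}$ is quasi-affine with vanishing higher cohomology of $\cal{O}$, hence affine.
\item Spreading out, $V\to\spec{A}\setminus\{\ideal{m}_A\}$ is affine.
\item The generators $x_1,\dots,x_d$ of $\ideal{m}_A$ then generate the unit ideal of $A'$, and each $V_{x_i}$ is affine, so $V$ is affine.
\end{itemize}
The same remark applies to the last line of the proof of Theorem~\ref{thm:cohpureunram}, which you are copying.

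\textbf{Second caveat: the fallback.} It does not work as stated. Lemma~\ref{lem:perftorind} is about rings, so it only applies to $H^0$. For $j\ge 1$, $H^j(\cal{O}_{V'})\otimesl_{A_{\mathrm{perf}}}k_{\mathrm{perf}}\simeq H^j(\cal{O}_V)\otimesl_A k_{\mathrm{perf}}$ need not be discrete. For instance, $E_A(k)\otimesl_A k_{\mathrm{perf}}$ sits in cohomological degree $-d$. The correct way to exploit the perfection is precisely the paper's argument above.
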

\begin{proof}
The case $\mathrm{dim}(A) = 2$ is clear and we may therefore assume $d=\mathrm{dim}(A) \ge 3$. 
We may assume the pair $(A, \ideal{m}_A)$ is henselian since the henselianization of $A$ is ind-\'{e}tale over $A$. So by Lemma~\ref{lem:cohpuretop} (ii), we find that $V \to \spec{A}$ misses the maximal ideal of $A$. Hence, by base-change, we see $R\Gamma(\cal{O}_V) \otimesl_A k \simeq 0$.
If $\mathrm{char}(k)=0$, we conclude as in \cite[0ECD]{sp}. Our novelty will be when $\mathrm{char}(k)=p>0$.\\
\indent By Theorem~\ref{thm:cohpuregenp}, we have $H^0(\cal{O}_V) \otimesl_A k$ is a discrete ring. Since $V \to \spec{A}$ is cohomologically pure in codimension $1$, 
each $H^i(\cal{O}_V)$ is supported on $\{\ideal{m}_A\}$ and thus has depth $0$ for $i\ge 1$. 
Hence, if $H^i(\cal{O}_V) \neq 0$ for $i\ge 1$, then $ H^i(\cal{O}_V) \otimesl_A k$ is non-zero in cohomological degree $-d$. Let $j\in [d-2]$ be the first index such that $H^j(\cal{O}_V)\neq 0$. 
By investigating the spectral sequence
\[E_2^{i,j} = \mathrm{Tor}_{-i}^A(H^j(\cal{O}_V), k) \Rightarrow H^{i+j}(R\Gamma(\cal{O}_V) \otimesl_A k),\]
we conclude that the $E_2^{-d,j}$-term survives to the $E_{\infty}$-page, which contradicts the fact that $R\Gamma(\cal{O}_V) \otimesl_A k \simeq 0$. 
Hence, $H^i(\cal{O}_V) = 0$ for $i\ge 1$, and thus $V$ is affine as claimed.
\end{proof}
In the above proof, we used two critical properties of perfect rings:
\begin{enumerate}
\item For any diagram of perfect rings $B \leftarrow A \to C$, the natural map $B \otimesl_A C \to B \otimes_A C$ is an isomorphism.
\item There exists a faithfully flat perfect cover $A \to A_{\text{perf}}$ such that $H^0(\cal{O}_{V'})=H^0(\cal{O}_V) \otimes_A A_{\text{perf}}$ is a perfect ring, where $V' = V \times_{\spec{A}} \spec{A_{\text{perf}}}$.
\end{enumerate}
Mixed characteristic analogues of the first property are usually stated in the $p$-completed setting, which would not be suitable for our purposes.
Instead, we will use the following.
\begin{proposition}\label{prop:torindep}
Let $B \leftarrow A \to C$ be a diagram of commutative rings, and assume there exists an element $\varpi=pu \in A$ admitting all $p$-power roots where $u \in A$ is a unit. 
Suppose that $C$ is perfect and both $A$ and $B$ are $p$-torsionfree rings such that $A/\varpi^{1/p^{\infty}}$ and $B/\varpi^{1/p^{\infty}}B$ are perfect rings. 
Then $B \otimesl_A C$ is a discrete ring.
\end{proposition}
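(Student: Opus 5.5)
The plan is to factor the map $A \to C$ through the perfect quotient $\overline{A} := A/\varpi^{1/p^{\infty}}$. This splits $B \otimesl_A C$ into a base change along $A \to \overline{A}$, which we compute by hand, followed by a base change among perfect rings, which is handled by Lemma~\ref{lem:perftorind}.

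\textbf{Step 1: $A \to C$ factors through $\overline{A}$.} Since $C$ is perfect, it is an $\bb{F}_p$-algebra, so $p = 0$ in $C$. Hence the image of $\varpi = pu$ is $0$, and each image of $\varpi^{1/p^n}$ is nilpotent. A perfect ring is reduced, because Frobenius is injective. So every $\varpi^{1/p^n}$ maps to $0$ in $C$. Writing $I = \bigcup_n \varpi^{1/p^n}A$, the map $A \to C$ therefore factors through $\overline{A} = A/I$. Associativity of the derived tensor product then gives
\[B \otimesl_A C \simeq (B \otimesl_A \overline{A}) \otimesl_{\overline{A}} C.\]

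\textbf{Step 2: $B \otimesl_A \overline{A}$ is discrete.} Since $A$ is $p$-torsionfree and $u$ is a unit, $\varpi$ is a non-zero-divisor on $A$, hence so is each root $\varpi^{1/p^n}$. Consequently $I = \colim_n \varpi^{1/p^n}A$ is a filtered colimit of free rank-one modules $A \xrightarrow{\varpi^{1/p^n}-\varpi^{1/p^{n+1}}\text{ quotient}} $; concretely, the transition maps are multiplication by $\varpi^{1/p^{n+1}-1/p^{n}}\cdots$, more precisely by $\varpi^{(p-1)/p^{n+1}}$. In particular $I$ is flat. So $\overline{A}$ is represented by the two-term complex $[I \to A]$, and
\[B \otimesl_A \overline{A} \simeq [B \otimes_A I \to B].\]
Tensor products commute with filtered colimits, so $B \otimes_A I \simeq \colim_n B$ along the analogous multiplication maps. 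The map $B \otimes_A I \to B$ is the colimit of the maps $B \xrightarrow{\varpi^{1/p^n}} B$. Each of these is injective because $B$ is $p$-torsionfree, and filtered colimits of injections are injective. Hence the complex has no $H^{-1}$, and
\[B \otimesl_A \overline{A} \simeq B/\varpi^{1/p^\infty}B =: \overline{B}\]
is discrete.

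\textbf{Step 3: the perfect base change.} Now $B \otimesl_A C \simeq \overline{B} \otimesl_{\overline{A}} C$. By hypothesis $\overline{A}$ and $\overline{B}$ are perfect, and $C$ is perfect. Lemma~\ref{lem:perftorind}, applied to the diagram $\overline{B} \leftarrow \overline{A} \to C$, shows that this is the discrete ring $\overline{B} \otimes_{\overline{A}} C$, which completes the argument.

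The main point requiring care is Step 2: identifying $B \otimes_A I \to B$ as an injective colimit. This uses exactly the $p$-torsionfreeness of $B$, whereas the flatness of $I$ uses that of $A$. This is where the hypotheses replace the $p$-completion that appears in the usual perfectoid Tor-independence statements. Step 1 depends on the observation that perfect rings are reduced.
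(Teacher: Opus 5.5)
Your proposal is correct and follows the paper's own proof. The shared steps are: factor $A \to C$ through $A/\varpi^{1/p^{\infty}}$, use $p$-torsion-freeness to identify $B \otimesl_A A/\varpi^{1/p^{\infty}}$ with the discrete ring $B/\varpi^{1/p^{\infty}}B$, and conclude with Lemma~\ref{lem:perftorind}. You also supply details the paper leaves implicit: the reducedness of $C$, the flatness of $I=\bigcup_n \varpi^{1/p^n}A$, and the injectivity of $B\otimes_A I \to B$. The only blemish is the garbled description of the transition maps in Step~2, which should simply be multiplication by $\varpi^{(p-1)/p^{n+1}}$, as you eventually say.
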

\begin{proof}
The map $A \to C$ factors over the ring $A/\varpi^{1/p^{\infty}}$.
Note that $B \otimesl_{A} A/\varpi^{1/p^{\infty}} \simeq B/\varpi^{1/p^{\infty}}B$ is a discrete ring since $B$ is $p$-torsionfree (so $\varpi$ is a nonzerodivisor in $B$). Therefore, we have
\[B \otimesl_A C \simeq (B \otimesl_A A/\varpi^{1/p^{\infty}}) \otimesl_{A/\varpi^{1/p^{\infty}}} C \simeq (B/\varpi^{1/p^{\infty}}B) \otimesl_{A/\varpi^{1/p^{\infty}}} C.\]
Since all the rings in the right-term of the above expression are perfect, we conclude that $B \otimesl_A C$ is a discrete ring as desired.
\end{proof}
As for the second property, in mixed characteristic, perfectoid rings are generally considered the correct replacement for perfect rings, 
but finding perfectoid covers is more subtle. Moreover, we
are not sure if the following analogue of the second property holds for perfectoid rings. Namely:
\begin{question}
If $V \to \spec{A}$ is an \'{e}tale morphism of schemes with $A$ perfectoid, is $H^0(\cal{O}_V)$ formally perfectoid?
\end{question}
We introduce particular kinds of schemes where some control over the global sections of arbitrary open subsets is guaranteed.
\begin{proposition}\label{prop:perfglobalsections}
Consider the following class of commutative rings having the following property (*): $A$ is a normal domain such that every element of $A$ has a $p$-th root in $A$.
\begin{enumerate}
\item[(i)] If $A$ has property (*), then $A/(p^{1/p^{\infty}})$ is a perfect ring.
\item[(ii)] If $X$ is an integral scheme such that, for every affine open $\spec{A} \subset X$, $A$ has property (*), then $\Gamma(X, \cal{O}_X)$ is also a ring with property (*).
\item[(iii)] If $X$ is an integral scheme such that, for every affine open $\spec{A} \subset X$, $A$ has property (*), then $\Gamma(U, \cal{O}_U)$ has property (*) for every open subset $U \subset X$.
\end{enumerate}
\end{proposition}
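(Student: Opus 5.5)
The plan is to treat the three parts in order. All the real content sits in (i) and (ii), and both rest on one observation: in a normal domain $A$ with fraction field $K$, any element of $K$ whose $p$-th power lies in $A$ is integral over $A$, hence already lies in $A$. In particular every $p$-power root of unity of $K$ lies in $A$ and is a unit there.

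For (i), first handle the degenerate cases. If $p=0$ in $A$, the ideal $(p^{1/p^{\infty}})$ is zero and $A$ is a reduced $\bb{F}_p$-algebra in which every element is a $p$-th power, hence perfect. If $p$ is a unit, the quotient is the zero ring. Otherwise, use property (*) inductively to choose a compatible system $p^{1/p^n}$ with $(p^{1/p^{n+1}})^p=p^{1/p^n}$, and put $I=\bigcup_n p^{1/p^n}A$. This ideal does not depend on the choices, since two $p^n$-th roots of $p$ differ by a root of unity, which is a unit of $A$ by the observation above. Surjectivity of Frobenius on $A/I$ is immediate from (*).

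For injectivity of Frobenius on $A/I$, suppose $x^p\in p^{1/p^n}A$. Then $y=x/p^{1/p^{n+1}}\in K$ satisfies $y^p=x^p/p^{1/p^n}\in A$. By normality $y\in A$, so $x\in p^{1/p^{n+1}}A\subset I$.

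For (ii), let $K$ be the function field of $X$. Write $\Gamma(X,\cal{O}_X)=\bigcap_i A_i\subset K$ over an affine open cover $\spec{A_i}$, using that $X$ is integral; we may assume $X\neq\emptyset$. Normality of $\Gamma$ follows because an element of $\operatorname{Frac}(\Gamma)\subset K$ that is integral over $\Gamma$ is integral over each $A_i$, so it lies in each $A_i$ and hence in $\Gamma$.

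The step I expect to need care is gluing $p$-th roots, since local roots are only unique up to roots of unity. Given $f\in\Gamma$, fix one $b\in A_1$ with $b^p=f$. For each $i$ choose $b_i\in A_i$ with $b_i^p=f$. Then $\zeta_i=b/b_i\in K$ satisfies $\zeta_i^p=1$, so $\zeta_i$ is integral over $A_i$ and lies in $A_i$. Hence $b=\zeta_i b_i\in A_i$ for every $i$, so $b\in\Gamma$ and $b^p=f$.

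For (iii), note that any nonempty open $U\subset X$ is again integral, and every affine open of $U$ is an affine open of $X$, so it has property (*). Applying (ii) to $U$ gives the claim; if $U=\emptyset$, the zero ring is trivially excluded or harmless by convention.
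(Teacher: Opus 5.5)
Your proposal is correct and follows essentially the same route as the paper. In (i) the same normality argument shows that $x^p\in p^{1/p^n}A$ forces $x\in p^{1/p^{n+1}}A$. In (ii) the paper likewise fixes one $p$-th root in the function field, shows it lies in every affine coordinate ring, and glues; the only difference there is that the paper cites \cite[0358]{sp} for normality where you argue directly via $\Gamma=\bigcap_i A_i$.

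One simplification: the root-of-unity detour in (ii) is unnecessary. Your opening observation applies directly to $b$, since $b^p=f\in A_i$, so $b\in A_i$ for every $i$. This also avoids dividing by $b_i=0$ when $f=0$. Your explicit treatment of the case $p=0$ in (i) is more careful than the paper's, whose injectivity argument implicitly assumes $p\neq 0$.
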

\begin{proof}
(i): If $p$ is invertible then $A/(p^{1/p^{\infty}})=0$ and the result is trivial. Otherwise, it is sufficient to show that the Frobenius map $A/p^{1/p^{n+1}} \to A/p^{1/p^{n}}$ is an isomorphism 
for any $n\ge 1$. 
Surjectivity follows because $A$ has all $p$-th roots. For injectivity,
if $x \in A$ has the property that $x^p = p^{1/p^{n}}y$ for some $y \in A$, then $z=\frac{x}{p^{1/p^{n+1}}} \in K(A)$ satisfies the $A$-integral equation $z^p-y=0$.
Since $A$ is normal, we conclude that $z \in A$, so $x = p^{1/p^{n+1}}z \in p^{1/p^{n+1}}A$ as desired.\\
\indent For (ii), by \cite[0358]{sp}, we know that $\Gamma(X, \cal{O}_X)$ is a normal domain. Thus, we need to show that every element $x \in \Gamma(X, \cal{O}_X)$ has a $p$-th root in $\Gamma(X, \cal{O}_X)$. 
Let $\eta \in X$ be the generic point. Then $x|_{\eta} \in k(\eta)$ has a $p$-th root in $k(\eta)$, say $y$. Now $y$ is integral over $\Gamma(X, \cal{O}_X)$, and therefore also integral 
over any affine open $U=\spec{A} \subset X$.
Since $A$ is normal with fraction field $k(\eta)$, we conclude that there exists an element $y_U \in A$ such that $y_U|_{\eta} = y$ and $y_U^p = x|_U$. It is clear
that for any smaller affine open $V \subset U$ that $y_V = y_U|_V$, hence, we obtain a global section $y \in \Gamma(X, \cal{O}_X)$ such that $(y^p - x)|_U = 0$ for all 
affine opens $U \subset X$, so $y^p - x=0$ in $\Gamma(X, \cal{O}_X)$ as desired.\\
\indent (iii) follows from (ii) by replacing $X$ with $U$.
\end{proof}
There is an obvious choice of schemes satisfying the condition in Proposition~\ref{prop:perfglobalsections}, namely, the spectra of absolute integral closures of local domains.
\begin{proposition}\label{prop:flatcovermixed}
Let $X$ be an integral, normal scheme whose fraction field is algebraically closed.
\begin{enumerate}
\item[(i)] For every affine open $\spec{A} \subset X$, $A$ has property (*) of Proposition~\ref{prop:perfglobalsections}. 
\item[(ii)] For every open subset $U \subset X$, the ring $A'=H^0(\cal{O}_U)$ has property (*) of Proposition~\ref{prop:perfglobalsections}. In particular,
$A'/p^{1/p^{\infty}}$ is a perfect ring.
\item[(iii)] Let $U \to X$ be any separated \'{e}tale morphism. Then $U$ is a disjoint union of open subsets of $X$.
\end{enumerate}
\end{proposition}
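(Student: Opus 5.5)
For (i), I will first show that every element of $A$ has a $p$-th root in $A$; normality and being a domain are then immediate, since $\spec{A}$ is an open subscheme of the integral normal scheme $X$. Let $a \in A$. The fraction field $K(A)=k(\eta)$, with $\eta$ the generic point of $X$, is algebraically closed, so there is a $y \in K(A)$ with $y^p = a$. This $y$ is a root of the monic polynomial $T^p - a \in A[T]$, hence integral over $A$. As $A$ is integrally closed in $K(A)$, we get $y \in A$. Thus $A$ has property (*).

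Part (ii) then follows from (i) together with Proposition~\ref{prop:perfglobalsections}. Any open $U \subset X$ is again integral and normal, with the same algebraically closed function field. By (i), every affine open of $X$, and in particular every affine open of $U$, has property (*). Proposition~\ref{prop:perfglobalsections} (iii), or (ii) applied to $U$, shows that $A'=H^0(\cal{O}_U)$ has property (*). Proposition~\ref{prop:perfglobalsections} (i) then shows that $A'/p^{1/p^{\infty}}$ is perfect. For (ii) we may assume $U \neq \emptyset$; otherwise $A'=0$ and everything is trivial.

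For (iii), let $U \to X$ be separated and \'{e}tale. Since $X$ is normal, $U$ is normal, so $U$ is the disjoint union of its connected components $U_i$. Each $U_i$ is integral, and \'{e}taleness makes it dominant onto $X$ (flat, hence generizing and open). The generic point of $U_i$ maps to $\eta$, and its residue field is a finite separable extension of $k(\eta)$. Because $k(\eta)$ is algebraically closed, that extension is trivial, so $U_i \to X$ is \'{e}tale and birational.

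The main step, and the expected obstacle, is to show that a separated, \'{e}tale, birational morphism $g: U_i \to X$ with $X$ normal and integral is an open immersion. I will first prove that $g$ is universally injective. It is unramified, so it suffices to check that for each $x \in X$ the fibre $U_{i,x}$ has at most one point, with trivial residue field extension. I would argue this locally. The fibre at $x$ is a finite disjoint union of spectra of finite separable extensions of $k(x)$. Replace $X$ by the strict henselization $\spec{\cal{O}^{sh}_{X,x}}$, which is a normal local domain, hence unibranch. After this base change, $U_i$ decomposes near the fibre as a disjoint union of pieces, and each piece through a point over $x$ is isomorphic to an open neighbourhood of the closed point. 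Because $\cal{O}^{sh}_{X,x}$ is a domain and $g$ is generically an isomorphism over $\eta$ (there is a single point over $\eta$ with trivial extension), only one such piece can contain the generic point. Separatedness rules out two distinct points over $x$ that both specialize from the unique generic point: two sections of a separated \'{e}tale map that agree at the generic point of an integral scheme must coincide.

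This gives universal injectivity. A universally injective \'{e}tale morphism is an open immersion (Stacks 025G), so each $U_i$ is identified with an open subset of $X$. Combining this over all components, $U = \coprod_i U_i$ is a disjoint union of open subsets of $X$, which proves (iii).
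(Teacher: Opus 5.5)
Your proofs of (i) and (ii) are the paper's. A $p$-th root of $a$ exists in the algebraically closed field $K(A)$ and lies in $A$ by integral closedness. Part (ii) is then (i) combined with Proposition~\ref{prop:perfglobalsections} (iii) and (i).

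For (iii) you take a different route, since the paper gives no argument and simply cites \cite[09Z9]{sp} (1). You reprove that statement instead. You split $U$ into integral pieces $U_i$ and observe that each $U_i \to X$ is separated, \'{e}tale and birational. You show such a map is universally injective by base changing to $R=\cal{O}^{sh}_{X,x}$, where two facts hold. Every point over the closed point carries a section, which is an open and closed immersion: it exists because $R$ is henselian with separably closed residue field, it is open because the map is unramified, and it is closed because the map is separated. Any two such sections pass through the unique point over the generic point of the domain $R$, and hence coincide. You then conclude with \cite[025G]{sp}. This is correct. Compared with the citation, it shows exactly where separatedness and normality (through the unibranch property of $\cal{O}^{sh}_{X,x}$) are used, and that only separable closedness of $K(X)$ is needed; the paper's route is simply shorter.

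Three small points should be made explicit.

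First, $X$ need not be Noetherian; the intended application is $X=\spec{A^{+}}$. So writing the normal scheme $U$ as a disjoint union of integral opens requires $U$ to have locally finitely many irreducible components \cite[033M]{sp}. This holds for two reasons:
\begin{enumerate}
\item[(a)] by flatness (going down), every generic point of an irreducible component of $U$ lies over $\eta$;
\item[(b)] every affine open of $U$ mapping into an affine open of $X$ is quasi-finite over it, so it meets only finitely many such generic points.
\end{enumerate}

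Second, the fibre $U_{i,x}$ need not be finite a priori, because $U_i \to X$ need not be quasi-compact. Your argument never uses finiteness, so just drop that word.

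Third, state the passage back from $R$ to $X$. If $U_{i,x}=\coprod_j \spec{L_j}$, the fibre of $U_i\times_X\spec{R}$ over the closed point is $U_{i,x}\times_{k(x)}\spec{k(x)^{\mathrm{sep}}}$, which has $\sum_j [L_j:k(x)]$ points. Thus ``at most one point over the closed point of $\spec{R}$'' is exactly injectivity at $x$ together with triviality of the residue field extension.
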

\begin{proof}
(i): Every such $A$ is normal with algebraically closed fraction field. Therefore, it suffices to observe that $K(A)$ contains all $p$-th roots.\\
(ii): This follows from (i) and Proposition~\ref{prop:perfglobalsections} (iii).\\
(iii): This is the content of (1) of \cite[09Z9]{sp}.
\end{proof}
To proceed, we will use a critical result of Bhatt in his seminal paper \cite{bhattcohen}.
\begin{theorem}\label{thm:bhattcohen}
Let $(A,\ideal{m}_A,k)$ be an excellent local Noetherian domain with $p \in \ideal{m}_A$. Let $A^{+}$ be the normalization of $A$ in an algebraic closure of its fraction field. 
Then $R\Gamma_{\ideal{m}_A}(A^{+})$ is concentrated in cohomological degree $d$ where $d=\mathrm{dim}(A)$.
\end{theorem}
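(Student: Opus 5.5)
Since Theorem~\ref{thm:bhattcohen} is the main theorem of \cite{bhattcohen}, the proposal here is to import it and to indicate how its proof goes, rather than to reprove it from scratch. The first step is a standard reduction to the complete case. Because $A$ is an excellent local domain, its completion $\widehat{A}$ is reduced, and $R\Gamma_{\ideal{m}_A}(A^{+})\otimes_A\widehat{A}\simeq R\Gamma_{\ideal{m}}(A^{+}\otimes_A\widehat{A})$. If $\ideal{q}\subset \widehat{A}$ is a minimal prime, then $A^{+}\otimes_A \widehat{A}/\ideal{q}$ maps to $(\widehat{A}/\ideal{q})^{+}$ compatibly. From this one reduces to $A$ complete. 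By the Cohen structure theorem one may then write $A$ as a module-finite extension of a complete regular local ring $A_0$ with $p\in\ideal{m}_{A_0}$, and $A^{+}=A_0^{+}$ since the fraction fields have the same algebraic closure. Since $\ideal{m}_{A_0}A$ is $\ideal{m}_A$-primary, it therefore suffices to treat $A=A_0$ regular. Vanishing of $H^i_{\ideal{m}}$ for $i>d$ is automatic, so the real content is that $H^i_{\ideal{m}}(A^{+})=0$ for $i<d$, i.e.\ that $A^{+}$ is a (balanced) big Cohen--Macaulay algebra.

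Next, $A^{+}$ is $p$-torsionfree, and every class in $H^i_{\ideal{m}}(A^+)$ is $p$-power torsion, since it comes from a finite extension. The Bockstein sequence for $0\to A^{+}\xrightarrow{p}A^{+}\to A^{+}/p\to 0$ reduces us to proving $H^i_{\ideal{m}}(A^{+}/p)=0$ for $i<d-1$, together with a surjectivity statement for $p$ in degree $d-1$. Equivalently, we may pass to the $p$-adic completion $\widehat{A^{+}}$, which is a perfectoid ring. Next one globalizes: one chooses a projective birational model, or works with the punctured spectrum $U=\spec{A}-\{\ideal{m}\}$. The local cohomology vanishing is then translated into a statement that $H^i(U^{+},\cal{O}/p)$ has no cohomology in the relevant range. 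The statement to aim for is the following: for a proper (or quasi-projective) $X$ over $A$ and a class in $H^i(X,\cal{O}_X/p)$ with $i>0$ in the appropriate range, there is a finite cover $Y\to X$ killing it. This is the mixed-characteristic analogue of the Hochster--Huneke and Huneke--Lyubeznik equal-characteristic theorem.

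The main obstacle is exactly this "killing cohomology by finite covers" step. In equal characteristic $p$ it follows from Frobenius-stable submodules and Artin--Schreier equations. In mixed characteristic there is no Frobenius on $A^{+}$ itself. Following \cite{bhattcohen}, I would attack it via the tilt $(\widehat{A^{+}})^{\flat}$ and prismatic cohomology of $X^{+}$. The point is that the relative prismatic/$\bb{F}_p$-étale comparison (a $p$-adic Riemann--Hilbert input) identifies $R\Gamma(X^{+},\cal{O}/p)$, up to almost zero error, with étale cohomology $R\Gamma_{\text{ét}}(X^{+}_{\overline{\eta}},\bb{F}_p)\otimes \cal{O}/p$. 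This vanishes in positive degrees because étale cohomology classes are killed by finite covers. Finally, the "almost" ambiguity in degree must be removed. I expect this to be handled with André's flatness lemma and an argument with the valuation of $p^{1/p^\infty}$, which exploits that $A^{+}$ is absolutely integrally closed. This yields the honest vanishing $H^i_{\ideal{m}}(A^{+})=0$ for $i\neq d$.
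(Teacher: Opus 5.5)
Your proposal matches the paper: Theorem~\ref{thm:bhattcohen} is not proved there but imported directly from \cite{bhattcohen}, so the citation is the whole argument and your sketch of Bhatt's method is only commentary. That commentary is loose in two places, though the citation makes both moot: the hypothesis $p\in\ideal{m}_A$ allows $\mathrm{char}(A)=p$, where $A^{+}$ is not $p$-torsionfree and the statement is the Hochster--Huneke theorem, and your reduction to the complete case runs the wrong way, since a map $A^{+}\otimes_A\widehat{A}\to(\widehat{A}/\ideal{q})^{+}$ does not by itself transfer vanishing of local cohomology back to $A^{+}$ (one would need to descend the relevant finite extensions of the completion, e.g.\ by an approximation argument over the henselization).
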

\begin{theorem}\label{Theorem:torindepmixed}
Let $(A,\ideal{m}_A,k)$ be an excellent regular local Noetherian ring of dimension $d$ with $p \in \ideal{m}_A$, $A^{+}$ the normalization of $A$ in the algebraic
closure of $K(A)$, and $i: A \to A^{+}$ the induced integral map.
\begin{enumerate}
\item[(i)] Let $M$ be a module over $A$ that is supported on $\{\ideal{m}_A\}$. Then $M \otimesl_A A^{+}$ is discrete.
\item[(ii)]Let $V \to \spec{A}$ be a flat morphism of schemes such that $V$ is cohomologically pure in codimension $1$ over $\spec{A}$.
 Then $H^0(\cal{O}_V) \otimesl_A A^{+}$ is a discrete ring equal to $H^0(\cal{O}_{V'})$ where $V'= V \times_{\spec{A}} \spec{A^{+}}$.
 \item[(iii)] In the situation of (ii), if $V \to \spec{A}$ is further assumed to be \'{e}tale, then $H^0(\cal{O}_V) \otimesl_A k$ is a discrete ring.
\end{enumerate}
\end{theorem}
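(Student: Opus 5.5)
Part (i) reduces to a Tor computation against local cohomology. Since $M$ is supported on $\{\ideal{m}_A\}$, it is $\ideal{m}_A$-power torsion, so $M \simeq R\Gamma_{\ideal{m}_A}(M)$. Because $A$ is regular local of dimension $d$, $R\Gamma_{\ideal{m}_A}(A) \simeq E_A(k)[-d]$ is a dualizing-type object, and every $\ideal{m}_A$-torsion module can be written as $M \simeq R\Gamma_{\ideal{m}_A}(A)\otimesl_A N$ for $N = M[d] \otimesl_A R\mathrm{Hom}_A(E_A(k),E_A(k))$-type data. Rather than invoke duality, I would compute directly with the Koszul/\v{C}ech complex:
\[
M \otimesl_A A^{+} \simeq R\Gamma_{\ideal{m}_A}(M) \otimesl_A A^{+} \simeq M \otimesl_A R\Gamma_{\ideal{m}_A}(A^{+}) \otimesl_{R\Gamma_{\ideal m_A}(A)}\cdots
\]
The cleaner route uses a regular system of parameters $x_1,\dots,x_d$ and the stable Koszul complex. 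By Theorem~\ref{thm:bhattcohen}, $R\Gamma_{\ideal{m}_A}(A^{+})$ is concentrated in degree $d$, which says that $x_1,\dots,x_d$ is a regular sequence on $A^{+}$ in the strong sense that each Koszul complex $K(x_1^n,\dots,x_d^n;A^{+})$ is acyclic outside top degree; equivalently $\Tor_i^A(A/(x_1^n,\dots,x_d^n),A^{+})=0$ for $i>0$. Since $A$ is regular, $A/(x^n)$ is resolved by the Koszul complex, so this holds.

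This gives flatness-like behavior of $A\to A^{+}$ against all modules killed by a power of $\ideal m_A$. To see this, note that a finitely generated module killed by $(x_1^n,\dots,x_d^n)$ admits a finite filtration. A cleaner argument is to show that $A^{+}$ is flat over $A$. Indeed, by the local criterion (Tor-vanishing against $k$ up to completion), and since $\Tor_i^A(k,A^{+})$ for $i>0$ is computed by the Koszul complex on $x_1,\dots,x_d$, it vanishes by Bhatt. By the flatness criterion for regular local rings (Tor-rigidity against $k$ plus $\ideal{m}$-adic separatedness issues), one concludes $\Tor_i^A(N,A^{+})=0$ for all finite length $N$ and $i>0$. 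Passing to filtered colimits, $\Tor_i^A(M,A^{+})=0$ for all $\ideal{m}_A$-power torsion $M$, which is (i). The main obstacle is this step: passing from vanishing against $k$ to all finite length modules needs dévissage along short exact sequences, which only gives vanishing of higher Tor once it holds in all degrees $i\ge1$ for $k$. Koszul acyclicity gives exactly that, so I expect dévissage to suffice without full flatness.

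For (ii), I would use the triangle $R\Gamma_{\ideal m}$-style exact sequence attached to $V$. Set $A'=H^0(\cal O_V)$ and $K=\tau^{\ge1}R\Gamma(\cal O_V)$, whose cohomologies are supported on $\{\ideal m_A\}$. By (i) and a finite filtration (cohomology lives in degrees $1,\dots,d-2$), $K\otimesl_A A^{+}$ has cohomology only in degrees $\ge1$. By flat base change along the (possibly non-flat, but affine) map, $R\Gamma(\cal O_V)\otimesl_A A^{+}\simeq R\Gamma(\cal O_{V'})$ holds for qcqs $V$ since tensor commutes with \v{C}ech complexes and $V$ is flat over $A$, so $\cal O_V\otimesl_A A^{+}=\cal O_{V'}$. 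The long exact sequence then shows that $A'\otimesl_A A^{+}$ sits in degrees $\le 0$ with $H^{<0}$ matching $H^{<0}$ of $R\Gamma(\cal O_{V'})$, which is zero, and $H^0$ mapping isomorphically to $H^0(\cal O_{V'})$, using that $K\otimesl A^{+}$ is in degrees $\ge1$.

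For (iii), use Proposition~\ref{prop:torindep} with $A^{+}\leftarrow$ the $p$-adic setup. Write $A'\otimesl_A k\simeq (A'\otimesl_A A^{+})\otimesl_{A^{+}} k'$ after replacing $k$ by the residue field $k'$ of $A^{+}$. Here $k'$ is a faithfully flat $k$-extension, which is perfect since it is algebraically closed, and $A\to A^{+}\to k'$ factors, with a spectral sequence argument in the $\Tor$-amplitude showing that discreteness over $k'$ implies discreteness over $k$. By (ii), $A'\otimesl_A A^{+}=H^0(\cal O_{V'})$. Since $V'\to\spec{A^{+}}$ is étale (separated, as $V$ is), Proposition~\ref{prop:flatcovermixed}(iii) shows that $V'$ is a disjoint union of opens, so $H^0(\cal O_{V'})$ is a product of rings with property (*). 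Proposition~\ref{prop:flatcovermixed}(ii) together with Proposition~\ref{prop:perfglobalsections}(i) shows that both $A^{+}$ and $H^0(\cal O_{V'})$ are $p$-torsionfree with perfect quotients mod $p^{1/p^\infty}$. Taking $\varpi=p$, Proposition~\ref{prop:torindep} gives that $H^0(\cal O_{V'})\otimesl_{A^{+}}k'$ is discrete. Some care is needed for infinite products, but quasi-compactness of $V$ should make the disjoint union finite over each component.
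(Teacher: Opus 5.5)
Your proposal is correct. For (ii) and (iii) it is essentially the paper's argument. In (ii) you use the truncation triangle $H^0(\mathcal{O}_V)\to R\Gamma(\mathcal{O}_V)\to\tau^{\ge 1}R\Gamma(\mathcal{O}_V)$ where the paper runs the spectral sequence $\Tor^A_{-i}(H^j(\mathcal{O}_V),A^+)\Rightarrow H^{i+j}(\mathcal{O}_{V'})$; this is the same bookkeeping. In (iii) you make the same moves as the paper: reduce to a residue field of $A^+$, decompose $V'$ via Proposition~\ref{prop:flatcovermixed}~(iii), and apply Proposition~\ref{prop:torindep} with $\varpi=p$.

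The real difference is in (i). The paper argues in one step: for $\ideal{m}_A$-torsion $M$,
\[M\otimesl_A A^+\simeq M\otimesl_A R\Gamma_{\ideal{m}_A}(A^+)\simeq M\otimesl_A H^d_{\ideal{m}_A}(A^+)[-d].\]
The right side lies in cohomological degrees $[0,d]$ because $A$ has global dimension $d$, and the left side lies in degrees $\le 0$. Your abandoned displayed formula was one step away from this. You instead extract from Bhatt's theorem only $\Tor^A_{>0}(k,A^+)=0$, via the Koszul complex on a regular system of parameters; this needs the comparison of local-cohomological and Koszul depth for non-finitely generated modules, or more simply the paper's argument with $M=k$. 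You then propagate it by d\'evissage along composition series and by filtered colimits over finite-length submodules. That is valid and isolates a clean intermediate statement (Tor-independence of $A^+$ against all $\ideal{m}_A$-torsion modules), but it is longer than the direct amplitude comparison.

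Two corrections to the write-up.

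First, drop the suggestion that $A^+$ is flat over $A$ and the appeal to a local flatness criterion. Flatness is not needed, and it is false in general. For $d\ge 4$ there are primes $\ideal{p}\not\ni p$ of height $\ge 3$, and flatness would force $H^i_{\ideal{p}A_{\ideal{p}}}((A_{\ideal{p}})^+)=0$ for $i<\dim A_{\ideal{p}}$. But $A_{\ideal{p}}$ has equal characteristic $0$, where normalized traces split $T\to(A_{\ideal{p}})^+$ for every module-finite normal extension $T$. Any non-Cohen--Macaulay such $T$ therefore gives nonzero $H^2$. Your final d\'evissage argument rightly avoids flatness.

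Second, in (ii) the parenthetical ``cohomology lives in degrees $1,\dots,d-2$'' uses $H^{>d}(\mathcal{O}_V)=0$. That holds when $\dim V\le d$ (e.g.\ in the \'{e}tale case), but it is not a hypothesis for a merely flat $V$. All you need is that $\tau^{\ge 1}R\Gamma(\mathcal{O}_V)$ is bounded. This holds because $V$ is quasi-compact and separated, so a finite \v{C}ech complex computes its cohomology; alternatively, pass to the colimit of its truncations.
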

\begin{proof}
(i): Since $M$ is supported on $\{\ideal{m}_A\}$, we have
\[A^{+} \otimesl_A M \simeq R\Gamma_{\ideal{m}_A}(A^{+}) \otimesl_A M.\]
Since $A$ is regular, $A$ has global dimension $d$, and since $R\Gamma_{\ideal{m}_A}(A^{+})$ is concentrated in cohomological degree $d$ by Theorem~\ref{thm:bhattcohen}, 
the right-hand side is concentrated in cohomological degrees $\ge 0$. But the left-hand side is concentrated in cohomological degrees $\le 0$,
so we conclude that $A^{+} \otimesl_A M$ is discrete.\\
(ii): By flat base-change, we know that
\[R\Gamma(\cal{O}_{V'}) \simeq R\Gamma(\cal{O}_V) \otimesl_A A^{+}.\]
We have a spectral sequence 
\[E_2^{i,j} = \mathrm{Tor}_{-i}^A(H^j(\cal{O}_V), A^{+}) \Rightarrow H^{i+j}(R\Gamma(\cal{O}_V) \otimesl_A A^{+}) = H^{i+j}(\cal{O}_{V'}).\]
Since $V \to \spec{A}$ is cohomologically pure in codimension $1$, each $H^j(\cal{O}_V)$ is supported on $\{\ideal{m}_A\}$ for $j \ge 1$, so by (i), $E_2^{-i,j} = 0$ for all $i \ge 1$ and $j \ge 1$.
Therefore, the terms $E_2^{-i,0} = \Tor_{i}^A(H^0(\cal{O}_V), A^{+})$ survive to the $E_{\infty}$-page, contributing to negative cohomological degrees of the complex $R\Gamma(\cal{O}_{V'})$.
We therefore conclude that $H^0(\cal{O}_V) \otimesl_A A^{+}$ is discrete and equal to $H^0(\cal{O}_{V'})$.\\
(iii): By (ii), $H^0(\cal{O}_{V}) \otimesl_A A^{+}$ is a discrete ring equal to $H^0(\cal{O}_{V'})$. Moreover,
by Proposition~\ref{prop:flatcovermixed}, $V'=\coprod_{i \in [n]} V_i, n \in \bb{N}$ where each $V_i$ is an open subset of $\spec{A^{+}}$,
so $H^0(\cal{O}_{V_i})$ is a normal domain such that $H^0(\cal{O}_{V_i})/p^{1/p^{\infty}}$ is a perfect ring for each $i \in [n]$.\\
\indent Let $k_{A^{+}}$ be the residue field of $A^{+}$. Note that the induced map 
\[k=A/(\ideal{m}_{A^{+}} \cap A) \to A^{+}/\ideal{m}_{A^{+}}=k_{A^{+}}\]
identifies $k_{A^{+}}$ as an algebraic closure of $k$, so $k \to k_{A^{+}}$ is faithfully flat and $k_{A^{+}}$ is perfect. 
Therefore, by faithfully flat descent, it suffices to show that $H^0(\cal{O}_V) \otimesl_A k_{A^{+}}$ is a discrete ring. We have
\[H^0(\cal{O}_V) \otimesl_A k_{A^{+}} \simeq H^0(\cal{O}_V) \otimesl_A A^{+} \otimesl_{A^{+}} k_{A^{+}} \simeq H^0(\cal{O}_{V'}) \otimesl_{A^{+}} k_{A^{+}}.\]
Since $V'=\coprod_{i \in [n]} V_i$, we have
\[H^0(\cal{O}_{V'}) \otimesl_{A^{+}} k_{A^{+}} \simeq \prod_{i \in [n]} H^0(\cal{O}_{V_i}) \otimesl_{A^{+}} k_{A^{+}}.\]
By Proposition~\ref{prop:torindep}, $H^0(\cal{O}_{V_i}) \otimesl_{A^{+}} k_{A^{+}}$ 
is a discrete ring for each $i \in [n]$. Hence, $H^0(\cal{O}_V) \otimesl_A k$ is a discrete ring as claimed.\\
\end{proof}
We are now ready to prove Theorem~\ref{thm:cohpuregen}.
\begin{proof}[Proof of Theorem~\ref{thm:cohpuregen}]
We've already resolved the equicharacteristic case in Theorem~\ref{thm:cohpuregenequi}, so we may assume 
that $p \in \ideal{m}_A$. The dimension $2$ case is clear, so we may assume that $d=\mathrm{dim}(A) \ge 3$. 
We may also assume that pair $(A, \ideal{m}_A)$ is henselian and $A$ is excellent, since the $\ideal{m}_A$-adic completion of $A$ is faithfully flat over $A$.\\
\indent By Lemma~\ref{lem:cohpuretop} (ii), 
$V$ misses the closed point of $\spec{A}$, so $R\Gamma(\cal{O}_V) \otimesl_A k \simeq 0$, and by Theorem~\ref{Theorem:torindepmixed} (iii), $H^0(\cal{O}_V) \otimesl_A k$ is a discrete ring. 
Since $V$ is cohomologically pure in codimension $1$, each $H^i(\cal{O}_V)$ is supported on $\{\ideal{m}_A\}$ for $i \ge 1$, 
so if $H^i(\cal{O}_V) \neq 0$ for some $i \ge 1$, then $H^i(\cal{O}_V) \otimesl_A k$ is non-zero in cohomological degree $-d$.
Let $j\in [d-2]$ be the smallest number such that $H^j(\cal{O}_V) \neq 0$. By investigating the spectral sequence
\[E_2^{i,j} = \mathrm{Tor}_{-i}^A(H^{j}(\cal{O}_V), k) \Rightarrow H^{i+j}(R\Gamma(\cal{O}_V) \otimesl_A k),\]
we conclude that the $E_2^{-d,j}$-term survives in the $E_{\infty}$-page, which contradicts the fact that $R\Gamma(\cal{O}_V) \otimesl_A k \simeq 0$.
\end{proof}
\section{Affineness of the maximal \'{e}tale locus}
We establish Theorem~\ref{thm:affine-mixed}.
\begin{theorem}
Let $X \to Y$ be a morphism of finite-type between locally Noetherian excellent schemes, where $Y$ is regular and $X$ is normal. 
Let $V \subset X$ be the maximal open subset such that $f|_V: V \to Y$ is \'{e}tale. The inclusion $V \to X$ is an affine morphism.
\end{theorem}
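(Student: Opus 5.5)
The plan is to mirror the proof of \cite[0ECD]{sp}, replacing its equicharacteristic input by Theorem~\ref{thm:cohpuregen}, and to induct on the dimension of the local rings of $Y$.

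\textbf{Reduction to a local statement.} Affineness of $V \to X$ is local on $X$, so we may take $X = \spec{B}$ affine; it then suffices to show that $V$ is affine. By the standard criterion, $V$ is affine once $V \times_X \spec{\cal{O}_{X,x}}$ is affine for every $x \in X \setminus V$. Moreover, since $f$ is \'{e}tale exactly on $V$, formation of $V$ commutes with localization. Excellence lets us pass to henselizations or completions: $\cal{O}_{X,x}$ stays normal, $\cal{O}_{Y,y}$ stays regular, and the \'{e}tale locus is compatible with the faithfully flat base change $A \to A^{\wedge}$ together with the descent of affineness. This reduces us to the following setting: $A = \cal{O}_{Y,y}$ is a complete regular local ring, $B$ is a normal local ring essentially of finite type over $A$ with $x$ the closed point, and $V \subset \spec{B} - \{\ideal{m}_B\}$.

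\textbf{Induction.} We induct on $d = \dim \cal{O}_{X,x}$. On the punctured spectrum the inductive hypothesis gives that $V \to \spec{B} - \{\ideal{m}_B\}$ is affine. The case $\dim \le 1$ is trivial, and when $\dim B > \dim A$ we should be able to reduce by replacing $A$ with a localization, or by noting that $V$ then misses the relevant component. If $\dim B \le 2$, purity of the ramification locus (Theorem~\ref{thm:nagata-purity}), which rests on the miracle flatness theorem, shows that the complement of $V$ is either of pure codimension $1$ or empty. In either case $V$ is affine: an open whose complement has pure codimension $1$ in a normal surface is affine, and if the complement is just $\{\ideal{m}_B\}$ then $f$ is \'{e}tale at $x$.

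\textbf{General step.} Now assume $d \ge 3$. The composition $V \to \spec{B}-\{\ideal{m}_B\}$ is affine, and by Hartshorne--Lichtenbaum vanishing \cite[0EB7]{sp} each $H^i(\cal{O}_V)$ for $i \ge 1$ should be supported at $\ideal{m}_A$, with $H^{d-1}(\cal{O}_V) = H^d(\cal{O}_V) = 0$. The vanishing uses the fact that the ramification locus has pure codimension $1$ at generic points of the complement, via Lemma~\ref{lem:cohpuretop} and Theorem~\ref{thm:nagata-purity}. It follows that $V \to \spec{A}$ is \'{e}tale, dominant and cohomologically pure in codimension $1$ in the sense of Definition~\ref{def:cohpure}. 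Theorem~\ref{thm:cohpuregen} then shows that $V$ is affine, which finishes the proof.

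\textbf{Main obstacle.} The hardest step will be verifying Definition~\ref{def:cohpure} for $V$ over $A$ rather than over $B$. This requires comparing $H^i(\cal{O}_V)$ with local cohomology of $B$ along $\spec{B}-V$. It also requires handling the case where $\dim B < \dim A$ locally, in which case $V$ need not be dominant and we must instead pass to the image; and the case where $B$ is not finite over $A$, which we would handle by Zariski's main theorem so as to work inside the normalization of $A$ in $V$, as in Lemma~\ref{lem:cohpuretop}(ii).
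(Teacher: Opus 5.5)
Your cohomological bookkeeping is essentially right. Affineness over the punctured spectrum of $B$ gives $\ideal{m}_A$-torsion higher cohomology. Hartshorne--Lichtenbaum, purity and the dimension formula bound $H^i(\cal{O}_V)$ for $i\ge\dim A-1$. A nonempty $V$ that is flat over the domain $A$ is automatically dominant, with $\dim B\le\dim A$.

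The final step fails, however. Theorem~\ref{thm:cohpuregen} does not apply to your $V$, because after localizing at $x$ the map $V\to\spec{A}$ is no longer \'etale. Your $V$ is $V\times_X\spec{\cal{O}_{X,x}}$ (or its base change to $A^{\wedge}$), an open of $\spec{B}$ with $B$ only essentially of finite type over $A$. It is a cofiltered limit of the \'etale $A$-schemes $V\cap U$, with $U$ ranging over affine neighbourhoods of $x$. In general, though, it is not even locally of finite type over $A$. For example, take $X$ the blow-up of $\bb{A}^3_k$ at the origin ($k$ infinite) and $x$ a closed point of the exceptional divisor $E$. The image of $\spec{\cal{O}_{X,x}}\setminus E$ in $\spec{A}$ contains the generic point but misses infinitely many height-one primes, so it is not constructible. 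The proof of Theorem~\ref{thm:cohpuregen} genuinely uses finite presentation: Zariski's main theorem in Lemma~\ref{lem:cohpuretop}(ii), and \cite[09Z9]{sp} in Proposition~\ref{prop:flatcovermixed}(iii). The same obstruction blocks your plan to normalize $A$ in $V$ via Zariski's main theorem. Relatedly, your $\dim B\le 2$ base case invokes Theorem~\ref{thm:nagata-purity}, which requires $\dim\cal{O}_{X,x}=\dim\cal{O}_{Y,y}$; it says nothing when $\dim B=2<\dim A$.

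The paper avoids this by localizing only on the base, which keeps $V$ \'etale and of finite type.
\begin{itemize}
\item It base changes to $\spec{\cal{O}_{Y,y}}$ and inducts on $d=\dim\cal{O}_{Y,y}$ rather than on $\dim\cal{O}_{X,x}$.
\item It uses quasi-finiteness of $V\to Y$: $V\cap f^{-1}(y)$ is a finite set of closed points of the fibre. So any $x\in f^{-1}(y)\setminus V$ has an affine neighbourhood $U$ with $U\cap V\cap f^{-1}(y)=\emptyset$.
\item Then $U\cap V$ is an honest \'etale, separated, finite type $A$-scheme missing the closed point.
\item The induction hypothesis, applied at the points $y'\neq y$ of $\spec{\cal{O}_{Y,y}}$, makes $U\cap V\to\spec{A}\setminus\{\ideal{m}_A\}$ affine. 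This gives the support condition.
\item The Hartshorne--Lichtenbaum, purity and dimension-formula estimate gives $H^i(\cal{O}_{U\cap V})=0$ for $i\ge d-1$. Theorem~\ref{thm:cohpuregen} then applies as stated.
\end{itemize}
Your induction on $\dim\cal{O}_{X,x}$ only controls $V$ over the punctured spectrum of $\cal{O}_{X,x}$, which is exactly what forces you onto the non-\'etale scheme $V\times_X\spec{\cal{O}_{X,x}}$. To repair the argument, switch to this base-dimension induction, or else prove a version of Theorem~\ref{thm:cohpuregen} for such pro-\'etale $V$, which your proposal does not do.
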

\begin{proof}
We first prove the result under the assumption that $Y$ is excellent following \cite[0ECD]{sp}.\\
\indent We first reduce to the situation that $Y$ has finite dimension.
For any $x \in X$, we need to find an affine open neighborhood $U$ of $x$ such that $U \cap V$ is affine. 
It is therefore necessary and sufficient to show that $V_x = V \times_X \spec{\cal{O}_{X,x}}$ is affine for every $x \in X$ \cite[01Z6]{sp}. 
Hence, it is necessary and sufficient to show the result for the base-change of the morphism
$X \times_Y \spec{\cal{O}_{Y,y}} \to \spec{\cal{O}_{Y,y}}$ for each $y \in Y$, so we may assume $Y$ is local of finite Krull-dimension. 
By induction on the dimension $d$ of $Y$, we may also assume that $V \cap (X \setminus f^{-1}(y)) \to X \setminus f^{-1}(y)$ is affine.\\
\indent For any $x \in f^{-1}(y)$, if $x \in V$ then $V_x$ is clearly affine.
Since $V \to Y$ is quasi-finite, $V \cap f^{-1}(y)$ is a finite disjoint union of closed points, so if $x \in f^{-1}(y)$ is not in $V$, 
then $x$ has an open affine neighborhood $U$ not meeting $f^{-1}(y)$. By replacing $X$ by $U$ we've reduced to the case that $X$ is affine, $V \to Y$ misses the closed point $y \in Y$, and 
$V \to \spec{\cal{O}_{Y,y}}-\{y\}$ is an affine morphism. If $d =0$ or $1$, then $V$ is affine. Hence, we assume $d \ge 2$.\\
\indent Now assume that $Y$ is excellent. Let $e(V)$ be the largest $i$ such that $H^i(\cal{O}_V) \neq 0$. As $X$ is affine, $X \setminus V$ is defined by an ideal $I \subset \cal{O}_X$, and by \cite[0DXB]{sp}, we have
\[e(V)= \max_{x \in X} e(V_x) - 1, \; \; e(V_x) = \max \{i: H^i_{I\cal{O}_{X,x}}(\cal{O}_{X,x}) \neq 0\}.\]
If $x$ is not a specialization of any point in $V$, then $V_x$ is empty and thus $e(V_x)=0$.
If $x$ is a specialization of some point in $V$, then $\mathrm{dim}(\cal{O}_{X,x}) \le d$ by the dimension formula \cite[02JU]{sp}. $V_x$ is contained in the punctured 
spectrum of $\spec{\cal{O}_{X,x}}$, so by the purity of the ramification locus $X \setminus V$, if
$\mathrm{dim}(\cal{O}_{X,x}) \ge 2$, then 
the dimension of the complement of $V_x$ in $\spec{\cal{O}_{X,x}}$ is at least $1$, and so by Hartshorne-Lichtenbaum vanishing (\cite[0EB7]{sp}), 
$e(V_x) \le \mathrm{dim}(\cal{O}_{X,x})-1 \le d-1$. If $\mathrm{dim}(\cal{O}_{X,x}) \le 1$, then $e(V_x) \le 1$, and therefore, $e(V) \le \max(0,d-2)=d-2$.\\
\indent As $V \to \spec{\cal{O}_{Y,y}} - \{y\}$ is affine and quasi-compact, for $i \ge 1$, $H^i(\cal{O}_V)$ is supported on $\{y\}$. Moreover, $V \to \spec{\cal{O}_{Y,y}}$ 
is separated, and since we know $e(V) \le d-2$, we conclude that $V \to \spec{\cal{O}_{Y,y}}$ is an \'{e}tale morphism that is cohomologically pure in codimension $1$.
By Theorem~\ref{thm:cohpuregen}, $V$ is affine as desired.\\
\end{proof}

\backmatter
\SingleSpacing
\renewcommand{\bibname}{References}
\setlength{\bibitemsep}{\baselineskip}
\bibliographystyle{ieeetr}
\bibliography{references}
\end{document}